\newtheorem{theorem}{Theorem}[section]
\newtheorem{lemma}[theorem]{Lemma}
\newtheorem{prop}[theorem]{Proposition}
\newtheorem*{remarknonum}{Remark}
\def \R{\mathbb{R}}
\def \mR{\mathbb{R}}
\def \Rn{\mathbb{R}^n}
\def \Rno{\mathbb{R}^n_0}
\def \RnIp{\mathbb{R}^{n+1}_{+}}
\def \Rnp{\mathbb{R}^{n}_{+}}
\def \a{\alpha}
\def \b{\beta}
\def \d{\delta}
\def \g{\gamma}
\def \e{\varepsilon}
\def \eps{\varepsilon}
\def \ph{\varphi}
\def \s{\sigma}
\def \L{\mathcal{L}}
\def \z{\zeta}
\def \Om{\Omega}
\def \Lap{\triangle}
\def \grad{\nabla}
\def \S{\mathcal{S}}
\def \Czinf{C^{\infty}_0}
\def \half{\frac{1}{2}}
\def \Lphe{\mathcal{L}_{\ph,\e}}
\def \Lphet{\tilde{\mathcal{L}}_{\ph, \e}}
\def \RA{N^{RA}} 
\def \AR{N^{AR}} 
\newcommand{\abs}[1]{\lvert #1 \rvert}          
\newcommand{\norm}[1]{\lVert #1 \rVert}         
\numberwithin{equation}{section}
\begin{document}

\title[Partial Data Problems for the Hodge Laplacian]{Partial Data Inverse Problems for the Hodge Laplacian}

\author[Chung]{Francis J. Chung}
\address{Department of Mathematics, University of Kentucky, Lexington, USA}

\author[Salo]{Mikko Salo}
\address{Department of Mathematics and Statistics, University of Jyv\"{a}skyl\"{a}, Jyv\"{a}skyl\"{a}, Finland}

\author[Tzou]{Leo Tzou}
\address{School of Mathematics and Statistics, University of Sydney, Sydney, Australia}

\subjclass[2000]{Primary 35R30}

\keywords{Inverse problems, Hodge Laplacian, partial data, absolute and relative boundary conditions, admissible manifolds, Carleman estimates}

\begin{abstract}
We prove uniqueness results for a Calder\'{o}n type inverse problem for the Hodge Laplacian acting on graded forms on certain manifolds in three dimensions.  In particular, we show that partial measurements of the relative-to-absolute or absolute-to-relative boundary value maps uniquely determine a zeroth order potential.  The method is based on Carleman estimates for the Hodge Laplacian with relative or absolute boundary conditions, and on the construction of complex geometrical optics solutions which reduce the Calder\'{o}n type problem to a tomography problem for 2-tensors. The arguments in this paper allow to establish partial data results for elliptic systems that generalize the scalar results due to Kenig-Sj\"{o}strand-Uhlmann.
\end{abstract}


\maketitle

\tableofcontents

\section{Introduction} \label{sec:intro}

This article is concerned with inverse problems with partial data for elliptic systems. We first discuss the prototype for such problems, which comes from the scalar case: the inverse problem of Calder\'on asks to determine the electrical conductivity $\gamma$ of a medium $\Omega$ from electrical measurements made on its boundary. More precisely, let $\Omega \subset \mR^n$ be a bounded domain with smooth boundary and let $\gamma \in L^{\infty}(\Omega)$ satisfy $\gamma \geq c > 0$ a.e.~in $\Omega$. The full boundary measurements are given by the Dirichlet-to-Neumann map (DN map) 
$$
\Lambda_{\gamma}^{DN}: H^{1/2}(\partial \Omega) \to H^{-1/2}(\partial \Omega), \ \ f \mapsto \gamma \partial_{\nu} u|_{\partial \Omega}
$$
where $u \in H^1(\Omega)$ is the unique solution of $\mathrm{div}(\gamma \nabla u) = 0$ in $\Omega$ with $u|_{\partial \Omega} = f$, and the conormal derivative $\gamma \partial_{\nu} u|_{\partial \Omega}$ is defined in the weak sense. Equivalently, one can consider the Neumann-to-Dirichlet map (ND map) 
$$
\Lambda_{\gamma}^{ND}: H^{-1/2}_{\diamond}(\partial \Omega) \to H^{1/2}(\partial \Omega), \ \ g \mapsto v|_{\partial \Omega}
$$
where $\mathrm{div}(\gamma \nabla v) = 0$ in $\Omega$ with $\gamma \partial_{\nu} v|_{\partial \Omega} = g$, and $H^{-1/2}_{\diamond}(\partial \Omega)$ consists of those elements in $H^{-1/2}(\partial \Omega)$ that are orthogonal to constants. The inverse problem of Calder\'on asks to determine the conductivity $\gamma$ from the knowledge of the DN map or (equivalently) the ND map. There is a substantial literature on this problem, with pioneering works including \cite{F}, \cite{C}, \cite{SU}, \cite{HN}, \cite{Nachman}, \cite{No_1988}. We refer to the surveys \cite{Novikov_survey}, \cite{U_BMS} for more information.

The Calder\'on problem with partial data corresponds to the case where one can only make measurements on subsets of the boundary. Let $\Gamma_D$ and $\Gamma_N$ be open subsets of $\partial \Omega$, and assume that we measure voltages on $\Gamma_D$ and currents on $\Gamma_N$. If the potential is grounded on $\partial \Omega \setminus \Gamma_D$ but can be prescribed on $\Gamma_D$, the partial boundary measurements are given by the partial DN map 
$$
\Lambda_{\gamma}^{DN} f|_{\Gamma_N} \ \ \text{for all $f \in H^{1/2}(\partial \Omega)$ with $\mathrm{supp}(f) \subset \Gamma_D$}.
$$
If instead we can freely prescribe currents on $\Gamma_N$ but no current is input on $\partial \Omega \setminus \Gamma_N$, then we know the partial ND map 
$$
\Lambda_{\gamma}^{ND} g|_{\Gamma_D} \ \ \text{for all $g \in H^{-1/2}_{\diamond}(\partial \Omega)$ with $\mathrm{supp}(g) \subset \Gamma_N$}.
$$
The basic uniqueness question is whether a (sufficiently smooth) conductivity is determined by such boundary measurements. We remark that in the partial data case there seems to be no direct way of obtaining the partial DN map from the partial ND map or vice versa, and the two cases need to be considered separately.

By now there are many uniqueness results for the Calder\'on problem with partial data involving varying assumptions on the sets $\Gamma_D$ and $\Gamma_N$. For further information we refer to the survey \cite{KeSa_survey} for results in dimensions $n \geq 3$ and \cite{GT_survey} for the case $n=2$. We only list here some of the main results for the partial DN map:
\begin{itemize}
\item 
$n \geq 3$, $\Gamma_D$ can be possibly very small but $\Gamma_N$ has to be slightly larger than the complement of $\Gamma_D$ \cite{KSU}
\item 
$n \geq 3$, $\Gamma_D = \Gamma_N = \Gamma$ and the complement of $\Gamma$ has to be part of a hyperplane or a sphere \cite{I}
\item 
$n = 2$, $\Gamma_D = \Gamma_N = \Gamma$ can be an arbitrary open set \cite{IUY}
\item 
$n \geq 3$, $\Gamma_D = \Gamma_N = \Gamma$ and the complement of $\Gamma$ has to be (conformally) flat in one direction and a certain ray transform needs to be injective \cite{KeSa} (a special case of this was proved independently in \cite{IY_cylindrical})
\end{itemize}
The approach of \cite{KSU} is based on Carleman estimates with boundary terms and the approach of \cite{I} is based on reflection arguments. The paper \cite{KeSa} combines these two approaches and extends both. There seem to be fewer results for the partial ND map, especially in dimensions $n \geq 3$; see \cite{I} and \cite{Ch2}. In fact, in dimensions $n \geq 3$ the Carleman estimate approach for the partial ND map seems to be more involved than for the partial DN map. We remark that there are counterexamples for uniqueness when $\Gamma_D$ and $\Gamma_N$ are disjoint \cite{DKN}.

The purpose of this paper is to consider analogous partial data results for elliptic systems. In the full data case ($\Gamma_D = \Gamma_N = \partial \Omega$), many uniqueness results are available for linear elliptic systems such as the Maxwell system \cite{OPS}, \cite{KSaU}, \cite{CaroZhou}, Dirac systems \cite{NT}, \cite{SaloTzou_diracfull}, Schr\"odinger equation with Yang-Mills potentials \cite{E}, elasticity \cite{NU}, \cite{NU_erratum}, \cite{ER}, and equations in fluid flow \cite{HeckLiWang}, \cite{LiWang}. In contrast, the only earlier partial data results for such systems in dimensions $n \geq 3$ that we are aware of are \cite{COS} for the Maxwell system and \cite{SaloTzou} for the Dirac system. One reason for the lack of partial data results for systems is the fact that Carleman estimates for systems often come with boundary terms that do not seem helpful for partial data inverse problems (see \cite{Eller}, \cite{SaloTzou_diracfull} for some such estimates). 

In this paper we establish partial data results analogous to \cite{KSU} for systems involving the Hodge Laplacian for graded differential forms, on certain Riemannian manifolds in dimensions $n \geq 3$. These are elliptic systems that generalize the scalar Schr\"odinger equation $(-\Lap+q)u = 0$ and are very close to the time-harmonic Maxwell equations when $n=3$. In fact, using the results of the present paper, we have finally been able to extend the partial data result of \cite{KSU} to the Maxwell system \cite{COST}. The main technical contribution of the present paper is a Carleman estimate for the Hodge Laplacian, with limiting Carleman weights, that has boundary terms involving the relative and absolute boundary values of graded forms. The boundary terms are of such a form that allows us to carry over the Carleman estimate approach of \cite{KSU} to the Hodge Laplace system. As far as we know, this is the first analogue of \cite{KSU} for systems besides \cite{SaloTzou} which considered a very special case.

In a sense, to deal with boundary terms for systems in a flexible way, one first needs a good understanding of the different splittings of Cauchy data in the scalar case. This encompasses both the scalar DN and ND maps simultaneously, since the "relative-to-absolute" map defined in Section \ref{sec:results} generalizes both the notion of the DN and ND maps. Therefore the methods developed in \cite{Ch2} for the partial ND map, involving Fourier analysis to treat the boundary terms in Carleman estimates, will be very useful in our approach. We expect that the methods developed in this paper open the way for obtaining partial data results via Carleman estimates for various elliptic systems. This has already been achieved for Maxwell equations \cite{COST}.

The plan of this document is as follows. Section \ref{sec:intro} is the introduction, and Section \ref{sec:results} contains precise statements of the main results. Section \ref{sec:notation} collects notation and identities used throughout the paper. Sections \ref{sec:carleman1}-\ref{sec:carleman3} will be devoted to the proofs of the Carleman estimates.  In Section \ref{sec:carleman1}, we will give the basic integration by parts argument for $k$-forms and simplify the boundary terms.  In Section \ref{sec:carleman2}, we prove the Carleman estimates for $0$-forms using the arguments from ~\cite{Ch2}, ~\cite{KeSa}. We will conclude the argument in Section \ref{sec:carleman3} by showing that the Carleman estimates for graded forms follow from an induction argument, given the corresponding result for $0$-forms. In Section \ref{sec:cgo} we will construct relevant complex geometrical optics solutions, following the ideas in ~\cite{DKSaU}.  In Section \ref{sec:tensor} we will present the Green's theorem argument and give the density result based on injectivity of a tensor tomography problem which finishes the proofs of Theorems \ref{RelativeAbsoluteThm} and \ref{AbsoluteRelativeThm}.  Section \ref{sec:higher} will contain the proof of Theorem \ref{EuclideanThm} and make some remarks about the case of dimensions $n \geq 4$. Section \ref{sec:appendix} contains the proofs of some technical lemmas.


\bigskip

\noindent {\bf Acknowledgements.} \ 
F.C.~ and M.S.~ were supported in part by the Academy of Finland (Finnish Centre of Excellence in Inverse Problems Research) and an ERC Starting Grant (grant agreement no 307023), and L.T.~was partly supported by Australian Research Council (FT130101346). The authors would like to thank the organizers of the Institut Mittag-Leffler program on Inverse Problems in 2013, where part of this work was carried out. We would also like to thank an anonymous referee for helpful suggestions.

\section{Statement of results} \label{sec:results}

The results in this paper are new even in Euclidean space, but it will be convenient to state them on certain Riemannian manifolds following ~\cite{DKSaU}, ~\cite{KeSa}, ~\cite{DKLS}. Suppose that $(M_0,g_0)$ is a compact oriented manifold with smooth boundary, and consider a manifold $T = \R \times M_0$ equipped with a Riemannian metric of the form $g = c(e \oplus g_0)$, where $c$ is a smooth conformal factor and $(\R,e)$ is the real line with Euclidean metric.  A compact manifold $(M,g)$ of dimension $n \geq 3$, with boundary $\partial M$, is said to be \emph{CTA} (conformally transversally anisotropic) if it can be expressed as a submanifold of such a $T$.  A CTA manifold is called \emph{admissible} if additionally $(M_0,g_0)$ can be chosen to be simple, meaning that $\partial M_0$ is strictly convex and for any point $x \in M_0$, the exponential map $\exp_x$ is a diffeomorphism from some closed neighbourhood of $0$ in $T_x M_0$ onto $M_0$ (see ~\cite{S}). Most of the geometric notions defined here will be from \cite{T1} and we refer the reader there for a more thorough treatment of the subject.

Let $\Lambda^k M$ be the $k$th exterior power of the cotangent bundle on $M$, and let $\Lambda M$ be the corresponding graded algebra. The corresponding spaces of sections (smooth differential forms) are denoted by $\Omega^k M$ and $\Omega M$. We will define $\Lap$ to be the Hodge Laplacian on $M$, acting on graded forms:
\[
-\Lap = d\delta + \delta d.
\]
Here $d$ is the exterior derivative and $\delta$ is the codifferential (adjoint of $d$ in the $L^2$ inner product). Suppose $Q$ is an $L^{\infty}$ endomorphism of $\Lambda M$; that is, $Q$ associates to almost every point $x \in M$ a linear map $Q(x)$ from $\Lambda_x M$ to itself, and the map $x \mapsto \norm{Q(x)}$ is bounded and measurable. Later will consider continuous endomorphisms, meaning that $x \mapsto Q(x)$ is continuous in $M$. The continuity of $Q$ will simplify matters since the recovery of $Q$ from boundary measurements involves integrals over geodesics, and continuity ensures that these integrals are well defined.

We would like to consider boundary value problems for the operator $-\Lap + Q$.  In order to do this, we will define the tangential trace $t: \Omega M \rightarrow \Omega \partial M$ by
\[
t: \omega \mapsto i^{*} \omega,
\]
where $i: \partial M \rightarrow M$ is the natural inclusion map.  Then the first natural boundary value problem to consider for $-\Lap + Q$, acting on graded forms $u$, is the relative boundary problem
\begin{eqnarray*}
(-\Lap + Q)u &=& 0 \mbox{ in } M \\
          tu &=& f \mbox{ on } \partial M \\
   t\delta u &=& g \mbox{ on } \partial M.
\end{eqnarray*}
If $Q$ is such that $0$ is not an eigenvalue for this problem, then this problem has a unique solution \cite[Section 5.9]{T1} and we may define a relative-to-absolute map 
\[
\RA_Q: H^{\half}(\partial M, \Lambda \partial M) \times H^{-\half}(\partial M, \Lambda \partial M) \rightarrow H^{\half}(\partial M, \Lambda \partial M) \times H^{-\half}(\partial M, \Lambda \partial M)
\]
by
\[
\RA_Q(f,g) = (t*u, t \delta *u)
\]
where $*$ is the Hodge star operator on $M$.  

The second natural boundary value problem to consider is the absolute boundary value problem
\begin{eqnarray*}
(-\Lap + Q)u  &=& 0 \mbox{ in } M \\
          t*u &=& f \mbox{ on } \partial M\\
   t\delta *u &=& g \mbox{ on } \partial M.
\end{eqnarray*}
Assuming that $0$ is not an eigenvalue, this defines an absolute-to-relative map 
\[
\AR_Q: H^{\half}(\partial M, \Lambda \partial M) \times H^{-\half}(\partial M, \Lambda \partial M ) \rightarrow H^{\half}(\partial M, \Lambda \partial M) \times H^{-\half}(\partial M, \Lambda \partial M)
\]
by
\[
\AR_Q(f,g) = (tu, t \delta u)
\]
for appropriate $Q$. For more details on the relative and absolute boundary value problems for the Hodge Laplacian, see \cite[Section 5.9]{T1}.

These maps both give rise to a Calder\'{o}n type inverse problem, which asks if knowledge of $\RA_Q$ or $\AR_Q$ suffices to determine $Q$.  If we restrict ourselves to considering the case of zero-forms only and if $Q$ acts on zero-forms by multiplication by a function $q \in L^{\infty}(M)$, then the relative-to-absolute and absolute-to-relative maps become the DN and ND maps, respectively, for the Schr\"odinger equation 
$$
(-\Lap+q)u = 0 \ \ \text{in $M$}
$$
where $u$ is now a function on $M$ and $\Lap$ is the Laplace-Beltrami operator on functions. Our problem is therefore a generalization of the standard partial data problem for the scalar Schr\"odinger equation on a compact manifold with boundary.

Let us review some earlier results for the Schr\"odinger problem in the scalar case, in dimensions $n \geq 3$. If $M$ is Euclidean, Sylvester and Uhlmann proved that knowledge of the full DN map uniquely determines the potential $q$ in the paper ~\cite{SU}.  Versions of this problem on admissible and CTA manifolds as defined above have been considered in  ~\cite{DKSaU} and ~\cite{DKLS}.  Partial data results for the DN map have been proven in ~\cite{BU}, ~\cite{I}, and ~\cite{KSU} for the Euclidean case, and more recently in ~\cite{KeSa}, the last of which contains the previous three results and extends them to the manifold case. Improved results in the linearized case are in ~\cite{DKSjU09}. Partial data results for the ND map, analogous to the ones in ~\cite{KSU}, were proven in ~\cite{Ch2}.  Other partial data results for scalar equations with first order potentials as well were obtained in ~\cite{DKSjU} and ~\cite{Ch1}, and some of those techniques will be useful to us in this paper as well. 


For the Hodge Laplacian acting on graded forms, we are not aware of previous results dealing with the determination of a potential from the relative-to-absolute or absolute-to-relative maps.  However, ~\cite{KrLU} reconstructs a real analytic metric from these maps in the case of no potential, and ~\cite{ShaSho}, ~\cite{Sho}, ~\cite{BeSha}, and ~\cite{JL} recover various kinds of topological information about the manifold from variants of these maps, again in the case of no potential. We remark that full data problems for the Hodge Laplacian in Euclidean space can be solved in a very similar way as in the scalar case (see Section \ref{sec:higher}), but full data problems on manifolds and partial data problems even in Euclidean space are more involved.


In order to describe the main results precisely, we will define `front' and `back' sets of the boundary $\partial M$ as in ~\cite{KSU}.  If $M \subset T = \mR \times M_0$ is CTA, we can use coordinates $(x_1, x')$ where $x_1$ is the Euclidean variable, and define the function $\ph:T \rightarrow \R$ by $\ph(x_1, x') = x_1$. As discussed in ~\cite{DKSaU}, $\ph$ is a natural limiting Carleman weight in $M$. Now define 
\begin{equation*}
\begin{split}
\partial M_{+} &= \{ p \in \partial M | \partial_{\nu} \ph(p) \geq 0 \} \\
\partial M_{-} &= \{ p \in \partial M | \partial_{\nu} \ph(p) \leq 0 \}. \\
\end{split}
\end{equation*}
Then the main results of this paper are the following.

\begin{theorem}\label{RelativeAbsoluteThm}
Let $M \subset \R \times M_0$ be a three dimensional admissible manifold with conformal factor $c=1$, and let $Q_1$ and $Q_2$ be continuous endomorphisms of $\Lambda M$ such that $\RA_{Q_1}, \RA_{Q_2}$ are defined. Let $\Gamma_+ \subset \partial M$ be a neighbourhood of $\partial M_{+}$, and let $\Gamma_{-} \subset \partial M$ be a neighbourhood of $\partial M_{-}$.  Suppose that
\[
\RA_{Q_1} (f,g)|_{\Gamma_{+}} = \RA_{Q_2} (f,g)|_{\Gamma_{+}}
\]
for all $(f,g) \in H^{\half}(\partial M, \Lambda \partial M) \times H^{-\half}(\partial M, \Lambda \partial M )$ supported in $\Gamma_{-}$.  Then $Q_1 = Q_2$.  
\end{theorem}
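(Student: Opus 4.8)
The strategy follows the now-classical CGO-plus-Carleman scheme of \cite{KSU}, \cite{DKSaU}, \cite{KeSa}, adapted to the Hodge Laplace system. First I would use the partial boundary data hypothesis together with an integration by parts (Green's) identity for $-\Lap + Q$ with relative boundary conditions to obtain an integral identity: if $u_1$ solves $(-\Lap+Q_1)u_1 = 0$ with appropriate relative data supported in $\Gamma_-$ and $u_2$ solves the transposed problem $(-\Lap+Q_2^*)u_2 = 0$, then $\int_M \langle (Q_1-Q_2) u_1, u_2 \rangle \, dV = (\text{boundary terms on } \partial M \setminus \Gamma_+ \text{ or } \Gamma_-)$. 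The point of the hypothesis — that $\Gamma_+$ is a neighbourhood of $\partial M_+$ and $\Gamma_-$ a neighbourhood of $\partial M_-$ — is precisely that, once we insert CGO solutions concentrated appropriately with respect to the Carleman weight $\ph(x_1,x')=x_1$, the Carleman estimate with the boundary terms constructed in Sections \ref{sec:carleman1}--\ref{sec:carleman3} lets us absorb the unknown boundary contributions and conclude $\int_M \langle (Q_1-Q_2)u_1, u_2 \rangle\,dV = 0$ in the limit.

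\medskip

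Next I would construct the complex geometric optics solutions. Using the Carleman estimates for the Hodge Laplacian with relative (resp.\ absolute) boundary conditions, I would build solutions of the form $u_j = e^{\mp (x_1 + i\psi)/h}(a_j + r_j)$ where $\psi$ is a solution of the eikonal equation on the transversal manifold $M_0$ (using simplicity of $(M_0,g_0)$: $\psi(x') = $ distance-type function along a geodesic), $a_j$ are amplitude graded forms solving appropriate transport equations, and $r_j = O(h)$ are correction terms. Here the conformal factor $c=1$ and $n=3$ keep the Hodge-theoretic bookkeeping manageable: a graded form in three dimensions has components of degrees $0,1,2,3$, and the transport equations for the leading amplitudes are first-order ODEs along geodesics of $M_0$ whose solutions can be prescribed freely at one endpoint of each geodesic. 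Substituting these into the integral identity and letting $h \to 0$, the $\int_M \langle (Q_1-Q_2)u_1,u_2\rangle\,dV = 0$ relation becomes, after the standard stationary-phase / oscillatory-integral analysis, a statement that certain geodesic ray transforms over $M_0$ of the components of $Q_1 - Q_2$ (contracted against the freely chosen amplitudes) vanish.

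\medskip

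The third step is to unwind this into a tensor tomography problem and invoke injectivity. Because $Q = Q_1 - Q_2$ is a full endomorphism of the graded algebra $\Lambda M$, the amplitudes $a_1, a_2$ — each a graded form with freely prescribable leading parts — let us test $Q$ against a large family of rank-one-type expressions $a_2^* \otimes a_1$. Collecting these and using the freedom in choosing the amplitudes, the vanishing conditions amount to the vanishing of the geodesic ray transform (attenuated, with the attenuation governed by lower-order transport data) of the various matrix entries of $Q$, which after expansion in the frequency variable dual to $x_1$ reduces to a tensor tomography problem for $2$-tensors on the simple manifold $(M_0, g_0)$. Injectivity of this transform on simple surfaces (here $\dim M_0 = 2$) — which holds by known results (e.g.\ \cite{S} and the surface tensor tomography literature) — then forces all components of $Q = Q_1 - Q_2$ to vanish, i.e.\ $Q_1 = Q_2$.

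\medskip

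\textbf{Main obstacle.} The crux, and the genuinely new technical input, is the Carleman estimate with the \emph{right} boundary terms: for scalar Schrödinger one has a clean estimate whose boundary term involves $\partial_\nu u$ weighted by $\partial_\nu \ph$, which is what makes the $\Gamma_+/\Gamma_-$ splitting work; for the Hodge system the boundary terms that come out of integrating by parts naturally mix tangential traces, $t u$, $t\delta u$, $t*u$, $t\delta * u$ in a way that is not obviously of the correct sign or structure. Making these boundary terms usable — essentially matching them to the relative/absolute Cauchy data so that the measured part on $\Gamma_\pm$ controls everything and the unmeasured part has a favourable sign — is the heart of the matter, and this is exactly what the Fourier-analytic treatment of boundary terms borrowed from \cite{Ch2} and the induction on form-degree in Section \ref{sec:carleman3} are designed to handle. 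A secondary difficulty is ensuring the transport equations for the graded amplitudes are solvable with enough free parameters to recover \emph{all} entries of the endomorphism $Q$ (not just, say, its action on $0$-forms), which is where working in dimension three and with $c=1$ is used.
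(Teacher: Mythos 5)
Your overall architecture matches the paper's: Carleman estimates for the Hodge Laplacian with relative/absolute boundary terms, CGO solutions $e^{-sx_1}$ times a quasimode built from the distance function $r$ on the simple transversal manifold with freely prescribable amplitudes along geodesics, a Green's identity exploiting the hypothesis on $\RA_{Q_1},\RA_{Q_2}$, and a reduction to attenuated geodesic ray transforms on $(M_0,g_0)$. One implementation difference worth noting: rather than estimating the unmeasured boundary contribution and letting it vanish in the limit, the paper uses the Carleman estimate (through a Hahn--Banach solvability argument) to construct the CGOs so that their relative boundary values vanish identically on $\Gamma_{-}^c$ and $\Gamma_{+}^c$; the resulting identity $((Q_2-Q_1)Z_1|Z_2)_M=0$ is then exact, with no remainder to absorb. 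That is a cosmetic rather than essential deviation.

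There is, however, a genuine gap at your final step. For the entries of $Q=Q_1-Q_2$ whose frame components depend on the geodesic direction (in the paper's notation the block $q_{I,J}$ with $I,J\in\{2,3,\{1,2\},\{3,1\}\}$, where $\eta^2=dr$ and $\eta^3=*_{g_0}dr$), the vanishing integrals are attenuated ray transforms of symmetric $2$-tensors on the simple surface $(M_0,g_0)$, and the known injectivity results (\cite{PSaU}, \cite{Assylbekov}) hold only modulo the natural kernel: vanishing of the transform of a $2$-tensor $f$ yields $f=-Xu+2\lambda u$ for some $u$, a sum of a function and a $1$-form vanishing on $\partial M_0$, not $f=0$. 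So the assertion that ``injectivity then forces all components of $Q$ to vanish'' fails as stated for these entries. The paper closes this gap by an extra argument you would need to supply: it combines the four matrix elements $f^{2,2},f^{2,3},f^{3,2},f^{3,3}$, observes that $f^{2,2}+f^{3,3}$ and $f^{2,3}-f^{3,2}$ are scalar matrices, deduces that the covariant Hessian of the zeroth Fourier mode $u_0$ is scalar, and then uses uniqueness for a compactly supported solution of a hyperbolic equation ($\partial_1^2u_0-\partial_2^2u_0+b\cdot\nabla u_0=0$ in isothermal coordinates, \cite{T1}) to force $u_0=0$; only then do trace-freeness and elliptic unique continuation give $f=0$. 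A minor additional correction: the attenuation in these transforms is the constant $-2\lambda$ arising from the complex frequency $s=\tau+i\lambda$ (after a Fourier transform in $x_1$), not an attenuation produced by lower-order transport data.
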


\begin{theorem}\label{AbsoluteRelativeThm}
Let $M$ be a three dimensional admissible manifold with conformal factor $c=1$, and let $Q_1$ and $Q_2$ be continuous endomorphisms of $\Lambda M$ such that $\AR_{Q_1}, \AR_{Q_2}$ are defined. Let $\Gamma_+ \subset \partial M$ be a neighbourhood of $\partial M_{+}$, and let $\Gamma_{-} \subset \partial M$ be a neighbourhood of $\partial M_{-}$.  Suppose that
\[
\AR_{Q_1} (f,g)|_{\Gamma_{+}} = \AR_{Q_2} (f,g)|_{\Gamma_{+}}
\]
for all $(f,g) \in H^{\half}(\partial M, \Lambda \partial M) \times H^{-\half}(\partial M, \Lambda \partial M )$ supported in $\Gamma_{-}$.  Then $Q_1 = Q_2$. 
\end{theorem}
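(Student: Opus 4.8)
The strategy mirrors the scheme of \cite{KSU}, \cite{KeSa} and \cite{Ch2}, but run with the absolute boundary conditions and with the Hodge Laplace system playing the role of the scalar Schr\"odinger equation. Writing $Q = Q_1 - Q_2$ and letting $u_1$ solve $(-\Lap + Q_1)u_1 = 0$ with absolute data $(t*u_1, t\delta*u_1) = (f,g)$ supported in $\Gamma_-$, the hypothesis $\AR_{Q_1}(f,g)|_{\Gamma_+} = \AR_{Q_2}(f,g)|_{\Gamma_+}$ says that the solution $u_2$ of the $Q_2$-problem with the \emph{same} absolute data agrees with $u_1$ in the relative Cauchy data $(tu, t\delta u)$ on $\Gamma_+$. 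First I would derive the standard integral identity: if $v$ solves the adjoint problem $(-\Lap + Q_2^*)v = 0$, then Green's formula for the Hodge Laplacian (with the boundary terms organized into pairings of $t*u$ with $t\delta v$, etc., as in \cite[Section 5.9]{T1}) gives
\begin{equation*}
\int_M \langle Q(u_1 - u_2), v\rangle \, dV = \text{(boundary terms on $\partial M \setminus \Gamma_+$)},
\end{equation*}
so that the right-hand side vanishes once $v$ is chosen to have vanishing absolute Cauchy data on a neighbourhood of $\partial M_-$ — equivalently, on $\Gamma_-$ after shrinking. Thus it remains to produce enough pairs $(u_1, v)$ of CGO solutions, with these support restrictions on their boundary data, so that $\int_M \langle Q w, v\rangle\,dV = 0$ forces $Q = 0$, where $w = u_1 - u_2$ carries the CGO asymptotics of $u_1$.

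The CGO construction is the content of Section \ref{sec:cgo}: following \cite{DKSaU}, one builds graded-form solutions $u_1 = e^{-\rho(x_1 + i\ph)/h}(a + r_1)$ and $v = e^{\rho(x_1 + i\ph)/h}(b + r_2)$ for the operators $-\Lap + Q_1$ and $-\Lap + Q_2^*$, where $\ph$ is the limiting Carleman weight, $a,b$ are Gaussian-beam-type amplitudes concentrating on a geodesic of $(M_0, g_0)$, and $r_i \to 0$ in the appropriate norm. The mechanism that makes the boundary restrictions compatible is the Carleman estimate of Sections \ref{sec:carleman1}--\ref{sec:carleman3}: because that estimate comes with boundary terms precisely in the absolute (relative) Cauchy data — with the right signs on $\partial M_\pm$ — one can, exactly as in \cite{KSU} and \cite{Ch2}, solve for remainders $r_i$ that make the full CGO solution's absolute boundary data supported in the prescribed neighbourhood ($\Gamma_-$ for $u_1$, the complement-side for $v$), at the cost of a controlled loss in $h$. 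Here the Fourier-analytic treatment of the boundary terms from \cite{Ch2} is what lets the absolute-condition boundary terms (which behave like the ND, not the DN, case) be absorbed — this is why Theorem \ref{AbsoluteRelativeThm} is not merely formally dual to Theorem \ref{RelativeAbsoluteThm} but uses the same extra input.

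Next I would insert these CGOs into the integral identity and pass to the limit $h \to 0$. The conformal factor $c=1$ assumption and the product structure $g = e \oplus g_0$ let the $x_1$-integration be carried out explicitly (a Fourier transform in $x_1$ of the relevant components of $Q$), and the $M_0$-integration, via stationary phase on the concentrating amplitudes, turns the limit into an integral of (entries of) $\hat{Q}$ against products of the transversal amplitudes along an arbitrary geodesic of $(M_0,g_0)$. As in Section \ref{sec:tensor}, the polarizations available in $a,b$ — since $Q$ is an endomorphism of the \emph{full} graded algebra $\Lambda M$, not just of functions — mean the data one extracts is the geodesic ray transform of a $2$-tensor built from $\hat{Q}(\lambda, \cdot)$ on $M_0$ for every frequency $\lambda$. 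Injectivity of this tensor tomography transform on the simple manifold $(M_0, g_0)$ (invoking \cite{S} and the results quoted in Section \ref{sec:tensor}) then forces $\hat{Q}(\lambda,\cdot) = 0$ for all $\lambda$, hence $Q_1 = Q_2$.

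\textbf{Main obstacle.} The crux is the same as in the relative-to-absolute case but with the harder boundary condition: establishing that the Carleman estimate for $-\Lap+Q$ with \emph{absolute} boundary terms has the correct structure to permit the \cite{KSU}-type argument, i.e.\ that the boundary contributions on $\Gamma_+^c$ can be controlled by the interior Carleman norm while those on $\Gamma_-^c$ appear with a favorable sign. In the scalar case this is already the subtle step that distinguishes \cite{Ch2} from \cite{KSU}; for graded forms it additionally requires that the induction of Section \ref{sec:carleman3} (reducing $k$-forms to $0$-forms) preserves the shape of the boundary terms, so that the $0$-form analysis of Section \ref{sec:carleman2} — itself built on \cite{Ch2} and \cite{KeSa} — propagates up the grading. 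Once the Carleman estimate is in hand with these boundary terms, the remaining steps (CGO construction, limiting process, reduction to tensor tomography, injectivity) are, modulo bookkeeping of polarizations, routine adaptations of \cite{DKSaU} and Section \ref{sec:tensor}.
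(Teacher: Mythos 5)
Your outline does follow the paper's actual route: a Carleman estimate with absolute boundary terms, a Hahn--Banach construction of CGOs of the form $e^{-sx_1}\big(e^{isr}\abs{g_0}^{-1/4}\sum_I b_I(\theta)\eta^I + R\big)$ with vanishing absolute data on $\Gamma_{\pm}^c$, a Green's identity, and a reduction to attenuated geodesic ray transforms on $(M_0,g_0)$. Before the main issue, two bookkeeping corrections. Applying Green's formula to $u_1-u_2$ (which satisfies $(-\Lap+Q_2)(u_1-u_2)=(Q_2-Q_1)u_1$) and to $v$ gives $\int_M \langle (Q_2-Q_1)u_1,\bar v\rangle\,dV = \text{boundary terms}$: the integrand involves $Qu_1$, not $Q(u_1-u_2)$. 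This matters because $u_2$ has no known asymptotics --- only $u_1$ and $v$ are CGOs --- so your claim that $w=u_1-u_2$ ``carries the CGO asymptotics of $u_1$'' is both unjustified and unnecessary. Also, the surviving boundary terms live on $\Gamma_+^c$, so the support condition you need (and the only one the Carleman method can deliver) is vanishing absolute data of $v$ on $\Gamma_+^c$, not on ``a neighbourhood of $\partial M_-$''; the latter would intrude on the large side of the Carleman weight and cannot be arranged.

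The genuine gap is the final step. The attenuated ray transform on symmetric $2$-tensors over a simple surface is \emph{not} injective in the sense you invoke: its kernel contains every tensor of the form $-Xu+2\lambda u$ with $u$ (a function plus a $1$-form, viewed on $SM_0$) vanishing on $\partial M_0$. So the vanishing of the transform of the $2$-tensor $f^{2,2}$ built from $\hat q_{2,2},\hat q_{2,3},\hat q_{3,2},\hat q_{3,3}$ only yields $f^{2,2}=-Xu+2\lambda u$, not $f^{2,2}=0$. The paper closes this by exploiting that the four polarization pairs $I,J\in\{2,3\}$ produce four $2$-tensors $f^{I,J}$ with interlocking entries: the combinations $f^{2,2}+f^{3,3}$ and $f^{2,3}-f^{3,2}$ are scalar multiples of the identity, which forces the covariant Hessian $\nabla^2 u_0$ of the zeroth Fourier mode to be scalar, i.e.\ $u_0$ satisfies a second-order \emph{hyperbolic} equation in isothermal coordinates; since $u_0$ vanishes to first order on $\partial M_0$, uniqueness for hyperbolic equations gives $u_0=0$, hence $f^{2,2}$ is trace-free, and taking traces in the potential equation then eliminates $u_0$ by elliptic unique continuation. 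Without this (or an equivalent argument) the proof does not close, so ``injectivity of the tensor tomography transform'' cannot be cited as a black box here.
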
   

In the case that $M$ is a domain in Euclidean space, we can also extend the results to higher dimensions.

\begin{theorem}\label{EuclideanThm}
Let $M$ be a bounded smooth domain in $\Rn$, with $n \geq 3$, and let $Q_1$ and $Q_2$ be continuous endomorphisms of $\Lambda M$ such that $\RA_{Q_1}, \RA_{Q_2}$ are defined.  Fix a unit vector $\alpha$, and let $\ph(x) = \alpha \cdot x$. Let $\Gamma_+ \subset \partial M$ be a neighbourhood of $\partial M_{+}$, and let $\Gamma_{-} \subset \partial M$ be a neighbourhood of $\partial M_{-}$.  Suppose that
\[
\RA_{Q_1} (f,g)|_{\Gamma_{+}} = \RA_{Q_2} (f,g)|_{\Gamma_{+}}
\]
for all $(f,g) \in H^{\half}(\partial M, \Lambda \partial M) \times H^{-\half}(\partial M, \Lambda \partial M )$ supported in $\Gamma_{-}$.  Then $Q_1 = Q_2$.  The same result holds if we replace the relative-to-absolute map with the absolute-to-relative one.  
\end{theorem}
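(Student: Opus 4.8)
The plan is to reduce the Euclidean statement to the three-dimensional CTA case already handled in Theorems \ref{RelativeAbsoluteThm} and \ref{AbsoluteRelativeThm} by a dimension-reduction argument, exploiting the fact that $\ph(x)=\alpha\cdot x$ is a linear weight and a bounded domain in $\Rn$ sits inside a cylinder whose cross-section is planar. After a rigid rotation we may assume $\alpha=e_1$, so $\ph(x)=x_1$; this is precisely the Carleman weight coming from the product structure $\Rn=\R\times\R^{n-1}$, and $\partial M_\pm$ in the theorem coincide with the sets defined via this $\ph$. The issue is that $\R^{n-1}$ with the Euclidean metric is not a simple manifold (it is noncompact), so one cannot directly invoke the admissible-manifold theorem; instead one should follow the Euclidean version of the $n\geq 3$ argument in \cite{KSU}, \cite{Ch2}, working with the full scale of CGO solutions $u = e^{\pm(\ph+i\psi)/h}(a + r)$ where $\psi$ is a harmonic conjugate-type phase and $a$ solves an appropriate transport equation, but now carrying the graded-form structure and the potential $Q$ along as in Sections \ref{sec:cgo} and \ref{sec:tensor}.

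Concretely I would proceed as follows. First, set $Q = Q_1 - Q_2$ (extended by zero outside $M$ if convenient) and let $u_1$ solve $(-\Lap+Q_1)u_1=0$ with relative data supported in $\Gamma_-$, and $u_2$ solve the adjoint-type problem $(-\Lap+Q_2^{*})u_2 = 0$; the hypothesis on the partial relative-to-absolute maps, together with the boundary-term structure of the Carleman estimate from Sections \ref{sec:carleman1}–\ref{sec:carleman3}, gives the integral identity $\int_M \langle Q u_1, u_2\rangle\,dV = 0$ after the Green's theorem computation of Section \ref{sec:tensor}, exactly as in the three-dimensional case — nothing in that step used simplicity, only the Carleman estimate and the partial-data boundary conditions, and the Carleman estimate itself holds for linear weights in any dimension. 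Second, I would insert the Euclidean CGO solutions of Section \ref{sec:cgo}: these are built from the weight $\ph=x_1$ and a real phase $\psi$ which in the Euclidean case may be taken to be another linear function $x_2$ (or, to get the tensor tomography structure, a function of the remaining variables), giving amplitudes $a$ that are constant along the flow and hence reduce the identity $\int Q u_1 u_2 = 0$ to the statement that certain components of $Q$ have vanishing integrals over all lines in a family — i.e. a ray/X-ray transform condition.

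The third and main step is then to upgrade ``the relevant X-ray transform of $Q$ vanishes'' to ``$Q=0$.'' In Euclidean space this is easier than the general admissible case: the X-ray transform over all lines in $\Rn$ (here, over the $2$-plane family generated by $\partial_1$ and the conjugate direction, then rotated) is injective on compactly supported continuous functions and $2$-tensors by the classical Radon/X-ray inversion, so one does not need the delicate tensor-tomography injectivity on simple manifolds; one only needs to vary $\alpha$ — wait, no: $\alpha$ is \emph{fixed} in the statement. So in fact the correct route is to keep $\alpha$ fixed, use the freedom in the CGO phases transverse to $\alpha$ (which in $\Rn$, $n\geq 3$, is an $(n-1)$-dimensional space, ample enough) to recover $Q$ on each $2$-plane spanned by $\alpha$ and a transverse direction, and then assemble: since every point of $M$ lies on such a plane and the planes sweep out all of $M$ as the transverse direction varies, pointwise determination of $Q$ follows. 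I expect the main obstacle to be bookkeeping the graded-form/tensor structure: one must check that the \emph{matrix-valued} analogue of the X-ray transform argument still pins down every entry of the endomorphism $Q$, which requires choosing enough independent polarizations in the CGO amplitudes $a$ (as in Section \ref{sec:cgo}) so that the bilinear pairings $\langle Q u_1,u_2\rangle$ separate all components of $Q$; once that is arranged, injectivity of the scalar Euclidean X-ray transform applied componentwise finishes the proof. The absolute-to-relative case is identical after exchanging the roles of relative and absolute boundary conditions and using the Hodge star symmetry, exactly as in the passage from Theorem \ref{RelativeAbsoluteThm} to Theorem \ref{AbsoluteRelativeThm}.
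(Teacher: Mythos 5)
Your first two steps (the Green's theorem identity $\int_M \langle (Q_1-Q_2) Z_1, \overline{Z_2}\rangle\,dV = 0$ from the partial-data hypothesis, and Euclidean CGOs with constant-coefficient amplitudes $dx^I$, using that the flat Hodge Laplacian acts componentwise so the amplitude needs no transport/quasimode construction) are exactly what the paper does, and your worry about "polarizations" is harmless here: since the frame $\{dx^I\}$ is global and constant, choosing the amplitudes $dx^I$ and $dx^J$ isolates each matrix element $q_{I,J}=\langle (Q_1-Q_2)dx^I,dx^J\rangle$ directly. The genuine gap is in your third step, precisely at the point where you talk yourself out of varying $\alpha$. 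The statement fixes $\alpha$ only to define $\partial M_{\pm}$ and hence the sets $\Gamma_{\pm}$ on which data are given; nothing prevents the \emph{proof} from running the CGO construction with a perturbed weight $\ph'(x)=\alpha'\cdot x$ for $\alpha'$ close to $\alpha$, because $\Gamma_{\pm}$ are \emph{open} neighbourhoods of $\partial M_{\pm}(\alpha)$, so $\partial M_{\pm}(\alpha')\subset\Gamma_{\pm}$ persists for all nearby $\alpha'$ and the Carleman estimate, solvability, and vanishing of the relative boundary values on $\Gamma_{\mp}^c$ all go through unchanged. This is the paper's key move: with phases $(-\alpha'+i(\ell+hk))\cdot x/h$ and $(\alpha'+i(\ell-hk))\cdot x/h$ one gets $((Q_1-Q_2)\,|\,e^{-2ik\cdot x})=0$ for every $k\perp\alpha'$ and every $\alpha'$ in a neighbourhood of $\alpha$, hence the Fourier transform of each (compactly supported, after extension by zero) matrix element vanishes on an open set of frequencies, and analyticity of the Fourier transform then gives $Q_1=Q_2$. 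No ray-transform or tensor-tomography inversion is needed at all.

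By contrast, the fixed-$\alpha$ route you propose does not close. With $\alpha$ fixed and linear transverse phases, the only frequencies reachable in the pairing are $2k$ with $k\perp\alpha$ (both $\beta_1,\beta_2$ must lie in $\alpha^{\perp}$ and differ by $O(h)$), so you learn $\hat q_{I,J}$ only on the hyperplane $\alpha^{\perp}$ -- equivalently, integrals of $q_{I,J}$ over lines parallel to the single direction $\alpha$ -- which is nowhere near enough to determine $Q$, and there is no mechanism in your sketch that actually "recovers $Q$ on each $2$-plane spanned by $\alpha$ and a transverse direction"; a single direction of lines gives no localization within such a plane. If instead you try to get more out of fixed $\alpha$ by taking complex $s=\tau+i\lambda$ and radial transverse phases as in Section \ref{sec:cgo}, you land exactly on the attenuated, frame-dependent ray-transform data of Proposition \ref{prop_transform_matrixelements}, and for $n>3$ this is the problem the paper explicitly states it cannot reduce to a tractable tensor tomography question -- which is the whole reason Section \ref{sec:higher} switches to the vary-$\alpha$/Fourier-analyticity argument in the Euclidean setting.
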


Theorem \ref{RelativeAbsoluteThm} is a generalization to certain systems of the scalar partial data result of ~\cite{KSU} for the DN map, and similarly Theorem \ref{AbsoluteRelativeThm} is an extension to systems of the scalar result of ~\cite{Ch2} for the ND map. To be precise, the above theorems are stated for the linear Carleman weight and not for the logarithmic weight as in ~\cite{KSU}, ~\cite{Ch2}. This restriction comes from the lack of conformal invariance of the full Hodge Laplacian. However, in the scalar case we could use the conformal invariance of the scalar Schr\"odinger operator together with a reduction from ~\cite{KeSa} to recover the logarithmic weight results of ~\cite{KSU}, ~\cite{Ch2} from the above theorems.

The proof of Theorems \ref{RelativeAbsoluteThm} and \ref{AbsoluteRelativeThm} involve three main ingredients -- the construction of complex geometrical optics (CGO) solutions, a Green's theorem argument, and a density argument relating this inverse problem to a tensor tomography problem where one determines a tensor field from its integrals along geodesics (see Section \ref{sec:tensor}).  Both the construction of CGO solutions and the Green's theorem argument require appropriate Carleman estimates.

To describe them, we will introduce the following notation.   For a CTA manifold $M$, let $N$ be the inward pointing normal vector field along $\partial M$. We can extend $N$ to be a vector field in a neighborhood of $\partial M$ by parallel transporting along normal geodesics, and then to a vector field on $M$ by multiplying by a cutoff function. For $u \in \Omega M$ we will let
\[
u_\perp = N^\flat \wedge i_N u,
\] 
where $N^{\flat}$ is the $1$-form corresponding to $N$ and $i_N$ is the interior product, and
\[
u_{||} = u - u_\perp.
\]
Let $\nabla$ denote the Levi-Civita connection on $M$, and $\nabla'$ denote the pullback connection on the boundary.  Let 
\[
\Delta_{\ph} = e^{\frac{\ph}{h}} h^2 \Delta e^{-\frac{\ph}{h}}
\]
where $\ph$ is a limiting Carleman weight as described in \cite{DKSaU}. Note that by \cite{DKSaU} such weights exist globally if $M$ is a CTA manifold. Then the Carleman estimates are as follows.

\begin{theorem}\label{RBCCarl}
Let $M$ be a CTA manifold, and let $Q$ be a $L^{\infty}$ endomorphism of $\Lambda M$.  Define $\Gamma_+ \subset \partial M$ to be a neighbourhood of $\partial M_{+}$. Suppose $u \in H^2(M, \Lambda M)$ satisfies the boundary conditions
\begin{equation}\label{RBC}
\begin{split}
u|_{\Gamma_+} &= 0 \mbox{ to first order } \\
tu|_{\Gamma_+^c} &= 0 \\
th\delta e^{-\frac{\ph}{h}}u|_{\Gamma_+^c} &= h\sigma ti_{\nu} e^{-\frac{\ph}{h}} u
\end{split}
\end{equation}
for some smooth endomorphism $\s$ independent of $h$.
Then there exists $h_0$ such that if $0 < h< h_0$,
\[
\|(-\Delta_\ph+h^2 Q)u\|_{L^2(M)} \gtrsim h \|u\|_{H^1(M)} +h^{\half}\| u_{\perp}\|_{H^1(\Gamma_+^c)} +h^{\half}\| h\grad_{N}u_{\|}\|_{L^2(\Gamma_+^c)}.
\]
\end{theorem}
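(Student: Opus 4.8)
The plan is to follow the classical Carleman estimate scheme for limiting Carleman weights, adapted to the Hodge Laplacian with the mixed relative/absolute boundary conditions in \eqref{RBC}. The starting point is the conjugated operator $\Delta_\ph = e^{\ph/h} h^2 \Delta e^{-\ph/h}$, which splits into a symmetric part $A$ and an antisymmetric part $iB$ (in the semiclassical sense), where $A$ contains the second-order part plus $|\grad\ph|^2/h^0$-type zeroth order terms and $B$ is first order, built from $\grad\ph$ acting by covariant differentiation and Clifford-type multiplication. The limiting Carleman weight property of $\ph$ means the symbol-level Poisson bracket $\{a, b\}$ vanishes on the characteristic set, which is exactly what forces the bulk estimate $\|\Delta_\ph u\|_{L^2}^2 \gtrsim h^2 \|u\|_{H^1}^2$ to hold after one integration by parts, \emph{up to boundary terms}. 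So the first step is to expand $\|\Delta_\ph u\|_{L^2(M)}^2 = \|Au\|^2 + \|Bu\|^2 + (i[A,B]u,u) + (\text{boundary})$, and to identify precisely which boundary integrals arise. This is the content announced for Section~\ref{sec:carleman1}: the basic integration by parts for $k$-forms and the simplification of boundary terms. The zeroth order perturbation $h^2 Q$ is harmless since $\|h^2 Q u\|_{L^2} \lesssim h^2 \|u\|_{L^2}$ is absorbed by the $h\|u\|_{H^1}$ term on the right for $h$ small.

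Next I would handle the boundary terms, which is where the structure of \eqref{RBC} is essential. The term $(i[A,B]u, u)$ and the leading boundary contributions will involve, on $\Gamma_+$, quantities controlled by the vanishing ``$u|_{\Gamma_+}=0$ to first order'' condition — so those boundary integrals over $\Gamma_+$ either vanish or have a favorable sign (this is the usual mechanism by which prescribing Cauchy data to vanish on the ``wrong'' part of the boundary costs nothing). On $\Gamma_+^c$, where $\pd_\nu\ph \le 0$, the boundary integrals come with the ``good'' sign and must be shown to dominate $h\|u_\perp\|_{H^1(\Gamma_+^c)}^2 + h\|h\grad_N u_\|\|_{L^2(\Gamma_+^c)}^2$. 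Here the decomposition $u = u_\perp + u_\|$ into normal and tangential parts, together with the relations $tu|_{\Gamma_+^c}=0$ and the third condition in \eqref{RBC} (which relates $th\delta e^{-\ph/h}u$ to $h\s\, t i_\nu e^{-\ph/h}u$ on $\Gamma_+^c$), is used to trade the full Cauchy data of $u$ on $\Gamma_+^c$ against $u_\perp$ and its tangential derivatives plus the normal derivative of $u_\|$, with the $\s$-term being lower order and absorbable. The scalar analogues of exactly this bookkeeping are in \cite{KSU} and, for the Neumann-type boundary condition, in \cite{Ch2}; the system case requires doing it componentwise with respect to the $u_\perp$/$u_\|$ splitting and keeping track of the connection terms via $\grad'$.

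The remaining point is the induction on the form degree announced for Section~\ref{sec:carleman3}: once the estimate is established for $0$-forms (the content of Section~\ref{sec:carleman2}, where \cite{Ch2} and \cite{KeSa} are invoked for the scalar boundary analysis), one passes to graded forms by commuting $d$ and $\delta$ through $-\Delta+Q$ and using that $d$ and $\delta$ lower and raise nothing but the degree while interacting with the boundary conditions in a controlled way; applying the $0$-form (more precisely, lower-degree) estimate to $du$ and $\delta u$ and reassembling recovers the $H^1$ control of $u$ itself. I expect the main obstacle to be precisely the treatment of the boundary terms on $\Gamma_+^c$ for general degree $k$: one must verify that the integration-by-parts identity produces \emph{only} boundary terms that can be written in terms of $u_\perp$, $\grad' u_\perp$, and $h\grad_N u_\|$ (plus genuinely lower order terms killed by the $\s$ condition and by taking $h$ small), with no uncontrolled contribution such as $\grad_N u_\perp$ or tangential derivatives of $u_\|$ appearing with the wrong sign. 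Checking that the mixed condition \eqref{RBC} is exactly strong enough to close this — neither losing a derivative nor a power of $h$ — is the crux, and is what makes this estimate usable for the partial data argument in the style of \cite{KSU}.
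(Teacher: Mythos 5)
Your outline reproduces the skeleton of Section~\ref{sec:carleman1} (the $A+iB$ splitting and the integration-by-parts identity of Proposition~\ref{CarlemaInitialForm}), but it breaks down at the two points that carry the real weight of the proof. First, the boundary terms with favourable sign on $\Gamma_+^c$ do \emph{not} dominate $h\|u_\perp\|_{H^1(\Gamma_+^c)}^2$: after simplifying with the boundary conditions (Proposition~\ref{ABCbndryterms}), the good-sign terms are only $-2h^3(\partial_\nu\ph\,\nabla_N u_\|\,|\,\nabla_N u_\|)$ and $-2h(\partial_\nu\ph(|d\ph|^2+|\partial_\nu\ph|^2)u_\perp\,|\,u_\perp)$ (in the relative-condition normalisation), i.e.\ they control $u_\perp$ only in $L^2$ and give nothing at all on the tangential derivatives $\nabla' t i_\nu u$; those tangential derivatives in fact appear in the error $R$ with a \emph{large} constant $Kh^3$ and cannot be absorbed by sign considerations. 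Controlling $h^3\|\nabla' ti_\nu u\|^2_{\Gamma_+^c}$ is the crux, and in the paper it requires a genuinely separate argument: for $0$-forms the semiclassical Fourier-analytic boundary estimate of Proposition~\ref{MainBndryEst} (the half-space factorisation with the operators $J$, $J^{*}$, the small/large frequency splitting of Propositions~\ref{smallprop} and~\ref{largeprop}, following \cite{Ch2}), and then Lemma~\ref{lemma_kformreduction_inductionstep} to propagate it to $k$-forms. Your plan offers no mechanism for this term. Relatedly, you cannot get the bulk positivity ``after one integration by parts'' from the limiting-weight property alone; the paper must work with the convexified weight $\ph_c=\ph+\tfrac{h\ph^2}{2\e}$ to make $(i[A,B]u|u)$ usable, and only undoes the convexification at the very end.

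Second, your induction on the degree is not the paper's and as stated would fail. You propose to apply the lower-degree estimate to $du$ and $\delta u$ and ``reassemble''; but $du,\delta u$ are only in $H^1$ for $u\in H^2$, they do not inherit the boundary conditions \eqref{RBC} (the third condition, with its $\s$-term, is not stable under $d$ or $\delta$), and even if they did, interior control of $du,\delta u$ does not produce the missing boundary tangential-derivative bound. The paper instead inducts only on the boundary inequality \eqref{InductionStep}: it writes $u_\|$ via the frame decomposition \eqref{splitting a k form}, applies the induction hypothesis to the $(k-1)$-forms $v=i_{e_j}\chi_i\bigl(u_{\|}+h(1-e^{-\rho/h})Zu_{\|}\bigr)$, where the correction $Z=*N\wedge(S+\s-(n-1)\kappa)i_N*$ is engineered precisely so that $v$ again satisfies the absolute boundary conditions \eqref{ABC}, and only then feeds the resulting tangential-gradient bound back into the integration-by-parts identity. (Also note the paper never proves Theorem~\ref{RBCCarl} directly: it proves Theorem~\ref{ABCCarl} and obtains the relative case by applying the Hodge star, which swaps the two sets of boundary conditions.) Without the boundary estimate for $0$-forms and this contraction-plus-correction induction, the claimed $h^{1/2}\|u_\perp\|_{H^1(\Gamma_+^c)}$ term on the right-hand side of the theorem is not reachable from your argument.
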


Here $H^1$ signifies the semiclassical $H^1$ space with semiclassical parameter $h$, and for instance 
$$
\norm{u}_{H^1(M)} = \norm{u}_{L^2(M)} + \norm{h \nabla u}_{L^2(M)}.
$$
The constant implied in the $\gtrsim$ sign is meant to be independent of $h$.  Note that the last boundary condition in \eqref{RBC} can be rewritten as
\[
th\delta u|_{\partial M} = -t i_{d\ph} u - h \s t i_{N} u .
\]

\begin{theorem}\label{ABCCarl}
Let $M$ be a CTA manifold, and let $Q$ be a $L^{\infty}$ endomorphism of $\Lambda M$.  Define $\Gamma_+ \subset \partial M$ to be a neighbourhood of $\partial M_{+}$. Suppose $u \in H^2(M, \Lambda M)$ satisfies the boundary conditions
\begin{equation}\label{ABC}
\begin{split}
u|_{\Gamma_+} &= 0 \mbox{ to first order } \\
t*u|_{\Gamma_+^c} &= 0 \\
th\delta *e^{-\frac{\ph}{h}}u|_{\Gamma_+^c} &= h \sigma ti_{\nu} * e^{-\frac{\ph}{h}} u
\end{split}
\end{equation}
for some smooth endomorphism $\s$ independent of $h$.
Then there exists $h_0$ such that if $0 < h < h_0$,
\[
\|(-\Delta_\ph+h^2 Q)u\|_{L^2(M)} \gtrsim h \|u\|_{H^1(M)} +h^{\half}\| u_{\|}\|_{H^1(\Gamma_+^c)} +h^{\half}\| h\grad_{N} u_{\perp}\|_{L^2(\Gamma_+^c)}.
\]
\end{theorem}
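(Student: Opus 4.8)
The plan is to reduce Theorem~\ref{ABCCarl} to Theorem~\ref{RBCCarl} by means of the Hodge star operator, which interchanges the absolute and relative boundary conditions while commuting with the (conjugated) Hodge Laplacian. Given $u\in H^2(M,\Lambda M)$ satisfying~\eqref{ABC}, set $v=*u$. Since $*\colon\Lambda M\to\Lambda M$ is a smooth bundle automorphism that is a pointwise isometry and is parallel for the Levi-Civita connection $\grad$ (hence also for its pullback $\grad'$ along $\partial M$), we have $v\in H^2(M,\Lambda M)$ and the map $u\mapsto *u$ preserves every semiclassical Sobolev norm appearing in the two theorems; in particular $\|v\|_{H^1(M)}=\|u\|_{H^1(M)}$. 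Moreover $*$ commutes with multiplication by the scalar function $e^{-\ph/h}$, and the standard identities $\delta=\pm *d*$, $d=\pm *\delta*$ (with signs depending only on degree and $n$) give $*\Lap=\Lap*$, whence $*\Delta_\ph=\Delta_\ph*$. Setting $\tilde Q=*Q*^{-1}$, which is again an $L^\infty$ endomorphism of $\Lambda M$ with $\|\tilde Q(x)\|=\|Q(x)\|$, we obtain $*(-\Delta_\ph+h^2Q)u=(-\Delta_\ph+h^2\tilde Q)v$ and therefore $\|(-\Delta_\ph+h^2Q)u\|_{L^2(M)}=\|(-\Delta_\ph+h^2\tilde Q)v\|_{L^2(M)}$.

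Next I would check that $v$ satisfies the relative boundary conditions~\eqref{RBC} on the same set $\Gamma_+$ and with the same endomorphism $\s$. The first-order vanishing of $u$ on $\Gamma_+$ transfers to $v$ because $*$ and $*^{-1}$ are smooth bundle maps; the condition $t*u|_{\Gamma_+^c}=0$ reads exactly $tv|_{\Gamma_+^c}=0$; and since $*e^{-\ph/h}u=e^{-\ph/h}v$, the third line of~\eqref{ABC} becomes $th\delta e^{-\ph/h}v|_{\Gamma_+^c}=h\s\,t i_{\nu}e^{-\ph/h}v$, i.e.\ the third line of~\eqref{RBC}. Applying Theorem~\ref{RBCCarl} to $v$ and $\tilde Q$ then yields, for $0<h<h_0$,
\[
\|(-\Delta_\ph+h^2\tilde Q)v\|_{L^2(M)}\gtrsim h\|v\|_{H^1(M)}+h^{\half}\|v_\perp\|_{H^1(\Gamma_+^c)}+h^{\half}\|h\grad_N v_{\|}\|_{L^2(\Gamma_+^c)}.
\]

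Finally I would transport the right-hand side back to $u$ using the fact that in a boundary collar the Hodge star interchanges tangential and normal parts: $*(u_{\|})=(*u)_\perp$ and $*(u_\perp)=(*u)_{\|}$ (if a form is tangential then its star carries a normal factor, and conversely). Hence $v_\perp=*(u_{\|})$ and $v_{\|}=*(u_\perp)$, and since $*$ is a pointwise isometry commuting with $\grad'$ and with $\grad_N$ we get $\|v_\perp\|_{H^1(\Gamma_+^c)}=\|u_{\|}\|_{H^1(\Gamma_+^c)}$ and $\|h\grad_N v_{\|}\|_{L^2(\Gamma_+^c)}=\|h\grad_N u_\perp\|_{L^2(\Gamma_+^c)}$. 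Substituting these equalities turns the estimate for $v$ into exactly the asserted estimate for $u$. The argument is a Hodge duality reduction, so there is no genuine obstacle; the one point demanding care is the bookkeeping for the Hodge star in the boundary collar — the sign-sensitive identities $*\Lap=\Lap*$, the behaviour of $*$ under the traces $t$ and $i_{\nu}$, and above all $*(u_{\|})=(*u)_\perp$, $*(u_\perp)=(*u)_{\|}$ together with the commutation of $*$ with $\grad'$ so that the semiclassical $H^1(\Gamma_+^c)$ norms are genuinely preserved. These are cleanly organized by working in boundary normal coordinates and decomposing forms as $\Lambda M|_{\partial M}=\Lambda\partial M\oplus(N^\flat\wedge\Lambda\partial M)$, on which $*$ acts (up to sign) as the induced boundary Hodge star taking each summand to the other. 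Alternatively one could reprove Theorem~\ref{ABCCarl} directly by mirroring the integration-by-parts and induction scheme used for Theorem~\ref{RBCCarl}, but the duality argument is considerably shorter.
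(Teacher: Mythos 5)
Your Hodge-duality bookkeeping is correct as far as it goes, and it is in fact exactly the equivalence the paper records after the statements of the two estimates (``Theorem \ref{ABCCarl} is actually Theorem \ref{RBCCarl} with $u$ replaced by $*u$, and vice versa''): $*$ commutes with $\Delta_{\ph}$ and with multiplication by $e^{-\ph/h}$, conjugation $Q \mapsto *Q*^{-1}$ preserves the class of $L^\infty$ endomorphisms, the substitution $u \mapsto *u$ interchanges the boundary conditions \eqref{ABC} and \eqref{RBC}, and $*(u_{\|})=(*u)_{\perp}$, $*(u_{\perp})=(*u)_{\|}$ together with the isometry and parallelism of $*$ preserve all the norms involved. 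The problem is logical: as a proof of Theorem \ref{ABCCarl} your argument is circular, because it takes Theorem \ref{RBCCarl} as an available input, whereas in this paper the implication runs in the opposite direction --- Theorem \ref{RBCCarl} is never proved independently; it is \emph{deduced} from Theorem \ref{ABCCarl} by the very same substitution $u\mapsto *u$. Your reduction therefore establishes only that the two estimates are equivalent under Hodge duality; it supplies a proof of neither.

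The genuine content of Theorem \ref{ABCCarl} is the direct argument of Sections \ref{sec:carleman1}--\ref{sec:carleman3}, none of which appears in your proposal: the integration-by-parts identity \eqref{IbyPidentity} and the analysis of its boundary terms under the absolute boundary conditions (Proposition \ref{ABCbndryterms}, which needs Lemmas \ref{Tdelta}--\ref{ConxnLaplace} and a careful cancellation of the $t\grad_N\grad_N i_N u$ terms); the scalar $0$-form estimate of Proposition \ref{MainBndryEst}, proved via flattening, the operators $J$, $J^*$ and the splitting into small and large tangential frequencies following \cite{Ch2}; and the induction on the form degree $k$ in Lemma \ref{lemma_kformreduction_inductionstep}, which requires the corrected form $v=i_{e_j}\chi_i\bigl(u_{\|}+h(1-e^{-\rho/h})Zu_{\|}\bigr)$ with a specially chosen endomorphism $Z$ so that $v$ again satisfies \eqref{ABC}. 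Your closing remark that one could instead ``mirror the integration-by-parts and induction scheme used for Theorem \ref{RBCCarl}'' presupposes a direct proof of Theorem \ref{RBCCarl} that does not exist here: the direct proof in the paper is of Theorem \ref{ABCCarl}, and that is precisely the work your proposal would have to carry out (or cite from elsewhere) for the argument to close.
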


Note that Theorem \ref{ABCCarl} is actually Theorem \ref{RBCCarl} with $u$ replaced by $*u$, and vice versa.  Therefore it suffices to prove Theorem \ref{ABCCarl} only.  It is also worth noting that the Carleman estimates are proved for CTA manifolds in general, with no restriction on either the dimension, the conformal factor, or the transversal manifold $(M_0,g_0)$. Theorems \ref{RBCCarl} and \ref{ABCCarl} are extensions to the Hodge Laplace system on CTA manifolds of the scalar and Euclidean Carleman estimates in \cite{KSU, Ch2}. 

Finally, we sketch the main ideas in the proofs of the theorems and highlight the new features in our approach. The main difficulty in proving the Carleman estimates is the fact that the standard integration by parts argument, which gives a useful Carleman estimate for scalar equations with Dirichlet boundary condition \cite{KSU}, results in complicated boundary terms when one is dealing with a system of equations (see Proposition \ref{CarlemaInitialForm}). The Fourier analytic methods of \cite{Ch2} will be crucial in handling these boundary terms. We first prove Theorem \ref{ABCCarl} for $0$-forms (i.e.\ scalar equations) by adapting the Euclidean arguments of \cite{Ch2} to the manifold case. After an initial estimate for the vectorial boundary terms in Proposition \ref{ABCbndryterms}, Theorem \ref{ABCCarl} is proved for $k$-forms by induction on $k$. The proof of the Carleman estimates is long and technical, due to the work required to simplify and estimate the boundary terms.

After proving the Carleman estimates, the construction of CGO solutions proceeds as in the scalar case \cite{KSU, DKSaU} and in the full data Maxwell case \cite{KSaU}. The end result is given in Lemma \ref{lemma_cgo_form_construction}. There the amplitude in the solutions is vector valued, and later one needs to use the flexibility in choosing the components of this vector. The inverse problem is solved by inserting the CGO solutions in a standard integral identity, Lemma \ref{RBCtoDensity}. Here an unexpected feature appears: recovering the matrix potential reduces to inverting mixed Fourier/attenuated geodesic ray transforms as in the scalar case \cite{DKSaU}, but the components of the matrix turn out to depend on the geodesic along which they are integrated. We resolve this difficulty when $\dim(M) = 3$ by making use of ray transforms on tensors of order $\leq 2$ and using recent results on tensor tomography \cite{PSaU}. When the underlying space is Euclidean, we can use classical Fourier arguments and prove the uniqueness result also in dimensions $\dim(M) \geq 4$.


\section{Notation and identities}\label{sec:notation}

As stated before, the basic reference for the following facts on Riemannian geometry is \cite{T1}. Let $(M,g)$ be a smooth ($=C^{\infty}$) $n$-dimensional Riemannian manifold with or without boundary. All manifolds will be assumed to be oriented. We write $\langle v, w \rangle$ for the $g$-inner product of tangent vectors, and $\abs{v} = \langle v,v \rangle^{1/2}$ for the $g$-norm. If $x = (x_1,\ldots,x_n)$ are local coordinates and $\partial_j$ the corresponding vector fields, we write $g_{jk} = \langle \partial_j, \partial_k \rangle$ for the metric in these coordinates. The determinant of $(g_{jk})$ is denoted by $\abs{g}$, and $(g^{jk})$ is the matrix inverse of $(g_{jk})$.

We shall sometimes do computations in normal coordinates. These are coordinates $x$ defined in a neighborhood of a point $p \in M^{\text{int}}$ such that $x(p) = 0$ and geodesics through $p$ correspond to rays through the origin in the $x$ coordinates. The metric in these coordinates satisfies 
\begin{equation*}
g_{jk}(0) = \delta_{jk}, \quad \partial_l g_{jk}(0) = 0.
\end{equation*}

The Einstein convention of summing over repeated upper and lower indices will be used. We convert vector fields to $1$-forms and vice versa by the musical isomorphisms, which are given by 
\begin{align*}
(X^j \partial_j)^{\flat} = X_k \,dx^k, \quad &X_k = g_{jk} X^j, \\
(\omega_k \,dx^k)^{\sharp} = \omega^j \partial_j, \quad &\omega^j = g^{jk} \omega_k.
\end{align*}
The set of smooth $k$-forms on $M$ is denoted by $\Omega^k M$, and the graded algebra of differential forms is written as 
\begin{equation*}
\Omega M = \oplus_{k=0}^n \Omega^k M.
\end{equation*}
The set of $k$-forms with $L^2$ or $H^s$ coefficients are denoted by $L^2(M,\Lambda^k M)$ and $H^s(M,\Lambda^k M)$, respectively. Here $H^s$ for $s \in \mR$ are the usual Sobolev spaces on $M$. The inner product $\langle \,\cdot\,,\,\cdot\, \rangle$ and norm $\abs{\,\cdot\,}$ are extended to forms and more generally tensors on $M$ in the usual way, and we also extend the inner product $\langle \,\cdot\,,\,\cdot\, \rangle$ to complex valued tensors as a complex bilinear form.

Let $d: \Omega^k M \to \Omega^{k+1} M$ be the exterior derivative, and let $*: \Omega^k M \to \Omega^{n-k} M$ be the Hodge star operator. We introduce the sesquilinear inner product on $\Omega^k M$, 
\begin{equation*}
(\eta|\zeta) = \int_M \langle \eta, \bar{\zeta} \rangle \,dV = \int_M \eta \wedge * \bar{\zeta} = (*\eta | *\zeta).
\end{equation*}
Here $dV = *1 = \abs{g}^{1/2} \,dx^1 \cdots \,dx^n$ is the volume form. The codifferential $\delta: \Omega^k M \to \Omega^{k-1} M$ is defined as the formal adjoint of $d$ in the inner product on real valued forms, so that 
\begin{equation*}
(d\eta|\zeta) = (\eta|\delta \zeta), \ \ \text{for } \eta \in \Omega^{k-1} M, \zeta \in \Omega^k M \text{ compactly supported and real}.
\end{equation*}
These operators satisfy the following relations on $k$-forms in $M$:
\begin{equation*}
** = (-1)^{k(n-k)}, \quad \delta = (-1)^{k(n-k)-n+k-1} *d*.
\end{equation*}
If $X$ is a vector field, the interior product $i_X: \Omega^k M \to \Omega^{k-1} M$ is defined by 
$$
i_X \omega(Y_1,\ldots,Y_{k-1}) = \omega(X,Y_1,\ldots,Y_{k-1}).
$$
If $\xi$ is a $1$-form then the interior product $i_{\xi} = i_{\xi^{\sharp}}$ is the formal adjoint of $\xi \wedge $ in the inner product on real valued forms, and on $k$-forms it has the expression 
\begin{equation*}
i_{\xi} = (-1)^{n(k-1)} *\xi \wedge *.
\end{equation*}
The interior and exterior products interact by the formula
\[
i_{\xi} \alpha \wedge \beta = (i_{\xi} \alpha) \wedge \beta + (-1)^k\alpha \wedge i_{\xi} \beta,
\]
where $\alpha$ is a $k$-form and $\beta$ an $m$-form. In particular if $\alpha$ and $\xi$ are one-forms then
\[
i_{\xi} \alpha \wedge \beta + \alpha \wedge i_{\xi} \beta = \langle \alpha, \xi \rangle \beta.
\]
In addition, the differential and codifferential satisfy the following product rules:
\[
d(f\eta) = df \wedge \eta + f d\eta 
\]
and
\[
\delta(f\eta) = -i_{df} \eta + f \delta \eta.
\]

The Hodge Laplacian on $k$-forms is defined by 
\begin{equation*}
-\Delta = (d+\delta)^2 = d\delta + \delta d.
\end{equation*}
It satisfies $\Delta * = * \Delta$. The above quantities may be naturally extended to graded forms.

We will also have to deal with forms that are not compactly supported on $M$.  We have already introduced the tangential trace $t: \Omega M \rightarrow \Omega \partial M$ by
\[
t: \omega \mapsto i^{*} \omega,
\]
so if $u$ is a graded form on $M$, then $tu$ is a graded form on $\partial M$.  Then
\[
(tu|tv)_{\partial M} 
\]
is interpreted in the same manner as $(u|v)_M$ above.  If $u$ and $v$ are graded forms on $M$, we will also define
\[
(u|v)_{\partial M} = \int_{\partial M} \langle u, \bar{v} \rangle dS = \int_{\partial M} ti_{\nu} u \wedge * \bar{v} \, dS
\]
where $dS$ is the volume form on $\partial M$.  Now if $\eta \in \Omega^{k-1} M$ and $\zeta \in \Omega^k M$ then $d$ and $\delta$ satisfy the following integration by parts formulas.
\begin{align}
 & (d\eta|\zeta)_{M} = (\nu \wedge \eta|\zeta)_{\partial M} + (\eta|\delta \zeta)_{M}, \label{iparts1} \\
 & (\delta \zeta|\eta)_{M} = -(i_{\nu} \zeta| \eta)_{\partial M} + (\zeta|d\eta)_{M}. \label{iparts2}
\end{align}
Note also that 
\[
(i_{\nu} \zeta| \eta)_{\partial M} = (\nu \wedge \eta|\zeta)_{\partial M}.
\]
Here $\nu$ denotes both the unit outer normal of $\partial M$ and the corresponding $1$-form.

Applying these formulas for the Hodge Laplacian gives
\begin{eqnarray*}
(-\Delta u|v)_M &=& (u| -\Delta v)_M  \\
                & & + (\nu \wedge \delta u| v)_{\partial M} - (i_{\nu} du|v)_{\partial M} - (i_{\nu} u| dv)_{\partial M} + (\nu \wedge u| \delta v)_{\partial M}
\end{eqnarray*}
where $u$ and $v$ are $k$-forms, or graded forms.  We can also redo the integration by parts to write the boundary terms in terms of absolute and relative boundary conditions, so 
\begin{eqnarray*}
(-\Delta u| v)_M &=& (u| -\Delta v)_M +(t u | ti_{\nu}dv)_{\partial M}\\
                 & & +(t \delta *u|ti_{\nu} *v)_{\partial M} + (t *u|ti_{\nu} d*v)_{\partial M} + (t\delta u|ti_{\nu}v)_{\partial M}.
\end{eqnarray*}

The Levi-Civita connection, defined on tensors in $M$, is denoted by $\nabla$ and it satisfies $\nabla_X * = * \nabla_X$. We will sometimes write $\nabla f$ (where $f$ is any function) for the metric gradient of $f$, defined by 
\begin{equation*}
\nabla f = (df)^{\sharp} = g^{jk} \partial_j f \partial_k.
\end{equation*}
If $X$ is a vector field and $\eta$, $\zeta$ are differential forms we have 
$$
\nabla_X (\eta \wedge \zeta) = (\nabla_X \eta) \wedge \zeta + \eta \wedge (\nabla_X \zeta).
$$
If $X, Y$ are vector fields then 
$$
[\nabla_X, i_Y] = i_{\nabla_X Y}.
$$

We can also express $d$ using the $\nabla$ operator, as follows:  if $\omega$ is a $k$-form on $M$, and $X_1,..,X_{k+1}$ are vector fields on $M$, then
\[
d\omega (X_1,..,X_{k+1}) = \sum\limits_{l= 1}^{k+1} (-1)^{l+1}( \nabla_{X_l} \omega)(X_1,..,\hat X_l,..,X_{k+1})
\]
where $\hat X_l$ means that we omit the $X_l$ argument. Moreover if $e_1, \ldots, e_n$ are an orthonormal frame of $TM$ defined in a neighborhood $U\subset M$ we have
\[
-\delta \omega = \sum\limits_{j= 1}^n i_{e_j} \nabla_{e_j} \omega.
\]

For the statements of the Carleman estimates, we introduced the notation
\[
u_\perp = N^\flat \wedge i_N u
\] 
and
\[
u_{||} = u - u_\perp
\]
where $N$ is a smooth vector field which coincides with the inward pointing normal vector field at the boundary $\partial M$, and is extended into $M$ by parallel transport.  Note that $i_N u_{\|} = 0$, $N\wedge u_{\perp} = 0$, and $tu_{\perp} = 0$ at $\partial M$.  In addition, if $u$ and $v$ are graded forms on $M$, then
\[
(tu|tv)_{\partial M} = (tu_{\|}|tv_{\|})_{\partial M} = (u_{\|}|v_{\|})_{\partial M}
\]
and
\[
(ti_N u|ti_N v)_{\partial M} = (ti_N u_{\perp}|ti_N v_{\perp})_{\partial M} = (u_{\perp}|v_{\perp})_{\partial M}.
\]
If $X$ is a vector field, we can break down $X$ into parallel and perpendicular components in the same way by using $(X^{\flat}_{\|})^{\sharp}$ and $(X^{\flat}_{\perp})^{\sharp}$.  The $\perp$ and $\|$ signs are interchanged by the Hodge star operator:
\[
*(u_{\|}) = (*u)_{\perp} \text{ and } *(u_{\perp}) = (*u)_{\|}.
\]
Note that by its definition in terms of parallel transport, $\nabla_N N = 0$.  Thus $\nabla_N$ commutes with $N \wedge $ and $i_N$.  

If we view $\partial M$ as a submanifold embedded into $M$, then $TM$ splits into $T\partial M \oplus N\partial M$, where $T\partial M$ is the tangent bundle of $\partial M$ and $N\partial M$ is the normal bundle.  Then the second fundamental form $II : T\partial M \oplus T\partial M \to N\partial M$ of $\partial M$ relative to this embedding is defined by
\[
II(X,Y) = (\nabla_X Y|N)N.
\]
The second fundamental form can also be defined in terms of the shape operator $s: T\partial M \rightarrow T\partial M$ by 
\[
s(X) = \nabla_X N.
\]
Then
\[
II(X,Y) = (s(X)|Y)N.
\]
These two operators carry information about the shape of the $\partial M$ in $M$, and thus show up in our boundary computations.  

Now we move to some more specific technical formulas used in the paper.  The proofs involve routine computations and are given in Section \ref{sec:appendix}. We begin with a simple computation.

\begin{lemma} \label{lemma_xieta_product}
If $\xi$ and $\eta$ are real valued $1$-forms on $M$ and if $u$ is a $k$-form, then 
$$
\xi \wedge i_{\eta} u + i_{\xi} (\eta \wedge u) + \eta \wedge i_{\xi} u + i_{\eta}(\xi \wedge u) = 2 \langle \xi, \eta \rangle u.
$$
\end{lemma}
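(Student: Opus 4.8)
The identity is bilinear and symmetric in $\xi$ and $\eta$, and both sides depend pointwise and linearly on $u$, so it suffices to verify it at a single point for a basis of $k$-forms and, by symmetry, it is enough to know the ``diagonal'' case $\xi = \eta$ together with polarization. The plan is therefore to first establish the case $\xi = \eta$, namely
\[
2\xi \wedge i_{\xi} u + 2 i_{\xi}(\xi \wedge u) = 2 \langle \xi, \xi \rangle u,
\]
which is exactly the anticommutator relation $\{ i_\xi, \xi\wedge \,\} = \langle \xi,\xi\rangle$ on $k$-forms; this in turn follows immediately from the Leibniz-type rule for $i_\xi$ already recorded in Section \ref{sec:notation}, namely $i_\xi(\alpha \wedge \beta) = (i_\xi \alpha)\wedge \beta + (-1)^{\deg \alpha}\alpha \wedge i_\xi \beta$ applied with $\alpha = \xi$ (so $i_\xi \xi = \langle \xi,\xi\rangle$ and the sign is $-1$), giving $i_\xi(\xi \wedge u) = \langle \xi,\xi\rangle u - \xi \wedge i_\xi u$.

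To get the stated bilinear identity from this, I would replace $\xi$ by $\xi + \eta$ in the diagonal relation $i_\zeta(\zeta \wedge u) + \zeta \wedge i_\zeta u = \langle \zeta,\zeta\rangle u$. Since $\zeta \mapsto i_\zeta$ is linear and $\zeta \mapsto \zeta \wedge\,$ is linear, expanding both sides and subtracting the $\zeta = \xi$ and $\zeta = \eta$ instances leaves precisely
\[
i_\xi(\eta \wedge u) + i_\eta(\xi \wedge u) + \xi \wedge i_\eta u + \eta \wedge i_\xi u = 2\langle \xi,\eta\rangle u,
\]
using $\langle \xi+\eta,\xi+\eta\rangle - \langle\xi,\xi\rangle - \langle\eta,\eta\rangle = 2\langle\xi,\eta\rangle$. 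This is the assertion. Note the computation is purely algebraic and fibrewise, so no assumption on $u$ beyond being a $k$-form enters, and the argument is uniform in the point of $M$.

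There is essentially no serious obstacle here; the only thing to be careful about is bookkeeping of the signs in the interior-product Leibniz rule (in particular that $i_\xi \xi = \langle \xi,\xi\rangle = |\xi|^2$ and not its negative, with our convention that $i_\xi$ is the adjoint of $\xi \wedge\,$), and making sure the polarization is carried out with the symmetric combination so that the cross terms assemble exactly into the four-term left-hand side. Alternatively, one may simply expand everything in an orthonormal coframe $e^1,\dots,e^n$ at a point, write $\xi = \xi_a e^a$, $\eta = \eta_b e^b$, and $u = e^{i_1}\wedge\cdots\wedge e^{i_k}$, and check the identity by a direct index computation; this is the approach I would relegate to the appendix, since it is routine.
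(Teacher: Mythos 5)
Your proposal is correct and follows essentially the same route as the paper: both reduce the bilinear identity to the diagonal case $\xi=\eta$ and recover the full statement by polarization, exactly as you do. The only (minor) divergence is in the diagonal case itself, where the paper verifies $\xi \wedge i_{\xi} u + i_{\xi}(\xi \wedge u) = \abs{\xi}^2 u$ by choosing an orthonormal coframe with $\xi = \abs{\xi}\,\eps^1$ and checking on basis $k$-forms, while you instead invoke the antiderivation rule $i_{\xi}(\alpha \wedge \beta) = (i_{\xi}\alpha)\wedge\beta + (-1)^{\deg\alpha}\alpha\wedge i_{\xi}\beta$ (already recorded in Section \ref{sec:notation}) with $\alpha=\xi$ and $i_\xi\xi = \langle\xi,\xi\rangle$ -- an equally valid, slightly more algebraic shortcut.
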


We also give an expression for the conjugated Laplacian.

\begin{lemma} \label{lemma_conjugated_hodge_expression}
Let $(M,g)$ be an oriented Riemannian manifold, let $\rho \in C^2(M)$ be a complex valued function, and let $s$ be a complex number. If $u$ is a $k$-form on $M$, then 
$$
e^{s\rho} (-\Delta) (e^{-s\rho} u) = -s^2 \langle d\rho, d\rho \rangle u + s \left[ 2\nabla_{{\rm grad}(\rho)} + \Delta \rho \right] u - \Delta u.
$$
\end{lemma}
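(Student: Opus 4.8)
The identity is a formal algebraic consequence of the product rules for $d$ and $\delta$, applied twice, so the plan is to compute $e^{s\rho}(-\Delta)(e^{-s\rho}u)$ directly by expanding $-\Delta = d\delta + \delta d$ and pushing the exponential factors past $d$ and $\delta$ one at a time. First I would set $\eta = e^{-s\rho}u$ and use the product rule $d(fv) = df\wedge v + f\,dv$ with $f = e^{-s\rho}$, noting $d(e^{-s\rho}) = -s e^{-s\rho}\,d\rho$, so that $d\eta = e^{-s\rho}(dM u - s\,d\rho\wedge u)$; similarly $\delta(fv) = -i_{df}v + f\delta v$ gives $\delta\eta = e^{-s\rho}(\delta u + s\, i_{d\rho} u)$, using $i_{d(e^{-s\rho})} = -s e^{-s\rho} i_{d\rho}$.

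Next I would apply a second exterior derivative to $\delta\eta$ and a second codifferential to $d\eta$, again extracting the factor $e^{-s\rho}$. This produces
$$
\delta d\eta = e^{-s\rho}\left[\delta d u - s(d\rho\wedge \delta u + i_{d\rho}\,du) + s^2\, i_{d\rho}(d\rho\wedge u) \right] - s e^{-s\rho}\,(\text{terms from }\delta\text{ acting on }d\rho\wedge u\text{ with the }-i\text{ rule}),
$$
and the symmetric expression for $d\delta\eta$. When I multiply by $e^{s\rho}$ and add the two, the $e^{\pm s\rho}$ cancel, the $s$-independent part collects to $(d\delta + \delta d)u = -\Delta u$, and the $s^2$ part collects to $-s^2\big(i_{d\rho}(d\rho\wedge u) + d\rho\wedge i_{d\rho}u\big) = -s^2\langle d\rho,d\rho\rangle u$ by the interior/exterior product identity $i_\xi(\alpha\wedge u) + \alpha\wedge i_\xi u = \langle \alpha,\xi\rangle u$ recorded in Section~\ref{sec:notation} (with $\alpha = \xi = d\rho$). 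The first-order-in-$s$ terms are the content of the middle bracket: they must be shown to equal $s[2\nabla_{\mathrm{grad}(\rho)} + \Delta\rho]u$.

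The one genuinely non-algebraic step is identifying the coefficient of $s^1$ with $2\nabla_{\mathrm{grad}(\rho)}u + (\Delta\rho)u$. Collecting the linear terms gives, up to sign bookkeeping, an expression of the shape $-\big(d\rho\wedge\delta u + \delta(d\rho\wedge u) + i_{d\rho}\,du + d(i_{d\rho}u)\big)$, where the two $d\rho$-dependent Leibniz remainders ($-i_{d(d\rho)}$ vanishes since $d\rho$ is closed in the relevant term, but the factor $d\rho$ itself is differentiated) must be tracked carefully. The standard route is the Weitzenb\"ock-type formula $\delta(\xi\wedge u) + \xi\wedge\delta u = (\delta\xi)u - \nabla_{\xi^\sharp}u$ together with its dual $d(i_\xi u) + i_\xi du = \nabla_{\xi^\sharp}u + (\text{terms in }\nabla\xi)$, valid for a $1$-form $\xi$; applying these with $\xi = d\rho$, using $\delta(d\rho) = -\Delta\rho$ (the Hodge Laplacian on functions), and noting that the symmetric part of $\nabla d\rho$ contributes equally to both identities so the antisymmetric (curl-type) parts cancel in the sum, yields exactly $2\nabla_{\mathrm{grad}(\rho)}u + (\Delta\rho)u$.

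I expect the main obstacle to be precisely this sign-and-Leibniz bookkeeping for the $s^1$ terms: one must correctly combine the two Leibniz remainders coming from $d$ and $\delta$ acting on $d\rho\wedge u$ and $i_{d\rho}u$ respectively, and verify that the pieces involving $\nabla d\rho$ assemble into the single covariant derivative $2\nabla_{\mathrm{grad}(\rho)}$ plus the scalar $\Delta\rho = -\delta d\rho$. A clean way to avoid ad hoc index juggling is to verify the identity first for $\rho$ real and $s$ real by the above manipulations, and then observe that both sides are polynomial (indeed quadratic) in $s$ with coefficients that are differential-operator expressions in $u$ independent of whether $s$ and $\rho$ are real; hence the identity extends to complex $\rho$ and $s$ by analytic continuation, which is why the statement is phrased for complex data. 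Alternatively, since the claimed right-hand side is manifestly the same whether computed in local coordinates or invariantly, one may do the entire computation at a point in normal coordinates, where $\partial_l g_{jk} = 0$, reducing $d$, $\delta$, and $\nabla$ to their flat expressions and making the cancellation transparent.
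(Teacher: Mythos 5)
Your proposal is correct, and its skeleton — conjugating $d$ and $\delta$ through $e^{-s\rho}$ to get $d-s\,d\rho\wedge$ and $\delta+s\,i_{d\rho}$, expanding $-\Delta=d\delta+\delta d$, and using the identity $d\rho\wedge i_{d\rho}u+i_{d\rho}(d\rho\wedge u)=\langle d\rho,d\rho\rangle u$ of Lemma \ref{lemma_xieta_product} for the $s^2$ term — is exactly the paper's. The only divergence is the $s^1$-coefficient identity $i_{d\rho}d+d\,i_{d\rho}-d\rho\wedge\delta-\delta(d\rho\wedge\cdot)=2\nabla_{{\rm grad}(\rho)}+\Delta\rho$: the paper proves it by a direct computation in Riemannian normal coordinates at a point (your own suggested alternative), whereas your primary route is via the Cartan formula $d\,i_\xi+i_\xi d=\mathcal{L}_{\xi^\sharp}$ and its formal adjoint, so the operator in question is $\mathcal{L}_X-\mathcal{L}_X^*=2\nabla_X-\delta\xi+(A-A^*)$ with $\xi=d\rho$, $X={\rm grad}(\rho)$, where the zeroth-order pieces $A,A^*$ are built from $\nabla d\rho$ and cancel because the Hessian of $\rho$ is symmetric ($d\rho$ is closed), while $-\delta d\rho=\Delta\rho$. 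That route is conceptually cleaner and buys an explanation of \emph{why} the zeroth-order remainder is exactly $\Delta\rho$; the paper's normal-coordinate computation is more pedestrian but self-contained. Two small cautions: the Weitzenb\"ock-type identity you quote, $\delta(\xi\wedge u)+\xi\wedge\delta u=(\delta\xi)u-\nabla_{\xi^\sharp}u$, is missing its zeroth-order $\nabla\xi$ term as written (you do acknowledge and dispose of it afterwards, but it should be stated with that term); and the real-then-analytic-continuation step is unnecessary, since the whole computation is algebraic and the inner product is extended complex-bilinearly (as in Section \ref{sec:notation}), which is how the paper handles complex $\rho$ and $s$ directly.
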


Next, an expansion for the expression $t\delta$.  

\begin{lemma}\label{Tdelta}
Let $u \in \Om^k(M)$.  Then
\[
-t (\delta u) = -\delta ' tu_{\|} + (S - (n-1)\kappa)ti_{N}u_{\perp} + t \grad_{N} i_N u,
\]
where $\kappa$ is the mean curvature of $\partial M$, and $S:\Om^{k-1}(\partial M) \rightarrow \Om^{k-1}(\partial M)$ is defined by 
\[
S\omega(X_1, \ldots, X_{k-1}) = \sum_{\ell = 1}^{k-1}\omega(X_1, \ldots, s X_{\ell}, \ldots X_{k-1}),
\]
with $s:T \partial M \rightarrow T\partial M$ being the shape operator of $\partial M$.  
\end{lemma}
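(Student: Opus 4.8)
The plan is to compute $\delta u$ at a boundary point using the formula $-\delta u = \sum_{j} i_{e_j}\nabla_{e_j} u$ in a well-chosen orthonormal frame, and then apply the tangential trace $t$. At a point $p \in \partial M$, choose the frame so that $e_n = N$ is the (extended) normal and $e_1,\dots,e_{n-1}$ are tangent to $\partial M$; it is convenient to take the $e_\alpha$ ($\alpha < n$) to be a frame that is, at $p$, adapted to $\partial M$ in the sense that $\nabla'_{e_\alpha} e_\beta$ (the boundary connection) vanishes at $p$. With this choice one splits the sum as $-\delta u = \sum_{\alpha=1}^{n-1} i_{e_\alpha} \nabla_{e_\alpha} u + i_N \nabla_N u$. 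The last term, after applying $t$, contributes $t\,i_N \nabla_N u = t\,\nabla_N i_N u$ since $\nabla_N$ commutes with $i_N$ (as $\nabla_N N = 0$), giving the term $t\grad_N i_N u$ in the statement.

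For the tangential part $\sum_{\alpha} i_{e_\alpha}\nabla_{e_\alpha} u$, I would decompose $u = u_{\|} + u_\perp$ with $u_\perp = N^\flat \wedge i_N u$, and track how $\nabla_{e_\alpha}$ acts on each piece. Writing $\nabla_{e_\alpha}(N^\flat \wedge i_N u) = (\nabla_{e_\alpha} N^\flat)\wedge i_N u + N^\flat \wedge \nabla_{e_\alpha} i_N u$ and using $\nabla_{e_\alpha} N = s(e_\alpha)$ (the shape operator), the $(\nabla_{e_\alpha}N^\flat)\wedge i_N u$ terms are what produce the zeroth order operators built from $s$: collecting $\sum_\alpha i_{e_\alpha}\big( s(e_\alpha)^\flat \wedge i_N u \big)$ and using the interior/exterior product identity $i_\xi(\eta\wedge\omega) = (i_\xi\eta)\omega - \eta\wedge i_\xi\omega$ for one-forms, one gets a sum of the form $(\sum_\alpha \langle s(e_\alpha),e_\alpha\rangle)\, i_N u - \sum_\alpha s(e_\alpha)^\flat \wedge i_{e_\alpha} i_N u$. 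The first factor is the trace of the shape operator, i.e. $(n-1)\kappa$; applying $t$ and comparing with the definition of $S$ acting on $ti_N u_\perp$ (which counts $k-1$ slots, not $n-1$) should yield precisely $(S - (n-1)\kappa)\, t i_N u_\perp$ after the sign bookkeeping — note $t i_N u = t i_N u_\perp$. Meanwhile the genuinely tangential differentiations $\sum_\alpha i_{e_\alpha}\nabla'_{e_\alpha}(\text{tangential part})$ reassemble, via the intrinsic formula $-\delta'\omega = \sum_\alpha i_{e_\alpha}\nabla'_{e_\alpha}\omega$ on $\partial M$, into $-\delta' t u_{\|}$, provided one checks that the difference between the ambient connection $\nabla$ restricted to tangential directions and the boundary connection $\nabla'$ (which is again governed by $II$, hence by $s$) contributes only terms that have already been accounted for or that vanish under $t$.

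The main obstacle I expect is the careful separation, in $\sum_\alpha i_{e_\alpha}\nabla_{e_\alpha} u$, of the three types of contributions — the intrinsic codifferential $\delta'$ on $\partial M$, the curvature/shape-operator zeroth order terms, and the terms that die under the trace $t$ — without dropping or double-counting signs, since $i_N$, $N^\flat\wedge$, the trace $t$, and the Hodge conventions all carry $(-1)$ factors depending on form degree. In particular one must verify that the Gauss-type correction relating $\nabla_{e_\alpha} e_\beta$ to $\nabla'_{e_\alpha} e_\beta$ (which differs by a normal component $\langle\nabla_{e_\alpha}e_\beta, N\rangle N = -\langle s(e_\alpha),e_\beta\rangle N$) produces exactly the $-(n-1)\kappa\, ti_N u_\perp$ piece and nothing more after $t$ is applied — and that terms involving $i_{e_\alpha}$ of a form still containing an $N^\flat$ factor vanish or combine correctly because $t\,i_N$ of a purely tangential form is zero. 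A clean way to organize this is to do the entire computation in boundary normal coordinates, where $\nabla_N N = 0$ and the Christoffel symbols encode $II$ transparently, and then read off each term. Since the lemma is cited as proved in the appendix, I would defer the lengthiest index chases there and here only present the frame choice, the split into normal and tangential sums, and the identification of each resulting term.
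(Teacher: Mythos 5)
Your proposal is correct and follows essentially the same route as the paper's appendix proof: expand $-\delta = \sum_j i_{e_j}\nabla_{e_j}$ in an orthonormal frame adapted to $\partial M$, obtain the normal term from $t\,i_N\nabla_N u = t\,\nabla_N i_N u$ (using $\nabla_N N = 0$), extract the $S$ and $(n-1)\kappa$ contributions from $\nabla_{e_\alpha}N = s(e_\alpha)$, and let the purely tangential differentiations of $u_{\|}$ reassemble into $-\delta' t u_{\|}$ because $i_N u_{\|}=0$ kills the second-fundamental-form corrections. The only differences are cosmetic -- you apply the Leibniz rule to the factorization $u_\perp = N^\flat\wedge i_N u$ and use interior/exterior product identities where the paper evaluates $(\nabla_{e_j}u_\perp)(e_j,X_1,\ldots,X_{k-1})$ directly via the Gauss formula -- and the ``sign bookkeeping'' you defer is indeed the one delicate point, since whether the middle term appears as $(S-(n-1)\kappa)$ or $((n-1)\kappa-S)$ depends on whether the shape operator and mean curvature are referred to the inward or outward normal, a convention the paper itself treats loosely.
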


Now for $ti_N d$.  

\begin{lemma}\label{iNdu}
Let $u \in \Om^k(M)$.  Then on $\partial M$,
\[
t i_N d u = t \nabla_{N} u_{\|} + St u_{\|} - d'ti_N u.
\]
\end{lemma}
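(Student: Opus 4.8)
The plan is to compute $ti_N du$ at a boundary point by working in a convenient local frame and expanding $d$ via the Levi-Civita connection. First I would fix $p \in \partial M$ and choose an orthonormal frame $e_1, \dots, e_{n-1}, e_n = N$ adapted to the boundary, so that $e_1, \dots, e_{n-1}$ are tangent to $\partial M$ near $p$ and $e_n$ is the (inward) unit normal extended by parallel transport along normal geodesics; recall $\nabla_N N = 0$, so $N\wedge$ and $i_N$ commute with $\nabla_N$. Using the formula $d\omega(X_1,\dots,X_{k+1}) = \sum_l (-1)^{l+1}(\nabla_{X_l}\omega)(X_1,\dots,\hat X_l,\dots,X_{k+1})$, I would evaluate $du(N, X_1, \dots, X_k)$ for tangent vectors $X_j$. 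The $l=1$ term contributes $(\nabla_N u)(X_1,\dots,X_k)$, which after taking the tangential trace and using the splitting $u = u_{\|} + u_\perp$ gives $t\nabla_N u_{\|}$ (the $u_\perp$ piece has $i_N u_\perp$-type behavior that drops out under $t$ applied to tangential arguments, modulo the bookkeeping below). The remaining terms $\sum_{l\ge 2}(-1)^{l+1}(\nabla_{X_{l-1}} u)(N, X_1,\dots,\hat X_{l-1},\dots,X_k)$ must be reorganized: each involves $\nabla_{X}$ of $u$ evaluated with $N$ in one slot, i.e. essentially $\nabla_X(i_N u)$, but commuting $i_N$ past $\nabla_X$ for $X$ tangential produces a correction $i_{\nabla_X N}u = i_{s(X)}u$ by the identity $[\nabla_X, i_N] = i_{\nabla_X N}$ and the definition $s(X) = \nabla_X N$ of the shape operator.

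The key bookkeeping step is then to recognize the two resulting pieces. The terms where the derivative falls on $i_N u$ assemble, again by the $\nabla$-expression for $d$ but now on $\partial M$ with the pullback connection $\nabla'$, into $d'(ti_N u)$ — with an appropriate sign — giving the $-d' t i_N u$ term; one should check that replacing $\nabla$ by $\nabla'$ costs only second-fundamental-form terms that either cancel or get absorbed. The terms involving $i_{s(X_l)}u$ summed over the slots produce exactly $\sum_{l}(\text{tu})(X_1,\dots,sX_l,\dots,X_k)$, which by the definition in Lemma \ref{Tdelta} of the operator $S$ (acting here on $(tu_{\|})$, since only tangential components survive) is $S t u_{\|}$. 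Combining: $ti_N du = t\nabla_N u_{\|} + S t u_{\|} - d' t i_N u$, as claimed.

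I expect the main obstacle to be the sign and normalization accounting when passing from $\nabla$ on $M$ to $\nabla'$ on $\partial M$ in the reassembly of the $d'(ti_N u)$ term, together with making sure the $u_\perp$-versus-$u_{\|}$ split is handled consistently — in particular verifying that the $u_\perp$ contribution to $(\nabla_N u)(X_1,\dots,X_k)$ vanishes under $t$ and that no stray curvature terms (Gauss–Codazzi type) appear. A clean way to control this is to note that $tu = tu_{\|}$ and $ti_N u = ti_N u_\perp$, so one can compute the two halves of $u$ separately: for $u_{\|}$ only the $t\nabla_N u_{\|} + Stu_{\|}$ terms arise, and for $u_\perp = N^\flat \wedge i_N u$ one uses the product rule for $\nabla$ and $d$ together with $dN^\flat$ being expressible via the shape operator on the boundary, which reproduces $-d'ti_N u$. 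A secondary subtlety is that $N^\flat$ is not closed in general, so $d(N^\flat \wedge i_N u) \ne -N^\flat \wedge d(i_N u)$; the discrepancy $dN^\flat \wedge i_N u$ is precisely what feeds the shape-operator corrections and must be tracked carefully, but it is a finite explicit computation once the frame is fixed.
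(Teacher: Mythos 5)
Your proposal is correct and follows essentially the same route as the paper: the paper likewise expands $d$ via the Levi-Civita connection, splits $u$ into $u_{\|}$ and $u_{\perp}$, uses $\nabla_N N=0$ and the Weingarten relation $(\nabla_X N)_{\|}=sX$ to produce $t\nabla_N u_{\|}+Stu_{\|}$ from the parallel part, and reassembles the terms where the derivative falls on $i_N u$ into $-d'\,t i_N u$ (the $\nabla\to\nabla'$ corrections vanish because $i_N u$ fed a normal argument gives $u(N,\ldots,N,\ldots)=0$). One small correction to your closing remark: in the $u_{\perp}$ computation the discrepancy term $dN^{\flat}\wedge i_N u$ is killed by $t\, i_N$ (since $i_N dN^{\flat}=0$ and $t(N^{\flat}\wedge\cdot)=0$), so the shape-operator contribution $Stu_{\|}$ comes solely from the $u_{\|}$ half, exactly as in your primary commutator bookkeeping.
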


We also need an expansion for $t\delta B$, where $B$ is the operator 
\begin{equation*}
\begin{split}
B &= \frac{h}{i} \left[ d \circ i_{d\varphi_c} + i_{d\varphi_c} \circ d - d\varphi_c \wedge \delta - \delta(d\varphi_c \wedge \,\cdot\,) \right] \\
  &= \frac{h}{i} \left[ 2 \nabla_{\grad \varphi_c} + \Delta \varphi_c \right].
\end{split}
\end{equation*}

\begin{lemma}\label{deltaBu}
If $u \in \Om^k(M)$ is such that $tu = 0$, then
\begin{eqnarray*}
& & t \delta Bu \\
&=& \delta ' tBu + 2ih\grad_{(\grad \ph_c)_{\|}}' t \grad_{N}i_N u -2ih\partial_{\nu}\ph_c t\grad_{N} \grad_{N}i_N u\\
& & + ih(2((n-1)\kappa-S)\partial_{\nu} \ph_{c} + 2\partial_{\nu}^2\ph_c +\Lap\ph_c)t\grad_{N} i_N u + 2ih(S-(n-1)\kappa) t \grad_{(\grad \ph_c)_{\|}} i_N u \\
& & + ih((S - (n-1)\kappa) \Lap \ph_c + \grad_{N} \Lap \ph_c)  t i_N u \\
& & + 2ihti_N R(N,\grad(\ph_c)_{||} ) u_\perp+2iht\grad_{[(\grad \ph_c)_{\|},N]}i_N u -2ihi_{s(\grad \ph_c)_{\|}} t \grad_{N} u_{\|}. \\
\end{eqnarray*}
\end{lemma}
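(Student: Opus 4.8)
The plan is to reduce the identity to repeated application of the earlier structural lemmas (Lemma \ref{Tdelta} and Lemma \ref{iNdu}) together with the product rules for $d$ and $\delta$ and the commutation properties of $\nabla_N$. Since $B = \frac{h}{i}[d i_{d\ph_c} + i_{d\ph_c} d - d\ph_c\wedge\delta - \delta(d\ph_c\wedge\,\cdot\,)]$, I would first write $t\delta Bu$ using the first form of $B$, and apply Lemma \ref{Tdelta} to the graded form $Bu$, giving
\[
-t\delta(Bu) = -\delta' t(Bu)_{\|} + (S-(n-1)\kappa)\, t i_N (Bu)_\perp + t\nabla_N i_N (Bu).
\]
The first term is already $-\delta' t Bu$ since $t$ kills the $\perp$ part, so the real content is computing $t i_N (Bu)$ and $t\nabla_N i_N(Bu)$ in terms of $u$. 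Because $tu=0$ by hypothesis, many boundary contributions from the $d$-pieces of $B$ vanish, which is what keeps the formula finite; I would exploit this systematically.

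Next I would compute $i_N Bu$. Using the second (simpler) form $B = \frac{h}{i}[2\nabla_{\grad\ph_c} + \Delta\ph_c]$, I get $i_N Bu = \frac{h}{i}[2 i_N \nabla_{\grad\ph_c} u + (\Delta\ph_c) i_N u]$. To move $i_N$ past $\nabla_{\grad\ph_c}$ I use $[\nabla_X, i_Y] = i_{\nabla_X Y}$, so $i_N\nabla_{\grad\ph_c}u = \nabla_{\grad\ph_c} i_N u - i_{\nabla_{\grad\ph_c}N} u$. I then split $\grad\ph_c = (\grad\ph_c)_\| + (\partial_\nu\ph_c) N$ (at and near $\partial M$) so that $\nabla_{\grad\ph_c} = \nabla_{(\grad\ph_c)_\|} + (\partial_\nu\ph_c)\nabla_N$, and note $\nabla_{(\grad\ph_c)_\|}N = s((\grad\ph_c)_\|)$ is tangential, which is the source of the $i_{s(\grad\ph_c)_\|}$ terms. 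Applying $t$ and then $\nabla_N$ once more (again commuting through $i_N$ and through the $N\wedge$/$i_N$ in $u_\perp = N^\flat\wedge i_N u$, which is legal because $\nabla_N N = 0$) produces the second-order normal derivatives $t\nabla_N\nabla_N i_N u$ and $t\nabla_N \nabla_{(\grad\ph_c)_\|} i_N u$. Rewriting $\nabla_N\nabla_{(\grad\ph_c)_\|}$ in terms of $\nabla_{(\grad\ph_c)_\|}\nabla_N$ costs a curvature term $R(N,(\grad\ph_c)_\|)$ and a term $\nabla_{[(\grad\ph_c)_\|,N]}$, which explains the $R$ and bracket terms on the last line; all remaining mismatches are lower order and get absorbed into the coefficients multiplying $t\nabla_N i_N u$, $t i_N u$, etc., after also differentiating $\Delta\ph_c$ and $\partial_\nu\ph_c$ along $N$ and along $(\grad\ph_c)_\|$. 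Throughout I would use $tu=0$ to discard the $u_\|$-contributions that are not hit by a normal derivative.

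The main obstacle I anticipate is bookkeeping rather than conceptual: one must carefully track which terms survive under $t$, keep the splitting $\grad\ph_c = (\grad\ph_c)_\| + (\partial_\nu\ph_c)N$ consistent with the parallel-transported extension of $N$ (so that formulas like $\nabla_N N = 0$ and $[\nabla_N, N\wedge] = [\nabla_N, i_N] = 0$ apply), and correctly collect the curvature term when commuting the two covariant derivatives past each other. A secondary subtlety is that $S$ and the shape operator $s$ interact with $i_N$ and with $\nabla_{(\grad\ph_c)_\|}$ only up to tangential lower-order corrections, so matching the exact coefficients $2((n-1)\kappa - S)\partial_\nu\ph_c + 2\partial_\nu^2\ph_c + \Delta\ph_c$ and $(S-(n-1)\kappa)\Delta\ph_c + \nabla_N\Delta\ph_c$ requires being attentive to those corrections. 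Once these are handled, the identity is just the reorganization of the resulting terms into the stated groups, and I would relegate the full index computation to the appendix as the lemma statement already indicates.
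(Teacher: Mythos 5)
Your plan is essentially the paper's own proof: apply Lemma \ref{Tdelta} to $Bu$, compute $t i_N(Bu)$ and $t\nabla_N i_N(Bu)$ from the form $B = \frac{h}{i}[2\nabla_{\grad\ph_c}+\Delta\ph_c]$ by splitting $\grad\ph_c$ into tangential and normal parts, commuting $i_N$ and $\nabla_N$ through $\nabla_{(\grad\ph_c)_\|}$ at the cost of the curvature term $R(N,(\grad\ph_c)_\|)$, the bracket term $\nabla_{[(\grad\ph_c)_\|,N]}$, and the Weingarten term $i_{s(\grad\ph_c)_\|}$, using $tu=0$ throughout — the paper merely packages this commutation step as the separate Lemma \ref{tnablaNiNnablagradf}. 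The only caution is the sign convention: since $N$ is the inward normal and $\nu$ the outward one, the splitting is $\grad\ph_c = (\grad\ph_c)_\| - (\partial_\nu\ph_c)N$, not $+(\partial_\nu\ph_c)N$ as written in your sketch.
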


Finally, we will need to do a computation to split the Hodge Laplacian into normal and tangential parts.  To do this, we will take advantage of a Weitzenbock identity, which says that
\[
\Lap = \tilde{\Lap} + R
\]  
where $R$ is a zeroth order linear operator depending only on the curvature of $M$, $\Lap$ is the Hodge Laplacian, and $\tilde{\Lap}$ is the connection Laplacian:
\[
\tilde \Delta u := \nabla^*\nabla u.
\] 
We then have the following result for $\tilde \Delta$.  
\begin{lemma} \label{ConxnLaplace}
Let $u \in \Om^k(M)$ satisfy $t u =0$.  Then
\[
t i_N \tilde\Delta u = \tilde \Delta' t i_Nu + t \nabla_{N}\nabla_N i_N u+tr(s^2)i_Nu -S_2i_Nu
\]
where $S_2 \omega (X_1,..,X_{k-1}) := \sum\limits_{l=1}^{k-1}\omega(.., s^2 X_l, ..)$.
\end{lemma}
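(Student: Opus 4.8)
The plan is to compute $ti_N\tilde\Delta u$ in a boundary-adapted orthonormal frame and match it term by term with the right-hand side. Fix an arbitrary $p\in\partial M$, choose an orthonormal frame $e_1,\dots,e_{n-1}$ of $T\partial M$ near $p$ that is $\nabla'$-geodesic at $p$ (so $\nabla'_{e_\alpha}e_\beta(p)=0$), set $e_n=N$, and extend each $e_j$ off $\partial M$ by parallel transport along the normal geodesics, so that $\nabla_N e_j\equiv 0$ near $p$. In this frame the connection Laplacian may be written as the trace of the Hessian, $\tilde\Delta u=\sum_{j=1}^n\big(\nabla_{e_j}\nabla_{e_j}u-\nabla_{\nabla_{e_j}e_j}u\big)$, and analogously $\tilde\Delta'\omega=\sum_{\alpha=1}^{n-1}\big(\nabla'_{e_\alpha}\nabla'_{e_\alpha}\omega-\nabla'_{\nabla'_{e_\alpha}e_\alpha}\omega\big)$ on $\partial M$. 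Split the $n$-fold trace into the normal term ($j=n$) and the tangential terms ($j=\alpha<n$) and evaluate at $p$. The normal term is immediate: $\nabla_N N=0$ annihilates the second summand, and since $\nabla_N$ commutes with $i_N$ we get $ti_N\big(\nabla_N\nabla_N u\big)=t\nabla_N\nabla_N i_N u$, which is the second term on the right.

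For the tangential terms I push $ti_N$ through $\nabla_{e_\alpha}\nabla_{e_\alpha}u-\nabla_{\nabla_{e_\alpha}e_\alpha}u$ using the commutator rule $[\nabla_X,i_Y]=i_{\nabla_X Y}$ together with $\nabla_{e_\alpha}N=s(e_\alpha)\in T\partial M$ and the Gauss relation $\nabla_{e_\alpha}e_\beta=\nabla'_{e_\alpha}e_\beta+II(e_\alpha,e_\beta)$. Commuting $i_N$ past the two derivatives produces $i_N\nabla_{e_\alpha}\nabla_{e_\alpha}u$ plus corrections of the form $i_{s(e_\alpha)}\nabla_{e_\alpha}u$ and $i_{\nabla_{e_\alpha}s(e_\alpha)}u$; here the zeroth-order piece is governed by the normal component of $\sum_\alpha\nabla_{e_\alpha}\nabla_{e_\alpha}N$, which equals $-\mathrm{tr}(s^2)$ because $|N|\equiv 1$, and this is the source of the $\mathrm{tr}(s^2)\,i_N u$ term. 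It then remains to match $\sum_\alpha ti_N\nabla_{e_\alpha}\nabla_{e_\alpha}u$ with $\tilde\Delta'(ti_N u)$ modulo lower order. Here the hypothesis $tu=0$ is used twice: $u$ restricts on $\partial M$ to its normal part $N^\flat\wedge i_N u$, and $tu\equiv 0$ on $\partial M$ forces all of its tangential covariant derivatives to vanish. Rewriting the ambient boundary derivatives $\nabla_{e_\alpha}u$ in terms of the pullback connection $\nabla'$ acting on $ti_N u$, each pass producing via the Gauss relation a normal correction linear in $s$, yields $\tilde\Delta'(ti_N u)$ plus the single surviving zeroth-order term $-S_2 i_N u$ (the operator that inserts $s^2$ into each of the $k-1$ slots), the remaining first-order terms, all of type $t\nabla_N i_N u$ and $ti_{s(e_\alpha)}\nabla'_{e_\alpha} i_N u$, cancelling against those generated in the commutator step. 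Combining the normal and tangential contributions gives the asserted formula.

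The main obstacle is precisely this matching step: tracking faithfully, slot by slot, how the normal corrections of the Gauss relation and the commutators $[\nabla_{e_\alpha},i_N]=i_{s(e_\alpha)}$ act on the multilinear structure of $u$, so that (i) every first-order boundary term cancels -- the statement has none, which is a useful internal check -- and (ii) the surviving zeroth-order terms assemble into exactly $\mathrm{tr}(s^2)i_N u-S_2 i_N u$ rather than some other combination of $\mathrm{tr}(s)^2$, $\mathrm{tr}(s^2)$ and $S_2$. It is worth noting that no Riemann curvature tensor can appear, since the two covariant derivatives are never antisymmetrized; only the first-order Gauss relation and the interior-product commutators are used, so $s$ and $i_N u$ are the only geometric data that enter, consistent with the statement. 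One should also keep in mind that the frame is $\nabla'$-geodesic only at the point $p$, so $N$-derivatives of the tangential connection coefficients $\langle\nabla_{e_\alpha}e_\beta,e_\gamma\rangle$ need not vanish there and must be retained; carrying out the entire computation at the single arbitrary point $p$, and only afterwards restoring generality, is the safest way to avoid sign and bookkeeping errors.
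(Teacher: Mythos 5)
Your strategy is essentially the paper's: work in a boundary-adapted orthonormal frame, treat the normal direction separately (where $\nabla_N N=0$ and $[\nabla_N,i_N]=0$ give $t\nabla_N\nabla_N i_N u$ immediately), and move $t\,i_N$ past the tangential second derivatives using Weingarten ($\nabla_{e_\alpha}N=s e_\alpha$ on $\partial M$) together with $tu=0$; whether one does this with the commutator $[\nabla_X,i_N]=i_{\nabla_X N}$, as you propose, or slot by slot with the Leibniz rule, as the paper does, is only a difference of bookkeeping. So the route is not the issue.

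The issue is that the proposal stops exactly where the lemma lives, and the structural claims it makes about the omitted bookkeeping do not hold up. (i) With your definition $\tilde\Delta u=\sum_j(\nabla_{e_j}\nabla_{e_j}u-\nabla_{\nabla_{e_j}e_j}u)$, the adapted frame is not ambient-geodesic at $p$: $\nabla_{e_\alpha}e_\alpha(p)=II(e_\alpha,e_\alpha)$ is a nonzero normal vector, so the correction terms produce $\mathrm{tr}(s)\,\nabla_N u$ and hence a first-order boundary term $\mathrm{tr}(s)\,t\nabla_N i_N u$ that cancels against nothing and is absent from the stated identity; your ``all first-order terms cancel'' check therefore fails in your own setup, and only disappears if, as in the paper's proof, $\tilde\Delta$ is read as the bare frame sum $\sum_j\nabla_{e_j}\nabla_{e_j}$. (ii) Once $tu=0$ is used, the cross terms $-2\sum_\alpha i_{se_\alpha}\nabla_{e_\alpha}u$ are not first-order objects awaiting cancellation: the would-be leading parts $e_\alpha\bigl[u(se_\alpha,X_1,\dots)\bigr]$ vanish because they are tangential derivatives of functions vanishing identically on $\partial M$, and what survives is itself zeroth order, contributing \emph{both} a $\mathrm{tr}(s^2)\,t i_N u$ and an $S_2\,t i_N u$ piece; the identification of $\sum_\alpha t\nabla_{e_\alpha}\nabla_{e_\alpha}(i_N u)$ with $\tilde\Delta'(t i_N u)$ (which uses $i_N i_N u=0$) contributes a further $S_2$ piece. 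Thus $\mathrm{tr}(s^2)$ and $S_2$ each arise from several sources, your one-step attribution of them does not add up, and the lemma is precisely the assertion that these contributions combine with specific coefficients -- this must be computed, not announced. In fact, carrying the computation through with the conventions as written ($N$ inward, $s=\nabla_{(\cdot)}N$, $\tilde\Delta$, $\tilde\Delta'$ the frame sums used in the paper's proof), the tangential zeroth-order part comes out as $-\mathrm{tr}(s^2)\,t i_N u+S_2\,t i_N u$ rather than $+\mathrm{tr}(s^2)\,t i_N u-S_2\,t i_N u$: for the closed unit disk and $u=r\,dr$ one has $tu=0$, $t i_N u=-1$, $t\nabla_N\nabla_N i_N u=0$, $\tilde\Delta'(t i_N u)=0$, $\mathrm{tr}(s^2)=1$, while $\sum_j t i_N\nabla_{e_j}\nabla_{e_j}u=+1$, not $-1$. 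The flip traces to the paper's identity $II(X,Y)=(s(X)|Y)N$, which with its own definitions should carry a minus sign; this is harmless where the lemma is used (in Proposition \ref{ABCbndryterms} these terms are only estimated in absolute value), but it shows that the sign pattern you treat as an internal consistency check cannot be guessed from the shape of the statement -- working it out slot by slot is the content of the proof, and as sketched your accounting would not reproduce it.
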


\section{Carleman estimates and boundary terms} \label{sec:carleman1}

As noted in the introduction, Theorem \ref{RBCCarl} follows from Theorem \ref{ABCCarl}, so it suffice to show that we can prove Theorem \ref{ABCCarl}.

In proving the Carleman estimates, it will suffice to work with smooth sections of $\Lambda M$ and apply a density argument to get the final result.  Let $\Om^k(M)$ denote the space of smooth sections of $\Lambda^k M$, and $\Om(M)$ denote the space of smooth sections of $\Lambda M$.  

In this section we give an initial form of the Carleman estimates by using an integration by parts argument as in ~\cite{KSU}.  To do this, we will first need to understand the relevant boundary terms.  We will use the integration by parts formulas 
\begin{align}
 & (du|v)_{M} = (\nu \wedge u|v)_{\partial M} + (u|\delta v)_{M},  \\
 & (\delta u|v)_{M} = -(i_{\nu} u|v)_{\partial M} + (u|dv)_{M} 
\end{align}
for $u,v \in \Om(M)$.

As in ~\cite{KSU}, we will need to work with the convexified weight
\[
\ph_c = \ph + \frac{h\ph^2}{2\e}.
\]

Then
\begin{equation*}
-\Delta_{\ph_c} = e^{\frac{\ph_c}{h}} (-h^2 \Delta) e^{-\frac{\ph_c}{h}}.
\end{equation*}
Writing 
\begin{align*}
d_{\ph_c} &= e^{\frac{\ph_c}{h}} h d e^{-\frac{\ph_c}{h}} = hd - d \ph_c \wedge, \\
\delta_{\ph_c} &= e^{\frac{\ph_c}{h}} h \delta e^{-\frac{\ph_c}{h}} = h \delta + i_{d\ph_c},
\end{align*}
we have 
\begin{align*}
-\Delta_{\ph_c} = d_{\ph_c} \delta_{\ph_c} + \delta_{\ph_c} d_{\ph_c}.
\end{align*}
By Lemma \ref{lemma_conjugated_hodge_expression} we can write this as $A+iB$ where $A$ and $B$ are self-adjoint operators given by 
\begin{align*}
A &= -h^2 \Delta - (d\varphi_c \wedge i_{d\varphi_c} + i_{d\varphi_c} (d\varphi_c \wedge \,\cdot\,) ) \\
  &= -h^2 \Delta - \abs{d\varphi_c}^2, \\
B &= \frac{h}{i} \left[ d \circ i_{d\varphi_c} + i_{d\varphi_c} \circ d - d\varphi_c \wedge \delta - \delta(d\varphi_c \wedge \,\cdot\,) \right] \\
  &= \frac{h}{i} \left[ 2 \nabla_{\grad \varphi_c} + \Delta \varphi_c \right].
\end{align*}
Let $\| \cdot \|$ indicate the $L^2$ norm on $M$, unless otherwise stated.  Then, for $u \in \Omega^k(M)$,  
\begin{align*}
\norm{\Delta_{\ph_c} u}^2 &= ((A+iB)u|(A+iB)u) \\
 &= \norm{Au}^2 + \norm{Bu}^2 + i(Bu|Au) - i(Au|Bu).
\end{align*}
Integrating by parts gives 
\begin{align*}
 & (Bu|Au) = (Bu|h^2 d\delta u + h^2 \delta du - \abs{d\varphi_c}^2 u) = (hdBu|hdu) + (h \delta Bu| h\delta u) \\
 &\qquad - (\abs{d\varphi_c}^2 Bu|u) + h(Bu|\nu \wedge h \delta u - i_{\nu} h du)_{\partial M} \\
 &= (A B u|u) + h(hdBu|\nu \wedge u)_{\partial M} - h(h\delta Bu|i_{\nu} u)_{\partial M} + h(Bu|\nu \wedge h\delta u - i_{\nu} hdu)_{\partial M}
\end{align*}
and after a short computation 
\begin{align*}
(Au|Bu) = (B A u|u) - \frac{2h}{i} ((\partial_{\nu} \varphi_c) Au|u)_{\partial M}.
\end{align*}
This finishes the basic integration by parts argument and shows the following:

\begin{prop}\label{CarlemaInitialForm}
If $u \in \Omega M$, then 
\begin{equation}\label{IbyPidentity}
\begin{split}
\norm{\Delta_{\ph_c} u}^2 = &\norm{Au}^2 + \norm{Bu}^2 + (i[A,B]u|u) \\
 &+ ih(hdBu|\nu \wedge u)_{\partial M} - ih(h\delta Bu|i_{\nu} u)_{\partial M} + ih(Bu|\nu \wedge h\delta u - i_{\nu} hdu)_{\partial M} \\
 &+ 2h((\partial_{\nu} \varphi_c) Au|u)_{\partial M}.
\end{split}
\end{equation}
\end{prop}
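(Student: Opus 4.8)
Since $u$ is a smooth graded form, all the integrations by parts below are justified. The plan is to follow the integration-by-parts scheme of \cite{KSU}, exactly as outlined in the paragraph preceding the statement, and to organize the bookkeeping carefully. First I would record the factorization of the conjugated Laplacian,
\[
-\Delta_{\ph_c} = d_{\ph_c}\delta_{\ph_c} + \delta_{\ph_c} d_{\ph_c},
\]
and, using Lemma \ref{lemma_conjugated_hodge_expression} with $\rho = \ph_c$ and $s = 1/h$ (together with the identity $d\ph_c \wedge i_{d\ph_c} + i_{d\ph_c}(d\ph_c \wedge \,\cdot\,) = \abs{d\ph_c}^2$ from Lemma \ref{lemma_xieta_product}), write $-\Delta_{\ph_c} = A + iB$ with $A$ and $B$ the formally self-adjoint operators displayed above. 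Expanding the Hermitian square of $\Delta_{\ph_c} u = -(A+iB)u$ then gives
\[
\norm{\Delta_{\ph_c} u}^2 = \norm{Au}^2 + \norm{Bu}^2 + i(Bu|Au) - i(Au|Bu),
\]
so everything reduces to evaluating the two cross terms modulo boundary contributions.

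For the term $(Au|Bu)$, I would use that $B = \frac{h}{i}\left[2\nabla_{\grad\ph_c} + \Delta\ph_c\right]$ is a first-order operator built from the vector field $\grad\ph_c$. A single integration by parts — the divergence theorem applied to $\nabla_{\grad\ph_c}$, using $\mathrm{div}(\grad\ph_c) = \Delta\ph_c$ to cancel the zeroth order pieces — moves $B$ across the inner product and leaves exactly one boundary term with coefficient $\langle \grad\ph_c,\nu\rangle = \partial_\nu\ph_c$, giving
\[
(Au|Bu) = (BAu|u) - \frac{2h}{i}\big((\partial_\nu\ph_c)Au\,\big|\,u\big)_{\partial M}.
\]

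For the term $(Bu|Au)$, I would substitute $Au = h^2 d\delta u + h^2 \delta d u - \abs{d\ph_c}^2 u$ and apply the integration-by-parts formulas \eqref{iparts1}--\eqref{iparts2} for $d$ and $\delta$ twice. The first pair of integrations pulls the outer $d$ and $\delta$ off $u$ and onto $Bu$, producing the boundary term $h(Bu\,|\,\nu\wedge h\delta u - i_\nu h d u)_{\partial M}$ and leaving $(hdBu|hdu) + (h\delta Bu|h\delta u) - (\abs{d\ph_c}^2 Bu|u)$. The second pair pulls the remaining $d$ and $\delta$ off $Bu$ and back onto $u$, recombining the bulk into $(ABu|u)$ and producing the boundary terms $h(hdBu\,|\,\nu\wedge u)_{\partial M}$ and $-h(h\delta Bu\,|\,i_\nu u)_{\partial M}$; here one also uses $(i_\nu\zeta|\eta)_{\partial M} = (\nu\wedge\eta|\zeta)_{\partial M}$ to put the boundary terms in the stated form. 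Combining the two computations,
\[
i(Bu|Au) - i(Au|Bu) = i\big((AB-BA)u\,\big|\,u\big) + (\text{the listed boundary terms}) = (i[A,B]u\,|\,u) + \cdots,
\]
which is precisely the identity \eqref{IbyPidentity}.

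The main obstacle is purely bookkeeping: there is no conceptual difficulty, but the signs and the placement of complex conjugates must be tracked carefully, since $(\,\cdot\,|\,\cdot\,)$ is sesquilinear whereas the integration-by-parts identities are stated for real forms (so one restricts to real forms and extends, or else carries the conjugates through). The factors of $h$ carried by $d_{\ph_c}$ and $\delta_{\ph_c}$, and the correct matching of $\nu\wedge$ versus $i_\nu$ in each of the four boundary terms, are the places where an error is most likely to creep in; a careful comparison against the scalar case of \cite{KSU} is the natural sanity check.
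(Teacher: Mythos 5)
Your proposal is correct and takes essentially the same route as the paper's proof: write $-\Delta_{\ph_c}=A+iB$, expand the Hermitian square, and integrate by parts to arrive at exactly the paper's two cross-term identities, $(Bu|Au) = (ABu|u) + h(hdBu|\nu \wedge u)_{\partial M} - h(h\delta Bu|i_{\nu} u)_{\partial M} + h(Bu|\nu \wedge h\delta u - i_{\nu} hdu)_{\partial M}$ and $(Au|Bu) = (BAu|u) - \tfrac{2h}{i}((\partial_{\nu}\ph_c)Au|u)_{\partial M}$, the latter by the same divergence-theorem computation the paper leaves as a ``short computation.'' The only (immaterial) slip is in the wording of the second round of integrations by parts: to produce the bulk term $(ABu|u)$ one moves the remaining derivatives off $u$ onto $dBu$ and $\delta Bu$, not ``back onto $u$,'' and the boundary terms you list are indeed the ones this produces.
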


Now we invoke the absolute boundary conditions to estimate the non-boundary terms and to simplify the boundary terms in \eqref{IbyPidentity}. It is enough to consider differential forms $u \in \Omega^k(M)$ for fixed $k$.

\begin{prop}\label{ABCbndryterms}
Let $u \in \Om^k(M)$ such that 
\begin{equation}\label{convexABC}
\begin{split}
t*u &= 0 \\
th\delta *u  &= -t i_{d\ph} *u +h \s t i_{N} *u .
\end{split}
\end{equation}
for some smooth bounded endomorphism $\s$ whose bounds are uniform in $h$.

Then the non-boundary terms in \eqref{IbyPidentity} satisfy 
\begin{equation}\label{NonBndryTermsABCfirst}
\norm{Au}^2 + \norm{Bu}^2 + (i[A,B]u|u) \gtrsim \frac{h^2}{\e}\|u\|_{H^1(M)}^2 - \frac{h^3}{\e}(\|u_{\|}\|_{H^1(\partial M)}^2 + \|h\nabla_{N} u_{\perp}\|_{L^2(\partial M)}^2)
\end{equation}
for $h \ll \e \ll 1$. Also, the boundary terms in \eqref{IbyPidentity} have the form
\begin{equation}\label{BndryTermsABCfirst}
-2h^3 (\partial_\nu \ph \nabla_N u_{\perp}|\nabla_N u_{\perp})_{\partial M}-  2h(\partial_\nu \ph (|d\ph|^2 + |\partial_\nu \ph|^2)u_{\|}|u_{\|})_{\partial M} + R  
\end{equation}
where 
\[
|R| \lesssim Kh^3 \|\nabla' t u_{\|}\|^2_{\partial M} + \frac{h}{K}\| u_{\|}\|^2_{\partial M} + \frac{h^3}{K}\|\nabla_N u_{\perp}\|^2_{\partial M} .
\]
for any large enough $K$ independent of $h$.  
\end{prop}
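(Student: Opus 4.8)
The plan is to substitute the absolute boundary conditions \eqref{convexABC} into the identity \eqref{IbyPidentity} of Proposition \ref{CarlemaInitialForm} and process the two groups of terms separately. For the non-boundary part $\norm{Au}^2 + \norm{Bu}^2 + (i[A,B]u|u)$, I would follow the scalar computation in \cite{KSU}: the operators $A = -h^2\Delta - \abs{d\ph_c}^2$ and $B = \frac{h}{i}[2\nabla_{\grad\ph_c} + \Delta\ph_c]$ have a commutator $i[A,B]$ whose principal part, after the convexification $\ph_c = \ph + \frac{h\ph^2}{2\e}$, gives a positive contribution of size $\frac{h^2}{\e}$ because $\ph$ is a limiting Carleman weight. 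Writing $\frac{h^2}{\e}\norm{u}_{H^1(M)}^2$ as the main term requires the usual trick of combining $\norm{Au}^2$ with part of the commutator to control $\norm{h\nabla u}_{L^2}^2$, plus an integration by parts that produces boundary error terms; these boundary errors are controlled by the trace terms $\frac{h^3}{\e}(\norm{u_\|}_{H^1(\partial M)}^2 + \norm{h\nabla_N u_\perp}_{L^2(\partial M)}^2)$, where the split into $\perp$ and $\|$ pieces comes from the decomposition $u = u_\| + u_\perp$ and the fact that the boundary conditions only constrain $*u$ (hence $u_\|$) directly. This is essentially the form-valued analogue of the scalar estimate, and the Weitzenbock identity $\Delta = \tilde\Delta + R$ handles the lower-order curvature terms harmlessly.

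For the boundary terms in \eqref{IbyPidentity}, namely
\[
ih(hdBu|\nu\wedge u)_{\partial M} - ih(h\delta Bu|i_\nu u)_{\partial M} + ih(Bu|\nu\wedge h\delta u - i_\nu hdu)_{\partial M} + 2h((\partial_\nu\ph_c)Au|u)_{\partial M},
\]
the strategy is to insert the boundary conditions $t*u = 0$ and $th\delta *u = -ti_{d\ph}*u + h\s ti_N*u$, which after applying the Hodge star translate into $tu_\perp = 0$ (i.e.\ $tu = tu_\|$) together with a relation expressing $th\delta u$ in terms of $ti_{d\ph}u$ and $h\s ti_N u$. I would then repeatedly apply Lemmas \ref{Tdelta}, \ref{iNdu}, \ref{deltaBu} and \ref{ConxnLaplace} to rewrite $t\delta u$, $ti_N du$, $t\delta Bu$, and the normal-tangential split of $Au = (-h^2\Delta - \abs{d\ph_c}^2)u$ purely in terms of boundary quantities: $t u_\|$ and its tangential derivatives $\nabla' t u_\|$, $ti_N u_\perp$ and its normal derivatives $\nabla_N ti_N u_\perp$. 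Collecting the genuinely signed contributions yields exactly the two leading terms $-2h^3(\partial_\nu\ph\,\nabla_N u_\perp|\nabla_N u_\perp)_{\partial M}$ and $-2h(\partial_\nu\ph(\abs{d\ph}^2 + \abs{\partial_\nu\ph}^2)u_\||u_\|)_{\partial M}$ (note $\ph_c = \ph$ on the part of $\partial M$ where these matter to leading order, or the $O(h/\e)$ discrepancy can be absorbed into $R$), and everything else is a remainder $R$ of lower order in $h$ or of cross-type.

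The remainder $R$ must then be bounded as claimed, $|R| \lesssim Kh^3\norm{\nabla' tu_\|}_{\partial M}^2 + \frac{h}{K}\norm{u_\|}_{\partial M}^2 + \frac{h^3}{K}\norm{\nabla_N u_\perp}_{\partial M}^2$ for large $K$. The mechanism is Cauchy–Schwarz/Young with a parameter: each cross term in $R$ pairs a ``top-order'' boundary quantity (either $h\nabla' tu_\|$ at weight $h^3$, or $h\nabla_N u_\perp$ at weight $h^3$) against a ``lower-order'' one ($u_\|$ at weight $h$), so splitting $2ab \le K a^2 + K^{-1}b^2$ with the big factor $K$ placed on the $h^3\norm{\nabla' tu_\|}^2$ side and the small factor $K^{-1}$ on the $h\norm{u_\|}^2$ and $h^3\norm{\nabla_N u_\perp}^2$ sides gives precisely the asserted bound; terms that are already of lower order in $h$ (coming from $\Delta\ph_c$, the curvature operator $R$ in Weitzenbock, the shape operator $s$, the mean curvature $\kappa$, and the endomorphism $\s$) are dominated by $\frac{h}{K}\norm{u_\|}^2$ or absorbed after further integration by parts on $\partial M$ (moving a tangential derivative off $tu_\|$). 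The main obstacle, and the place demanding real care, is the bookkeeping in this last step: one must verify that \emph{every} term produced by the four expansion lemmas is either one of the two signed leading terms or genuinely of the remainder type — in particular that no uncontrolled term of size $h^3\norm{\nabla_N u_\perp}^2$ with the \emph{wrong} sign, or any term involving $\nabla_N\nabla_N i_N u$ that cannot be traded away, survives; the $th\delta *u$ boundary condition is calibrated exactly so that the dangerous $t\nabla_N\nabla_N i_N u$ contributions from $t\delta Bu$ (Lemma \ref{deltaBu}) and from $ti_N Au$ cancel, and confirming this cancellation is the crux.
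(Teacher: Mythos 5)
Your overall skeleton (substitute the boundary conditions into \eqref{IbyPidentity}, expand the boundary terms with the technical lemmas, isolate two signed leading terms, absorb the rest by Cauchy--Schwarz with a large parameter $K$) matches the paper, but the execution of the boundary-term half has a genuine gap. First, you mistranslate the absolute conditions: $t*u=0$ is equivalent to $u_\perp=0$ (i.e.\ $i_N u=0$) on $\partial M$, not to ``$tu_\perp=0$, i.e.\ $tu=tu_\|$'' --- those identities hold for \emph{every} form and carry no information; in particular the absolute conditions do \emph{not} tell you anything about $tu$. Second, and consequently, your plan to apply Lemmas \ref{Tdelta}, \ref{iNdu}, \ref{deltaBu} and \ref{ConxnLaplace} directly to $u$ cannot be carried out: Lemmas \ref{deltaBu} and \ref{ConxnLaplace} are proved only under the hypothesis $tu=0$, which is the \emph{relative} condition and fails for $u$ satisfying \eqref{convexABC}. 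The paper's proof inserts an extra step you are missing: it first checks that the four boundary terms in \eqref{IbyPidentity} are unchanged under $u\mapsto *u$, notes that $*u$ satisfies the relative conditions \eqref{convexRBC}, and then performs the entire boundary computation for forms with $tu=0$, dualizing back at the end --- which is precisely why the leading terms in \eqref{BndryTermsABCfirst} appear in the variables $\nabla_N u_\perp$ and $u_\|$. Without this reduction (or absolute-boundary analogues of the expansion lemmas) your bookkeeping cannot even start.

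Two further points. In the non-boundary estimate \eqref{NonBndryTermsABCfirst}, the stated error terms contain no $\|h\nabla_N u_\|\|_{L^2(\partial M)}^2$; eliminating that trace requires actually using the second condition in \eqref{convexABC}, via Lemma \ref{iNdu} and the identity relating $ti_N h du$ to $ti_N *(h\delta * u)$, to bound $\|th\nabla_N u_\|\|_{L^2(\partial M)}$ by $\|u\|_{L^2(\partial M)}$, together with a Gaffney-type inequality and one more integration by parts whose boundary term is again killed by the boundary condition; your sketch does not indicate any of this, and a generic ``scalar KSU argument plus traces'' would leave the forbidden term. Finally, the crucial cancellation you flag is real, but its mechanism is not the calibration of the $th\delta *u$ condition: in the paper the $\pm 2h^3(\partial_\nu\ph_c\, t\nabla_N\nabla_N i_N u\,|\,ti_N u)_{\partial M}$ contributions cancel structurally between the expansion of $t\delta Bu$ (Lemma \ref{deltaBu}) and the Weitzenbock/Lemma \ref{ConxnLaplace} expansion of $ti_N\Delta u$ inside $2h((\partial_\nu\ph_c)Au|u)_{\partial M}$, while the boundary condition is used elsewhere, in the form \eqref{tnablaNiN}, to trade the first-order quantity $t\nabla_N i_N u$ for zeroth-order terms.
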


\begin{proof}[Proof of Proposition \ref{ABCbndryterms}]

We will prove \eqref{NonBndryTermsABCfirst} first.  The argument follows the one given in ~\cite{Ch2}, for scalar functions.  

Note that $A$ and $B$ have the same scalar principal symbols as they do for zero-forms: that is, given a local basis $dx^1, \ldots, dx^n$ for the cotangent space with $dx^I = dx^{i_1} \wedge \ldots \wedge dx^{i_k}$, 
\begin{equation*} 
A = A_s + h E_1, \qquad A_s(f dx^I) = (Af) dx^I
\end{equation*}
and
\[
B = B_s + h E_0, \qquad B_s(f dx^I) = (Bf) dx^I,
\]
where $E_1$ and $E_0$ are first and zeroth order operators, respectively, with uniform bounds in $h$ and $\e$.  Therefore locally 
\[
[A,B](f dx^I) = ([A,B]f) dx^I + h([E_1,B_s] + [A_s,E_0] +hR)(f dx^I)
\]
where $R$ is a first order operator with uniform bounds in $h$ and $\e$. Choosing a partition of unity $\chi_1, \ldots, \chi_m$ of $M$ such that this operation can be performed near each $\mathrm{supp}(\chi_j)$, the argument for scalar functions in the proof of Proposition 3.1 from ~\cite{Ch2} implies that 
\[
i([A,B]u|u) = \sum_{j=1}^m i([A,B]u| \chi_j u) = 4\frac{h^2}{\e}\| (1 + h\e^{-1} \ph) u \|_{L^2}^2 + h(B \beta B u| u) + h^2 (Qu|u),
\]
where $Q$ is a second order operator.  Recall that
\[
B = \frac{h}{i} \left( d \circ i_{d\varphi_c} + i_{d\varphi_c} \circ d - d\varphi_c \wedge \delta - \delta(d\varphi_c \wedge \,\cdot\,) \right),
\]
so using integration by parts with the above formula, we get 
\begin{eqnarray*}
h(B \beta Bu|u) &=& h(\beta Bu|Bu) - ih^2(i_{\nu} \beta Bu| i_{d\varphi_c} u)_{\partial M} -ih^2(\nu \wedge i_{d\varphi_c}\beta Bu| u )_{\partial M}\\
                & & -ih^2(\nu \wedge \beta Bu|d\varphi_c \wedge u)_{\partial M} -ih^2(i_{\nu}(d\varphi_c \wedge \beta Bu)|u)_{\partial M}\\
                &=& h(\beta Bu|Bu) - ih^2(d\varphi_c \wedge i_{\nu} \beta Bu|  u)_{\partial M} -ih^2(\nu \wedge i_{d\varphi_c}\beta Bu| u )_{\partial M}\\
                & & -ih^2(i_{d\varphi_c} \nu \wedge \beta Bu| u)_{\partial M} -ih^2(i_{\nu}(d\varphi_c \wedge \beta Bu)|u)_{\partial M}.\\
\end{eqnarray*}
By Lemma \ref{lemma_xieta_product} we obtain 
\[
h(B \beta Bu|u) = h(\beta Bu|Bu) - 2ih^2(\partial_{\nu} \varphi_c \beta Bu| u)_{\partial M}.
\]
The absolute boundary condition says that $t*u = 0$, so $u_{\perp} = 0$ at the boundary.  Therefore
\begin{eqnarray*}
h(B \beta Bu|u) &=& h(\beta Bu|Bu) - 2ih^2(\partial_{\nu} \varphi_c \beta Bu| u_{\|})_{\partial M} \\
                &=& h(\beta Bu|Bu) - 2ih^2(t\partial_{\nu} \varphi_c \beta Bu| tu_{\|})_{\partial M}. \\
\end{eqnarray*}
The boundary term in the last expression is bounded by 
\[
h^3\e^{-1}\| tBu\|_{L^2(\partial M)}^2 + h^3\e^{-1}\| u_{\|}\|_{L^2(\partial M)}^2.
\]
At the boundary,
\begin{eqnarray*}
tBu &=& \frac{h}{i} t\left[ 2 \nabla_{\grad \varphi_c} + \Delta \varphi_c \right]u \\
    &=& \frac{h}{i} \left[ -2 \partial_{\nu} \varphi_c t\nabla_{N}u_{\|} -2 \partial_{\nu} \varphi_c t\nabla_{N}u_{\perp}+ 2t\nabla_{(\grad \varphi_c)_{\|}}u_{\|} + \Delta \varphi_c tu_{\|} \right], \\
\end{eqnarray*}
so
\begin{eqnarray*}
\|tBu\|_{L^2(\partial M)}^2 &\lesssim& \|th\nabla_{N}u_{\|} \|_{L^2(\partial M)}^2 +\|th\nabla_{N}u_{\perp} \|_{L^2(\partial M)}^2 \\
                            & &        + \|th\nabla_{(\grad \varphi_c)_{\|}}u_{\|}\|_{L^2(\partial M)}^2 + h^2\|tu_{\|}\|_{L^2(\partial M)}^2. \\
                            &\lesssim& \|th\nabla_{N} u_{\|} \|_{L^2(\partial M)}^2 +\|th\nabla_{N}u_{\perp} \|_{L^2(\partial M)}^2+ \|u_{\|}\|_{H^1(\partial M)}^2. \\
\end{eqnarray*}
Now by Lemma \ref{iNdu},
\[
t i_N hd u = t h\nabla_{N} u_{\|} + hSt u_{\|} - hd'ti_N u .
\]
Since $t*u=0$, we have $i_Nu, u_{\perp} = 0$ at the boundary, and thus
\[
t i_N hd u = t h\nabla_{N} u_{\|} + hSt u_{\|}.
\]
Therefore
\begin{eqnarray*}
\|th\nabla_{N}u_{\|}\|_{L^2(\partial M)}^2  &\lesssim& \|t i_N hd u\|_{L^2(\partial M)}^2+ h^2\|u_{\|}\|_{L^2(\partial M)}^2 \\
                                           &\lesssim& \|t i_N * (h\delta *u)\|_{L^2(\partial M)}^2+ h^2\|u_{\|}\|_{L^2(\partial M)}^2 \\
                                           &\lesssim& \|t h\delta *u\|_{L^2(\partial M)}^2+ h^2\|u_{\|}\|_{L^2(\partial M)}^2 \\
                                           &\lesssim& \|u\|_{L^2(\partial M)}^2
\end{eqnarray*}
where in the last step we invoked the absolute boundary condition.  Therefore
\[
\|tBu\|_{L^2(\partial M)}^2 \lesssim \|th\nabla_{N}u_{\perp} \|_{L^2(\partial M)}^2+ \|u_{\|}\|_{H^1(\partial M)}^2,
\]
and thus 
\[
h(B \beta Bu|u) \lesssim \frac{h^2}{\e}\|Bu\|_{L^2}^2 + \frac{h^3}{\e}\|u_{\|}\|_{H^1(\partial M)}^2 + \frac{h^3}{\e}\|h\nabla_{N}u_{\perp}\|_{L^2(\partial M)}^2. 
\]
Similarly
\[
h^2 (Qu|u) \lesssim h^2\|u\|_{H^1}^2 + h^3\|u_{\|}\|_{H^1(\partial M)}^2 + h^3\|h\nabla_{N}u_{\perp}\|_{L^2(\partial M)}^2.
\]
Therefore
\begin{eqnarray*}
& & i([A,B]u|u) \gtrsim  \\
& &\frac{h^2}{\e}\|u\|_{L^2}^2 - \frac{h^2}{\e}\|Bu\|_{L^2}^2 - h^2\|u\|_{H^1}^2 - h^3\e^{-1}\|u_{\|}\|_{H^1(\partial M)}^2 - h^3\e^{-1}\|h\nabla_{N} u_{\perp}\|_{L^2(\partial M)}^2. \\
\end{eqnarray*}

Meanwhile, since $t*u = 0$ on $\partial M$ we can write
\begin{eqnarray*}
& & h^2(\|hdu\|_{L^2}^2 +\|h\delta u\|_{L^2}^2) = h^2( (hd*u, hd*u) + (h\delta * u, h\delta * u) ) \\
&=& h^2(-h^2\Lap *u| *u) - h^3(\nu \wedge h\delta *u| *u)_{\partial M} \\
&=& h^2(Au|u)+h^2(|d\ph_c|^2 u|u) -h^3(\nu \wedge h\delta *u| *u)_{\partial M} \\
&=& h^2(Au|u)+h^2(|d\ph_c|^2 u|u) +h^3(th\delta *u| ti_N *u)_{\partial M}.\\
\end{eqnarray*} 
Using the absolute boundary conditions again, we have 
\begin{eqnarray*}
th\delta *u &=& -t i_{d\ph} *u +h \s t i_{N} *u \\
            &=& \partial_{\nu} \ph t i_{N} *u +h \s t i_{N} *u, \\
\end{eqnarray*}
so
\[
h^2(\|hdu\|_{L^2}^2 +\|h\delta u\|_{L^2}^2) \lesssim \frac{1}{K}\|Au\|_{L^2}^2 + Kh^4\|u\|_{L^2}^2 +h^2\|u\|_{L^2}^2+ h^3\|ti_N*u\|_{L^2(\partial M)}^2,
\]
or 
\[
\|Au\|_{L^2}^2 \gtrsim Kh^2(\|hdu\|_{L^2}^2 + \|h\delta u\|_{L^2}^2) - K^2 h^4\|u\|_{L^2}^2 - Kh^2\|u\|_{L^2}^2 - Kh^3\|u_{\|}\|_{L^2(\partial M)}^2. 
\]
We take $K \sim \frac{1}{\alpha \e}$ with $\alpha$ large and fixed. Putting this together with the inequality for $(i[A,B]u|u)$ and Gaffney's inequality $\norm{u}_{H^1} \sim \norm{u}_{L^2} + \norm{hdu}_{L^2} + \norm{h\delta u}_{L^2}$ when $t*u = 0$, we obtain that 
\begin{equation*} 
\norm{Au}^2 + \norm{Bu}^2 + (i[A,B]u|u) \gtrsim \frac{h^2}{\e}\|u\|_{H^1}^2 - h^3\e^{-1}(\|u_{\|}\|_{H^1(\partial M)}^2 + \|h\nabla_{N} u_{\perp}\|_{L^2(\partial M)}^2)
\end{equation*}
for $h \ll \e \ll 1$. This proves \eqref{NonBndryTermsABCfirst}.

We will now show the expression \eqref{BndryTermsABCfirst} for the boundary terms in \eqref{IbyPidentity}. Recall that these boundary terms are given by 
\begin{equation}\label{BndryTermsAgain}
\begin{split}
  &ih(hdBu|\nu \wedge u)_{\partial M} - ih(h\delta Bu|i_{\nu} u)_{\partial M} + ih(Bu|\nu \wedge h\delta u - i_{\nu} hdu)_{\partial M} \\
+ &2h((\partial_{\nu} \ph_c) Au|u)_{\partial M}.
\end{split}
\end{equation}
Note that 
\begin{eqnarray*}
& & ih(hdB*u|\nu\wedge *u)_{\partial M} - ih(h\delta B*u| i_\nu *u)_{\partial M} + ih(B*u |\nu\wedge h\delta *u - i_\nu hd*u)_{\partial M} \\
& & + 2h ((\partial_\nu\ph_c)A*u |*u)_{\partial M} \\ 
&=& ih(hdBu|\nu \wedge u)_{\partial M} - ih(h\delta Bu|i_{\nu} u)_{\partial M} + ih(Bu|\nu \wedge h\delta u - i_{\nu} hdu)_{\partial M} \\
& & + 2h((\partial_{\nu} \ph_c) Au|u)_{\partial M},
\end{eqnarray*}
Moreover, if $u$ satisfies the absolute boundary conditions \eqref{convexABC}, then $*u$ satisfies the relative boundary conditions 
\begin{equation}\label{convexRBC}
\begin{split}
tu &= 0 \\
th\delta u  &= -t i_{d\ph} u +h \s t i_{N} u,
\end{split}
\end{equation}
and vice versa.  Therefore it suffices to prove that if $u$ satisfies \eqref{convexRBC} then the boundary terms \eqref{BndryTermsAgain} become
\begin{equation}\label{BndryTermsRBC}
-2h^3 (\partial_\nu \ph \nabla_N u_{\|}|\nabla_N u_{\|})_{\partial M}-  2h(\partial_\nu \ph (|d\ph|^2 + |\partial_\nu \ph|^2)u_{\perp}| u_{\perp})_{\partial M} + R  
\end{equation}
where 
\begin{equation}\label{Rbound}
|R| \lesssim  Kh^3 \|\nabla' ti_N u\|^2_{\partial M} + \frac{h}{K}\| u_{\perp}\|^2_{\partial M} + \frac{h^3}{K}\|\nabla_N u_{\|}\|^2_{\partial M} 
\end{equation}
for any large enough $K$ independent of $h$. 

So let's return to \eqref{BndryTermsAgain}, and assume $u$ satisfies \eqref{convexRBC}.  The condition $tu = 0$ implies that the first term $ih(hdBu|\nu \wedge u)_{\partial M}$ is zero.  Therefore we are left with 
\begin{equation*}
-ih(h\delta Bu|i_{\nu} u)_{\partial M} + ih(Bu|\nu \wedge h\delta u)_{\partial M} - ih(Bu| i_{\nu} hdu)_{\partial M} + 2h((\partial_{\nu} \ph_c) Au|u)_{\partial M}.
\end{equation*}

We calculate each of the terms individually.  

Firstly,
\begin{equation*}
\begin{split}
ih(Bu|\nu \wedge h\delta u)_{\partial M} &= ih(Bu|\nu \wedge h(\delta u)_{\|})_{\partial M} \\
                                         &= -ih(i_{N} Bu| h(\delta u)_{\|})_{\partial M} \\
                                         &= -ih(t i_{N} Bu| t h\delta u)_{\partial M}.\\
\end{split}
\end{equation*}
Now
\[
Bu = \frac{h}{i} \left( 2 \nabla_{\grad \ph_c} + \Delta \varphi_c \right)u,
\]
so
\begin{equation*}
\begin{split}
ti_N Bu &= \frac{h}{i} ti_N \left( 2 \nabla_{(\grad \ph_c)_{\|}} - 2\partial_{\nu} \ph_c \nabla_N + \Delta \varphi_c \right)u\\
       &= \frac{h}{i} \left( 2 \nabla_{(\grad \ph_c)_{\|}}ti_N  - 2\partial_{\nu} \ph_c t\nabla_N i_N  + t\Delta \varphi_c i_N \right)u.\\
\end{split}
\end{equation*}
Therefore,
\begin{equation*}
\begin{split}
 -ih(t i_{N} Bu| t h\delta u)_{\partial M} = & 2h(\partial_{\nu} \ph_c t h\nabla_N i_N u| th\delta u)_{\partial M} \\
 & - 2h(h \nabla_{(\grad \ph_c)_{\|}} t i_Nu |th\delta u)_{\partial M} \\
 & - h^2(t \Delta \varphi_c i_N u | th\delta u)_{\partial M}. \\
\end{split}
\end{equation*}
Now if $th\delta u|_{\partial M} = -t i_{d\ph} u + h \s t i_{N} u$,  and $tu = 0$, then
\begin{equation}\label{altRBC}
th\delta u|_{\partial M} = (\partial_{\nu}\ph + h\sigma) t i_{N} u.
\end{equation}
Therefore
\begin{equation*}
\begin{split}
 -ih(t i_{N} Bu| t h(\delta u))_{\partial M} = &2h(\partial_{\nu} \ph_c t h\nabla_N i_N u| (\partial_{\nu}\ph + h\sigma)ti_{N} u)_{\partial M} \\
 & - 2h(h \nabla_{(\grad \ph_c)_{\|}} t i_Nu |(\partial_{\nu}\ph + h\sigma)ti_{N} u)_{\partial M} \\
 & - h^2(t \Delta \varphi_c i_N u | (\partial_{\nu}\ph + h\sigma)ti_{N} u )_{\partial M}. \\
\end{split}
\end{equation*}

Moreover, by Lemma \ref{Tdelta},
\[
t h(\delta u) = h \delta ' tu_{\|} + h((n-1)\kappa - S)ti_{N}u_{\perp} - t h \grad_{N} i_N u.
\]
Since $tu = 0$,
\[
t h(\delta u) = h((n-1)\kappa - S)ti_{N}u_{\perp} - th \grad_{N} i_N u.
\]
Substituting this into \eqref{altRBC} gives
\begin{equation}\label{tnablaNiN}
th\grad_{N} i_{N} u = (-\partial_{\nu}\ph - h\sigma + h(n-1)\kappa - hS)ti_{N} u.
\end{equation}
Therefore
\begin{equation*}
\begin{split}
 -ih(ti_{N} Bu|th(\delta u))_{\partial M} = &-2h(\partial_{\nu} \ph_c (\partial_{\nu}\ph + h\sigma - h(n-1)\kappa + hS)ti_{N} u| (\partial_{\nu}\ph + h\sigma)ti_{N} u)_{\partial M} \\
& - 2h(h\nabla_{(\grad \ph_c)_{\|}} t i_Nu |(\partial_{\nu}\ph + h\sigma)ti_{N} u)_{\partial M} \\
& - h^2(t \Delta \varphi_c i_N u | (\partial_{\nu}\ph + h\sigma)ti_{N} u )_{\partial M}. \\
\end{split}
\end{equation*}
We can write this as
\begin{equation}\label{Term2}
ih(Bu|\nu \wedge h\delta u)_{\partial M} = -2h(\partial_{\nu} \ph |\partial_{\nu} \ph|^2 ti_{N} u| ti_N u)_{\partial M} + R_2
\end{equation}
where $R_2$ satisfies the bound on $R$ in \eqref{Rbound}.


Secondly, 
\begin{eqnarray*}
- ih(Bu| i_{\nu} hdu)_{\partial M} &=&  ih((Bu)_{\|}| i_{N} hdu)_{\partial M} \\
                                   &=&  ih(t(Bu)_{\|}| ti_{N} hdu)_{\partial M}. \\
\end{eqnarray*}
By Lemma \ref{iNdu}, 
\[
t i_N hd u = t h\nabla_{N} u_{\|} + hSt u_{\|} - hd'ti_N u,
\]
so if $tu = 0$, 
\[
t i_N hd u = t h\nabla_{N} u_{\|} - hd'ti_N u.
\]
Therefore
\[
- ih(Bu| i_{\nu} hdu)_{\partial M} = ih(tBu| t h\nabla_{N} u_{\|} - hd'ti_N u)_{\partial M}. 
\]
Expanding $B$, this becomes 
\[
h(th(-2\partial_{\nu} \ph_c \nabla_N u + 2\nabla_{(\grad \ph_c)_{\|}}u + (\Lap \ph_c) u)| t h\nabla_{N} u_{\|} - hd'ti_N u)_{\partial M}.
\]
Since $tu = 0$, the last expression is equal to 
\begin{equation}\label{thirdterm}
- 2h(\partial_{\nu} \ph_c t h\nabla_N u - th \nabla_{(\grad \ph_c)_{\|}}u | t h\nabla_{N} u_{\|} - hd'ti_N u)_{\partial M}.
\end{equation}                      
The
\[
- 2h(\partial_{\nu} \ph_c t h\nabla_N u_{\|}|- hd'ti_N u)_{\partial M}
\]
part has the same type of bound as in \eqref{Rbound}, so 
\begin{equation}\label{Term3}
- ih(Bu| i_{\nu} hdu)_{\partial M} = - 2h(\partial_{\nu} \ph_c t h\nabla_N u_{\|}| t h\nabla_{N} u_{\|})_{\partial M} + R_3,
\end{equation}
where $R_3$ has the same bound as in \eqref{Rbound}.

Thirdly, 
\begin{eqnarray*}
ih(h\delta Bu|i_{\nu} u)_{\partial M} &=& ih(h(\delta Bu)_{\|}|i_{\nu} u)_{\partial M} \\
                                      &=& -ih(ht(\delta Bu)|t i_{N} u)_{\partial M}. \\
\end{eqnarray*}
By Lemma \ref{deltaBu}, 
\begin{eqnarray*}
ht \delta Bu &=& h\delta ' tBu + 2ih^2\grad_{(\grad \ph_c)_{\|}}' t \grad_{N}i_N u -2ih^2\partial_{\nu}\ph_c t\grad_{N} \grad_{N}i_N u\\
            & & + ih^2(2((n-1)\kappa-S)\partial_{\nu} \ph_{c} + 2\partial_{\nu}^2\ph_c + \Lap\ph_c)t\grad_{N} i_N u \\
            & & + 2ih^2 (S -(n-1) \kappa) t \grad_{(\grad \ph_c)_{\|}} i_N u + ih^2((S - (n-1)\kappa) \Lap \ph_c + \grad_{N} \Lap \ph_c)  t i_N u \\
            & &+2ih^2 ti_N R(N,\grad(\ph_c)_{||} ) u_\perp +2ih^2t\grad_{[(\grad \ph_c)_{\|},N]}i_N u -2ih^2i_{s(\grad \ph_c)_{\|}} t \grad_{N} u_{\|}. \\
\end{eqnarray*}

The terms on the last two lines, when paired with $ih t i_{N} u$, are bounded by \eqref{Rbound}. 

Moreover, using the boundary conditions in the form of equation \eqref{tnablaNiN} on the 
\[
h^3((2((n-1)\kappa - S)\partial_{\nu} \ph_{c} + 2\partial_{\nu}^2\ph_c + \Lap\ph_c)t\grad_{N} i_N u | t i_{N} u)_{\partial M} 
\]
term shows that this too is bounded by \eqref{Rbound}. Therefore we need only worry about the first three terms.  

For the $-ih(h\delta ' tBu| ti_{N} u)$ term, we can integrate by parts to get
\[
-ih( tBu| hd' ti_{N} u)_{\partial M} = -2h(ht \nabla_{\grad(\varphi_c)}u +\frac{1}{2} h\Delta \varphi_c tu|hd' ti_{N} u)_{\partial M}.
\]
Since $tu = 0$, we get
\[
ih( tBu| hd' ti_{N} u)_{\partial M} = 2h(ht \nabla_{\grad(\varphi_c)}u|hd' ti_{N} u)_{\partial M}.
\]
Now 
\[
t \nabla_{\grad(\varphi_c)}u = t \nabla_{\nabla(\varphi_c)_{\|}} u_{\perp} + t \nabla_{\nabla(\varphi_c)_{\perp}} u_{\|}
\]
since $tu = 0$.  Therefore 
\[
|ih( tBu| hd' ti_{N} u)_{\partial M}| \leq Kh^3\|\grad ' t i_{N} u \|^2_{\partial M} + K h^3 \norm{ u_{\perp} }_{\partial M}^2 + K h^3 \norm{\nabla_N u_{\|}}^2,
\]
and so this term is bounded by \eqref{Rbound}.  

For the $2h^3(\grad_{(\grad \ph_c)_{\|}}' t \grad_{N}i_N u|t i_{N} u)_{\partial M} $ term, we can use equation \eqref{tnablaNiN} to get
\begin{eqnarray*}
& & 2h^3(\grad_{(\grad \ph_c)_{\|}}' t \grad_{N}i_N u|t i_{N} u)_{\partial M} \\
&=& -2h^2(\grad_{(\grad \ph_c)_{\|}}' (-\partial_{\nu}\ph - h\sigma + h(n-1)\kappa - hS)ti_{N} u |t i_{N} u)_{\partial M} .
\end{eqnarray*}
and then use Cauchy-Schwartz, so this term is bounded by \eqref{Rbound} too.  Therefore
\begin{equation}\label{Term1}
-ih(h\delta Bu|i_{\nu} u)_{\partial M} = 2h^3(\partial_{\nu}\ph_c t\grad_{N} \grad_{N}i_N u|ti_{N} u)_{\partial M} + R_1
\end{equation}
where $R_1$ is bounded by \eqref{Rbound}.

Finally,
\begin{equation*}
\begin{split}
2h((\partial_{\nu} \ph_c) Au|u)_{\partial M} &= 2h((\partial_{\nu} \ph_c) Au|u_{\perp})_{\partial M} \\
                                             &= 2h((\partial_{\nu} \ph_c) (Au)_{\perp}|u_{\perp})_{\partial M} \\
                                             &= 2h((\partial_{\nu} \ph_c) t i_N Au| t i_N u)_{\partial M} \\
\end{split}
\end{equation*}
because of the boundary condition $tu = 0$.  Now $A = -h^2 \Delta - \abs{d\varphi_c}^2$, so
\begin{equation*}
\begin{split}
2h((\partial_{\nu} \ph_c) ti_N Au|ti_N u)_{\partial M} = &-2h((\partial_{\nu} \ph_c)h^2 ti_N \Delta u|ti_N u)_{\partial M} \\
                                                         &-2h((\partial_{\nu} \ph_c)\abs{d\varphi_c}^2 ti_N u|ti_N u)_{\partial M}. \\
\end{split}
\end{equation*}
Using the Weitzenbock identity, we can write $-2h((\partial_{\nu} \ph_c)h^2 ti_N \Delta u|ti_N u)_{\partial M} $ as 
\[
-2h((\partial_{\nu} \ph_c)h^2 ti_N \tilde{\Delta} u|ti_N u)_{\partial M} + 2h((\partial_{\nu} \ph_c)h^2 R ti_N u|ti_N u)_{\partial M}.  
\]
The second term is bounded by \eqref{Rbound}.  For the first term, we can apply Lemma \ref{ConxnLaplace} to get
\[
-2h((\partial_{\nu} \ph_c)h^2 t \nabla_{N} \nabla_N i_N u|ti_N u)_{\partial M} -2h((\partial_{\nu} \ph_c)h^2 \tilde{\Lap} ' t i_N u|ti_N u)_{\partial M} + h^3(tr(s^2)i_Nu -S_2i_Nu|ti_N u)_{\partial M}
\]
where $S_2 \omega (X_1,..,X_{k-1}) := \sum\limits_{l=1}^{k-1}\omega(.., s^2 X_l, ..)$. The last term is bounded again by \eqref{Rbound} and we can integrate by parts in the $\tilde{\Lap} '$ part to get something bounded by \eqref{Rbound} as well.  Therefore
\begin{equation*}
\begin{split}
2h((\partial_{\nu} \ph_c) Au|u)_{\partial M} = &-2h((\partial_{\nu} \ph_c)\abs{d\varphi_c}^2 ti_N u|ti_N u)_{\partial M} \\
                                               &-2h((\partial_{\nu} \ph_c)h^2 t \nabla_{N} \nabla_N i_N u|ti_N u)_{\partial M} + R_4 \\
\end{split}
\end{equation*}
where $R_4$ is bounded by \eqref{Rbound}.  

Now putting this together with \eqref{Term2}, \eqref{Term3}, and \eqref{Term1}, we get that the boundary terms in \eqref{IbyPidentity} have the form
\begin{equation*}
\begin{split}
 &-2h(\partial_{\nu} \ph |\partial_{\nu} \ph|^2 ti_{N} u| ti_N u)_{\partial M}- 2h(\partial_{\nu} \ph_c t h\nabla_N u_{\|}| t h\nabla_{N} u_{\|})_{\partial M}\\
+&2h^3(\partial_{\nu}\ph_c t\grad_{N} \grad_{N}i_N u|ti_{\nu} u)_{\partial M}-2h((\partial_{\nu} \ph_c)\abs{d\varphi_c}^2 ti_N u|ti_N u)_{\partial M}\\
-&2h((\partial_{\nu} \ph_c)h^2 t \nabla_{N} \nabla_N i_N u|ti_N u)_{\partial M} + R.
\end{split}
\end{equation*}
The $\pm 2h^3(\partial_{\nu}\ph_c t\grad_{N} \grad_{N}i_N u|ti_{\nu} u)_{\partial M}$ terms cancel, leaving us with 
\begin{equation*}
\begin{split}
&-2h(\partial_{\nu} \ph|\partial_{\nu} \ph|^2 ti_{N} u| ti_N u)_{\partial M}- 2h(\partial_{\nu} \ph_c t h\nabla_N u_{\|}| t h\nabla_{N} u_{\|})_{\partial M}\\
&-2h((\partial_{\nu} \ph_c)\abs{d\varphi_c}^2 ti_N u|ti_N u)_{\partial M}+ R. \\
\end{split}
\end{equation*}
We can replace $\ph_c$ by $\ph$ and incorporate the error into $R$ without affecting the bound on $R$, to get
\begin{equation*}
\begin{split}
&-2h(\partial_{\nu} \ph|\partial_{\nu} \ph|^2 ti_{N} u| ti_N u)_{\partial M}- 2h(\partial_{\nu} \ph t h\nabla_N u_{\|}| t h\nabla_{N} u_{\|})_{\partial M}\\
&-2h(\partial_{\nu} \ph\abs{d\varphi}^2 ti_N u|ti_N u)_{\partial M}+ R \\
\end{split}
\end{equation*}
and the proposition follows.

\end{proof}

\section{The $0$-form case} \label{sec:carleman2}

We will now prove Theorem \ref{ABCCarl} in the 0-form case.  In the case where $(M,g)$ is a domain in Euclidean space, Theorem \ref{ABCCarl} for $0$-forms is  the Carleman estimate given in ~\cite[Theorem 1.3]{Ch2}. In this section we will deal with the added complication of being on a CTA manifold, rather than in Euclidean space. Most of the ideas are from ~\cite{Ch2} with necessary modifications added to adapt to the manifold case.

If $u$ is a zero form, then $i_N u = 0$, so $u_{\perp} = 0$ and $u = u_{\|}$.  Theorem \ref{ABCCarl} reduces to the estimate 
\begin{equation} \label{ABCCarl_zeroform}
\|(-\Delta_\ph+h^2 Q)u\|_{L^2(M)} \gtrsim h \|u\|_{H^1(M)} + h^{\half}\| u_{\|}\|_{H^1(\Gamma_+^c)}
\end{equation}
where $Q \in L^{\infty}(M)$ and $0 < h < h_0$, for functions $u \in H^2(M)$ with $u|_{\Gamma_+} = 0$ to first order and $h\partial_{\nu} (e^{-\frac{\ph}{h}} u) = h\s e^{-\frac{\ph}{h}} u$ on $\Gamma_+^c$. By arguing as in the beginning of Section \ref{sec:carleman3} below, the estimate \eqref{ABCCarl_zeroform} will be a consequence of the following proposition.

\begin{prop}\label{MainBndryEst}
Suppose $u$ is a function in $H^2(M)$ which satisfies the following boundary conditions:
\begin{equation}\label{BC}
\begin{split}
u, \partial_{\nu} u &= 0 \mbox{ on } \Gamma_+ \\
h\partial_{\nu} (e^{-\frac{\ph}{h}} u) &= h\s e^{-\frac{\ph}{h}} u \mbox{ on } \Gamma_+^c
\end{split}
\end{equation}
for some smooth function $\s$ independent of $h$.

Then 
\[
h^{\half} \|h \grad ' u \|_{L^2(\Gamma_+^c)} \lesssim \|\Delta_{\ph_c} u \|_{L^2(M)} + h\|u\|_{H^1(M)} + h^{\frac{3}{2}}\|u\|_{L^2(\Gamma_+^c)},  
\]
\end{prop}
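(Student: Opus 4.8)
The plan is to prove Proposition \ref{MainBndryEst} by localizing near $\Gamma_+^c$ and reducing, in boundary normal coordinates, to a one–dimensional problem in the normal variable, following the scalar boundary Carleman estimate of Chung \cite{Ch2} (see also \cite{KeSa}); the new point here is to absorb the added complication of a CTA metric. Choose a finite cover of $\overline{\Gamma_+^c}$ by coordinate patches carrying boundary normal coordinates $(x',x_n)$, in which $M$ is $\{x_n \ge 0\}$, $\partial M$ is $\{x_n = 0\}$, and $g = dx_n^2 + g_{\alpha\beta}(x',x_n)\,dx^\alpha dx^\beta$. Working with a subordinate partition of unity, it suffices to bound $h^{\half}\|h\grad'(\chi u)\|_{L^2(\Gamma_+^c)}$ for each cutoff $\chi$: the commutator $[\Delta_{\ph_c},\chi] = e^{\ph_c/h}[h^2\Delta,\chi]e^{-\ph_c/h}$ is a first order semiclassical operator with $O(h)$ coefficients, so it contributes only to $\|\Delta_{\ph_c}u\|_{L^2(M)} + h\|u\|_{H^1(M)}$; patches meeting $\Gamma_+$ contribute nothing to the left–hand side since $u$, hence $h\grad' u$, vanishes there, and error terms localized near $\Gamma_+$ are controlled by the first order vanishing of $u$.

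In a single patch, write $-\Delta_{\ph_c} = (hD_{x_n} + i\partial_{x_n}\ph_c)^2 + \Lambda(x,hD_{x'}) + hR$, where $\Lambda$ is the conjugated tangential Laplacian, a second order semiclassical operator with principal symbol $\lambda(x,\xi') = |\xi'|^2_{g'} + (\text{quadratic term from }d'\ph_c)$, and $R$ is first order with bounds uniform in $h$ and $\e$. After freezing the tangential coefficients and applying the semiclassical tangential Fourier transform, the equation $\Delta_{\ph_c}u = f$ becomes, for each $\xi'$, an ODE in $x_n \in [0,\infty)$ for $\hat u(x_n,\xi')$ with right–hand side $\hat f$ plus controlled errors, together with the boundary datum $hD_{x_n}\hat u(0,\xi') = (\text{lower order})\,\hat u(0,\xi')$ inherited from \eqref{BC}. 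For each $\xi'$ one then pairs the ODE with a multiplier of the form $\overline{a(x_n)\,hD_{x_n}\hat u + b(x_n)\hat u}$, with $a$ equal to $1$ near $x_n = 0$, integrates in $x_n$ and takes real parts; the resulting boundary term at $x_n = 0$ carries $\lambda(x',0,\xi')\,|\hat u(0,\xi')|^2$ and $|hD_{x_n}\hat u(0,\xi')|^2$, the latter being removed using the Robin condition. Integrating in $\xi'$ and summing over patches recovers $\int_{\Gamma_+^c}|h\grad' u|^2\,dS$ up to interior remainders, which after Cauchy–Schwarz with a small parameter are absorbed into $\|\Delta_{\ph_c}u\|^2_{L^2(M)} + h^2\|u\|^2_{H^1(M)} + h^3\|u\|^2_{L^2(\Gamma_+^c)}$; taking square roots gives the claim.

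The main obstacle is carrying out this $x_n$–multiplier argument uniformly in the tangential frequency $\xi'$. One must split into the regimes of small, comparable, and large $|\xi'|$ relative to the natural semiclassical scale and, in each regime, choose a frequency cutoff and the multiplier coefficients $a,b$ so that the boundary term has the correct definite sign while the interior remainders stay within the budget $h^2\|u\|_{H^1(M)}^2$ — in particular a bare $\|u\|_{H^1(M)}^2$ is not affordable, so a naive Rellich identity does not suffice — all while keeping control of the errors introduced by freezing the metric coefficients and by the convexification term $\tfrac{h\ph^2}{2\e}$. Once these frequency estimates are in place, reassembling the patches, treating $[\Delta_{\ph_c},\chi]$, and handling the neighbourhood of $\Gamma_+$ are bookkeeping parallel to \cite{Ch2} and \cite{KeSa}.
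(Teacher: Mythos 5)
There is a genuine gap: the decisive analytic step is exactly the part you defer. Your scheme rests on a Rellich/multiplier identity in the normal variable whose boundary term is supposed to carry $\lambda(x',0,\xi')\,|\hat u(0,\xi')|^2$ with a definite sign, the Robin condition being used to discard $|hD_{x_n}\hat u(0,\xi')|^2$. But on $\Gamma_+^c$ the weight $\ph_c$ has, in general, a nontrivial tangential gradient, so the conjugated tangential symbol at the boundary is roughly $|\xi'|^2_{g'}-|d'\ph_c|^2+2i\langle d'\ph_c,\xi'\rangle$: its real part is \emph{negative} for $|\xi'|\lesssim |d'\ph_c|$, and no choice of $a,b$ in your multiplier produces a favorably signed boundary term uniformly in that regime. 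You acknowledge that "a naive Rellich identity does not suffice," but you do not supply the replacement mechanism, and in the low-frequency regime the correct mechanism is not a sign condition at all. In the paper (following \cite{Ch2}), after flattening along the Carleman direction $x_1$ (the graph reduction $x_1=f(x')$, available because $\partial_\nu\ph$ is bounded away from $0$ on $\Gamma_+^c$), the frozen-coefficient operator is \emph{factored} as $(h\partial_1-T_{F_+})(h\partial_1-T_{F_-})$ with $\mathrm{Re}\,F_\pm,|F_\pm|\simeq 1+|\xi|$ on the low-frequency support; the mapping properties of these one-dimensional factors (Lemma \ref{bddness}) give a full interior $H^2$ bound for $w_s$ \emph{without using any boundary condition}, and the boundary $\dot H^1$ control then comes from the semiclassical trace theorem. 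The Robin condition enters only for the high-frequency piece $w_\ell$, and not by cancelling $|hD\hat u(0)|^2$: one controls $(h\partial_1-T_{G^\e_-})w_\ell$ up to the boundary and then uses that the boundary condition turns this trace into an \emph{elliptic first-order} tangential operator applied to $w_\ell$, which is what recovers $\|w_\ell\|_{\dot H^1(\partial\Rnp)}$.

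Two further points in your outline are thinner than they can afford to be. First, the frequency splitting cannot be made only in terms of the size of $|\xi'|$: the roots $A_\pm$ of the normal symbol degenerate (branch-cut set $\b\cdot\xi=0$, $g_0^{ij}\xi_i\xi_j\le \a^2|\g|^2/(1+|\g|^2)$), so the cutoffs must be anisotropic, tracking $\tilde K\cdot\xi$ as well as $|\xi|$, with $\tilde K$ the (nearly constant) tangential part of the weight direction along the boundary patch; this is why the paper first reduces to patches where $|g_0-I|\le\d$ and $|\grad_{g_0}f-K|\le\d$. Second, "freezing the tangential coefficients" is not an $O(h)$ error like the $[\Delta_{\ph_c},\chi]$ commutators: it produces errors of size comparable to the oscillation of the coefficients, which must be beaten by the uniform invertibility of the model factors; this is exactly the role of the small-$\d$ covering argument in Section \ref{sec:carleman2}.3 and is not addressed in your proposal. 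As written, the proposal is a plausible plan whose core — the uniform-in-$\xi'$ estimates, especially in the low-frequency regime — is missing, and the specific mechanism proposed for obtaining them would fail there.
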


We will prove this proposition in the case where the metric $g$ has the form $g = e \oplus g_0$. However, if $g$ were of the form $g = c(e \oplus g_0)$, we could write
\begin{equation}
\begin{split}
\|\Delta_{\ph_c} u \|_{L^2(M)} &=       \| h^2 e^{\frac{\ph_c}{h}} \Lap_{c(e \oplus g_0)} e^{-\frac{\ph_c}{h}} u \|_{L^2(M)} \\
                                 &\gtrsim \| h^2 e^{\frac{\ph_c}{h}} \Lap_{e \oplus g_0} e^{-\frac{\ph_c}{h}} u \|_{L^2(M)} - h\|u\|_{H^1(M)}.\\
\end{split}
\end{equation}
Therefore the proposition remains true even in the case when the conformal factor is not constant.  More generally, the proofs of the Carleman estimates work for any smooth conformal factor, and thus as noted earlier, the Carleman estimates hold on CTA manifolds in general.  



\vspace{10pt}

\noindent \ref{sec:carleman2}.1. {\bf The operators.} Here we introduce the operators we will use in the proof of Proposition \ref{MainBndryEst}.  Similar operators are found in ~\cite{Ch1} and ~\cite{Ch2}.  Suppose $F(\xi)$ is a complex valued function on $\R^{n-1}$, with the properties that $|F(\xi)|, \mathrm{Re} F(\xi) \simeq 1 + |\xi|$.  Fix coordinates $(x_1, x')$ on $\Rn$, and define $\Rnp$ to be the subset of $\R^n$ with $x_1 > 0$.  Define $\S(\Rnp)$ as the set of restrictions to $\Rnp$ of Schwartz functions on $\Rn$.  Finally, if $u \in \S(\Rnp)$, then define $\hat{u}(x_1,\xi)$ to be the semiclassical Fourier transform of $u$ in the $x'$ variables only.

Now for $u \in \S(\Rnp)$, define $J$ by
\[
\widehat{Ju}(x_1,\xi) = (F(\xi) + h\partial_1)\hat{u}(x_1,\xi).
\]
This has adjoint $J^*$ defined by 
\[
\widehat{J^*u}(x_1, \xi) = (\overline{F}(\xi) - h\partial_1)\hat{u}(x_1, \xi).
\]
These operators have right inverses given by
\[
\widehat{J^{-1}u} = \frac{1}{h}\int_0^{x_1} \hat{u}(t,\xi) e^{F(\xi)\frac{t-x_1}{h}}dt
\]
and
\[
\widehat{J^{*-1}u} = \frac{1}{h}\int_{x_1}^\infty \hat{u}(t,\xi) e^{\overline{F}(\xi)\frac{x_1-t}{h}}dt.
\]
Now we have the following boundedness result, given in ~\cite{Ch2}.

\begin{lemma}\label{bddness}
The operators $J$, $J^{*}$, $J^{-1}$, and $J^{*-1}$, initially defined on $\S(\Rnp)$, extend to bounded operators
\[
J, J^{*}: H^1(\Rnp) \rightarrow L^2(\Rnp)
\]
and
\[
J^{-1}, J^{*-1}: L^2(\Rnp) \rightarrow H^1(\Rnp).
\]
Moreover, these extensions for $J^{*}$ and $J^{*-1}$ are isomorphisms.
\end{lemma}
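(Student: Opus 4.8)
The plan is to pass to the Fourier side in the tangential variables $x'$ and reduce everything to a family of one-dimensional estimates in $x_1$, uniform in the dual parameter $\xi$. By Plancherel in $x'$ (with the semiclassical normalization, so that $h\nabla_{x'}$ becomes multiplication by $i\xi$), for $u \in \S(\Rnp)$ one has
\[
\norm{u}_{L^2(\Rnp)}^2 = \int_{\R^{n-1}}\int_0^\infty \abs{\hat u(x_1,\xi)}^2\,dx_1\,d\xi,
\]
\[
\norm{u}_{H^1(\Rnp)}^2 \simeq \int_{\R^{n-1}}\int_0^\infty \left[(1+\abs{\xi}^2)\abs{\hat u(x_1,\xi)}^2 + \abs{h\partial_1\hat u(x_1,\xi)}^2\right]dx_1\,d\xi.
\]
Since $\S(\Rnp)$ is dense in both $L^2(\Rnp)$ and $H^1(\Rnp)$, it suffices to verify the claimed bounds on Schwartz data and then pass to the limit.

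The boundedness of $J$ and $J^*$ from $H^1$ to $L^2$ is then immediate from the algebraic form of the operators: since $\abs{F(\xi)}\lesssim 1+\abs{\xi}$, we get $\norm{Ju}_{L^2}^2 \lesssim \int\int[(1+\abs{\xi})^2\abs{\hat u}^2 + \abs{h\partial_1\hat u}^2]\lesssim \norm{u}_{H^1}^2$, and identically for $J^*$. For the inverses, fix $\xi$ and write $\rho = \rho(\xi) = \mathrm{Re}\,F(\xi)\simeq 1+\abs{\xi}>0$. After extending $\hat u(\cdot,\xi)$ by zero, the map $\hat u(\cdot,\xi)\mapsto \widehat{J^{-1}u}(\cdot,\xi)$ is (the restriction to $x_1>0$ of) convolution on $\R$ against the kernel $k_\xi(s)=h^{-1}e^{-F(\xi)s/h}$ for $s\ge 0$ and $0$ otherwise, which satisfies $\norm{k_\xi}_{L^1(\R)} = h^{-1}\int_0^\infty e^{-\rho s/h}\,ds = \rho^{-1}\simeq (1+\abs{\xi})^{-1}$. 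Young's inequality gives $\norm{\widehat{J^{-1}u}(\cdot,\xi)}_{L^2(0,\infty)}\le \rho^{-1}\norm{\hat u(\cdot,\xi)}_{L^2(0,\infty)}$, hence $\int\int(1+\abs{\xi}^2)\abs{\widehat{J^{-1}u}}^2\lesssim \norm{u}_{L^2}^2$. For the normal derivative one uses that $J^{-1}$ is a right inverse of $J$ by construction, so pointwise in $\xi$ one has $h\partial_1\widehat{J^{-1}u} = \hat u - F(\xi)\widehat{J^{-1}u}$; bounding the two terms with the estimate just obtained yields $\norm{h\partial_1 J^{-1}u}_{L^2}\lesssim \norm{u}_{L^2}$, so $J^{-1}\colon L^2(\Rnp)\to H^1(\Rnp)$ is bounded. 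The argument for $J^{*-1}$ is identical, with $(0,x_1)$ replaced by $(x_1,\infty)$: the relevant kernel is $s\mapsto h^{-1}e^{\overline{F}(\xi)s/h}$ supported in $s\le 0$, again of $L^1$ norm $\rho^{-1}$ since $\mathrm{Re}\,\overline{F}(\xi)=\rho$, and $J^{*-1}$ is a right inverse of $J^*$, giving $h\partial_1\widehat{J^{*-1}u}=\overline{F}(\xi)\widehat{J^{*-1}u}-\hat u$.

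It remains to show that $J^*$ and $J^{*-1}$ are isomorphisms. We already have $J^*J^{*-1}=I$ on $L^2(\Rnp)$. For the other composition, let $u\in H^1(\Rnp)$ and set $v = J^{*-1}J^*u - u\in H^1(\Rnp)$; then $\widehat v(\cdot,\xi)$ solves the homogeneous equation $(\overline{F}(\xi)-h\partial_1)\widehat v = 0$, so $\widehat v(x_1,\xi)=c(\xi)e^{\overline{F}(\xi)x_1/h}$. Because $\mathrm{Re}\,\overline{F}(\xi)=\rho(\xi)>0$, this function lies in $L^2((0,\infty),dx_1)$ only if $c(\xi)=0$; as $\widehat v(\cdot,\xi)\in L^2(0,\infty)$ for a.e.\ $\xi$, we conclude $v=0$, i.e.\ $J^{*-1}J^*=I$ on $H^1(\Rnp)$, so $J^*$ and $J^{*-1}$ are mutually inverse isomorphisms. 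The main point of the lemma lies precisely here, and it is also the reason the analogous statement fails for $J$: the homogeneous solution $e^{-F(\xi)x_1/h}$ \emph{decays} as $x_1\to\infty$, hence belongs to $L^2(0,\infty)$, so $J$ has the nontrivial kernel $\{c(\xi)e^{-F(\xi)x_1/h}\}$ and $J^{-1}$ is only a right inverse (indeed $J^{-1}J$ is the projection annihilating the trace at $x_1=0$). Thus the asymmetric sign of $\mathrm{Re}\,F$ relative to the orientation of the half-line $\{x_1>0\}$ is the crux of the argument; all the remaining ingredients are routine Fourier-side estimates.
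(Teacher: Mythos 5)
Your proof is correct. Note that the paper does not prove Lemma \ref{bddness} at all — it is quoted from \cite{Ch2} — and your argument (semiclassical Plancherel in $x'$, Young's inequality for the explicit one-dimensional kernels of $L^1$ norm $\simeq (1+\abs{\xi})^{-1}$, the identity $h\partial_1\widehat{J^{-1}u}=\hat u-F(\xi)\widehat{J^{-1}u}$ for the derivative bound, and the sign of $\mathrm{Re}\,F$ to exclude the homogeneous solution $c(\xi)e^{\overline{F}(\xi)x_1/h}$ in $L^2(0,\infty)$, giving $J^{*-1}J^*=I$) is precisely the standard route taken in that reference. Your closing remark, that the decaying homogeneous solutions $e^{-F(\xi)x_1/h}$ give $J$ a nontrivial kernel so that only $J^*$ and $J^{*-1}$ can be isomorphisms, correctly identifies why the lemma is stated asymmetrically.
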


Note that similar mapping properties hold between $H^1(\Rnp)$ and $H^2(\Rnp)$, by the same reasoning.  

We'll record the other operator fact from ~\cite{Ch2} here, too.  

Let $m,k \in \mathbb{Z}$, with $m,k\geq 0$.  Suppose $a(x,\xi,y)$ are smooth functions on $\R^{n-1} \times \R^{n-1} \times \R$ that satisfy the bounds
\[
|\partial_x^\b \partial_\xi^{\a} \partial_y^j a(x,\xi,y)| \leq C_{\a,\b} (1 + |\xi|)^{m-|\a|}
\]
for all multiindices $\a$ and $\b$, and for $0 \leq j \leq k$.  In other words, each $\partial_y^j a(x,\xi,y)$ is a symbol on $\R^{n-1}$ of order $m$, with bounds uniform in $y$, for $0 \leq j \leq k$.  Then we can define an operator $A$ on Schwartz functions in $\Rn$ by applying the pseudodifferential operator on $\R^{n-1}$ with symbol $a(x,\xi,y)$, defined by the Kohn-Nirenberg quantization, to $f(x,y)$ for each fixed $y$.  

\begin{lemma}\label{flatops}
If $A$ is as above, then $A$ extends to a bounded operator from $H^{k+m}(\Rn)$ to $H^k(\Rn)$.
\end{lemma}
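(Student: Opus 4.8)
The plan is to reduce the statement to standard mapping properties of semiclassical pseudodifferential operators on $\R^{n-1}$, parametrized by the extra variable $y$. First I would fix $y$ and consider the operator $A_y$ on $\R^{n-1}$ with Kohn-Nirenberg symbol $a(x,\xi,y)$. By hypothesis, for each fixed $y$ this is a symbol in the Hörmander class $S^m(\R^{n-1})$, with seminorm bounds $|\partial_x^\b \partial_\xi^\a a(x,\xi,y)| \leq C_{\a,\b}(1+|\xi|)^{m-|\a|}$ that are \emph{uniform in $y$}. The standard continuity theorem for pseudodifferential operators then gives that $A_y : H^{s+m}(\R^{n-1}) \to H^s(\R^{n-1})$ is bounded, with operator norm controlled by finitely many of the seminorms $C_{\a,\b}$ — hence uniformly in $y$. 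Here I would use the Sobolev scale in the $x'$ variables only; the Kohn-Nirenberg quantization is the semiclassical one with parameter $h$, but since $h$ is fixed in this lemma we may absorb it into the constants (the operator norm bound then depends on $h$, which is harmless for the present purpose).

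Next I would handle the $y$-derivatives. Since each $\partial_y^j a(x,\xi,y)$ for $0 \le j \le k$ is again a symbol of order $m$ with bounds uniform in $y$, differentiating the relation $(Af)(x,y) = A_y f(\cdot, y)(x)$ in $y$ produces, by the product (Leibniz) rule, a finite sum of terms of the form $(\partial_y^j A)_y \big(\partial_y^{i} f\big)$ with $i + j \le \ell$ for the $\ell$-th $y$-derivative, where $(\partial_y^j A)_y$ denotes the pseudodifferential operator with symbol $\partial_y^j a(\cdot,\cdot,y)$. Each such operator maps $H^{s+m}$ to $H^s$ in the $x'$ variables with norm uniform in $y$, by the previous paragraph applied to the symbol $\partial_y^j a$. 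Consequently $\partial_y^\ell (Af) \in L^2_y H^{s}_{x'}$ whenever $f \in H^{\ell}_y H^{s+m}_{x'} \cap \bigcap$ the relevant intermediate spaces.

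Finally I would assemble the pieces using the characterization of the isotropic Sobolev space $H^k(\R^n) = H^k(\R^{n-1}_{x'} \times \R_y)$ in terms of mixed norms: by interpolation (or directly, via the Fourier transform and the equivalence $(1+|\xi'|^2+|\eta|^2)^{k} \simeq \sum_{j=0}^{k} |\eta|^{2j}(1+|\xi'|^2)^{k-j}$), a function $g$ lies in $H^k(\R^n)$ iff $\partial_y^j g \in L^2_y H^{k-j}_{x'}$ for all $0 \le j \le k$, with equivalent norms. Applying this with $g = Af$: for each $j$ in this range, $\partial_y^j(Af)$ is a finite sum of operators of order $m$ in $x'$ (namely the $(\partial_y^i A)_y$, $i \le j$) applied to $\partial_y^{j-i} f$, and each such term lies in $L^2_y H^{k-j}_{x'}$ provided $\partial_y^{j-i} f \in L^2_y H^{(k-j)+m}_{x'}$, which holds since $f \in H^{k+m}(\R^n)$ and $(k-j) + m + (j-i) \le k+m$. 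Summing over $j$ and $i$ gives $\|Af\|_{H^k(\R^n)} \lesssim \|f\|_{H^{k+m}(\R^n)}$, and density of Schwartz functions lets us extend $A$ to all of $H^{k+m}(\R^n)$.

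I expect the main technical point — though it is routine rather than deep — to be the bookkeeping in the second and third paragraphs: namely verifying that the Leibniz expansion of $\partial_y^j(Af)$ only ever requires symbol estimates on $\partial_y^i a$ with $i \le k$ (so the hypothesis is exactly what is needed) and that the Sobolev exponents balance so that no more than $k+m$ total derivatives on $f$ are used. The uniformity-in-$y$ of the pseudodifferential norm bounds is the other ingredient, but this is immediate from the standard Calderón–Vaillancourt / symbol-calculus continuity theorem once one observes the hypothesis provides seminorm bounds independent of $y$.
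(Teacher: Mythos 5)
The paper itself does not prove Lemma \ref{flatops}: it is recorded as ``the other operator fact from \cite{Ch2}'' and quoted without proof, so there is no argument in this text to compare yours against line by line. Judged on its own terms, your proof is essentially correct and is the natural one: the slice-wise bound $H^{s+m}(\R^{n-1}) \to H^{s}(\R^{n-1})$ with operator norm controlled by finitely many symbol seminorms, hence uniform in $y$; the Leibniz expansion of $\partial_y^{\ell}(Af)$, which only ever calls on $\partial_y^{j}a$ with $j \le \ell \le k$, exactly what the hypothesis supplies; and the mixed-norm characterization $\|g\|_{H^k(\R^n)}^2 \simeq \sum_{j=0}^{k} \|\partial_y^j g\|_{L^2_y H^{k-j}_{x'}}^2$, with the exponent count $(j-i)+(k-j)+m \le k+m$ closing the estimate. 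Density of Schwartz functions then gives the extension.

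The one point you should repair is the parenthetical dismissal of the semiclassical parameter. The Sobolev norms in this paper are semiclassical (built from $h\nabla$), the quantization is the semiclassical Kohn--Nirenberg one, and the lemma is invoked inside the Carleman estimate machinery (for instance to control the commutator term $hE_1$ in the proof of Proposition \ref{smallprop}), where all implied constants must be uniform as $h \to 0$; a bound whose constant ``depends on $h$'' would be useless there, so absorbing $h$ into the constants is not harmless in context. This is not a real obstruction: either invoke the semiclassical continuity theorem, whose operator norms are uniform in $h \in (0,1]$ when the symbol seminorms are $h$-independent (see \cite{Z}), or rescale $x' \mapsto x'/h$, which converts the semiclassical quantization and semiclassical Sobolev norms into classical ones while preserving the symbol bounds uniformly in $h$. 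With that substitution your slice-wise estimates become uniform in both $y$ and $h$, and the rest of your bookkeeping goes through verbatim.
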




\vspace{10pt}

\noindent \ref{sec:carleman2}.2. {\bf The graph case.} Suppose $f: M_0 \rightarrow \R$ is smooth.  In this section, we'll examine the case where $M$ lies in the set $\{ x_1 \geq f(x')\}$, and $\Gamma_+^c$ lies in the graph $\{ x_1 = f(x') \}$.  For this section we'll make two additional assumptions on $f$ and $M_0$.

First, we'll assume that $g_0$ is nearly constant:  that there exists a choice of coordinates on the subset $P(M)$ which consists of the projection of $M$ onto $M_0$, such that when represented in these coordinates,
\[
|g_0 - I| \leq \d
\]
on $P(M)$, where $\d$ is a positive constant to be chosen later.

Second, we'll assume that $f$ is such that $\grad_{g_0} f$ is nearly constant on $P(M)$:  that there exists a constant vector field $K$ on $TM_0$ such that
\[
|\grad_{g_0} f - K|_{g_0} \leq \d
\]
where $\d$ is the same constant from above.  The choice of $\d$ will depend ultimately only on $K$.  In the next subsection we'll see how to remove these two assumptions.

Now we can do the change of variables $(x_1,x') \mapsto (x_1 - f(x'), x')$.  Define $\tilde{M}'$ and $\tilde{\Gamma}_+'$ to be the images of $M$ and $\Gamma_+$ respectively, under this map. Note that $\{ x_1 \geq f(x')\}$ maps to $(0,\infty) \times M_0$, and $\Gamma_+^{c}$ maps to a subset of $0 \times M_0$.  Observe that in the new coordinates $\varphi(x) = x_1 + f(x')$.

Now it suffices to prove the following proposition.  

\begin{prop}\label{FirstChange}
Suppose $w \in H^2(\tilde{M} ')$, and  
\begin{equation}\label{Change1BC}
\begin{split}
w, \partial_{\nu} w &= 0 \mbox{ on } \tilde{\Gamma}_+' \\
h \partial_{y}w|_{\tilde{\Gamma}_+'^c} &= \frac{w + \grad_{g_0} f \cdot h \grad_{g_0} w - h\s w}{1 + |\grad_{g_0} f|^2}.
\end{split}
\end{equation}
where $\s$ is smooth and bounded on $\tilde{M} '$.  
Then
\[
h^{\half} \|h \grad_{g_0} w \|_{L^2(\tilde{\Gamma}_+'^c)} \lesssim \|\Lphet ' w \|_{L^2(\tilde{M} ')} + h\|w\|_{H^1(\tilde{M} ')} + h^{\frac{3}{2}}\|w\|_{L^2(\tilde{\Gamma}_+'^c )} 
\]
where
\[
\Lphet ' = (1+|\grad_{g_0} f|^2)h^2\partial_1^2 - 2(\a + \grad_{g_0} f \cdot h\grad_{g_0})h\partial_1 + \a^2 + h^2\Lap_{g_0}
\]
and $\a = 1 + \frac{h}{\e}(x_1 + f(x '))$.  Note that on $\tilde{M} '$, $\a$ is very close to $1$.
\end{prop}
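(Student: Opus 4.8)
The plan is to adapt the scalar argument of Chung~\cite{Ch2}, carried out directly for the variable‑coefficient operator $\Lphet '$ rather than for a frozen model; the hypotheses $|g_0-I|\le\d$, $|\grad_{g_0}f-K|\le\d$ and $\a=1+O(h/\e)$ serve precisely to make $\Lphet '$ a pseudodifferential perturbation, with genuinely lower‑order remainders, of the flat operator treated in~\cite{Ch2}. First I would extend $w$ by zero. Since $w$ and $\partial_\nu w$ vanish on $\tilde\Gamma_+'$, the zero‑extension lies in $H^2$ of the slab $(0,\infty)\times M_0$, and after extending the coordinates on $P(M)$ to all of $\R^{n-1}$ (keeping $|g_0-I|\le\d$, $|\grad_{g_0}f-K|\le\d$ and making the coefficients constant off a compact set) we may assume $w\in H^2(\Rnp)$ with compact support in $x_1$ and $x'$. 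The part of $\partial\tilde M'$ inside $\{x_1=0\}$ is $\tilde\Gamma_+'^c$, where \eqref{Change1BC} holds, together with possibly a piece of $\tilde\Gamma_+'$, where $w=\partial_\nu w=0$ and hence \eqref{Change1BC} holds trivially; so the extension satisfies \eqref{Change1BC} on all of $\{x_1=0\}$, while $\Lphet '$ applied to the extension equals the zero‑extension of $\Lphet 'w$ with no distributional interface contribution. It therefore suffices to prove the asserted inequality with $\tilde M'$ and $\tilde\Gamma_+'^c$ replaced by $\Rnp$ and $\{x_1=0\}$.

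Next I would peel off the elliptic part of $\Lphet '$. Regard $\Lphet '$ as a second‑order operator in $x_1$ whose coefficients are $x'$‑pseudodifferential operators (uniformly in $x_1$, since $\a$ varies only by $O(h/\e)$); its characteristic polynomial in $\tau=h\partial_1$ is $(1+|\grad_{g_0}f|^2)\tau^2-2(\a+i\,\grad_{g_0}f\cdot\xi)\tau+\a^2-g_0^{jk}\xi_j\xi_k$, with roots $\tau_\pm$. When $\d$ is small enough, $\mathrm{Re}\,\tau_+\simeq1+|\xi|$ uniformly, while $\mathrm{Re}\,\tau_->0$ for $g_0^{jk}\xi_j\xi_k<\a^2$, $\mathrm{Re}\,\tau_-<0$ for $g_0^{jk}\xi_j\xi_k>\a^2$, and $\tau_-$ meets the imaginary axis exactly on the characteristic sphere $g_0^{jk}\xi_j\xi_k=\a^2\approx1$. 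Choosing $F(\xi)$ as in Section~\ref{sec:carleman2} to be a smooth symbol with $|F|,\mathrm{Re}\,F\simeq1+|\xi|$ that agrees with $\tau_+$ off a fixed neighbourhood of that sphere, I would write $\Lphet '=(-J^*)(h\partial_1-T_-)+hE$ modulo lower order, with $J^*$ acting in $x'$, $T_-$ an $x'$‑pseudodifferential operator of order one, and $E$ of order one with bounds uniform in $h,\e$; Lemma~\ref{flatops} controls the remainders produced by the variable coefficients. Setting $v=(h\partial_1-T_-)w$ and using that $J^*\colon H^1(\Rnp)\to L^2(\Rnp)$ is an isomorphism with inverse $J^{*-1}$ (Lemma~\ref{bddness}), one obtains $v=-J^{*-1}(\Lphet 'w)+J^{*-1}(hEw)+(\text{l.o.t.})$, whence $\|v\|_{H^1(\Rnp)}\lesssim\|\Lphet 'w\|_{L^2(\Rnp)}+h\|w\|_{H^1(\Rnp)}$.

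Then I would analyse, after Fourier transform in $x'$ and frequency by frequency, the first‑order equation $h\partial_1w=v+T_-w$ together with the decay of the compactly supported $w$ at $x_1=\infty$ and the Robin‑type condition \eqref{Change1BC} at $x_1=0$, which — after substituting $h\partial_1w|_{x_1=0}=v|_{x_1=0}+T_-w|_{x_1=0}$ — becomes a relation among $w|_{x_1=0}$, $h\grad_{g_0}w|_{x_1=0}$ and $v|_{x_1=0}$. Where $\mathrm{Re}\,\tau_-(0,x',\xi)<0$, the decaying mode is admissible and its amplitude is pinned down by \eqref{Change1BC}; since the associated Lopatinski determinant stays of size $\simeq1+|\xi|$ there, the trace bound $\|v|_{x_1=0}\|_{L^2}\lesssim h^{-\half}\|v\|_{H^1}$ gives $h^{\half}\|h\grad_{g_0}w\|_{L^2(\{x_1=0\})}\lesssim\|v\|_{H^1}+h^{\frac{3}{2}}\|w\|_{L^2(\{x_1=0\})}$ on that range, the endomorphism $\s$ and the $\grad_{g_0}f$ terms producing the last summand. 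Where $\mathrm{Re}\,\tau_->0$ and bounded away from $0$, $w$ is the unique decaying solution and is controlled by $v$ directly. Summing the contributions and using the bound on $v$ from the previous paragraph completes the proof of Proposition~\ref{FirstChange} (and, via the conformal‑factor reduction already noted after Proposition~\ref{MainBndryEst} together with the removal of the graph normalisation carried out in the next subsection, of Proposition~\ref{MainBndryEst}).

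The crux is the thin band of frequencies $\big|g_0^{jk}\xi_j\xi_k-\a^2\big|\ll1$ around the characteristic sphere, where $h\partial_1-T_-$ degenerates and the ODE cannot be solved in either direction. There the estimate must be squeezed out of the interplay between the decay of $w$ at infinity, the precise Robin structure of \eqref{Change1BC}, and the fact that on this bounded, near‑characteristic range $\Lphet '$ itself is nearly characteristic, so that the term $h\|w\|_{H^1}$ kept on the right (rather than $\|w\|_{H^2}$, which is unavailable because $\Lphet '$ is not semiclassically elliptic) absorbs the corresponding contribution to $h^{\half}\|h\grad_{g_0}w\|_{L^2(\{x_1=0\})}$. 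This is exactly the point for which the operators $J,J^{-1},J^*,J^{*-1}$ of Section~\ref{sec:carleman2} and the estimates of~\cite{Ch2} are engineered; the additional work here, beyond the Euclidean case of~\cite{Ch2}, is to check that the pseudodifferential remainders generated by $g_0$, $\grad_{g_0}f$ and $\a$ are of order $\le1$ with the stated $h$‑gains, which is what the near‑constancy hypotheses and Lemma~\ref{flatops} provide.
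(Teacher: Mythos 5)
Your overall strategy---flatten to the half space $\Rnp$, factor the operator into first--order factors built from the roots of $(1+|\g|^2)X^2-2(\a+i\,\b\cdot\xi)X+(\a^2-g_0^{jk}\xi_j\xi_k)$, invert the elliptic factor with the $J^*$ machinery of Lemma \ref{bddness}, and use \eqref{Change1BC} as a Lopatinski--type condition---is the same circle of ideas as the paper's proof (which goes through Proposition \ref{SecondChange} and the splitting $w=w_s+w_\ell$). But two steps do not hold up as written. First, your single variable--coefficient factorization $\Lphet'=(-J^*)(h\partial_1-T_-)+hE$ needs $\tau_\pm(x,\xi)$ to be smooth order--one symbols, and they are not: the square root is discontinuous where its argument meets the non-positive real axis, i.e.\ on the set $\b\cdot\xi=0$, $g_0^{jk}\xi_j\xi_k\le \a^2|\g|^2/(1+|\g|^2)$, which is nonempty whenever $K\neq 0$. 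Your modification of $F$ near the characteristic sphere does not touch this set, so $T_-$ is not a pseudodifferential operator there and Lemma \ref{flatops} and the commutator estimates you invoke do not apply. This is precisely why the paper splits off the low frequencies with the cutoff $\rho$: on $\mathrm{supp}\,\hat{w}_s$ it freezes the coefficients at $\tilde{K}$, so $A_\pm(|\tilde{K}|,\tilde{K},\xi)$ are mere Fourier multipliers with $\mathrm{Re}\,A_\pm$ bounded below (no symbol smoothness needed, no boundary condition needed, and $\|\Lphet w_s\|\gtrsim\|w_s\|_{H^2}$ gives the trace bound), while for $w_\ell$ it replaces $A^{\e}_\pm$ by $G^{\e}_\pm=(1-\zeta)A^{\e}_\pm+\zeta$, with $\zeta$ chosen so that the discontinuity set is cut away.

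The more serious gap is your treatment of the near-characteristic band $|g_0^{jk}\xi_j\xi_k-\a^2|\ll1$, which you correctly identify as the crux but then dispose of by asserting that its boundary contribution is absorbed by $h\|w\|_{H^1(\tilde{M}')}$. That absorption fails by a full power of $h$: on that band $|\xi|\simeq 1$, so the quantity to be controlled is comparable to $h^{\half}\|w\|_{L^2(\tilde{\Gamma}_+'^c)}$, and the best the interior term can give via the semiclassical trace inequality is $h^{\half}\|w\|_{L^2(\partial\Rnp)}\lesssim\|w\|_{H^1}$, not $h\|w\|_{H^1}$; the term $h^{\frac{3}{2}}\|w\|_{L^2(\tilde{\Gamma}_+'^c)}$ is likewise too weak by a factor of $h$. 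The correct mechanism---and the one the paper uses in Proposition \ref{largeprop}---is that the boundary condition is used exactly on this band: after inverting the elliptic factor one controls $h^{\half}\|(h\partial_y-T_{G^{\e}_-})w_\ell\|_{L^2(\partial\Rnp)}$, and substituting \eqref{tildeBC} converts this into $h^{\half}$ times a boundary operator with symbol $\frac{1+i\,\b\cdot\xi}{1+|\g|^2}-G^{\e}_-$, which remains elliptic of order one across the characteristic sphere because $A^{\e}_-$ vanishes there while $1+i\,\b\cdot\xi$ is bounded away from zero. In other words, the estimate on the band comes from ellipticity of the Robin-minus-root symbol on the boundary, not from interior absorption; as your proposal stands this step is missing, so the claimed inequality is not established. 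Your case analysis for $\mathrm{Re}\,\tau_-$ bounded away from zero (of either sign) is essentially the paper's two regimes and could be salvaged, but you would need the frequency splitting and the $\zeta$-modified symbols to make the factorization legitimate and to close the argument at the characteristic sphere.
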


This proposition implies Proposition \ref{MainBndryEst}, in the graph case described above.

\begin{proof}[Proof of Proposition \ref{MainBndryEst} in the graph case]

Suppose $u \in H^2(M)$, and $u$ satisfies \eqref{BC}.  Let $w$ be the function on $\tilde{M}$ defined by $w(x_1, x') = u(x_1 + f(x'), x')$.  Then $w \in H^2(\tilde{M} ')$, and $w$ satisfies \eqref{Change1BC}.  Therefore by Proposition \ref{FirstChange}, 
\[
h^{\half} \|h \grad ' w \|_{L^2(\tilde{\Gamma}_+'^c)} \lesssim \|\Lphet ' w \|_{L^2(\tilde{M} ')} + h\|w\|_{H^1(\tilde{M} ')} + h^{\frac{3}{2}}\|w\|_{L^2(\tilde{\Gamma}_+'^c )} .
\]
Now by a change of variables,
\begin{equation*}
\begin{split}
\|u\|_{L^2(\Gamma_+^c)} &\simeq \|w\|_{L^2(\tilde{\Gamma}_+'^c)}, \\
\|u\|_{H^1(M)} &\simeq \|w\|_{H^1(\tilde{M} ')}, \\
\end{split}
\end{equation*}
and
\[
\|h\grad ' u\|_{L^2(\Gamma_+^c)} \simeq \|h\grad_{g_0} w\|_{L^2(\tilde{\Gamma}_+'^c)}.
\]
Moreover, 
\[
\left( \Lphet ' w \right)(x_1 -f(x'), x') = \Lphe \left( u(x_1,x') \right) + hE_1 u(x_1,x')
\]
where $E_1$ is a first order semiclassical differential operator.  Therefore by a change of variables,
\[
\|\Lphet ' w \|_{L^2(\tilde{M} ')} \lesssim \|\Lphe u \|_{L^2(M)} + h\|u\|_{H^1(M)}.
\]
Putting this all together gives
\[
h^{\half} \|h \grad_{g_0} u \|_{L^2(\Gamma_+^c)} \lesssim \|\Lphe u \|_{L^2(M)} + h\|u\|_{H^1(M)} + h^{\frac{3}{2}}\|u\|_{L^2(\Gamma_+^c )} .
\]
\end{proof}

We can do a second change of variables to move to Euclidean space.  By our assumption on $M_0$, we can choose coordinates on $P(\tilde{M} ') = P(M)$ such that 
\[
|g_0 - I| \leq \d.
\]
Now we have a change of variables giving a map from $P(\tilde{M} ')$ to a subset of $\R^{n-1}$, and hence a map from $\tilde{M} '$ to a subset of $\Rnp$, where the image of $\tilde{\Gamma}_+'$ lies in the plane $x_1 = 0$.  Let $\tilde{M}$ and $\tilde{\Gamma}_+$ be the images of $\tilde{M} '$ and $\tilde{\Gamma}_+'$ respectively, under this map.  We'll use the notation $(x_1, x')$ to describe points in $\Rnp$, where now $x'$ ranges over $\R^{n-1}$.  Now it suffices to prove the following proposition.  

\begin{prop}\label{SecondChange}
Suppose $w \in H^2(\tilde{M})$, and  
\begin{equation}\label{tildeBC}
\begin{split}
w, \partial_{\nu} w &= 0 \mbox{ on } \tilde{\Gamma}_{+} \\
h \partial_{y}w|_{\tilde{\Gamma}_+^c} &= \frac{w + \b \cdot h \grad_{x} w - h\s w}{1 + |\g |^2}.
\end{split}
\end{equation}
where $\s$ is smooth and bounded on $\tilde{M}$, and $\b$ and $\g$ are a vector valued and scalar valued function, respectively, which coincide with the coordinate representations of $\grad_{g_0} f$ and $|\grad_{g_0} f|_{g_0}$.  
Then
\[
h^{\half} \|h \grad_{x'} w \|_{L^2(\tilde{\Gamma}_+^c)} \lesssim \|\Lphet w \|_{L^2(\tilde{M})} + h\|w\|_{H^1(\tilde{M} )} + h^{\frac{3}{2}}\|w\|_{L^2(\tilde{\Gamma}_+^c )} 
\]
where
\[
\Lphet  = (1+|\g|^2)h^2\partial_1^2 - 2(\a + \b \cdot h \grad_x)h\partial_1 + \a^2 + h^2\L,
\]
and $\L$ is the second order differential operator in the $x'$ variables given by 
\[
\L = g_0^{ij}\partial_i \partial_j.
\]
\end{prop}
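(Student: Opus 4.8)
The plan is to adapt the factorization argument used for the Euclidean $0$-form estimate in \cite[Theorem 1.3]{Ch2}; the new features are the near-Euclidean transversal metric $g_0$ and the first-order terms $\b\cdot h\grad_{x'}$ and $|\g|^2$ produced by the tilt of the graph, neither of which can be absorbed crudely since $\Lphet$ is not elliptic (its characteristic set is $\{\tau=0,\ |\xi'|=1\}$). First I would pass to the full half-space. Since $w$ and $\partial_\nu w$ vanish on $\tilde\Gamma_+$, extending $w$ by zero across $\partial\tilde M\setminus\{x_1=0\}$ yields a compactly supported element of $H^2(\Rnp)$ which still satisfies the third condition in \eqref{tildeBC} on all of $\{x_1=0\}$ (on the extra part of that hyperplane both sides vanish), and extending $g_0,\b,\g,\s,f$ smoothly to $\R^{n-1}$ so that $g_0,\b,\g$ stay within $\d$ of $I,K,|K|$, it suffices to prove the estimate with $\tilde M,\tilde\Gamma_+^c$ replaced by $\Rnp,\{x_1=0\}$.

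Next, the factorization. Viewing the semiclassical symbol of $\Lphet$ as a quadratic in the $\partial_1$-slot with $\xi'$ a parameter, its roots are
\[
r_\pm(x_1,x',\xi')=\frac{(\a+i\b\cdot\xi')\pm\sqrt{(\a+i\b\cdot\xi')^2-(1+|\g|^2)(\a^2-|\xi'|^2_{g_0})}}{1+|\g|^2}.
\]
The discriminant has real part $(1+|\g|^2)|\xi'|^2_{g_0}-(\b\cdot\xi')^2-|\g|^2\a^2$, which by Cauchy--Schwarz ($(\b\cdot\xi')^2\le|\g|^2|\xi'|^2_{g_0}$) and $\a$ close to $1$ is $\gtrsim|\xi'|^2$ for $|\xi'|$ large; hence, for $\d$ small depending only on $K$, one of the roots, $r_+$, satisfies $|r_+|,\mathrm{Re}\,r_+\simeq 1+|\xi'|$ while $|r_-|\lesssim 1+|\xi'|$. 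Taking $F=\overline{r_+}$ (after a harmless modification of $r_+$ at low frequencies so that $F$ is a genuine symbol, the excess being a bounded operator), and using the $x'$-pseudodifferential calculus of Lemma \ref{flatops} with $x_1$ as a parameter to define the operators and to collect commutator errors, I would factor
\[
\Lphet=(1+|\g|^2)\,\mathcal A\mathcal B+hE_1,\qquad \mathcal B=h\partial_1-\mathrm{Op}(r_-),\qquad \mathcal A=h\partial_1-\mathrm{Op}(\overline F),
\]
where $E_1$ is a first-order semiclassical operator with bounds uniform in $h$ and $\e$; since $\mathrm{Re}\,F,|F|\simeq 1+|\xi'|$, the operator $\mathcal A$ is of the type $-J^*$ of Lemma \ref{bddness}, and (in the variable-coefficient version of that lemma, proved by the same argument) $\mathcal A$ is an isomorphism $H^1(\Rnp)\to L^2(\Rnp)$.

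With the factorization in hand, set $v=\mathcal Bw\in H^1(\Rnp)$; then $(1+|\g|^2)\mathcal Av=\Lphet w-hE_1w$, so $\|v\|_{H^1(\Rnp)}\lesssim\|\Lphet w\|_{L^2}+h\|w\|_{H^1}$. The semiclassical trace inequality gives $h^{\half}\|v|_{\{x_1=0\}}\|_{L^2(\R^{n-1})}\lesssim\|v\|_{H^1(\Rnp)}$. On $\{x_1=0\}$, substituting $h\partial_1w=(1+|\g|^2)^{-1}(w+\b\cdot h\grad_{x'}w-h\s w)$ from \eqref{tildeBC} into $v=h\partial_1w-\mathrm{Op}(r_-)w$ gives
\[
(1+|\g|^2)\,v|_{\{x_1=0\}}=\Lambda\big(w|_{\{x_1=0\}}\big)-h\s\,w|_{\{x_1=0\}},\qquad \Lambda=\mathrm{Op}\big(1-\a|_{x_1=0}+\sqrt{D}\,\big),
\]
where $D$ is the boundary value of the discriminant above; since $1-\a|_{x_1=0}=-\tfrac h\e f=O(h)$ and $|\sqrt D|\simeq 1+|\xi'|$, the operator $\Lambda$ is elliptic of order one up to an $O(h)$ term, so the elliptic estimate yields $\|w|_{\{x_1=0\}}\|_{H^1}\lesssim\|v|_{\{x_1=0\}}\|_{L^2}+h\|w|_{\{x_1=0\}}\|_{L^2}$. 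Combining these with $\|h\grad_{x'}w|_{\{x_1=0\}}\|_{L^2}\le\|w|_{\{x_1=0\}}\|_{H^1}$ and undoing the zero-extension gives the proposition.

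The main obstacle is the factorization step: checking that the good characteristic root $r_+$ is uniformly elliptic of order one over this near-Euclidean, tilted geometry — which is exactly where the smallness of $\d$ (depending only on $K$) is used — and then upgrading the invertibility of $J^*$ in Lemma \ref{bddness} to symbols depending on both $x_1$ and $x'$ while keeping every commutator and low-frequency error in the form $hE_1$ with $\|hE_1w\|_{L^2}\lesssim h\|w\|_{H^1}$. A secondary point is the bookkeeping in the last step showing that the third condition of \eqref{tildeBC} is precisely the one that turns the induced boundary operator $\Lambda$ into an elliptic first-order operator.
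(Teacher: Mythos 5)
Your overall architecture (zero extension to $\Rnp$, factorization of $\Lphet$ into two first-order factors built from the roots $r_\pm$, inversion of the elliptic factor as in Lemma \ref{bddness}, semiclassical trace, then the boundary condition converting the remaining factor into a tangential operator) matches the paper's, but there is a genuine gap at the two places you yourself flag as the "main obstacle," and it is not repairable without an extra idea. The operator $\Lphet$ is characteristic at the bounded frequencies $\{\b\cdot\xi'=0,\ |\xi'|^2_{g_0}=\tfrac{|\g|^2\a^2}{1+|\g|^2}\}$, where the discriminant $D$ vanishes, and the branch locus of $\sqrt{D}$ (where $D$ is negative real) also lies at bounded frequencies. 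Consequently: (i) $r_-$ (not only $r_+$) fails to be a smooth symbol there, so the composition/commutator calculus of Lemma \ref{flatops} that is supposed to produce the $hE_1$ error in your factorization is not available; (ii) if you modify the symbols at low frequencies to make them genuine symbols, the discrepancy in the factorization is a frequency-localized but $O(1)$ operator of first order, so applied to all of $w$ it only yields $\|v\|_{H^1}\lesssim \|\Lphet w\|+\|w\|_{H^1}$, and the $O(1)\|w\|_{H^1}$ loss is fatal since the proposition only allows $h\|w\|_{H^1}$ on the right; and (iii) most importantly, your boundary operator $\Lambda$ has principal symbol $1-\a+\sqrt{D}$, which is $O(h)$ on the characteristic set, where $|\xi'|\simeq |\g|/\sqrt{1+|\g|^2}\simeq 1$ when $K\neq 0$; hence the claimed global ellipticity $|\sqrt D|\simeq 1+|\xi'|$ and the estimate $\|w\|_{H^1(\partial\Rnp)}\lesssim \|v\|_{L^2(\partial\Rnp)}+h\|w\|_{L^2(\partial\Rnp)}$ are false precisely at the frequencies that matter, and no choice of $\delta$ small fixes this.

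The paper's proof supplies the missing ingredient: a frequency splitting $w=w_s+w_\ell$ by a cutoff $\rho(\xi)$ adapted to $\tilde K$, chosen so that both the characteristic set and the branch locus lie inside $\{\rho=1\}$. On the small-frequency piece $w_s$ the boundary condition is not used at all: after freezing coefficients at $(\tilde K,|\tilde K|,I)$ (this is where $\delta$ enters), \emph{both} roots $A_\pm$ have real part bounded below on $\mathrm{supp}\,\hat w_s$, so both factors are of $J^{*}$ type, $\|\Lphet w_s\|\gtrsim\|w_s\|_{H^2}$, and the trace inequality gives $h^{1/2}\|w_s\|_{\dot H^1(\partial\Rnp)}$. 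Only on the large-frequency piece $w_\ell$ is your scheme carried out, with the modified symbols $G^\eps_\pm$; there the modification error annihilates $w_\ell$ (since $T_\zeta w_\ell=0$), and the boundary symbol $\sqrt{D}/(1+|\g|^2)$ is genuinely elliptic on $\mathrm{supp}(1-\rho)$, so the boundary-condition step works. Without this splitting (or an equivalent device to handle the characteristic frequencies independently of the boundary data), your argument does not close.
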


Proposition \ref{FirstChange} can be obtained from Proposition \ref{SecondChange} in the same manner as before, with errors from the change of variables being absorbed into the appropriate terms.  Therefore it suffices to prove Proposition \ref{SecondChange}.  

To do this, we'll split $w$ into small and large frequency parts, using a Fourier transform.  Recall that we are assuming 
\[
|\grad_{g_0} f - K|_{g_0} \leq \d.
\]
Translating down to $\tilde{M}$, and recalling that $g_0$ is nearly the identity, we get that there is a constant vector field $\tilde{K}$  on $\tilde{M}$ such that
\[
|\b - \tilde{K}| \leq C_{\d}
\]
and
\[
|\g - |\tilde{K}|| \leq C_{\d}
\]
where $C_{\d}$ goes to zero as $\d$ goes to zero.  Now choose $m_2 > m_1 > 0$, and $\mu_1$ and $\mu_2$ such that 
\[
\frac{|\tilde{K}|}{\sqrt{1+|\tilde{K}|^2}} < \mu_1 < \mu_2 < \half + \frac{|\tilde{K}|}{2\sqrt{1+|\tilde{K}|^2}} < 1.
\]
The eventual choice of $\mu_j$ and $m_j$ will depend only on $\tilde{K}$.  

Define $\rho \in \Czinf(\Rn)$ such that $\rho(\xi) = 1$ if $|\xi| < \mu_1$ and $|\tilde{K} \cdot \xi| < m_1$, and $\rho(\xi) = 0$ if $|\xi| > \mu_2$ or $|\tilde{K} \cdot \xi| > m_2$.  

Now suppose $w \in C^{\infty}(\tilde{M})$ such that $w \equiv 0$ in a neighbourhood of $\tilde{\Gamma}_+$, and $w$ satisfies \eqref{tildeBC}.  We can extend $w$ by zero to the rest of $\Rnp$.  Then $w \in \S(\Rnp)$, and we can write our desired estimate as
\[
h^{\half} \| w \|_{\dot{H}^1(\partial \Rnp)} \lesssim \|\Lphet w \|_{L^2(\Rnp)} + h\|w\|_{H^1(\Rnp)}+ h^{\frac{3}{2}}\|w\|_{L^2(\partial \Rnp )}. 
\]

Recall that  $\hat{w}(x_1, \xi)$ is the semiclassical Fourier transform of $w$ in the $x'$ directions, and define $w_s$ and $w_{\ell}$ by $\hat{w}_s = \rho \hat{w}$ and $\hat{w}_{\ell} = (1-\rho) \hat{w}$, so $w = w_s + w_{\ell}$.  

Now we can address each of these parts separately.  

\begin{prop}\label{smallprop}
Suppose $w$ is as above.  There exist choices of $m_1, m_2, \mu_1,$ and $\mu_2$, depending only on $\tilde{K}$, such that if $\d$ is small enough,
\[
h^{\half} \|w_s \|_{\dot{H}^1(\partial \Rnp)} \lesssim \|\Lphet w \|_{L^2(\Rnp)} + h\|w\|_{H^1(\Rnp)} + h^{\frac{3}{2}}\|w\|_{L^2(\partial \Rnp)}.
\]
\end{prop}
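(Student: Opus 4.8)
The plan is to exploit the fact that on the low-frequency part $w_s$ the operator $\Lphet$ behaves like a perturbation of a one-dimensional Carleman situation in the $x_1$ variable with the tangential Laplacian contributing a bounded, controlled amount. First I would write $\Lphet$ in the factored form suggested by the $J$-operators of subsection \ref{sec:carleman2}.1: since $\a$ is close to $1$ and $|\g|,|\b|$ are close to $|\tilde K|,\tilde K$, one has on the frequency support of $\rho$ a factorization
\[
\Lphet = (1+|\g|^2) (h\partial_1 - F_-(\xi) )(h\partial_1 - F_+(\xi)) + (\text{lower order}),
\]
where $F_\pm(\xi)$ are the roots of the symbol in $h\partial_1$, and the key point is that the choice of $\mu_1,\mu_2,m_1,m_2$ relative to $\tilde K$ guarantees $\mathrm{Re}\,F_+ \simeq 1+|\xi|$ while $\mathrm{Re}\,F_- \lesssim -(1+|\xi|)$ on the support of $\rho$ (this is where the precise window $\frac{|\tilde K|}{\sqrt{1+|\tilde K|^2}} < \mu_1 < \mu_2 < \half + \frac{|\tilde K|}{2\sqrt{1+|\tilde K|^2}}$ enters). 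Thus $h\partial_1 - F_+$ is of the type $J$ (up to conjugation by a Fourier multiplier of order $0$) and $h\partial_1 - F_-$ is of type $J^*$, both invertible with the mapping properties of Lemma \ref{bddness}.

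Next I would use the boundary conditions \eqref{tildeBC} to pin down the Cauchy data of $w_s$ at $x_1 = 0$. The Neumann-type condition there expresses $h\partial_1 w$ on $\tilde\Gamma_+^c$ as a combination of $w$ and $h\grad_{x'} w$ divided by $1+|\g|^2$; after applying $\rho$ and absorbing the commutator $[\rho,\cdot]$-type and $\g$-vs-$|\tilde K|$ discrepancy errors into the right-hand side (these cost powers of $C_\d$ or $h$, hence are harmless), the trace of $w_s$ satisfies an effective relation that lets me run the standard integration-by-parts / energy estimate for $J$ and $J^*$. Concretely, writing $v = (h\partial_1 - F_+)w_s$, one has $v = (h\partial_1 - F_-)^{-1}(\text{const})(\Lphet w_s + \text{errors})$, so $\|v\|_{H^1(\Rnp)}$ is controlled; then $w_s = (h\partial_1 - F_+)^{-1} v$ (the correct right inverse being the $J^{*-1}$-type one that integrates from $x_1 = \infty$, matching the support and decay), and the trace estimate $\|w_s\|_{\dot H^1(\partial\Rnp)}^2 \lesssim h^{-1}\|(h\partial_1 - F_+)^{-1}v\|_{H^1}^2 + \ldots$ comes from integrating $\partial_1(e^{2\mathrm{Re}F_+ x_1/h}|\widehat{w_s}|^2(1+|\xi|)^2)$ and using $\mathrm{Re}\,F_+ > 0$. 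Summing the contributions and tracking the powers of $h$ gives exactly the claimed inequality with $h^{1/2}$ on the left.

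The step I expect to be the main obstacle is the bookkeeping of the boundary terms: one must verify that when the Neumann condition from \eqref{tildeBC} is substituted and the frequency cutoff is inserted, every term that is not the principal $h^{1/2}\|w_s\|_{\dot H^1(\partial\Rnp)}$ can be genuinely absorbed into $\|\Lphet w\|_{L^2} + h\|w\|_{H^1} + h^{3/2}\|w\|_{L^2(\partial\Rnp)}$ — in particular the cross terms between $h\partial_1 w$ and $h\grad_{x'} w$ at the boundary, and the terms produced by the discrepancy $|\b - \tilde K| \le C_\d$, must be handled by choosing $\d$ small (so $C_\d$ beats the implicit constants) after the geometric parameters $\mu_j, m_j$ have already been fixed in terms of $\tilde K$ alone. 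A secondary technical point is checking that $[\rho, h\partial_1]$-type and $[\rho, \L]$-type commutators, and the replacement of $h^2\L$ by $h^2|\xi|^2$ on $\mathrm{supp}\,\rho$, only produce $O(h)$ errors of the right form; this follows from Lemma \ref{flatops} applied to symbols supported where $\rho$ lives, together with the usual semiclassical symbol calculus. Once these absorptions are justified, the estimate for $w_s$ follows as above.
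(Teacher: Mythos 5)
Your proposal breaks down at its central structural claim. You assert that on the support of $\rho$ one has $\mathrm{Re}\,F_+ \simeq 1+|\xi|$ while $\mathrm{Re}\,F_- \lesssim -(1+|\xi|)$, i.e.\ one forward and one backward factor, and you then build the whole argument (boundary condition \eqref{tildeBC}, energy integration, absorption of Cauchy-data cross terms) on that sign split. But on the low-frequency window this is false, and indeed the window $\frac{|\tilde{K}|}{\sqrt{1+|\tilde{K}|^2}} < \mu_1 < \mu_2 < \half + \frac{|\tilde{K}|}{2\sqrt{1+|\tilde{K}|^2}}$, $|\tilde{K}\cdot\xi|<m_2$ is chosen precisely to prevent it: there the product of the two roots, $(1-|\xi|^2)/(1+|\tilde{K}|^2)$, is positive and the discriminant $(1+i\tilde{K}\cdot\xi)^2-(1+|\tilde{K}|^2)(1-|\xi|^2)$ stays in a small neighbourhood of the segment $[-|\tilde{K}|^2,0]$ of the real axis, so its square root is nearly purely imaginary and \emph{both} roots satisfy $\mathrm{Re}\,A_\pm(|\tilde{K}|,\tilde{K},\xi) > \tfrac{1}{2(1+|\tilde{K}|^2)}$. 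Consequently both factors $h\partial_y - T_{F_\pm}$ are of $J^*$ type (isomorphisms by Lemma \ref{bddness}, no boundary data required), and the paper's proof of Proposition \ref{smallprop} is an interior, elliptic-type estimate: freeze coefficients at $\tilde{K}$ (cost $C_\delta\|w_s\|_{H^2}$), factor the constant-coefficient operator exactly on $\mathrm{supp}\,\hat{w}_s$, conclude $\|\Lphet w_s\|_{L^2(\Rnp)} \gtrsim \|w_s\|_{H^2(\Rnp)}$ for small $\delta$, obtain the boundary term from the semiclassical trace theorem $h^{\half}\|w_s\|_{\dot H^1(\partial\Rnp)} \lesssim \|w_s\|_{H^2(\Rnp)}$, and finally commute $T_\rho$ past $(1+|\gamma|^2)^{-1}\Lphet$ using Lemma \ref{flatops}. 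The boundary conditions \eqref{tildeBC} are never used for $w_s$; they are needed only for the large-frequency piece (Proposition \ref{largeprop}), which is exactly the regime where the roots do split into one with positive and one with non-positive real part — so your strategy is essentially the large-frequency argument misapplied to the small-frequency piece.

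Two further points would block the proposal even granting the sign claim. First, an extension $F_-$ with $\mathrm{Re}\,F_-\lesssim -(1+|\xi|)$ is incompatible with the hypotheses $|F|,\mathrm{Re}\,F \simeq 1+|\xi|$ under which Lemma \ref{bddness} gives invertibility, so the mapping properties you invoke for $(h\partial_1 - F_-)^{-1}$ are not available in that form. Second, your proposed trace step — integrating $\partial_1\bigl(e^{2\mathrm{Re}F_+x_1/h}|\hat{w}_s|^2(1+|\xi|)^2\bigr)$ over the half-line with $\mathrm{Re}\,F_+>0$ — involves an exponentially growing weight and does not yield a bound of the boundary norm by interior norms of the stated strength; the correct and much simpler tool here is the semiclassical trace inequality applied to $w_s\in H^2(\Rnp)$, once the interior $H^2$ bound has been secured as above.
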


Before proceeding to the proof, let's make some definitions.  If $V \in \R^{n-1}$ and $a \in \R$, define $A_{\pm}(a,V, \xi)$ by
\[
A_{\pm}(a,V,\xi) = \frac{1 + iV \cdot \xi \pm \sqrt{(1+iV\cdot \xi)^2 - (1+|a|^2)(1-|\xi|^2)}}{1 + |a|^2},
\]
In other words, $A_{\pm}(a,V, \xi)$ are defined to be the roots of the polynomial
\[
(1 + |a|^2)X^2 - 2(1+ iV \cdot \xi)X + (1 - |\xi|^2).
\]
In the definition, we'll choose the branch of the square root which has non-negative real part, so the branch cut occurs on the negative real axis.  

\begin{proof}
Now consider the behaviour of $A_{\pm}(|\tilde{K}|, \tilde{K},\xi)$ on the support of $\rho$, or equivalently, on the support of $\hat{w}_s$.  If $\eta > 0$, we can choose $\mu_2$ such that on the support of $\hat{w}_s$, 
\[
1 - (1+|\tilde{K}|^2)(1-|\xi|^2) < \eta.
\]
Then on the support of $\hat{w}_s$, the expression
\[
(1+i\tilde{K} \cdot \xi)^2 - (1+| \tilde{K} |^2)(1-|\xi|^2)
\]
has real part confined to the interval $[ - \tilde{K}^2 - m_2^2, \eta + m_2^2 ]$, and imaginary part confined to the interval $[-2m_2, 2m_2]$.  Therefore, by correct choice of $\eta$ and $m_2$, we can ensure
\[
\mathrm{Re}A_{\pm}(|\tilde{K}|, \tilde{K},\xi) > \frac{1}{2(1+|\tilde{K}|^2)}.
\]
on the support of $\hat{w}_s$.  This allows us to fix the choice of $\mu_1, \mu_2, m_1,$ and $m_2$.  Note that the choices depend only on $\tilde{K}$, as promised.  

The bounds on $A_{\pm}(|\tilde{K}|, \tilde{K},\xi)$ allow us to choose $F_{\pm}$ so that $F_{\pm} = A_{\pm}(|\tilde{K}|, \tilde{K},\xi)$ on the support of $\hat{w}_s$, and $\mathrm{Re}F_{\pm}, |F_{\pm}| \simeq 1 + |\xi|$ on $\Rn$, with constant depending only on $K$.  Therefore $F_{+}$ and $F_{-}$ both satisfy the conditions on $F$ in Section 2.  If $T_{\psi}$ represents the operator with Fourier multiplier $\psi$ (in the $x'$ variables), then it follows that the operators $h \partial_y - T_{F_{+}}$ and $h\partial_y - T_{F_{-}}$ both have the properties of $J^{*}$ in that section.  

Up until now, the operator $\Lphet$ has only been applied to functions supported in $\tilde{M}$.  However, we can extend the coefficients of $\Lphet$ to $\Rnp$ while retaining the $|\b - \tilde{K}| < C_{\d}$  and $|\g - |\tilde{K}|| \leq C_{\d}$ conditions.  Then
\begin{eqnarray*}
\|\Lphet w_s\|_{L^2(\Rnp)} &=& \|((1+|\g|^2)h^2\partial_y^2 - 2(\a + \b \cdot h\grad_x)h\partial_y + \a^2 + h^2\L)w_s\|_{L^2(\Rnp)} \\
                            &\geq& \|((1+|\tilde{K}|^2)h^2\partial_y^2 - 2(1 + \tilde{K} \cdot h\grad_x)h\partial_y + 1 + h^2\Lap_{x'})w_s\|_{L^2(\Rnp)} \\
                            & & - C_\delta \|w_s\|_{H^2(\Rnp)}\\
\end{eqnarray*}
for sufficiently small $h$ and some $C_{\delta}$ which goes to zero as $\delta$ goes to zero.  Meanwhile, 
\begin{eqnarray*}
& & (1+|\tilde{K}|^2)(h\partial_y - T_{F_{+}})(h\partial_y - T_{F_{-}})w_s \\
&=& (1+|\tilde{K}|^2)(h^2 \partial_y^2 - T_{F_{+} + F_{-}} h\partial_y + T_{F_{+}F_{-}})w_s. \\
\end{eqnarray*}
Since $F_{\pm} = A_{\pm}(\tilde{K}, K, \xi)$ on the support of $\hat{w}_s$, this can be written as 
\begin{eqnarray*}
& & (1+|\tilde{K}|^2)(h^2 \partial_y^2 - T_{A_{+} + A_{-}}h\partial_y + T_{A_{+}A_{-}})w_s\\
&=& ((1+|\tilde{K}|^2)h^2\partial_y^2 - 2(1 + \tilde{K} \cdot h\grad_x)h\partial_y + 1 + h^2\Lap_x)w_s.\\
\end{eqnarray*}
Therefore
\[
\|\Lphet w_s\|_{L^2(\Rnp)} \geq \|(h\partial_y - T_{F_{+}})(h\partial_y - T_{F_{-}})w_s \|_{L^2(\Rnp)} - C_\delta\|w_s\|_{H^2(\Rnp)}.
\]
Now by the boundedness properties,
\[
\|(h\partial_y - T_{F_{+}})(h\partial_y - T_{F_{-}})w_s \|_{L^2(\Rnp)} \simeq \|w_s\|_{H^2(\Rnp)},
\]
so for small enough $\delta$, 
\[
\|\Lphet w_s\|_{L^2(\Rnp)} \gtrsim \|w_s\|_{H^2(\Rnp)}.
\]
Then by the semiclassical trace formula, 
\[
\|\Lphet w_s\|_{L^2(\Rnp)} \gtrsim h^{\half}\|w_s\|_{\dot{H}^1(\partial \Rnp)}.
\]

Finally, note that 
\begin{equation*}
\begin{split}
\|\Lphet w_s\|_{L^2(\Rnp)} &=        \|\Lphet T_{\rho} w\|_{L^2(\Rnp)}\\
                           &\lesssim \|(1 + |\gamma|^2)^{-1} \Lphet T_{\rho} w\|_{L^2(\Rnp)}\\
                           &\lesssim \|T_{\rho} (1 + |\gamma|^2)^{-1} \Lphet  w\|_{L^2(\Rnp)} + \|hE_1w\|_{L^2(\Rnp)}.
\end{split}
\end{equation*}
where $hE_1$ comes from the commutator of $T_{\rho}$ and $(1 + |\gamma|^2)^{-1} \Lphet$.  By Lemma \ref{flatops}, $E_1$ is bounded from $H^1(\Rnp)$ to $L^2(\Rnp)$, so
\[
\|\Lphet w_s\|_{L^2(\Rnp)} \lesssim \|\Lphet  w\|_{L^2(\Rnp)} + h\|w\|_{H^1(\Rnp)}.
\]
Therefore
\[
\|\Lphet  w\|_{L^2(\Rnp)} + h\|w\|_{H^1(\Rnp)} \gtrsim h^{\half}\|w_s\|_{\dot{H}^1(\partial \Rnp)}
\]
as desired.

\end{proof}

Now we have to deal with the large frequency term.  

\begin{prop}\label{largeprop}
Suppose $w$ is the extension by zero to $\Rnp$ of a function in $C^{\infty}(\tilde{M})$ which is $0$ in a neighbourhood of $\tilde{\Gamma}_+$, and satisfies \eqref{tildeBC}, and let $w_{\ell}$ be defined as above.  Then if $\d$ is small enough,
\[
h^{\half} \|w_{\ell}\|_{\dot{H}^1(\partial \Rnp)} \lesssim \|\Lphet w \|_{L^2(\Rnp)} + h\|w\|_{H^1(\RnIp)} + h^{\frac{3}{2}}\|w\|_{L^2(\partial \Rnp)}.
\]
\end{prop}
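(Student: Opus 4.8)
The plan is to complement the small-frequency estimate by treating the large-frequency piece $w_\ell$ with $\hat w_\ell = (1-\rho)\hat w$, supported where either $|\xi| > \mu_1$ or $|\tilde K\cdot\xi| > m_1$. On this frequency region the symbol of the constant-coefficient model operator $(1+|\tilde K|^2)h^2\partial_y^2 - 2(1+\tilde K\cdot h\grad_x)h\partial_y + 1 + h^2\Lap_{x'}$ behaves differently: the two roots $A_\pm(|\tilde K|,\tilde K,\xi)$ of the characteristic polynomial no longer both have comfortably positive real part. In fact on the large-frequency region one of the roots, say $A_-$, can have negative real part, which corresponds to a genuine \emph{boundary value problem} rather than a purely elliptic estimate. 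So here the boundary conditions \eqref{tildeBC} must be used in an essential way, unlike in Proposition \ref{smallprop} where they were essentially irrelevant.

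The key steps, in order, are as follows. First I would factor the model operator as $(1+|\tilde K|^2)(h\partial_y - T_{G_+})(h\partial_y - T_{G_-})$ where $G_\pm$ agree with $A_\pm(|\tilde K|,\tilde K,\xi)$ on $\mathrm{supp}(1-\rho)$ and are chosen so that $\pm\mathrm{Re}\,G_\pm \gtrsim 1+|\xi|$ there; then $h\partial_y - T_{G_+}$ is of the $J^*$-type of Section \ref{sec:carleman2}.1 (invertible, with $J^{*-1}$ mapping $L^2(\Rnp)\to H^1(\Rnp)$) while $h\partial_y - T_{G_-}$ is of the $J$-type (it has a right inverse $J^{-1}$, but it is \emph{not} injective on $\Rnp$: its kernel consists of the decaying solutions $e^{G_-x_1/h}$, which is exactly where the boundary data enters). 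Second, applying $(h\partial_y - T_{G_+})^{-1}$ to $\Lphet w_\ell$ reduces matters to estimating $(h\partial_y - T_{G_-})w_\ell$, up to the $C_\delta\|w_\ell\|_{H^2}$ and commutator errors handled as in the previous proof via Lemma \ref{flatops}. Third — and this is the crux — I would use the semiclassical trace formula together with the boundary conditions \eqref{tildeBC} to control the trace $h^{1/2}\|w_\ell\|_{\dot H^1(\partial\Rnp)}$: one solves the first-order ODE $(h\partial_y - T_{G_-})w_\ell = r$ explicitly by the formula for $J^{-1}$ plus a boundary term $c(\xi)e^{G_-x_1/h}$, where $c(\xi)$ is determined by the Neumann-type condition $h\partial_y w|_{\Gamma_+^c} = (w + \beta\cdot h\grad_x w - h\sigma w)/(1+|\gamma|^2)$; plugging the ODE solution into this condition gives an algebraic relation for $c(\xi)$ from which $\|w_\ell\|_{\dot H^1(\partial\Rnp)}$ is bounded by $\|\Lphet w\|_{L^2} + h\|w\|_{H^1} + h^{3/2}\|w\|_{L^2(\partial\Rnp)}$. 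The vanishing of $w$ near $\tilde\Gamma_+$ is used to ensure there is no contribution from that part of the boundary.

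The main obstacle I expect is the third step: verifying that the algebraic relation for the boundary coefficient $c(\xi)$ is \emph{solvable with uniform bounds} on the large-frequency region, i.e.\ that the ``Lopatinski-type'' determinant arising from matching $h\partial_y w_\ell$ against the Robin-type right-hand side of \eqref{tildeBC} is bounded below by $c(1+|\xi|)$ uniformly in $\xi$ on $\mathrm{supp}(1-\rho)$ and uniformly in small $h$. This is where the specific structure of $A_-(|\tilde K|,\tilde K,\xi)$ and the precise thresholds $\mu_1, m_1$ (chosen in Proposition \ref{smallprop} to depend only on $\tilde K$) have to cooperate — one needs $\mathrm{Re}\big((1+|\gamma|^2)A_- - (1 + i\beta\cdot\xi)\big)$, or the analogous quantity after accounting for the $h\sigma$ and variable-coefficient corrections, to stay away from zero. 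Once that lower bound is in hand, the $C_\delta$-errors are absorbed exactly as before by taking $\delta$ small, the commutator errors with $T_{1-\rho}$ go into $h\|w\|_{H^1}$ via Lemma \ref{flatops}, and combining Propositions \ref{smallprop} and \ref{largeprop} yields Proposition \ref{SecondChange}, hence (by the two changes of variables) Proposition \ref{MainBndryEst}.
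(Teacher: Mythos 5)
Your overall strategy is the paper's: split $w=w_s+w_\ell$, factor the conjugated operator into two first-order factors built from the roots $A_\pm$, and exploit the Robin condition \eqref{tildeBC} through the quantity $\tfrac{\alpha+i\beta\cdot\xi}{1+|\gamma|^2}-G_-$. But two steps as written would fail. First, your description of $h\partial_y-T_{G_-}$ as ``$J$-type, with kernel the decaying solutions $e^{G_-x_1/h}$'' rests on the claim $\mathrm{Re}\,G_-\lesssim-(1+|\xi|)$ on $\mathrm{supp}(1-\rho)$, which is false: for $\tilde{K}\cdot\xi=0$ and $\mu_1<|\xi|<1$ the discriminant $1-(1+|\tilde{K}|^2)(1-|\xi|^2)$ lies in $(0,1)$, so both roots are real and positive, and $A_-(|\tilde{K}|,\tilde{K},\xi)\to 0$ as $|\xi|\to 1$. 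On that part of the frequency region there is no decaying homogeneous solution and no uniform lower bound on $|\mathrm{Re}\,G_-|$, so the explicit ODE-solution-plus-matching scheme does not run uniformly. The paper avoids solving the ODE altogether: it only needs the $G_+$-factor to satisfy the $F$-conditions (hence bounded below from $H^1$ to $L^2$), applies the semiclassical trace inequality to $v=(h\partial_y-T_{G^{\e}_-})w_\ell\in H^1(\Rnp)$, and then substitutes \eqref{tildeBC} into $h\partial_y w_\ell$ at $x_1=0$; the resulting boundary operator has symbol $\tfrac{\alpha+i\beta\cdot\xi}{1+|\gamma|^2}-G^{\e}_-$, which on $\mathrm{supp}(1-\zeta)$ equals $\tfrac{1}{1+|\gamma|^2}\sqrt{(\alpha+i\beta\cdot\xi)^2-(1+|\gamma|^2)(\alpha^2-g_0^{ij}\xi_i\xi_j)}$, elliptic of order one there because the argument of the root stays away from the branch cut once $\delta$ is small (this is exactly the paper's singular-support discussion, and is also why the paper interpolates with the extra cutoff, $G^{\e}_\pm=(1-\zeta)A^{\e}_\pm+\zeta$ with $\zeta$ supported where $\rho=1$, so the symbols are globally smooth and $T_\zeta w_\ell=0$). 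So the ``Lopatinski determinant'' you flag as the main obstacle is bounded below by construction of the minus root; nothing beyond the threshold choice of Proposition \ref{smallprop} is needed.

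Second, you freeze the coefficients in \emph{both} factors and plan to absorb the resulting $C_\delta\|w_\ell\|_{H^2(\Rnp)}$ error ``as in the previous proof.'' That absorption worked for $w_s$ because there both factors are of $J^*$-type and the product controls $\|w_s\|_{H^2}$; for $w_\ell$ the $G_-$-factor is not elliptic (it degenerates at $|\xi|=1$), so the main term only controls $\|(h\partial_y-T_{G^{\e}_-})w_\ell\|_{H^1}$, not $\|w_\ell\|_{H^2}$, and a $C_\delta\|w_\ell\|_{H^2}$ error cannot be absorbed. This is why the paper keeps the variable-coefficient symbol $G^{\e}_-(\gamma,\beta,\xi)$ in the second factor (which is also what makes the boundary substitution match \eqref{tildeBC} exactly, up to an $hE_0$ error) and replaces only the first factor by the constant-coefficient $G_+(|\tilde{K}|,\tilde{K},\xi)$, producing an error $\delta\|(h\partial_y-T_{G^{\e}_-})w_\ell\|_{H^1}$, i.e.\ a small multiple of precisely the quantity being bounded below, hence absorbable for small $\delta$. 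With these two corrections your argument collapses onto the paper's proof.
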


\begin{proof}

Suppose $V \in \Rn$.  Recall that we defined 
\[
A_{\pm}(a,V,\xi) = \frac{1 + iV \cdot \xi \pm \sqrt{(1+iV\cdot \xi)^2 - (1+|a|^2)(1-|\xi|^2)}}{1 + |a|^2},
\]
so $A_{\pm}(a,V,\xi)$ are roots of the polynomial
\[
(1 + |a|^2)X^2 - 2(1+ iV \cdot \xi)X + (1 - |\xi|^2).
\]
Now let's define
\[
A^{\e}_{\pm}(a, V,\xi) = \frac{\a + iV \cdot \xi \pm \sqrt{(\a+iV\cdot \xi)^2 - (1+|a|^2)(\a^2-g_0^{ij}\xi_i \xi_j)}}{1 + |a|^2},
\]
so $A^{\e}_{\pm}(V,\xi)$ are the roots of the polynomial
\[
(1 + |a|^2)X^2 - 2(\a+ iV \cdot \xi)X + (\a^2 - g_0^{ij}\xi_i \xi_j).
\]
(Recall that $\a$ is defined by $\a = 1 + \frac{h}{\e}(x_1 + f(x'))$.)  Again we'll use the branch of the square root with non-negative real part.  

Now set $\zeta \in C^{\infty}_0(\R^{n-1})$ to be a smooth cutoff function such that $\zeta = 1$ if 
\[
|\tilde{K} \cdot \xi| < \half m_1 \mbox{ and } |\xi| < \half \frac{|\tilde{K}|}{\sqrt{1+|\tilde{K}|^2}} +  \half \mu_1, 
\]
and $\zeta = 0$ if $|\tilde{K} \cdot \xi| \geq m_1$ or $|\xi| \geq \mu_1$.  

Now define 
\[
G_{\pm}(a,V, \xi) = (1-\zeta)A_{\pm}(a,V,\xi) + \zeta 
\]
and
\[
G^{\e}_{\pm}(a,V, \xi) = (1-\zeta)A^{\e}_{\pm}(a, V,\xi) + \zeta.
\]

Consider the singular support of $A^{\e}_{\pm}(\g,\b,\xi)$.  These are smooth as functions of $x$ and $\xi$ except when the argument of the square root falls on the non-positive real axis.  This occurs when $\b \cdot \xi = 0$ and
\[
g_0^{ij}\xi_i \xi_j \leq \frac{\a^2|\g|^2}{1 + |\g|^2}.
\] 
Now for $\d$ sufficiently small, depending on $\tilde{K}$, this does not occur on the support of $1 -\z$.  Therefore
\[
G^{\e}_{\pm}(\g, \b, \xi) = (1-\zeta)A^{\e}_{\pm}(\g,\b,\xi) + \zeta
\]
are smooth, and one can check that they are symbols of first order on $\Rn$.  

Then by properties of pseudodifferential operators,
\begin{eqnarray*}
& & (1+|\g|^2)(h\partial_y - T_{G^{\e}_{+}(\g,\b, \xi)})(h\partial_y - T_{G^{\e}_{-}(\g,\b, \xi)}) \\
&=& (1+|\g|^2)(h^2\partial_y^2 - T_{G^{\e}_{+}(\g,\b, \xi) + G^{\e}_{-}(\g,\b, \xi)}h\partial_y + T_{G^{\e}_{+}(\g,\b, \xi)G^{\e}_{-}(\g,\b, \xi)})+hE_1,
\end{eqnarray*}
where $E_1$ is bounded from $H^1(\RnIp)$ to $L^2(\RnIp)$.  
This last line can be written out as
\begin{equation*}
\begin{split}
&(1+|\g|^2)h^2\partial_y^2 - 2(\a + \b \cdot h \grad_x)h\partial_y T_{1 - \zeta}T_{1+\z} + (\a + h^2\L)T_{(1-\z)^2} \\
&+ hE_1 + T_{\zeta^2} -2 h\partial_y T_{\zeta},
\end{split}
\end{equation*}
by modfiying $E_1$ as necessary.  Now $T_{\zeta}w_{\ell} = 0$, so
\begin{eqnarray*}
& & (1+|\g|^2)(h\partial_y - T_{G^{\e}_{+}(\g,\b,\xi)})(h\partial_y - T_{G^{\e}_{-}(\g,\b,\xi)})w_{\ell}\\
&=& \Lphet w_{\ell} - h E_1 w_{\ell}.
\end{eqnarray*}

Therefore
\begin{eqnarray*}
\|\Lphet w_{\ell} \|_{L^2(\RnIp)} &\gtrsim& \|(h\partial_y - T_{G^{\e}_{+}(\g,\b,\xi)})(h\partial_y - T_{G^{\e}_{-}(\g,\b,\xi)})w_{\ell}\|_{L^2(\RnIp)} \\
                                     & & - h\|w_{\ell}\|_{H^1(\RnIp)}.\\
\end{eqnarray*}
Now
\[
G^{\e}_{+}(\g,\b, \xi) =  G_{+}(|\tilde{K}|,\tilde{K}, \xi) + (G^{\e}_{+}(\g, \b, \xi) -  G_{+}(|\tilde{K}|,\tilde{K}, \xi)),
\]
and
\[
T_{G^{\e}_{+}(\g,\b, \xi) -  G_{+}(|\tilde{K}|,\tilde{K}, \xi)}
\]
involves multiplication by functions bounded by $O(\d)$, so
\[
\|T_{G^{\e}_{+}(\g,\b, \xi) -  G_{+}(|\tilde{K}|,\tilde{K}, \xi)}v\|_{L^2(\RnIp)} \lesssim \d \|v\|_{H^1(\RnIp)}.
\]
Therefore
\begin{eqnarray*}
\|\Lphet w_{\ell} \|_{L^2(\RnIp)} &\gtrsim& \|(h\partial_y - T_{G_{+}(|\tilde{K}|,\tilde{K},\xi)})(h\partial_y - T_{G^{\e}_{-}(\g,\b,\xi)})w_{\ell}\|_{L^2(\RnIp)} \\
                                     & & - h\|w_{\ell}\|_{H^1(\RnIp)} - \d\|(h\partial_y - T_{G^{\e}_{-}(\g,\b, \xi)})w_{\ell}\|_{H^1(\RnIp)} .\\
\end{eqnarray*}
Now we can check that $G_{+}(|\tilde{K}|,\tilde{K},\xi)$ satisfies the necessary properties of $F$ from Section 5, so
\begin{eqnarray*}
\|\Lphet w_{\ell} \|_{L^2(\RnIp)} &\gtrsim& \|(h\partial_y - T_{G^{\e}_{-}(\g,\b, \xi)})w_{\ell}\|_{H^1(\RnIp)} \\
                                     & & - h\|w_{\ell}\|_{H^1(\RnIp)} - \d\|(h\partial_y - T_{G^{\e}_{-}(\g,\b,\xi)})w_{\ell}\|_{H^1(\RnIp)} .\\
\end{eqnarray*}
Then for small enough $\d$,
\begin{equation*}
\begin{split}
\|\Lphet w_{\ell} \|_{L^2(\RnIp)} &\gtrsim \|(h\partial_y - T_{G^{\e}_{-}(\g,\b, \xi)})w_{\ell}\|_{H^1(\RnIp)} - h\|w_{\ell}\|_{H^1(\RnIp)} \\
                                  &\gtrsim h^{\half}\|(h\partial_y - T_{G^{\e}_{-}(\g,\b, \xi)})w_{\ell}\|_{L^2(\Rno)} - h\|w_{\ell}\|_{H^1(\RnIp)}.
\end{split}
\end{equation*}

Now by \eqref{tildeBC},
\[
h\partial_y w = \frac{w + \b \cdot h \grad_x w + h\s w}{1 + |\g|^2} 
\]
on $\partial \Rnp$, so
\[
h\partial_y w_{\ell} = \frac{w_{\ell} + \b \cdot \grad_x w_{\ell}}{1 + |\g|^2} + hE_0 w
\]
on $\partial \Rnp$, where $E_0$ is bounded from $L^2(\R^{n-1})$ to $L^2(\R^{n-1})$.  Therefore
\begin{eqnarray*}
\|\Lphet w_{\ell}\|_{L^2(\Rnp)} &\gtrsim& h^{\half}\left\| \frac{w_{\ell}+\b \cdot \grad_x w_{\ell}}{1+|\g|^2} -T_{G^{\e}_{-}(\g,\b,\xi)}w_{\ell} \right\|_{L^2(\partial \Rnp)}\\
                                & &       - h\|w_{\ell}\|_{H^1(\Rnp)} - h^{\frac{3}{2}}\|w\|_{L^2(\partial \Rn)} \\
                                &\gtrsim& h^{\half} \|w_{\ell} \|_{\dot{H}^1(\partial \Rnp)} - h\|w_{\ell}\|_{H^1(\Rnp)} - h^{\frac{3}{2}}\|w\|_{L^2(\partial \Rnp)}.      
\end{eqnarray*}

Now 
\[
\|w_{\ell}\|_{H^1(\Rnp)} \lesssim \|w\|_{H^1(\Rnp)}
\]
and
\[
\|\Lphet w_{\ell}\|_{L^2(\Rnp)} \lesssim \|\Lphet w \|_{L^2(\Rnp)} + h\|w\|_{H^1(\Rnp)}.
\]
Therefore
\[
\|\Lphet w \|_{L^2(\Rnp)} + h\|w\|_{H^1(\Rnp)} + h^{\frac{3}{2}}\|w\|_{L^2(\partial \Rnp)}  \gtrsim h^{\half} \|w_{\ell} \|_{\dot{H}^1(\partial \Rnp)}
\]
as desired.

\end{proof}

Now combing the results of Propositions \ref{smallprop} and \ref{largeprop} gives 
\[
h^{\half} \|w_{\ell}\|_{\dot{H}^1(\partial \Rnp)} + h^{\half} \|w_{s}\|_{\dot{H}^1(\partial \Rnp)} \lesssim \|\Lphet w \|_{L^2(\Rnp)} + h\|w\|_{H^1(\RnIp)} + h^{\frac{3}{2}}\|w\|_{L^2(\partial \Rnp)}.
\]
Since $w = w_s + w_{\ell}$, we get
\[
h^{\half} \|w\|_{\dot{H}^1(\partial \Rnp)} \lesssim \|\Lphet w \|_{L^2(\Rnp)} + h\|w\|_{H^1(\RnIp)} + h^{\frac{3}{2}}\|w\|_{L^2(\partial \Rnp)}.
\]
for $w \in C^{\infty}(\tilde{M})$ such that $w \equiv 0$ in a neighbourhood of $\tilde{\Gamma}_+$, and $w$ satisfies \eqref{tildeBC}.  A density argument now proves Proposition \ref{SecondChange}, and hence Proposition \ref{MainBndryEst}, at least under the assumptions on $g_0$ and $f$ made at the beginning of this section.  



\vspace{10pt}

\noindent \ref{sec:carleman2}.3. {\bf Finishing the proof.} Now we need to remove the graph conditions on $\Gamma_+^c$, and the conditions on the metric $g_0$.  Since $\Gamma_+$ is a neighbourhood of $\partial M_{+}$, in a small enough neighbourhood $U$ around any point $p$ on $\Gamma_+^c$, $\Gamma_+^c$ coincides locally with a subset of a graph of the form $x_1 = f(x')$, with $M \cap U$ lying in the set $x_1 > f(x')$.  Moreover, for any $\d >0$, if $\grad_{g_0} f(p) = K$, then in some small neighbourhood of $p$, $|\grad_{g_0} f - K|_{g_0} < \d$.  Additionally, since we can choose coordinates at $p$ such that $g_0 = I$ in those coordinates, for any $\d > 0$ we can ensure that there are coordinates such that $|g_0 - I| \leq \d$ in a small neighbourhood of $p$.  We can choose $\d$ to be small enough for Proposition \ref{MainBndryEst} to hold, by the proof in the previous subsection.  

Now we can let $U_j$ be open sets in $M$ such that $\{U_1, \ldots U_m\}$ is a finite open cover of $M$ such that each $M \cap U_j$ has smooth boundary, and each $\Gamma_+^c \cap U_j$ is represented as a graph of the form $x_1 = f_j(x')$, with $|\grad_{g_0} f_j - K_j|_{g_0} < \d_j$, and there is a choice of coordinates on the projection of $M \cap U_j$ in which $|g_0 - I| \leq \d_j$, where $\d_j$ are small enough for 
\[
h^{\half} \|h \grad_t v_j \|_{L^2(\Gamma_+^c \cap U_j)} \lesssim \|\L_{\ph,\e} v_j \|_{L^2(M \cap U_j)} + h\|v_j\|_{H^1(M \cap U_j)} + h^{\frac{3}{2}}\|v_j\|_{L^2(\Gamma_+^c \cap U_j)},  
\]
to hold for all $v_j \in H^2(M \cap U_j)$ such that 
\begin{equation}\label{jBC}
\begin{split}
v_j, \partial_{\nu} v_j &= 0 \mbox{ on } \partial (U_j \cap M) \setminus \Gamma_+^c  \\
h\partial_{\nu} (e^{-\frac{\ph}{h}} v_j) &= h\s e^{-\frac{\ph}{h}} v_j \mbox{ on } \Gamma_+^c \cap U_j.
\end{split}
\end{equation}
Without loss of generality we may assume that each $U_j$ is compactly contained in $U_j^0\times (0,1)$ where $U_j^0$ is a coordinate chart of $M_0$. 

Now let $\chi_1, \ldots \chi_m$ be a partition of unity subordinate to $U_1, \ldots U_m$, and for $w \in H^2(M)$ satisfying \eqref{BC}, define $w_j = \chi_j w$.  Then if $\Gamma_+^c \cap U_j \neq \varnothing$, $w_j$ satisfies \eqref{jBC} for some $\s$, and so 
\[
h^{\half} \|h \grad_t w_j \|_{L^2(\Gamma_+^c \cap U_j)} \lesssim \|\L_{\ph,\e} w_j \|_{L^2(M \cap U_j)} + h\|w_j\|_{H^1(M \cap U_j)} + h^{\frac{3}{2}}\|w_j\|_{L^2(\Gamma_+^c \cap U_j)}.
\]
Adding together these estimates gives
\[
h^{\half} \|h \grad_t w \|_{L^2(\Gamma_+^c)} \lesssim \sum_{j=1}^m \|\L_{\ph,\e} w_j \|_{L^2(M)} + h\|w\|_{H^1(M)} + h^{\frac{3}{2}}\|w\|_{L^2(\Gamma_+^c)}.
\]
Now each $\| \Lphe w_j \|_{L^2(M)} = \| \Lphe \chi_j w \|_{L^2(M)}$ is bounded by a constant times $\| \Lphe w \|_{L^2(M)} + h\|w\|_{H^1(M)}$, so
\[
h^{\half} \|h \grad_t w \|_{L^2(\Gamma_+^c)} \lesssim \|\L_{\ph,\e} w \|_{L^2(M)} + h\|w\|_{H^1(M)} + h^{\frac{3}{2}}\|w\|_{L^2(\Gamma_+^c)}.
\]
This finishes the proof of Proposition \ref{MainBndryEst}.

\section{The $k$-form case} \label{sec:carleman3}

We will prove Theorem \ref{ABCCarl} for $u \in \Om^k(M)$ by induction.  If $k=0$, then $i_N u = 0$, so $u_{\perp} = 0$ and $u = u_{\|}$. Then Theorem  \ref{ABCCarl} for $k=0$ becomes the Carleman estimate \eqref{ABCCarl_zeroform} that was established in Section \ref{sec:carleman2}.  

Note that it suffices to prove Theorem \ref{ABCCarl} for $u \in \Om^k(M)$, with the appropriate boundary conditions, for each $k$, and $Q=0$.  Then the final theorem follows by adding the resulting estimates and noting that the extra $h^2Qu$ term on the right can be absorbed into the terms on the left, for sufficiently small $h$.

\vspace{12pt}

\noindent \ref{sec:carleman3}.1. {\bf Proof of Theorem \ref{ABCCarl} for $k \geq 1$.} Suppose $u \in \Om^k(M)$ with $k \geq 1$. First note that if we impose the boundary conditions \eqref{ABC} of  Theorem \ref{ABCCarl}, substituting the result of Proposition \ref{ABCbndryterms} into \eqref{IbyPidentity} gives 
\begin{equation}\label{ABCIbyP}
\begin{split}
\norm{\Delta_{\ph_c} u}^2 = & \norm{Au}^2 + \norm{Bu}^2 + (i[A,B]u|u) - \\
   & 2h^3 (\partial_\nu \ph \nabla_N u_{\perp}|\nabla_N u_{\perp})_{\Gamma_{+}^c}-  h(\partial_\nu \ph (|d\ph|^2 + |\partial_\nu \ph|^2)u_{\|}|u_{\|})_{\Gamma_+^c} + R \\
\end{split}
\end{equation}
where
\[
|R| \leq C\left( Kh^3 \|\nabla' t u_{\|}\|^2_{\Gamma_{+}^c} + \frac{h}{K}\| u_{\|}\|^2_{\Gamma_{+}^c} + \frac{h^3}{K}\|\nabla_N u_{\perp}\|^2_{\Gamma_{+}^c} \right).
\]
Recall also from Proposition \ref{ABCbndryterms} that the non-boundary terms $\norm{Au}^2 + \norm{Bu}^2 + (i[A,B]u|u)$ satisfy 
\begin{equation}\label{NonBndryTermsABC}
\norm{Au}^2 + \norm{Bu}^2 + (i[A,B]u|u) \gtrsim \frac{h^2}{\e}\|u\|_{H^1(M)}^2 - \frac{h^3}{\e}(\|u_{\|}\|_{H^1(\partial M)}^2 + \|h\nabla_{N} u_{\perp}\|_{L^2(\partial M)}^2)
\end{equation}
for $h \ll \e \ll 1$.
We now return to \eqref{ABCIbyP} and examine the boundary terms.  On $\Gamma_{+}^c$, there exists $\e_1 > 0$ such that $\partial_{\nu} \ph < -\e_1$.  Using this together with \eqref{ABCIbyP} and \eqref{NonBndryTermsABC} gives 
\begin{equation*}
\begin{split}
        &\norm{\Delta_{\ph_c} u}^2 + Kh^3 \|\nabla' t u_{||}\|^2_{\Gamma_+^c} + \frac{h}{K}\|u_{||}\|^2_{\Gamma_+^c} + \frac{h^3}{K}\|\nabla_N u_{\perp}\|^2_{\Gamma_+^c} \\
\gtrsim & \frac{h^2}{\e}\|u\|^2_{H^1(M)} + h^3 \|\nabla_N u_{\perp}\|^2_{\Gamma_+^c} + h\|u_{||}\|^2_{\Gamma_+^c}, \\
\end{split}
\end{equation*}
for large enough $K$.  The last two terms on the left side can be absorbed into the right side, giving
\begin{equation*}
\norm{\Delta_{\ph_c} u}^2 + Kh^3 \|\nabla' t u_{||}\|^2_{\Gamma_+^c} \gtrsim  \frac{h^2}{\e}\|u\|^2_{H^1(M)} + h^3 \|\nabla_N u_{\perp}\|^2_{\Gamma_+^c} + h\|u_{||}\|^2_{\Gamma_+^c}. 
\end{equation*}

Now we want to analyze the boundary term on the left, and this is the part where we will use induction on $k$:

\begin{lemma} \label{lemma_kformreduction_inductionstep}
If $u \in \Om^k(M)$ and $u$ satisfies the boundary conditions \eqref{ABC}, then
\begin{equation}\label{InductionStep}
h^3 \|\nabla' tu_{||}\|^2_{\Gamma_+^c} \lesssim \norm{\Delta_{\ph_c} u}^2 + h^2 \|u\|^2_{H^1(M)} + h^2\|u_{||}\|^2_{\Gamma_+^c}.
\end{equation}
\end{lemma}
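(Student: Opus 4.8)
The plan is to bound the intrinsic covariant derivative $\nabla' tu_{\|}$ along the closed manifold $\partial M$ by the first order quantities $d'(tu_{\|})$ and $\delta'(tu_{\|})$ via a Bochner identity on $\partial M$, and then to recognise each of these, on $\Gamma_+^c$, as a tangential trace of a differential form of one degree lower than $u$ (after a Hodge star in one case), to which the inductive hypothesis --- the cases of Theorems \ref{RBCCarl} and \ref{ABCCarl} in degree $k-1$, which the induction also supplies --- can be applied. First I would pass to smooth $u$ by density and note that, since $u|_{\Gamma_+}=0$ to first order and $t*u|_{\Gamma_+^c}=0$, in fact $t*u=0$ on all of $\partial M$; hence $\omega:=tu_{\|}=i^*u$ vanishes on $\Gamma_+$ and is a $k$-form on the \emph{closed} manifold $\partial M$ supported in $\Gamma_+^c$, so $\|\nabla'\omega\|_{L^2(\Gamma_+^c)}=\|\nabla'\omega\|_{L^2(\partial M)}$. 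The Bochner--Weitzenb\"ock identity on $\partial M$ gives
\[
\|\nabla'\omega\|^2_{L^2(\partial M)}\ \lesssim\ \|d'\omega\|^2_{L^2(\partial M)}+\|\delta'\omega\|^2_{L^2(\partial M)}+\|\omega\|^2_{L^2(\partial M)},
\]
and after multiplying by $h^3$ it suffices to control $h^3\|d'\omega\|^2$ and $h^3\|\delta'\omega\|^2$ by the right-hand side of \eqref{InductionStep}, since $h^3\|\omega\|^2\le h^2\|u_{\|}\|^2_{\Gamma_+^c}$.

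The mechanism for the remaining two terms is the following. If $v$ is a $(k-1)$-form on $M$ obtained from $u$ by an algebraic (order zero) operator with $h$-independent coefficients, then $\|\Delta_{\ph_c}v\|_{L^2(M)}\lesssim\|\Delta_{\ph_c}u\|_{L^2(M)}+h\|u\|_{H^1(M)}$, because the commutator of $h^2\Delta$ with such an operator is $h$ times a first order operator, and such a $v$ vanishes to first order on $\Gamma_+$ whenever $u$ does. If in addition $v$ satisfies the boundary conditions \eqref{ABC}, resp.\ \eqref{RBC}, of the $(k-1)$-form problem, then the $(k-1)$-form case of Theorem \ref{ABCCarl}, resp.\ \ref{RBCCarl}, which the induction provides, yields
\[
h\|v_{\|}\|^2_{H^1(\Gamma_+^c)}+h\|h\grad_N v_{\perp}\|^2_{L^2(\Gamma_+^c)}\ \lesssim\ \|\Delta_{\ph_c}v\|^2\ \lesssim\ \|\Delta_{\ph_c}u\|^2+h^2\|u\|^2_{H^1(M)}
\]
(with $\|$ and $\perp$ interchanged in the relative case); in particular $h^3\|\nabla' tv_{\|}\|^2_{\Gamma_+^c}$ and $h^3\|\grad_N v_{\perp}\|^2_{\Gamma_+^c}$ are bounded by $\|\Delta_{\ph_c}u\|^2+h^2\|u\|^2_{H^1(M)}$. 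Applying this with $v=i_N u$ --- which has $i_Nv\equiv 0$ and vanishes identically on $\Gamma_+^c$, hence satisfies \eqref{RBC} --- controls $h^3\|t\grad_N i_N u\|^2_{\Gamma_+^c}$; combined with Lemma \ref{Tdelta}, which on $\Gamma_+^c$ (where $u_\perp=0$) reads $\delta'(tu_{\|})=t(\delta u)+t(\grad_N i_N u)$, this disposes of the second summand, and the first summand $t(\delta u)$ is treated the same way by choosing a suitable algebraic $(k-1)$-form built from $u$ and $*u$ (and using the cutoff below), so that $h^3\|\delta'\omega\|^2\lesssim\|\Delta_{\ph_c}u\|^2+h^2\|u\|^2_{H^1(M)}+h^2\|u_{\|}\|^2_{\Gamma_+^c}$.

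The term $h^3\|d'\omega\|^2$ is the main obstacle, since $d'\omega=t(du)$ a priori brings in $du$, of degree $k+1$. I would reduce it by Hodge duality on $M$: on $\Gamma_+^c$ one has $u=u_{\|}$, so $*u=(*u)_\perp$ and $*u$ satisfies the relative conditions there, while $\|d'\omega\|_{L^2(\partial M)}=\|\delta'(*_{\partial M}\omega)\|_{L^2(\partial M)}$ and $*_{\partial M}\omega=\pm t(i_N*u)$; applying Lemmas \ref{Tdelta} and \ref{iNdu} (and, near $\partial M$, the fact that $N^{\flat}$ is closed) rewrites this, iterating if needed, as a tangential trace of an algebraic form of strictly lower degree built from $u$, controlled by the induction exactly as in the previous paragraph --- in the worst case via the $0$-form estimate of Section \ref{sec:carleman2}. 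Two points need care. First, an auxiliary form involving a derivative of $u$ need not vanish to first order on all of $\Gamma_+$; one therefore multiplies it by a fixed cutoff equal to $1$ near $\Gamma_+^c$ and supported away from $\partial M_+$ --- legitimate since $\Gamma_+^c$ and $\partial M_+$ are disjoint compact subsets of $\partial M$ --- which passes harmlessly through $\Delta_{\ph_c}$ and restores the hypothesis of \eqref{ABC}/\eqref{RBC} where $\partial_\nu\ph\ge 0$, the only place it is used. Second, the bookkeeping of powers of $h$ is essential: it is the factor $h$ already present on the right of Theorems \ref{RBCCarl}--\ref{ABCCarl} that produces the gain $h^2\|u\|^2_{H^1(M)}$ and $h^2\|u_{\|}\|^2_{\Gamma_+^c}$ rather than the weaker $\|u\|^2_{H^1(M)}$ that a plain elliptic estimate would give, and this gain is exactly what makes the subsequent absorption possible. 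Collecting the three contributions yields \eqref{InductionStep}.

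In summary, the hard part is the $d'\omega=t(du)$ term, which lies outside the range of the induction until Hodge duality together with the commutation identities of Section \ref{sec:notation} brings it down in degree; everything else is a matter of checking that the lower degree auxiliary forms $i_Nu$, $*u$, and their algebraic relatives satisfy the precise boundary hypotheses of the inductive Carleman estimates and of keeping the powers of $h$ sharp.
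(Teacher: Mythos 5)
Your overall strategy differs from the paper's, and as written it has two genuine gaps. The first concerns the term you yourself identify as the main obstacle, $h^3\|d'(tu_{\|})\|^2_{\partial M}$ with $d'(tu_{\|})=t(du)$. The proposed reduction by boundary Hodge duality does not lower the degree: writing $\|d'(tu_{\|})\|=\|\delta'(*'tu_{\|})\|$ with $*'tu_{\|}=\pm\,t(i_N*u)$ and applying Lemma \ref{Tdelta} to the form $w=i_N*u$ (which has $i_Nw=0$, $w_\perp=0$) gives $\delta' t(i_N*u)=t\delta(i_N*u)=\pm\,t*(N^\flat\wedge du)+(\text{zeroth order in }u)=\pm\,*'t(du)+(\text{zeroth order})$, i.e.\ you recover exactly the quantity you started from; the ``iterating if needed'' step has no content, and no concrete lower-degree algebraic form capturing $t(du)$ is produced (it cannot be, since $t(du)$ contains genuinely new tangential derivatives of $u_{\|}$ not prescribed by \eqref{ABC}). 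The paper avoids this entirely: it never passes through $d'$ and $\delta'$, but instead uses the pointwise splitting $u_{\|}=\frac1k\sum_j e_j^\flat\wedge i_{e_j}u_{\|}$ in a frame extended by parallel transport, a partition of unity $\chi_i$ with $\nabla_N\chi_i=0$, and the corrected $(k-1)$-form $v=i_{e_j}\chi_i\bigl(u_{\|}+h(1-e^{-\rho/h})Zu_{\|}\bigr)$ with $Z=*N\wedge(S+\sigma-(n-1)\kappa)i_N*$, verifying that $v$ satisfies the \emph{same} absolute conditions \eqref{ABC}; the induction hypothesis is then only the lemma itself at degree $k-1$, which bounds \emph{all} tangential derivatives $\nabla' tv_{\|}$, and these reassemble to $\nabla' tu_{\|}$.

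The second gap is structural. To handle $\delta'(tu_{\|})=t(\delta u)+t\grad_N i_Nu$ you invoke Theorem \ref{RBCCarl} in degree $k-1$ applied to $v=i_Nu$. But \ref{RBCCarl} in degree $k-1$ is, via the Hodge star, \ref{ABCCarl} in degree $n-k+1$, which an induction on the degree from below does not supply; a two-sided induction does not close either (for $n=3$, $k=2$ you would need the absolute estimate for $2$-forms, i.e.\ the very statement being proved). Moreover the quantity you are trying to import, $h^3\|\grad_N i_Nu\|^2_{\Gamma_+^c}\sim h^3\|\grad_N u_\perp\|^2_{\Gamma_+^c}$, is precisely one of the boundary terms that the degree-$k$ estimate itself must produce with a good sign in \eqref{ABCIbyP}; even if you could bound it, it enters the main argument multiplied by the large constant $K$ and could not be absorbed by the single good term $h^3\|\grad_N u_\perp\|^2_{\Gamma_+^c}$. (The bound for $t(\delta u)$ by ``a suitable algebraic $(k-1)$-form'' is likewise unsubstantiated, since $\delta u$ is first order in $u$.) In the paper's route none of these normal-derivative terms on $\Gamma_+^c$ ever need to be estimated inside the lemma, which is exactly why only the tangential estimate \eqref{InductionStep} is inducted on.
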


If \eqref{InductionStep} is granted, fix $K$ sufficiently large and then take $h \ll \eps \ll 1$ to obtain that 
\begin{equation*}
\norm{\Delta_{\ph_c} u}^2 \gtrsim \frac{h^2}{\e}\|u\|^2_{H^1(M)} + h^3 \|\nabla_N u_{\perp}\|^2_{\Gamma_+^c} + h\|u_{||}\|^2_{\Gamma_+^c} +h^3 \|\nabla' tu_{||}\|^2_{\Gamma_+^c}.
\end{equation*}
Rewriting without the squares, 
\begin{equation*}
\norm{\Delta_{\ph_c} u} \gtrsim \frac{h}{\sqrt{\e}}\|u\|_{H^1(M)} + h^{\half} \|h\nabla_N u_{\perp}\|_{\Gamma_+^c} + h^{\half}\|u_{||}\|_{H^1(\Gamma_+^c)}.
\end{equation*}
Now if $u$ satisfies \eqref{ABC} then so does $e^{\frac{\ph^2}{2\e}}u$ since $\e$ is fixed.  Therefore
\begin{equation*}
\norm{e^{\frac{\ph^2}{2\e}} \Delta_{\ph} u} \gtrsim \frac{h}{\sqrt{\e}}\|e^{\frac{\ph^2}{2\e}}u\|_{H^1(M)} + h^{\half} \|h\nabla_N e^{\frac{\ph^2}{2\e}}u_{\perp}\|_{\Gamma_+^c} + h^{\half}\|e^{\frac{\ph^2}{2\e}}u_{||}\|_{H^1(\Gamma_+^c)}.
\end{equation*}
Since $e^{\frac{\ph^2}{2\e}}$ is smooth and bounded on $M$, we get
\begin{equation*}
\norm{ \Delta_{\ph} u} \gtrsim h\|u\|_{H^1(M)} + h^{\half} \|h\nabla_N u_{\perp}\|_{\Gamma_+^c} + h^{\half}\|u_{||}\|_{H^1(\Gamma_+^c)}.
\end{equation*}
Thus Theorem \ref{ABCCarl} for $k \geq 1$ will follow after we have proved Lemma \ref{lemma_kformreduction_inductionstep}.


\begin{proof}[Proof of Lemma \ref{lemma_kformreduction_inductionstep}]
For the $0$-form case, this follows from Theorem \ref{ABCCarl} for $0$-forms, which in this section we are assuming has been proved.  Therefore we can seek to prove \eqref{InductionStep} for $k$-forms by induction on $k$.  

Let $k > 0$, and assume that \eqref{InductionStep} holds for $k-1$ forms satisfying \eqref{ABC}.  Now let $U_1, \ldots, U_m \subset T$ be an open cover of $\Gamma_{+}^c$ such that each $U_i \cap \Gamma_+^c$ has a coordinate patch, and let $\chi_1, \ldots, \chi_m$ be a partition of unity with respect to $\{U_i\}$ such that $\sum \chi_i = 1$ near $\Gamma_+^c$ and $\nabla_N \chi_i = 0$ for each $i$.  It will suffice to show that 
\[
h^3 \|\nabla' t\chi_i u_{||}\|^2_{\Gamma_+^c} \lesssim \norm{\Delta_{\ph_c} u}^2 + h^2\|u\|^2_{H^1(M)} + h^2\|u_{||}\|^2_{\Gamma_+^c}.
\]
Now on $U_i \cap \Gamma_+^c$, let $\{e_1,..,e_{n-1}\}$ be an orthonormal frame for the tangent space, and extend these vector fields into $M$ by parallel transport along normal geodesics.  

Observe for all $\omega \in \Om^k(U_j \cap \Gamma_+^c)$ one can write 
\begin{equation}\label{splitting a k form}
\omega = \frac{1}{k} \sum\limits_{j=1}^{n-1} e_j^\flat \wedge i_{e_j} \omega.
\end{equation}
Therefore we can write 
\begin{eqnarray*}
\nabla' t \chi_i u_{||} &=& \frac{1}{k}\nabla' \sum\limits_{j=1}^{n-1} e_j^\flat \wedge i_{e_j} t \chi_i u_{||} \\
                        &=& \frac{1}{k}\nabla' \sum\limits_{j=1}^{n-1} e_j^\flat \wedge t i_{e_j} \chi_i u_{||}. \\
\end{eqnarray*}
Then it suffices to show that 
\[
h^3 \| \nabla' (e_j^\flat \wedge t i_{e_j} \chi_i u_{||})\|^2_{\Gamma_+^c} \lesssim \norm{\Delta_{\ph_c} u}^2 + h^2\|u\|^2_{H^1(M)} + h^2\|u_{||}\|^2_{\Gamma_+^c},
\]
or equivalently, that 
\begin{equation}\label{ReduxInductionStep}
h^3 \| \nabla' t i_{e_j} \chi_i u_{||}\|^2_{\Gamma_+^c} \lesssim \norm{\Delta_{\ph_c} u}^2 + h^2\|u\|^2_{H^1(M)} + h^2\|u_{||}\|^2_{\Gamma_+^c}. 
\end{equation}

Now we want to apply the induction hypothesis to $i_{e_j} \chi_i u_{||}$, so we have to check that it satisfies the boundary conditions \eqref{ABC}.  In fact we will have to modify $i_{e_j} \chi_i u_{||}$ slightly to achieve this.  Let $\rho(x)$ be a function defined in a neighbourhood of the boundary as the distance to the boundary along a normal geodesic, and extend it to the rest of $M$ by multiplication by a cutoff function.  Then the claim is that $v = i_{e_j} \chi_i (u_{||} + h (1 - e^{\frac{-\rho}{h}}) Z  u_{\|})$ satisfies the absolute boundary conditions \eqref{ABC}, where $Z$ is an endomorphism yet to be chosen.

Since $u$ satisfies \eqref{ABC}, $i_{e_j} \chi_i u_{||}$ and $i_{e_j} \chi_i (h (1 - e^{\frac{-\rho}{h}})Z u_{\|}))$ both vanish to first order on $\Gamma_+$.  Therefore $v$ does as well.  

Moreover, $t* i_{e_j} \chi_i u_{||} = 0$ on $\Gamma_+^c$ if $i_N i_{e_j} \chi_i u_{\|} = -\chi_i i_{e_j} i_N u_{\|} = 0$ on $\Gamma_+^c$, and this again follows from the fact that $u$ satisfies \eqref{ABC}.  Note that $(1 - e^{\frac{-\rho}{h}}) = 0$ at $\partial M$, so $t*v = 0$ on $\Gamma_+^c$.

Finally, by Lemma \ref{Tdelta}, 
\[
-t \delta * i_{e_j} \chi_i u_{||} =  -\delta ' t (* i_{e_j} \chi_i u_{||})_{\|} + (S - (n-1)\kappa)ti_{N}(* i_{e_j} \chi_i u_{||})_{\perp} + t \grad_{N} i_N * i_{e_j} \chi_i u_{||}.
\]
Since $t *i_{e_j}  \chi_i u_{||} = 0$ on $\Gamma_+^c$, the first term vanishes there as well.  Therefore on $\Gamma_+^c$,
\[
-t h\delta * i_{e_j} \chi_i u_{||} = h(S - (n-1)\kappa)ti_{N}(* i_{e_j} \chi_i u_{||})_{\perp} + t h\grad_{N} \chi_i i_N * i_{e_j} u_{||}.
\]
Now 
\begin{equation*}
\begin{split}
t h\grad_{N} \chi_i i_N * i_{e_j} u_{||} &= t h\grad_{N} \chi_i i_N e_j^{\flat} \wedge * u_{||} (-1)^{k-1} \\
                                         &= t h\grad_{N} \chi_i i_N e_j^{\flat} \wedge (* u)_{\perp} (-1)^{k-1}\\
                                         &= t h\grad_{N} \chi_i i_N e_j^{\flat} \wedge * u (-1)^{k-1}\\
                                         &= (-1)^{k} \chi_i e_j^{\flat} \wedge t h\grad_{N} i_N * u, \\
\end{split}
\end{equation*}
so
\begin{equation}\label{thdeltastariejchiuparallel_expansion}
-t h\delta * i_{e_j} \chi_i u_{||} = h(S - (n-1)\kappa)ti_{N}(* i_{e_j} \chi_i u_{||})_{\perp} + (-1)^{k} \chi_i e_j^{\flat} \wedge t h\grad_{N} i_N * u.
\end{equation}
Applying the same calculation to the $i_{e_j} \chi_i  h(1 - e^{\frac{-\rho}{h}})Z u_{\|}$ term gives
\[
-t h\delta * i_{e_j} i_{e_j} \chi_i  h(1 - e^{\frac{-\rho}{h}})Z u_{\|} = (-1)^{k} \chi_i e_j^{\flat} \wedge t h^2\grad_{N} (1 - e^{\frac{-\rho}{h}})  i_N * Z u_{\|};
\]
the other term vanishes since $(1 - e^{\frac{-\rho}{h}}) = 0$ at the boundary.  Thus
\[
-t h\delta * i_{e_j} i_{e_j} \chi_i  h(1 - e^{\frac{-\rho}{h}})Z u_{\|} = (-1)^{k} \chi_i e_j^{\flat} \wedge t i_N * hZ  u_{\|}
\]
Meanwhile, by Lemma \ref{Tdelta} and by \eqref{ABC}, 
\[
-t h\delta * u = h(S - (n-1)\kappa)ti_{N}(*u)_{\perp} + t h\grad_{N} i_N (* u) = t i_{d\ph} *u - h \s t i_{N} *u .
\]
Viewing this as an equation for $t h\grad_{N} i_N (* u)$ and substituting into \eqref{thdeltastariejchiuparallel_expansion} gives
\begin{multline*}
-t h\delta * i_{e_j} \chi_i u_{||} = h(S - (n-1)\kappa)ti_{N}(* i_{e_j} \chi_i u_{||})_{\perp} \\
 + (-1)^{k}\chi_i e_j^{\flat} \wedge( -h(S - (n-1)\kappa)ti_{N}(*u)_{\perp} +t i_{d\ph} *u -h \s ti_{N} *u).
\end{multline*}
Therefore
\begin{multline*}
-t h\delta * i_{e_j} \chi_i (u_{||} + h (1 - e^{\frac{-\rho}{h}})Z u_{\|}) = h(S - (n-1)\kappa)ti_{N}(* i_{e_j} \chi_i u_{||})_{\perp} \\
 + (-1)^{k}\chi_i e_j^{\flat} \wedge( -h(S - (n-1)\kappa)ti_{N}(*u)_{\perp} +t i_{d\ph} *u -h \s ti_{N} *u + t i_N * hZ  u_{\|}).
\end{multline*}
Now if we let 
\[
Z = * N \wedge (S +\sigma - (n-1)\kappa) i_N *,
\]
where here we identify $S$ and $\sigma$ with their extensions by parallel transport to a neighbourhood of the boundary, then
\[
t i_N * Z u_{\|} = (S +\sigma - (n-1)\kappa) ti_N * u_{\|} = (S +\sigma - (n-1)\kappa) t i_N * u,
\]
and 
\[
-t h\delta * i_{e_j} \chi_i (u_{||} + h (1 - e^{\frac{-\rho}{h}})Z u_{\|}) = h(S - (n-1)\kappa)ti_{N}(* i_{e_j} \chi_i u_{||})_{\perp} + (-1)^{k}\chi_i e_j^{\flat} \wedge t i_{d\ph} *u.
\]

Since $t*u = 0$ on $\Gamma_+^c$, we can replace the $d\ph$ in $t i_{d\ph} * u$ with its normal component:
\[
t i_{d\ph} * u = -\partial_{\nu} \ph t i_{N} * u.
\]
Then
\begin{equation*}
\begin{split}
\chi_i e_j^{\flat} \wedge -t i_{d\ph} *u &= \partial_{\nu} \ph \chi_i e_j^{\flat} \wedge t i_{N} (*u)_{\perp} \\
                                 &= \partial_{\nu} \ph \chi_i e_j^{\flat} \wedge t i_{N} * u_{\|} \\  
                                 &= -\partial_{\nu} \ph t i_{N} \chi_i e_j^{\flat} \wedge * u_{\|} \\  
                                 &= \partial_{\nu} \ph t i_{N} * i_{e_j} \chi_i u_{\|} (-1)^{k}. \\  
\end{split}
\end{equation*}
Since $t *i_{e_j}  \chi_i u_{||} = 0$ on $\Gamma_+^c$,
\[
\chi_i e_j^{\flat} \wedge -t i_{d\ph} *u = -t i_{d\ph} * i_{e_j} \chi_i u_{\|} (-1)^{k},
\]
and
\[
(-1)^{k} \chi_i e_j^{\flat} \wedge -t i_{d\ph} *u = -t i_{d\ph} * i_{e_j} \chi_i u_{\|}.
\]
Therefore  
\[
-t h\delta * i_{e_j} \chi_i (u_{||} + h (1 - e^{\frac{-\rho}{h}})Z u_{\|}) = t i_{d\ph} * i_{e_j} \chi_i u_{||} -h \s ' t i_{N} *  i_{e_j} \chi_i u_{||}
\]
where $\sigma '$ is a smooth bounded endomorphism.  We can replace $u_{\|}$ on the right side by $u_{||} + h (1 - e^{\frac{-\rho}{h}})Z u_{\|}$, since $(1 - e^{\frac{-\rho}{h}})$ is zero at the boundary. Therefore $v = i_{e_j} \chi_i (u_{||} + h (1 - e^{\frac{-\rho}{h}})Z u_{\|})$ satisfies the boundary conditions \eqref{ABC}, and so by the induction hypothesis,
\[
h^3 \|\nabla' t v\|^2_{\Gamma_+^c} \lesssim \norm{\Delta_{\ph_c} v}^2 + h^2\|v\|^2_{H^1(M)} + h^2\| v\|^2_{\Gamma_+^c},
\]
Keeping in mind that the second term of $v$ is zero at the boundary, and $O(h)$ elsewhere, we get
\begin{equation}\label{first_nablaprimetiejchiiuparallel_bound}
h^3 \|\nabla' t i_{e_j} \chi_i u_{||}\|^2_{\Gamma_+^c} \lesssim \norm{\Delta_{\ph_c} v}^2 + h^2\|u_{\|}\|^2_{H^1(M)} + h^2\| u_{\|} \|^2_{\Gamma_+^c}.
\end{equation}
Now
\[
\norm{\Delta_{\ph_c} v} \lesssim \|\Delta_{\ph_c} i_{e_j} \chi_i u_{||}\| + h\| \Delta_{\ph_c} i_{e_j} \chi_i (1 - e^{\frac{-\rho}{h}})Z u_{\|}\|.
\]
The commutators of $\Delta_{\ph_c}$ with $i_{e_j} \chi_i$ and $ i_{e_j} \chi_i (1 - e^{\frac{-\rho}{h}})Z$ are $O(h)$ and first order, so
\begin{eqnarray*}
\norm{\Delta_{\ph_c} v} &\lesssim& \|i_{e_j} \chi_i \Delta_{\ph_c}u_{||}\| + h\| i_{e_j} \chi_i (1-e^{\frac{-\rho}{h}})Z \Delta_{\ph_c}  u_{\|}\| + h\|u_{\|}\|_{H^1(M)} \\
                        &\lesssim& \|\Delta_{\ph_c}u_{||}\| + h\|u_{\|}\|_{H^1(M)}. \\ 
\end{eqnarray*}
Substituting back into \eqref{first_nablaprimetiejchiiuparallel_bound} gives
\begin{equation*}
h^3 \|\nabla' t i_{e_j} \chi_i u_{||}\|^2_{\Gamma_+^c} \lesssim \norm{\Delta_{\ph_c} u_{||}}^2 + h^2\| u_{||}\|^2_{H^1(M)} + h^2\| u_{||}\|^2_{\Gamma_+^c}.
\end{equation*}

This proves \eqref{ReduxInductionStep}, which finishes the proof of the lemma.
\end{proof}

\section{Complex geometrical optics solutions} \label{sec:cgo}

We will begin by constructing CGOs for the relative boundary case.  To start, we can use the Carleman estimate from Theorem \ref{RBCCarl} to generate solutions via a Hahn-Banach argument. The notations are as in Section \ref{sec:results}.

\begin{prop}\label{RBCHahnBanach}
Let $Q$ be an $L^{\infty}$ endomorphism on $\Lambda M$, and let $\Gamma_{+}$ be a neighbourhood of $\partial M_{+}$.  For all $v \in L^2(M, \Lambda M)$, and $f,g \in L^2(M, \Lambda \partial M)$ with support in $\Gamma_{+}^c$, there exists $u \in L^2(M, \Lambda M)$ such that 
\begin{equation*}
\begin{split}
(-\Lap_{-\ph} + h^2Q^{*})u &= v \mbox{ on } M\\
                  tu &= f \mbox{ on } \Gamma_{+}^c\\
    th\delta_{-\ph}u &= g \mbox{ on } \Gamma_{+}^c, \\ 
\end{split}    
\end{equation*}
with
\[
\|u\|_{L^2(M)} \lesssim h^{-1}\|v\|_{L^2(M)} + h^{\half}\|f\|_{L^2(\Gamma^c_{+})} + h^{\half}\|g\|_{L^2(\Gamma_+^c)}.
\]
\end{prop}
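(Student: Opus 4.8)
The plan is to construct $u$ by duality (Hahn--Banach together with the Riesz representation theorem), using the Carleman estimate of Theorem \ref{RBCCarl} for the formal adjoint of the operator at hand. Since $-\Delta$ is formally self-adjoint and $\ph$ is real, the formal $L^2(M,\Lambda M)$-adjoint of $P := -\Delta_\ph + h^2Q$ is exactly $P^{*} = -\Delta_{-\ph} + h^2Q^{*}$, the operator in the statement. So I would apply Theorem \ref{RBCCarl} to $P$: for every $w \in H^2(M,\Lambda M)$ satisfying the homogeneous form of the boundary conditions \eqref{RBC} (take $\s = 0$),
\[
h\|w\|_{H^1(M)} + h^{\half}\|w_\perp\|_{H^1(\Gamma_+^c)} + h^{\half}\|h\grad_N w_{||}\|_{L^2(\Gamma_+^c)} \lesssim \|Pw\|_{L^2(M)};
\]
in particular $w \mapsto Pw$ is injective on the space $\mathcal D$ of such $w$.

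Next I would integrate by parts. With $\tilde w = e^{-\ph/h}w$, $\tilde u = e^{\ph/h}u$ and the Green's identity for $-\Delta$ from Section \ref{sec:notation}, one obtains for $w \in \mathcal D$ and arbitrary $u$ an identity
\[
(u|Pw)_M = (P^{*}u|w)_M + h\,\mathcal B(u,w),
\]
where $\mathcal B(u,w)$ is a sum of integrals over $\partial M$. Since $w$ vanishes to first order on $\Gamma_+$, the part of $\mathcal B$ over $\Gamma_+$ is zero. Over $\Gamma_+^c$ I would insert the remaining conditions of \eqref{RBC} ($tw = 0$ and $th\delta_\ph w = 0$) and rewrite $\mathcal B$ using Lemmas \ref{Tdelta}, \ref{iNdu} and the product identities of Section \ref{sec:notation}; the point to be verified is that the terms carrying the non-prescribed traces $t*u$ and $t\delta *u$ cancel (the ``Lagrangian'' character of \eqref{RBC}), leaving a pairing of $tu$ with $ti_\nu d_\ph w$ -- which by Lemma \ref{iNdu} and $tw=0$ equals $th\grad_N w_{||}$ up to lower-order multiples of $ti_\nu w$ -- together with a pairing of $th\delta_{-\ph}u$ with $ti_\nu w$. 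Both $w$-factors are controlled by the right-hand side of Theorem \ref{RBCCarl} at the level $h^{-1/2}\|Pw\|_{L^2(M)}$ (recall $\|ti_\nu w\|_{L^2(\partial M)} = \|w_\perp\|_{L^2(\partial M)}$).

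With this in hand I would define, on $\{Pw : w \in \mathcal D\} \subset L^2(M,\Lambda M)$, the conjugate-linear functional
\[
L_0(Pw) = (v|w)_M + h\,(f|ti_\nu d_\ph w)_{\Gamma_+^c} + h\,(g|ti_\nu w)_{\Gamma_+^c},
\]
i.e.\ $(v|w)_M$ plus exactly the $f$- and $g$-dependent part of $h\mathcal B(u,w)$ under the substitution $tu = f$, $th\delta_{-\ph}u = g$. It is well defined by the injectivity above, and Cauchy--Schwarz followed by Theorem \ref{RBCCarl} give
\[
|L_0(Pw)| \lesssim \big(h^{-1}\|v\|_{L^2(M)} + h^{\half}\|f\|_{L^2(\Gamma_+^c)} + h^{\half}\|g\|_{L^2(\Gamma_+^c)}\big)\,\|Pw\|_{L^2(M)}.
\]
Extending $L_0$ to $L^2(M,\Lambda M)$ with the same norm (Hahn--Banach) and representing it as $L_0(\cdot) = (u|\cdot)_M$ (Riesz) yields $u \in L^2(M,\Lambda M)$ with the asserted bound. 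Testing against $w$ supported in the interior of $M$ annihilates $\mathcal B$ and gives $P^{*}u = v$ in $M$; for general $w \in \mathcal D$, comparing $(u|Pw)_M = (P^{*}u|w)_M + h\mathcal B(u,w)$ with the definition of $L_0$ forces $tu|_{\Gamma_+^c} = f$ and $th\delta_{-\ph}u|_{\Gamma_+^c} = g$, interpreted in the weak sense dual to this pairing, with nothing imposed on $\Gamma_+$. This is the desired $u$.

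The step I expect to be the main obstacle is precisely the boundary-term reduction of the second paragraph: one must check that, after using all of \eqref{RBC} together with Lemmas \ref{Tdelta}, \ref{iNdu} (and the interior/exterior product identities of Section \ref{sec:notation}), the Green's-formula boundary terms really do collapse to the two pairings of the \emph{prescribed} data $f,g$ against traces of $w$ that the Carleman estimate controls, with the $t*u$- and $t\delta *u$-type contributions dropping out; and that the bookkeeping of powers of $h$ produces the stated $h^{\half}\|f\|$ and $h^{\half}\|g\|$ (the gain over the naive $h^{-1/2}$ coming from the $h$ in front of $\mathcal B$ meeting the $h^{\half}$-weighted trace terms in Theorem \ref{RBCCarl}). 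Once that is settled, the remainder is the standard Hahn--Banach solvability scheme.
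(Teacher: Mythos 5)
Your proposal is correct and follows essentially the same route as the paper: a Hahn--Banach/Riesz duality argument built on Theorem \ref{RBCCarl} applied to forms satisfying \eqref{RBC} with $\s=0$, with the functional $(v|w)_M$ plus the $f$- and $g$-pairings against $ti_{\nu}d_{\ph}w$ and $ti_{\nu}w$, the trace $ti_{\nu}hd_{\ph}w$ controlled via Lemma \ref{iNdu} (using $tw=0$) by $\|h\grad_N w_{\|}\|_{L^2(\Gamma_+^c)}+\|w_\perp\|_{H^1(\Gamma_+^c)}$, and the equation and boundary values of $u$ identified afterwards by the Green's formula of Section \ref{sec:notation}, where the relative boundary conditions on $w$ annihilate the terms carrying $t*u$ and $t\delta*u$ exactly as you anticipated. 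The $h$-bookkeeping you describe matches the paper's, so the step you flagged as a possible obstacle goes through as expected.
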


\begin{proof}
Suppose $w \in \Om(M)$ satisfies the relative boundary conditions \eqref{RBC} with $\s = 0$, and examine the expression
\begin{equation}\label{fxnal}
|(w|v) - (t i_{\nu}hd_{\ph} w|hf)_{\Gamma_+^c} - (ti_{\nu} w|hg)_{\Gamma_+^c}|.
\end{equation}
This is bounded above by
\[
h\|w\|_{L^2(M)} h^{-1}\|v\|_{L^2(M)} + h^{\half}\|t i_{\nu}hd_{\ph} w\|_{L^2(\Gamma_+^c)} h^{\half}\|f\|_{L^2(\Gamma_+^c)} + h^{\half}\|ti_{\nu} w\|_{L^2(\Gamma_+^c)} \|g\|_{L^2(\Gamma_+^c)}.
\]
By Lemma \ref{iNdu}, 
\[
t i_{\nu}hd_{\ph} w = he^{\frac{\ph}{h}}t\grad_N (e^{-\frac{\ph}{h}}w)_{\|} + hStw_{\|} - he^{\frac{\ph}{h}}d'ti_N(e^{-\frac{\ph}{h}}w).
\]
Since $tw = 0$, 
\[
t i_{\nu}hd_{\ph} w = ht\grad_N w_{\|} - he^{\frac{\ph}{h}}d'ti_N(e^{-\frac{\ph}{h}}w).
\]
Therefore
\[
\|t i_{\nu}hd_{\ph} w\|_{L^2(\Gamma_+^c)} \leq \|h\grad_N w_{\|}\|_{L^2(\Gamma_+^c)} + \|w_{\perp}\|_{H^1(\Gamma_+^c)}.
\]
Then by Theorem \ref{RBCCarl},
\begin{eqnarray*}
& &        |(w|v) + (t i_{\nu}hd_{\ph} w|hf)_{\Gamma_+^c} + (ti_{\nu} w|hg)_{\Gamma_+^c}| \\
&\lesssim& \|(-\Lap_{\ph} + h^2Q)w\|_{L^2(M)}\left( h^{-1}\|v\|_{L^2(M)} + h^{\half}\|f\|_{L^2(\Gamma_+^c)} + h^{\half}\|g\|_{L^2(\Gamma_+^c)} \right) . \\
\end{eqnarray*}
Therefore on the subspace 
\[
\{(-\Lap_{\ph} + h^2Q)w| w \in \Om(M) \mbox{ satisfies \eqref{RBC} with } \s = 0 \} \subset L^2(M, \Lambda M),
\]
the map 
\[
(-\Lap_{\ph} + h^2Q)w \mapsto (w|v) - (t i_{\nu}hd_{\ph} w|hf)_{\Gamma_+^c} - (ti_{\nu} w|hg)_{\Gamma_+^c}
\]
defines a bounded linear functional with the bound 
\[
h^{-1}\|v\|_{L^2(M)} + h^{\half}\|f\|_{L^2(\Gamma_+^c)} + h^{\half}\|g\|_{L^2(\Gamma_+^c)}.
\]
By Hahn-Banach, this functional extends to the whole space, and thus there exists a $u \in L^2(M, \Lambda M)$ such that
\[
\|u\|_{L^2(M)} \lesssim h^{-1}\|v\|_{L^2(M)} + h^{\half}\|f\|_{L^2(\Gamma_+^c)} + h^{\half}\|g\|_{L^2(\Gamma_+^c)} 
\]
and
\[
(w|v) - (t i_{\nu}hd_{\ph} w|hf)_{\Gamma_+^c} - (ti_{\nu} w|hg)_{\Gamma_+^c} = ((-\Lap_{\ph} + h^2Q)w | u).
\]
Integrating by parts and applying the boundary conditions \eqref{RBC} gives
\begin{eqnarray*}
& & (w|v) - (t i_{\nu}hd_{\ph} w|hf)_{\Gamma_+^c} - (ti_{\nu} w|hg)_{\Gamma_+^c} \\
&=& (w |(-\Lap_{-\ph} + h^2Q^{*})u) - h(t i_{\nu}hd_{\ph} w|tu)_{\partial M} - h(ti_{\nu} w|th\delta_{-\ph}u)_{\partial M}
\end{eqnarray*}
for all $w \in \Om(M)$ satisfying the relative boundary conditions \eqref{RBC} with $\s = 0$. Varying $w$ over the compactly supported elements of $\Om(M)$ one sees that $(-\Lap_{-\ph} + h^2Q^{*})u = v$ on $M$ which reduces the above relation to 
\begin{eqnarray*}
 - (t i_{\nu}hd_{\ph} w|hf)_{\Gamma_+^c} - (ti_{\nu} w|hg)_{\Gamma_+^c}  =  - h(t i_{\nu}hd_{\ph} w|tu)_{\partial M} - h(ti_{\nu} w|th\delta_{-\ph}u)_{\partial M}
\end{eqnarray*}
for all $w \in \Om(M)$ satisfying the relative boundary conditions \eqref{RBC} with $\s = 0$. We now vary $w$ satisfying condition \eqref{RBC} with $\s = 0$ and $i_\nu w = 0$ to obtain $tu = f$ on $\Gamma_{+}^c$. Finally, by varying $w$ over all forms satisfying conditions \eqref{RBC} with $\s = 0$ we see that $th\delta_{-\ph}u = g$ on $\Gamma_{+}^c$.

To summarize, we can see that 
\begin{equation*}
\begin{split}
(-\Lap_{-\ph} + h^2Q^{*})u &= v \mbox{ on } M\\
                  tu &= f \mbox{ on } \Gamma_{+}^c\\
    th\delta_{-\ph}u &= g \mbox{ on } \Gamma_{+}^c, \\ 
\end{split}    
\end{equation*}
as desired.  
\end{proof}

To match notations with previous papers, we will begin by rewriting this result, along with the Carleman estimate, in $\tau$ notation, as follows.  

Theorem \ref{RBCCarl} becomes the following.

\begin{theorem}\label{RBCCarltau}
Let $Q$ be an $L^{\infty}$ endomorphism on $\Lambda M$.  Define $\Gamma_+ \subset \partial M$ to be a neighbourhood of $\partial M_{+}$. Suppose $u \in H^2(M, \Lambda M)$ satisfies the boundary conditions
\begin{equation}\label{RBCtau}
\begin{split}
u|_{\Gamma_+} &= 0\ \  \mbox{and}\ \  \nabla_\nu u\mid_{\Gamma_+} = 0 \\
tu|_{\Gamma_+^c} &= 0 \\
t\delta e^{-\tau \ph}u|_{\Gamma_+^c} &= \sigma ti_N e^{-\tau \ph} u
\end{split}
\end{equation}
for some smooth endomorphism $\s$ independent of $\tau$.
Then there exists $\tau_0 > 0$ such that if $\tau > \tau_0$,
\[
\|(-\Delta_{\tau} + Q)u\|_{L^2(M)} \gtrsim \tau \|u\|_{L^2(M)} + \|\grad u\|_{L^2(M)} +\tau^{\frac{3}{2}}\|u_{\perp}\|_{L^2(\Gamma_+^c)} +\tau^{\half}\|\grad ' ti_N u\|_{L^2(\Gamma_+^c)} +\tau^{\half}\| \grad_{N} u_{\|}\|_{L^2(\Gamma_+^c)}.
\]
where 
\[
\Delta_{\tau} = e^{\tau \ph}\Delta e^{-\tau \ph}.
\]
\end{theorem}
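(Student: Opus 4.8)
The plan is to obtain Theorem \ref{RBCCarltau} from Theorem \ref{RBCCarl} by a pure rescaling, setting $\tau = 1/h$; no genuinely new analysis is needed, since Theorem \ref{RBCCarl} is already available (it follows from Theorem \ref{ABCCarl} by replacing $u$ with $*u$). The first step is to record the relation between the conjugated Hodge Laplacians: since $\Delta_\ph = e^{\ph/h} h^2 \Delta e^{-\ph/h}$ and $\Delta_\tau = e^{\tau\ph} \Delta e^{-\tau\ph}$, we have $\Delta_\ph = h^2 \Delta_\tau = \tau^{-2}\Delta_\tau$ when $\tau = 1/h$, hence $(-\Delta_\ph + h^2 Q)u = \tau^{-2}(-\Delta_\tau + Q)u$ and therefore $\norm{(-\Delta_\ph + h^2 Q)u}_{L^2(M)} = \tau^{-2}\norm{(-\Delta_\tau + Q)u}_{L^2(M)}$.

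Next I would check that the boundary conditions correspond. The first two lines of \eqref{RBC} and \eqref{RBCtau} are literally the same. Dividing the third line of \eqref{RBC} by $h$ and writing $\tau = 1/h$ gives $t\delta e^{-\tau\ph}u|_{\Gamma_+^c} = \s\, t i_\nu e^{-\tau\ph}u$; on $\partial M$ one has $\nu = -N$, and the resulting sign is absorbed into the smooth ($h$- resp.\ $\tau$-independent) endomorphism $\s$, which yields exactly the third line of \eqref{RBCtau}. Thus $u \in H^2(M,\Lambda M)$ satisfies \eqref{RBC} for some such $\s$ if and only if it satisfies \eqref{RBCtau} for some such $\s$.

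The remaining step is to expand the right-hand side of Theorem \ref{RBCCarl} after multiplying through by $\tau^2 = h^{-2}$. By definition of the semiclassical norms, $h\norm{u}_{H^1(M)} = h\norm{u}_{L^2(M)} + h^2\norm{\grad u}_{L^2(M)}$, so $\tau^2$ times this is $\tau\norm{u}_{L^2(M)} + \norm{\grad u}_{L^2(M)}$; likewise $\tau^2 \cdot h^{\half}\norm{h\grad_N u_{\|}}_{L^2(\Gamma_+^c)} = \tau^{\half}\norm{\grad_N u_{\|}}_{L^2(\Gamma_+^c)}$. For the term $\tau^2 \cdot h^{\half}\norm{u_\perp}_{H^1(\Gamma_+^c)}$ I would use the elementary equivalence $\norm{u_\perp}_{H^1(\Gamma_+^c)} \simeq \norm{t i_N u}_{L^2(\Gamma_+^c)} + \norm{h\grad' t i_N u}_{L^2(\Gamma_+^c)}$, valid because $tu_\perp = 0$, $t i_N u_\perp = t i_N u$ and $|N^\flat| = 1$ on $\partial M$, so that $u_\perp$ restricted to $\partial M$ is faithfully represented by the $(k-1)$-form $t i_N u$; the non-tangential contributions to the boundary derivatives of $u_\perp$ are $O(\norm{u}_{H^1(M)})$ and are absorbed into the first term. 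This produces the two boundary terms $\tau^{3/2}\norm{u_\perp}_{L^2(\Gamma_+^c)}$ and $\tau^{\half}\norm{\grad' t i_N u}_{L^2(\Gamma_+^c)}$ of the statement. Cancelling the common factor $\tau^{-2}$ gives the claimed inequality, with $\tau_0 = 1/h_0$.

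Since all the analytic content sits in Theorems \ref{RBCCarl} and \ref{ABCCarl}, I do not expect a real obstacle here. The only point deserving a sentence of care is the precise meaning of $\norm{u_\perp}_{H^1(\Gamma_+^c)}$ and the verification that the error in identifying it with the $H^1(\Gamma_+^c)$ norm of $t i_N u$ (plus $\norm{u_\perp}_{L^2}$) is of lower order and absorbable into the $\tau\norm{u}_{H^1(M)}$ term, which in the $\tau$ notation supplies both $\tau\norm{u}_{L^2(M)}$ and $\norm{\grad u}_{L^2(M)}$.
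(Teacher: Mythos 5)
Your proposal is correct and is essentially the paper's own argument: the paper obtains Theorem \ref{RBCCarltau} simply by restating Theorem \ref{RBCCarl} in the $\tau = 1/h$ notation, which is exactly the rescaling you carry out (conjugated operator $\Delta_\ph = \tau^{-2}\Delta_\tau$, boundary conditions identified after absorbing the sign from $i_\nu = -i_N$ into $\s$, and the semiclassical norms unpacked, with the tangential $H^1(\Gamma_+^c)$ norm of $u_\perp$ identified with that of $ti_N u$ up to absorbable zeroth-order terms).
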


By choice of coordinates, note that the same theorem holds for $\tau < 0$, with $\Gamma_{+}$ replaced by $\Gamma_{-}$.  

Then Proposition \ref{RBCHahnBanach} becomes the following.

\begin{prop}\label{RBCHahnBanachtau}
Let $Q$ be an $L^{\infty}$ endomorphism on $\Lambda M$.  For all $v \in L^2(M, \Lambda M)$, and $f,g \in L^2(\Gamma_{+}^c, \Lambda \Gamma_{+}^c)$, there exists $u \in L^2(M, \Lambda M)$ such that 
\begin{equation*}
\begin{split}
(-\Lap_{-\tau} + Q^{*})u &= v \mbox{ on } M\\
                      tu &= f \mbox{ on } \Gamma_{+}^c \\
        t\delta_{-\tau}u &= g \mbox{ on } \Gamma_{+}^c, \\ 
\end{split}    
\end{equation*}
with
\[
\|u\|_{L^2(M)} \lesssim \tau^{-1} \| v\|_{L^2(M)} + \tau^{-\half}\|f\|_{L^2(\Gamma_{+}^c)} + \tau^{-\frac{3}{2}}\|g\|_{L^2(\Gamma_{+}^c)}.
\]
\end{prop}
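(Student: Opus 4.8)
The content of Proposition \ref{RBCHahnBanachtau} is exactly Proposition \ref{RBCHahnBanach} written in the $\tau$-normalization, so the plan is to record the dictionary between the two and check that the powers of $\tau$ come out right; alternatively one repeats the Hahn--Banach argument of Proposition \ref{RBCHahnBanach}, with Theorem \ref{RBCCarltau} in place of Theorem \ref{RBCCarl}. For the first route, put $\tau=1/h$. Then $\Lap_{-\ph}=h^2\Lap_{-\tau}$, so $-\Lap_{-\ph}+h^2Q^{*}=h^2(-\Lap_{-\tau}+Q^{*})$, and the boundary operator $th\delta_{-\ph}$ of \eqref{RBC} equals $h\,t\delta_{-\tau}$. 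Hence, given data $(v,f,g)$ for the $\tau$-problem, I would apply Proposition \ref{RBCHahnBanach} to the rescaled data $(\tau^{-2}v,\ f,\ \tau^{-1}g)$ to obtain $u\in L^2(M,\Lambda M)$ with $(-\Lap_{-\ph}+h^2Q^{*})u=\tau^{-2}v$, $tu=f$ and $th\delta_{-\ph}u=\tau^{-1}g$ on $\Gamma_+^c$; multiplying the first relation by $\tau^2$ and the third by $\tau$ gives precisely $(-\Lap_{-\tau}+Q^{*})u=v$ in $M$, $tu=f$ and $t\delta_{-\tau}u=g$ on $\Gamma_+^c$, while the estimate of Proposition \ref{RBCHahnBanach} reads
\[
\|u\|_{L^2(M)}\lesssim h^{-1}\|\tau^{-2}v\|+h^{1/2}\|f\|+h^{1/2}\|\tau^{-1}g\|=\tau^{-1}\|v\|_{L^2(M)}+\tau^{-1/2}\|f\|_{L^2(\Gamma_+^c)}+\tau^{-3/2}\|g\|_{L^2(\Gamma_+^c)},
\]
which is the claimed bound.

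If one prefers to argue directly, the structure is identical to the proof of Proposition \ref{RBCHahnBanach}. For $w\in\Om(M)$ satisfying the relative conditions \eqref{RBCtau} with $\s=0$, one considers the $\tau$-normalized version of the linear functional there, sending $(-\Lap_\tau+Q)w$ to $(w|v)-(ti_\nu d_\tau w|f)_{\Gamma_+^c}-(ti_\nu w|g)_{\Gamma_+^c}$, where $d_\tau=e^{\tau\ph}\,d\,e^{-\tau\ph}$. By Cauchy--Schwarz this is bounded by $\|w\|_{L^2(M)}\|v\|+\|ti_\nu d_\tau w\|_{L^2(\Gamma_+^c)}\|f\|+\|ti_\nu w\|_{L^2(\Gamma_+^c)}\|g\|$. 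Using Lemma \ref{iNdu} for the conjugated exterior derivative together with $tw=0$ (which removes the $Stw_{\|}$ term) one finds $\|ti_\nu d_\tau w\|_{L^2(\Gamma_+^c)}\lesssim\|\grad_N w_{\|}\|_{L^2(\Gamma_+^c)}+\|w_\perp\|_{H^1(\Gamma_+^c)}$, and Theorem \ref{RBCCarltau} controls all the relevant traces: $\|w\|_{L^2(M)}\lesssim\tau^{-1}\|(-\Lap_\tau+Q)w\|_{L^2(M)}$, the quantities $\|\grad_N w_{\|}\|_{L^2(\Gamma_+^c)}$ and $\|w_\perp\|_{H^1(\Gamma_+^c)}$ are $\lesssim\tau^{-1/2}\|(-\Lap_\tau+Q)w\|_{L^2(M)}$, and $\|ti_\nu w\|_{L^2(\Gamma_+^c)}\lesssim\tau^{-3/2}\|(-\Lap_\tau+Q)w\|_{L^2(M)}$. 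Thus the functional is bounded on the subspace of $L^2(M,\Lambda M)$ consisting of such images $(-\Lap_\tau+Q)w$, with norm $\lesssim\tau^{-1}\|v\|+\tau^{-1/2}\|f\|+\tau^{-3/2}\|g\|$.

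The remaining steps are routine: extend the functional to all of $L^2(M,\Lambda M)$ by Hahn--Banach with the same norm and let $u$ be the representing element, so that $\|u\|_{L^2(M)}$ obeys the stated bound and $((-\Lap_\tau+Q)w|u)=(w|v)-(ti_\nu d_\tau w|f)_{\Gamma_+^c}-(ti_\nu w|g)_{\Gamma_+^c}$ for all admissible $w$. Integrating by parts to move the conjugated Laplacian off $w$, using the boundary integration-by-parts identities for $d$ and $\delta$ on graded forms from Section \ref{sec:notation}, and inserting the relative conditions \eqref{RBCtau} on $w$ (which make the unwanted boundary pairings vanish, since $w$ vanishes to first order on $\Gamma_+$ and satisfies $tw=0$ together with the relative codifferential condition on $\Gamma_+^c$), one is left with exactly the pairings against $tu$ and $t\delta_{-\tau}u$ on $\Gamma_+^c$; reading these off identifies $(-\Lap_{-\tau}+Q^{*})u=v$ in $M$, $tu=f$ and $t\delta_{-\tau}u=g$ on $\Gamma_+^c$.

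I do not expect a genuine obstacle here: all the analytic content sits in Theorem \ref{RBCCarltau}, which is already available, and the remaining work is pure bookkeeping --- keeping track of which power of $\tau$ attaches to $v$, $f$ and $g$, and checking that integration by parts produces precisely the boundary pairings named in the statement. The only point to keep in mind is that the duality argument yields $u$ only in $L^2(M,\Lambda M)$, with the equation and the boundary identities understood weakly against test forms satisfying \eqref{RBCtau}; this, however, is exactly the formulation used to construct the complex geometrical optics solutions in the remainder of the section, so no further regularity is asserted or needed.
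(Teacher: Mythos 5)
Your proposal is correct and matches the paper's treatment: the paper presents Proposition \ref{RBCHahnBanachtau} simply as Proposition \ref{RBCHahnBanach} rewritten under $\tau=1/h$, which is exactly your dictionary argument, and your bookkeeping of the powers ($v\mapsto\tau^{-2}v$, $g\mapsto\tau^{-1}g$, giving $\tau^{-1}$, $\tau^{-1/2}$, $\tau^{-3/2}$) reproduces the stated bound. Your alternative direct Hahn--Banach run with Theorem \ref{RBCCarltau} is just the same argument as the paper's proof of Proposition \ref{RBCHahnBanach} in the $\tau$-normalization, so no new issues arise.
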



Now we turn to the construction of the CGOs themselves. From now on we will invoke the assumption that the conformal factor $c$ in the definition of $M$ as an admissible manifold satisfies $c=1$. Below we will consider complex valued $1$-forms, and $\langle \,\cdot\,,\,\cdot\, \rangle$ will denote the complex bilinear extension of the Riemannian inner product to complex valued forms.

We assume that 
$$
(M,g) \subset \subset (\mR \times M_0, g), \quad g = e \oplus g_0,
$$
where $(M_0,g_0)$ is a compact $(n-1)$-dimensional manifold with smooth boundary. We write $x = (x_1,x')$ for points in $\mR \times M_0$, where $x_1$ is the Euclidean coordinate and $x'$ is a point in $M_0$. Let $Q$ be an $L^{\infty}$ endomorphism of $\Lambda M$. We next wish to construct solutions to the equation 
$$
(-\Delta + Q)Z = 0 \text{ in } M
$$
where $Z$ is a graded differential form in $L^2(M, \Lambda M)$ having the form 
$$
Z = e^{-s x_1}(A + R).
$$
Here $s = \tau + i\lambda$ is a complex parameter where $\tau, \lambda \in \mR$ and $\abs{\tau}$ is large, the graded form $A$ is a smooth amplitude, and $R$ will be a correction term obtained from the Carleman estimate. Inserting the expression for $Z$ in the equation results in 
$$
e^{s x_1}(-\Delta+Q) e^{-s x_1} R = -F
$$
where 
$$
F = e^{s x_1}(-\Delta+Q) e^{-s x_1} A.
$$
The point is to choose $A$ so that $\norm{F}_{L^2(M)} = O(1)$ as $\abs{\tau} \to \infty$.

By Lemma \ref{lemma_conjugated_hodge_expression}, we have 
$$
F = (-\Delta - s^2 + 2s \nabla_{\partial_1} + Q)A.
$$
We wish to choose $A$ so that $\nabla_{\partial_1} A = 0$. The following lemma explains this condition. Below, we identify a differential form in $M_0$ with the corresponding differential form in $\mR \times M_0$ which is constant in $x_1$.

\begin{lemma} \label{lemma_product_differentialform_decomposition}
If $u$ is a $k$-form in $\mR \times M_0$ with local coordinate expression $u = u_I \,dx^I$, then 
$$
\nabla_{\partial_1} u = 0 \quad \Longleftrightarrow \quad u_I = u_I(x') \text{ for all } I.
$$
If $\nabla_{\partial_1} u = 0$, then there is a unique decomposition 
$$
u = dx^1 \wedge u' + u''
$$
where $u'$ is a $(k-1)$-form in $M_0$ and $u''$ is a $k$-form in $M_0$. For such a $k$-form $u$, one has 
$$
\Delta u = dx^1 \wedge \Delta_{x'} u' + \Delta_{x'} u''
$$
where $\Delta$ and $\Delta_{x'}$ are the Hodge Laplacians in $\mR \times M_0$ and in $M_0$, respectively.
\end{lemma}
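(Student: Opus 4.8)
\medskip
\noindent\textbf{Proof sketch.} The plan is to exploit the product structure $g = e\oplus g_0$ throughout, the two key facts being that $\partial_1$ is a parallel unit vector field and that each slice $\{x_1 = c\}\times M_0$ is totally geodesic. For the first equivalence, work in coordinates $(x_1,x')$ adapted to the splitting, so $g_{1k} = \delta_{1k}$ and $\partial_1 g_{jk} = 0$; the coordinate formula for the Christoffel symbols then gives $\Gamma^l_{1k} = 0$ for all $l,k$, so $\nabla_{\partial_1}\partial_1 = 0$ and $\nabla_{\partial_1}dx^j = -\Gamma^j_{1k}\,dx^k = 0$. Consequently $\nabla_{\partial_1}(u_I\,dx^I) = (\partial_1 u_I)\,dx^I$, and $\nabla_{\partial_1}u = 0$ holds precisely when $\partial_1 u_I = 0$ for every $I$, i.e.\ when $u_I = u_I(x')$.

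For the decomposition, assuming $\nabla_{\partial_1}u = 0$, I would set $u' := i_{\partial_1}u$ and $u'' := u - dx^1\wedge u'$. Since $dx^1 = \partial_1^\flat$, the interior-product identities give $i_{\partial_1}u'' = u' - i_{\partial_1}(dx^1\wedge u') = u' - u' = 0$, so $u''$ contains no $dx^1$ factor, and both $u'$ and $u''$ have $x_1$-independent coefficients by the first part; hence each is (the pullback of) a form on $M_0$, of degrees $k-1$ and $k$ respectively. Uniqueness is immediate, since applying $i_{\partial_1}$ to $dx^1\wedge u' + u''$ recovers $u'$ and then $u''$ is determined. (Alternatively, one simply collects the multi-indices $I$ according to whether $1\in I$.)

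For the last identity the strategy is to show that $d$ and $\delta$ act diagonally on the splitting and restrict to the corresponding operators on $M_0$. I would use the orthonormal frame $e_1 = \partial_1$, with $e_2,\dots,e_n$ an orthonormal frame of $TM_0$ extended to be $x_1$-independent. Because $g$ is a product and the slices are totally geodesic, $\nabla_{e_j}$ for $j\ge 2$ acts on $x_1$-independent $M_0$-forms exactly as the Levi-Civita connection of $(M_0,g_0)$, while $\nabla_{e_1}$ annihilates every $x_1$-independent form. Inserting this frame into the identities $d\omega = \sum_j e_j^\flat\wedge\nabla_{e_j}\omega$ and $-\delta\omega = \sum_j i_{e_j}\nabla_{e_j}\omega$ from Section \ref{sec:notation}, and using $i_{e_j}dx^1 = 0$ together with $e_j^\flat\wedge dx^1 = -dx^1\wedge e_j^\flat$ for $j\ge 2$, yields $d(dx^1\wedge u' + u'') = -dx^1\wedge d_{x'}u' + d_{x'}u''$ and likewise $\delta(dx^1\wedge u' + u'') = -dx^1\wedge\delta_{x'}u' + \delta_{x'}u''$. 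Composing these and using $-\Delta = d\delta+\delta d$ on both $M$ and $M_0$, the two sign changes cancel and one obtains $\Delta(dx^1\wedge u' + u'') = dx^1\wedge\Delta_{x'}u' + \Delta_{x'}u''$.

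The one point requiring care is the explicit verification that $\nabla_{e_j}$ restricted to $x_1$-independent $M_0$-forms is genuinely the connection of $(M_0,g_0)$ — equivalently, that the induced exterior derivative and codifferential really are $d_{x'}$ and $\delta_{x'}$ — which follows from the product form of the metric but should be spelled out; the remainder is sign bookkeeping. A slicker but equivalent alternative for this last part is to observe that $\partial_1$ is Killing and parallel, so $\nabla_{\partial_1}$ commutes with $\Delta$, hence $\Delta u$ is again $x_1$-independent and decomposes; one then still needs naturality of $d$ and $\delta$ under the inclusion $M_0\hookrightarrow M$ to identify the two pieces.
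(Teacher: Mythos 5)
Your proposal is correct and follows essentially the same route as the paper: the vanishing of the Christoffel symbols $\Gamma^l_{1k}$ for the product metric gives $\nabla_{\partial_1}(u_I\,dx^I)=(\partial_1 u_I)\,dx^I$, the splitting $u = dx^1\wedge u' + u''$ is the same decomposition, and the Laplacian identity is obtained exactly as in the paper by showing $d$ and $\delta$ send $dx^1\wedge(\cdot)$ to $-dx^1\wedge d_{x'}(\cdot)$, $-dx^1\wedge\delta_{x'}(\cdot)$ and restrict to $d_{x'},\delta_{x'}$ on $M_0$-forms, the signs cancelling in $d\delta+\delta d$. The only cosmetic difference is that you verify $\delta(dx^1\wedge u')=-dx^1\wedge\delta_{x'}u'$ via the global frame identity (flagging the product-connection fact), whereas the paper does this computation in Riemannian normal coordinates at a point; both are fine.
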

\begin{proof}
In the $(x_1,x')$ coordinates $g$ has the form 
$$
g(x_1,x') = \left( \begin{array}{cc} 1 & 0 \\ 0 & g_0(x') \end{array} \right).
$$
Consequently, for any $k, l$ the Christoffel symbols satisfy 
$$
\Gamma_{1k}^l = \frac{1}{2} g^{lm}(\partial_1 g_{km} + \partial_k g_{1m} - \partial_m g_{1k}) = 0.
$$
This shows that $\nabla_{\partial_1} \,dx^I = 0$ for all $I$, and therefore any $k$-form $u = u_I \,dx^I$ satisfies 
$$
\nabla_{\partial_1} (u_I \,dx^I) = \partial_1 u_I \,dx^I.
$$
Thus $\nabla_{\partial_1} u = 0$ if and only if each $u_I$ only depends on $x'$. In general, if $u$ is a $k$-form on $\mR \times M_0$ we have the unique decomposition 
$$
u = dx^1 \wedge u' + u''
$$
where $u'(x_1,\,\cdot\,)$ is a $(k-1)$-form in $M_0$ and $u''(x_1,\,\cdot\,)$ is a $k$-form in $M_0$, depending smoothly on the parameter $x_1$. If $\nabla_{\partial_1} u = 0$, then $u = dx^1 \wedge u' + u''$ where $u'$ and $u''$ are differential forms in $M_0$.

Suppose now that $u = dx^1 \wedge u' + u''$ where $u'$ and $u''$ are forms in $M_0$. Denote by $d_{x'}$ and $\delta_{x'}$ the exterior derivative and codifferential in $x'$. Clearly 
$$
d(dx^1 \wedge u') = -dx^1 \wedge d_{x'} u', \quad du'' = d_{x'} u''.
$$
The identity $\delta = -\sum_{j=1}^n i_{e_j} \nabla_{e_j}$, where $e_j$ is an orthonormal frame in $T(\mR \times M_0)$ with $e_1 = \partial_1$, together with the fact that $\nabla_{\partial_1} u'' = 0$, implies that 
$$
\delta u'' = \delta_{x'} u''.
$$
Finally, computing in Riemannian normal coordinates at $p$ gives that 
\begin{align*}
\delta(dx^1 \wedge u')|_p &= -\sum_{j=1}^n i_{\partial_j} \nabla_{\partial_j} (u_J' \,dx^1 \wedge dx^J)|_p \\
 &= -\sum_{j=2}^n i_{\partial_j} (dx^1 \wedge \nabla_{\partial_j} u')|_p \\
 &= -dx^1 \wedge \delta_{x'} u'|_p.
\end{align*}
Thus 
$$
\delta(dx^1 \wedge u') = -dx^1 \wedge \delta_{x'} u'.
$$
It follows directly from these facts that 
\begin{align*}
\Delta(dx^1 \wedge u' + u'') &= -(d \delta + \delta d)(dx^1 \wedge u' + u'') \\
 &= dx^1 \wedge \Delta_{x'} u' + \Delta_{x'} u''. \qedhere
\end{align*}
\end{proof}

Returning to the expression for $F$, the assumption $\nabla_{\partial_1} A = 0$ gives that 
$$
F = (-\Delta-s^2+Q)A.
$$
Writing $Y^k$ for the $k$-form part of a graded form $Y$, and decomposing $A^k = dx^1 \wedge (A^k)' + (A^k)''$ as in Lemma \ref{lemma_product_differentialform_decomposition}, we obtain that 
$$
F^k = dx^1 \wedge (-\Delta_{x'} - s^2)(A^k)' + (-\Delta_{x'}-s^2)(A^k)'' + (QA)^k.
$$
Thus, in order to have $\norm{F}_{L^2(M)} = O(1)$ as $\abs{\tau} \to \infty$, it is enough to find for each $k$ a smooth $(k-1)$-form $(A^k)'$ and a smooth $k$-form $(A^k)''$ in $M_0$ such that 
\begin{gather*}
\norm{(-\Delta_{x'}-s^2)(A^k)'}_{L^2(M_0)} = O(1), \quad \norm{(A^k)'}_{L^2(M_0)} = O(1), \\
\norm{(-\Delta_{x'}-s^2)(A^k)''}_{L^2(M_0)} = O(1), \quad \norm{(A^k)''}_{L^2(M_0)} = O(1).
\end{gather*}
If $(M_0,g_0)$ is simple, there is a straightforward quasimode construction for achieving this.

\begin{lemma} \label{lemma_form_quasimode}
Let $(M_0,g_0)$ be a simple $m$-dimensional manifold, and let $0 \leq k \leq m$. Suppose that $(\hat{M}_0,g_0)$ is another simple manifold with $(M_0,g_0) \subset \subset (\hat{M}_0,g_0)$, fix a point $\omega \in \hat{M}_0^{\rm int} \setminus M_0$, and let $(r,\theta)$ be polar normal coordinates in $(\hat{M}_0,g_0)$ with center $\omega$. Suppose that $\eta^1, \ldots, \eta^m$ is a global orthonormal frame of $T^* M_0$ with $\eta^1 = dr$ and $\nabla_{\partial_r} \eta^j = 0$ for $2 \leq j \leq m$, and let $\{ \eta^I \}$ be a corresponding orthonormal frame of $\Lambda^k M_0$. Then for any $\lambda \in \mR$ and for any $\binom{m}{k}$ complex functions $b_I \in C^{\infty}(S^{m-1})$, the smooth $k$-form 
$$
u = e^{isr} \abs{g_0(r,\theta)}^{-1/4} \sum_I b_I(\theta) \eta^I,
$$
with $s = \tau + i\lambda$ for $\tau$ real, satisfies as $\abs{\tau} \to \infty$ 
$$
\norm{(-\Delta_{x'}-s^2)u}_{L^2(M_0)} = O(1), \quad \norm{u}_{L^2(M_0)} = O(1).
$$
\end{lemma}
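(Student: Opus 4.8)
The plan is the standard WKB (geometric optics) quasimode construction: the phase $r$ is chosen to solve the eikonal equation, and the amplitude $\abs{g_0}^{-1/4}\sum_I b_I(\theta)\eta^I$ is the Hadamard amplitude that solves the transport equation \emph{exactly}, so that only a lower-order term survives. \textbf{Step 1 (Conjugation).} Write $u = e^{isr}v$ with $v = \abs{g_0(r,\theta)}^{-1/4}\sum_I b_I(\theta)\eta^I$. Since $\omega \notin M_0$ and $(\hat M_0,g_0)$ is simple, $r$ is smooth on a neighbourhood of $M_0$, so Lemma \ref{lemma_conjugated_hodge_expression} applies on $(M_0,g_0)$ with $\rho = r$ and with $-is$ in place of the spectral parameter. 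Using that $\grad_{g_0} r = \partial_r$ in polar normal coordinates, this yields
\[
(-\Delta_{x'} - s^2)u \;=\; e^{isr}\Big[(\langle dr,dr\rangle_{g_0} - 1)\,s^2 v \;-\; is\,(2\nabla_{\partial_r} + \Delta_{x'} r)\,v \;-\; \Delta_{x'} v\Big].
\]

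\textbf{Step 2 (Eikonal and transport equations).} The first bracketed term vanishes because $r$ is a Riemannian distance function, so $\langle dr,dr\rangle_{g_0} = 1$. For the transport term I would first record that $\nabla_{\partial_r}\eta^I = 0$ for every multi-index $I$: indeed $\nabla_{\partial_r}\eta^1 = \nabla_{\partial_r}(dr) = (\nabla_{\partial_r}\partial_r)^\flat = 0$ since radial curves are unit-speed geodesics, $\nabla_{\partial_r}\eta^j = 0$ for $j \geq 2$ by hypothesis, and $\nabla_{\partial_r}$ is a derivation for $\wedge$. Since each $b_I$ depends only on $\theta$, this gives $\nabla_{\partial_r}v = (\partial_r\abs{g_0}^{-1/4})\sum_I b_I\eta^I = -\tfrac14\,\tfrac{\partial_r\abs{g_0}}{\abs{g_0}}\, v$. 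On the other hand, in polar normal coordinates $\Delta_{x'} r = \abs{g_0}^{-1/2}\partial_r(\abs{g_0}^{1/2}) = \tfrac12\,\tfrac{\partial_r\abs{g_0}}{\abs{g_0}}$, so $2\nabla_{\partial_r}v + (\Delta_{x'} r)\,v = 0$; the amplitude $\abs{g_0}^{-1/4}$ cancels the transport operator exactly, just as in the scalar case.

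\textbf{Step 3 (Conclusion).} We are left with $(-\Delta_{x'} - s^2)u = -e^{isr}\,\Delta_{x'} v$, where $v$, and hence $\Delta_{x'} v$, is a fixed smooth $k$-form independent of $\tau$. Since $\abs{e^{isr}} = e^{-\lambda r}$ is bounded on the compact set $M_0$ uniformly in $\tau$, both $\norm{(-\Delta_{x'} - s^2)u}_{L^2(M_0)} = \norm{e^{isr}\Delta_{x'} v}_{L^2(M_0)}$ and $\norm{u}_{L^2(M_0)} = \norm{e^{isr} v}_{L^2(M_0)}$ are $O(1)$ as $\abs{\tau}\to\infty$, which is the claim. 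The only genuinely form-specific point — and the step requiring the most care — is the verification that $\nabla_{\partial_r}\eta^I = 0$, i.e. that the chosen frame is parallel along radial geodesics; this is exactly what reduces the transport equation for $k$-forms to the classical scalar transport equation, after which everything is the familiar quasimode computation.
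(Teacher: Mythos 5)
Your proposal is correct and follows essentially the same route as the paper: the phase $r$ kills the $s^2$ term via the eikonal equation, the Hadamard factor $\abs{g_0}^{-1/4}$ together with an $r$-parallel frame solves the transport equation exactly, and the remaining term $-e^{isr}\Delta_{x'}v$ is $\tau$-independent and bounded. The only cosmetic difference is that you verify $\nabla_{\partial_r}\eta^I=0$ directly via the derivation property of $\nabla_{\partial_r}$, while the paper writes $\tilde a=\eta^1\wedge\tilde a'+\tilde a''$ and differentiates the coefficient functions; these are the same computation.
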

\begin{proof}
We first try to find the quasimode in the form $u = e^{is\psi} a$ for some smooth real valued phase function $\psi$ and some smooth $k$-form $a$. Lemma \ref{lemma_conjugated_hodge_expression} implies that 
\begin{equation*}
(-\Delta_{x'}-s^2) (e^{is\psi} a) = e^{is\psi} \big[ s^2 (\abs{d\psi}^2-1) a -is \left[ 2\nabla_{{\rm grad}(\psi)} a + (\Delta_{x'} \psi) a \right] - \Delta_{x'} a \big].
\end{equation*}
Let $(r,\theta)$ be polar normal coordinates as in the statement of the lemma, and note that 
$$
g_0(r,\theta) = \left( \begin{array}{cc} 1 & 0 \\ 0 & h(r,\theta) \end{array} \right)
$$
globally in $M_0$ for some $(m-1) \times (m-1)$ symmetric positive definite matrix $h$.

Define 
$$
\psi(r,\theta) = r.
$$
Then $\psi \in C^{\infty}(M_0)$ and $\abs{d\psi}^2 = 1$, so that the $s^2$ term will be zero.  We next want to choose $a$ so that $ 2\nabla_{{\rm grad}(\psi)} a + (\Delta_{x'} \psi) a = 0$. Note that 
$$
\nabla_{{\rm grad}(\psi)} = \nabla_{\partial_r}, \qquad \Delta_{x'} \psi = \frac{1}{2} \frac{\partial_r \abs{g_0(r,\theta)}}{\abs{g_0(r,\theta)}}.
$$
Thus, choosing $a = \abs{g_0}^{-1/4} \tilde{a}$ for some $k$-form $\tilde{a}$, it is enough to arrange that 
$$
\nabla_{\partial_r} \tilde{a} = 0.
$$
Using the frame $\{ \eta^j \}$ above, with $\eta^1 = dr$, we write 
$$
\tilde{a} = \eta^1 \wedge \tilde{a}' + \tilde{a}''
$$
where $\tilde{a}'$ is a $(k-1)$-form and $\tilde{a}''$ is a $k$-form in $M_0$ of the form 
$$
\tilde{a}' = \sum_{\underset{\abs{J}=k-1}{J \subset \{2,\ldots,m\}}} \alpha_{1,J} \eta^J, \quad \tilde{a}'' = \sum_{\underset{\abs{J}=k}{J \subset \{2,\ldots,m\}}} \beta_{J} \eta^J
$$
for some functions $\alpha_{1,J}$ and $\beta_{J}$ in $M_0$. Now, the form of the metric implies that $\nabla_{\partial_r} \eta^1 = 0$, and by assumption $\nabla_{\partial_r} \eta^j = 0$ for $2 \leq j \leq m$. Therefore 
$$
\nabla_{\partial_r} \tilde{a} = \sum_{\underset{\abs{J}=k-1}{J \subset \{2,\ldots,m\}}} \partial_r \alpha_{1,J} \eta^1 \wedge \eta^J + \sum_{\underset{\abs{J}=k}{J \subset \{2,\ldots,m\}}} \partial_r \beta_{J} \eta^J.
$$
In the definitions of $\tilde{a}'$ and $\tilde{a}''$, we may now choose 
$$
\alpha_{1,J} = b_{\{1\} \cup J}(\theta), \quad \beta_{J} = b_J(\theta)
$$
where $b_I$ are the given functions in $C^{\infty}(S^{m-1})$. The resulting $k$-form $u = e^{is\psi} \abs{g_0}^{-1/4} \tilde{a}$ satisfies the required conditions.
\end{proof}

The next result gives the full construction of the complex geometrical optics solutions.

\begin{lemma} \label{lemma_cgo_form_construction}
Let $(M,g) \subset \subset (\mR \times M_0,g)$ where $g = e \oplus g_0$, assume that $(M_0,g_0)$ is simple, and let $Q$ be an $L^{\infty}$ endomorphism of $\Lambda M$. Let $(\hat{M}_0,g_0)$ be another simple manifold with $(M_0,g_0) \subset \subset (\hat{M}_0,g_0)$, fix a point $\omega \in \hat{M}_0^{\rm int} \setminus M_0$, and let $(r,\theta)$ be polar normal coordinates in $(\hat{M}_0,g_0)$ with center $\omega$. Suppose that $\eta^1, \ldots, \eta^n$ is a global orthonormal frame of $T^*(\mR \times M_0)$ with $\eta^1 = dx^1$, $\eta^2 = dr$, and $\nabla_{\partial_r} \eta^j = 0$ for $3 \leq j \leq n$, and let $\{ \eta^I \}$ be a corresponding orthonormal frame of $\Lambda (\mR \times M_0)$. Let also $\lambda \in \mR$. If $\abs{\tau}$ is sufficiently large and if $s = \tau + i\lambda$, then for any $2^n$ complex functions $b_I \in C^{\infty}(S^{n-2})$ there exists a solution $Z \in L^2(M, \Lambda M)$ of the equation 
$$
(-\Delta+Q)Z = 0 \text{ in } M
$$
having the form 
$$
Z = e^{-sx_1} \left[ e^{i s r} \abs{g_0(r,\theta)}^{-1/4} \left[ \sum_I b_I(\theta) \eta^I \right] + R \right]
$$
where $\norm{R}_{L^2(M)} = O(\abs{\tau}^{-1})$. Further, one can arrange that the relative boundary values of $Z$ vanish on $\Gamma_{+}^c$ or $\Gamma_{-}^c$ (depending on the sign of $\tau$).
\end{lemma}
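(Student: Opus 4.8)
The plan is to carry out the scalar complex geometrical optics construction of \cite{DKSaU} in the graded setting, the only genuinely new inputs being the form-valued quasimode of Lemma~\ref{lemma_form_quasimode} and the product decomposition of Lemma~\ref{lemma_product_differentialform_decomposition}; the Carleman machinery of Sections~\ref{sec:carleman1}--\ref{sec:carleman3}, packaged in Proposition~\ref{RBCHahnBanachtau}, supplies the rest.

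First I would take $A = e^{isr}\abs{g_0(r,\theta)}^{-1/4}\sum_I b_I(\theta)\eta^I$ as the amplitude. Since $e^{isr}$, $\abs{g_0}^{-1/4}$, the $b_I(\theta)$ and the frame (with $\eta^1 = dx^1$ parallel) all depend only on $x'$, we have $\nabla_{\partial_1}A = 0$, so by Lemma~\ref{lemma_conjugated_hodge_expression} the amplitude error is
\[
F := e^{sx_1}(-\Delta+Q)e^{-sx_1}A = (-\Delta - s^2 + Q)A .
\]
For each degree $k$, Lemma~\ref{lemma_product_differentialform_decomposition} gives $A^k = dx^1\wedge (A^k)' + (A^k)''$ and
\[
F^k = dx^1\wedge (-\Delta_{x'}-s^2)(A^k)' + (-\Delta_{x'}-s^2)(A^k)'' + (QA)^k ,
\]
where $(A^k)'$ is a $(k-1)$-form and $(A^k)''$ a $k$-form on $M_0$. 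By construction these are precisely quasimodes of the type produced by Lemma~\ref{lemma_form_quasimode} on the simple $(n-1)$-manifold $(M_0,g_0)$ with the frame $\eta^2 = dr,\eta^3,\ldots,\eta^n$: writing $J$ for subsets of $\{2,\ldots,n\}$, the $2^{n-1}$ functions $b_{\{1\}\cup J}$ feed the forms $(A^k)'$ and the $b_J$ feed the $(A^k)''$. Lemma~\ref{lemma_form_quasimode} together with $Q\in L^\infty$ then yields $\norm{F}_{L^2(M)} = O(1)$ as $\abs{\tau}\to\infty$.

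It remains to solve $e^{sx_1}(-\Delta+Q)e^{-sx_1}R = -F$ for $R\in L^2(M,\Lambda M)$ with $\norm{R}_{L^2(M)} = O(\abs{\tau}^{-1})$ and so that $Z = e^{-sx_1}(A+R)$ has vanishing relative boundary values on $\Gamma_-^c$ (if $\tau>0$) or $\Gamma_+^c$ (if $\tau<0$). Dividing the trace conditions desired for $Z$ by the nonvanishing smooth factor $e^{-sx_1}$ turns them into conditions on $A+R$; expressing $e^{sx_1}(-\Delta+Q)e^{-sx_1}$ through $\Delta_\tau$ shows that, with $\lambda$ fixed, the extra contributions from the imaginary part of $s$ are lower order in the Carleman parameter, so the remainder equation is of the kind solved by Proposition~\ref{RBCHahnBanachtau} (used with the sign of $\tau$ matching the set $\Gamma_\pm^c$ in play, cf.\ the remark after Theorem~\ref{RBCCarltau}). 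Feeding in the source $-F$ and the relative boundary data on $\Gamma^c$ chosen so that, together with the main term $e^{-sx_1}A$, the relative boundary values of $Z$ vanish there, that proposition produces $R$ with $\norm{R}_{L^2(M)}\lesssim \abs{\tau}^{-1}\norm{F}_{L^2} + \abs{\tau}^{-1/2}\norm{f}_{L^2(\Gamma^c)} + \abs{\tau}^{-3/2}\norm{g}_{L^2(\Gamma^c)} = O(\abs{\tau}^{-1})$, and by construction $Z=e^{-sx_1}(A+R)$ solves $(-\Delta+Q)Z = 0$ with the claimed vanishing relative boundary values.

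The substantive work is organizational: matching each graded component of $A$ with the correct quasimode of Lemma~\ref{lemma_form_quasimode} (the split into the $dx^1\wedge$ part and the tangential part in every degree), and tracking the complex parameter $s=\tau+i\lambda$ carefully enough through the conjugated Hodge Laplacian, and through the boundary-value bookkeeping, that the remainder equation genuinely falls under Proposition~\ref{RBCHahnBanachtau}. I expect this last point --- confirming that the $\lambda$-dependent terms act as harmless lower-order perturbations and that the boundary data needed to kill the relative traces of $Z$ on $\Gamma^c$ are of admissible size --- to be the main thing to verify; the deep ingredients (the quasimodes on $M_0$ and the Carleman estimate with its boundary terms) are already in hand.
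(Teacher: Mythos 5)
Your argument is essentially the paper's own proof: the same amplitude assembled from Lemma \ref{lemma_form_quasimode} via the splitting of Lemma \ref{lemma_product_differentialform_decomposition}, the same $O(1)$ bound on $F$, and the same appeal to Proposition \ref{RBCHahnBanachtau} to produce $R$ and to prescribe the relative boundary data on $\Gamma_{\pm}^c$. The one point you leave loose --- verifying that the boundary data needed to cancel the traces of the amplitude term are small enough to yield the stated $O(\abs{\tau}^{-1})$ bound rather than merely $O(\abs{\tau}^{-1/2})$ --- is glossed at least as much in the paper itself, whose proof simply says that the proposition ``allows us to find $R$ with the right properties.''
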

\begin{proof}
Try first $Z = e^{-sx_1}(A+R)$ where $\nabla_{\partial_1} A = 0$. By the discussion in this section, we need to solve the equation 
$$
e^{s x_1}(-\Delta+Q)(e^{-s x_1} R) = -F
$$
where 
$$
F = (-\Delta-s^2+Q)A.
$$
Decomposing the $k$-form part of $A$ as $A^k = \eta^1 \wedge (A^k)' + (A^k)''$ as in Lemma \ref{lemma_product_differentialform_decomposition}, where $\eta^1 = dx^1$, we obtain that 
$$
F^k = \eta^1 \wedge (-\Delta_{x'} - s^2)(A^k)' + (-\Delta_{x'}-s^2)(A^k)'' + (QA)^k.
$$

Let $\eta^1, \ldots, \eta^n$ and $\{ \eta^I \}$ be orthonormal frames as in the statement of the result. We can use Lemma \ref{lemma_form_quasimode} to find, for any $\binom{n-1}{k-1}$ functions $b_J'(\theta)$ and for any $\binom{n-1}{k}$ functions $b_J''(\theta)$, quasimodes 
\begin{gather*}
(A^k)' = e^{isr} \abs{g_0}^{-1/4} \sum_{\underset{\abs{J}=k-1}{J \subset \{2,\ldots,n\}}} b_J'(\theta) \eta^J, \\
(A^k)'' = e^{isr} \abs{g_0}^{-1/4} \sum_{\underset{\abs{J}=k}{J \subset \{2,\ldots,n\}}} b_J''(\theta) \eta^J.
\end{gather*}
Recalling that $A^k = \eta^1 \wedge (A^k)' + (A^k)''$ and relabeling functions, this shows that for any $\binom{n}{k}$ functions $b_I \in C^{\infty}(S^{n-2})$ we may find $A^k$ of the form 
$$
A^k = e^{isr} \abs{g_0}^{-1/4} \sum_{\underset{\abs{I}=k}{I \subset \{1,\ldots,n\}}} b_I(\theta) \eta^I
$$
with $\norm{(-\Delta-s^2)A^k}_{L^2(M)} = O(1)$, $\norm{A^k}_{L^2(M)} = O(1)$ as $\abs{\tau} \to \infty$. Repeating this construction for all $k$, we obtain the amplitude 
$$
A = e^{i s r} \abs{g_0(r,\theta)}^{-1/4} \sum_I b_I(\theta) \eta^I
$$
with the same norm estimates as those for $A^k$. Then also $\norm{F}_{L^2(M)} = O(1)$. Then Proposition \ref{RBCHahnBanachtau} allows us to find $R$ with the right properties. This finishes the proof.
\end{proof}

Note that if $Z$ is a solution to $(-\Delta+*Q*^{-1})Z = 0 $ in $M$, and $Z$ has relative boundary values that vanish on $\Gamma_+^c$, then $*Z$ is a solution to $(-\Delta + Q)*Z = 0$ in $M$, and $*Z$ has absolute boundary values that vanish on $\Gamma_{+}^c$.  Thus this construction also gives us solutions with vanishing absolute boundary values on $\Gamma_+^c$. 



\section{The tensor tomography problem} \label{sec:tensor}

Now we can begin the proof of Theorems \ref{RelativeAbsoluteThm} and \ref{AbsoluteRelativeThm}. First we will use the hypotheses of Theorem \ref{RelativeAbsoluteThm} to obtain some vanishing integrals involving $(Q_2-Q_1)$. 

\begin{lemma}\label{RBCtoDensity}
Suppose the hypotheses of Theorem \ref{RelativeAbsoluteThm} hold.  Using the notation in Lemma \ref{lemma_cgo_form_construction}, let $Z_j \in L^2(M, \Lambda M)$ be solutions of $(-\Delta+Q_1)Z_1 = (-\Delta+\bar{Q}_2)Z_2 = 0$ in $M$ of the form 
\begin{align*}
Z_1 &= e^{-sx_1} \left[ e^{i s r} \abs{g_0}^{-1/4} \left[ \sum_I c_I(\theta) \eta^I \right] + R_1 \right], \\
Z_2 &= e^{sx_1} \left[ e^{i s r} \abs{g_0}^{-1/4} \left[ \sum_I d_I(\theta) \eta^I \right] + R_2 \right]
\end{align*}  
with vanishing relative boundary conditions on $\Gamma_{-}^c$ and $\Gamma_{+}^c$ respectively.   
Then
\[
((Q_2 - Q_1) Z_1| Z_2)_M = 0.
\]
\end{lemma}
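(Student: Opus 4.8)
The plan is to run the standard integration-by-parts (Green's theorem) argument for the Hodge Laplacian, using the boundary-term identity already recorded in Section \ref{sec:notation} together with the fact that $Z_1$ and $Z_2$ have complementary vanishing relative boundary data. First I would recall the identity
\[
(-\Delta u|v)_M = (u|-\Delta v)_M + (tu|ti_\nu dv)_{\partial M} + (t\delta *u|ti_\nu *v)_{\partial M} + (t*u|ti_\nu d*v)_{\partial M} + (t\delta u|ti_\nu v)_{\partial M},
\]
applied with $u = Z_1$ and $v = Z_2$ (extended by a density argument from $H^2$, since the CGO solutions are only in $L^2$ a priori — I will address this below). Since $(-\Delta+Q_1)Z_1 = 0$ and $(-\Delta+\bar Q_2)Z_2 = 0$, the bulk terms combine to give
\[
((Q_1 - \bar{\bar Q}_2)Z_1|Z_2)_M = (Q_1 Z_1|Z_2)_M - (Z_1|\bar Q_2 Z_2)_M = (Q_1 Z_1|Z_2)_M - (Q_2 Z_1|Z_2)_M = -((Q_2-Q_1)Z_1|Z_2)_M,
\]
so the claim reduces to showing that all four boundary terms vanish.

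The vanishing of the boundary terms is where the hypothesis of Theorem \ref{RelativeAbsoluteThm} enters. Write $\Gamma = \Gamma_+$ and $\Gamma^c = \Gamma_+^c \subset \Gamma_-$, and likewise $\Gamma_-^c \subset \Gamma_+$. On $\Gamma_-^c$ (hence on a subset of $\Gamma_+$) the form $Z_1$ has vanishing relative Cauchy data: $tZ_1 = 0$ and $t\delta_{-\tau}Z_1 = 0$, which after absorbing the $e^{\mp sx_1}$ factors means $tZ_1 = 0$ and $t\delta Z_1 = \text{(first order term)}\cdot ti_N Z_1$ there; similarly $Z_2$ has vanishing relative data on $\Gamma_+^c$. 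On the remaining part of $\partial M$, namely $\Gamma_+ \cap \Gamma_-^c{}^c = \Gamma_+ \setminus \Gamma_-^c$ and $\Gamma_+^c$, I use the equality of the partial relative-to-absolute maps: on $\Gamma_+$ we have $\RA_{Q_1}(f,g)|_{\Gamma_+} = \RA_{Q_2}(f,g)|_{\Gamma_+}$, which says $t*Z_1$ and $t\delta*Z_1$ agree with the corresponding data of the $Q_2$-solution there, so the difference of the two solutions has all four boundary data — relative ($t$, $t\delta$) from supports being in $\Gamma_-$, and absolute ($t*$, $t\delta*$) from the measurement equality — controlled on all of $\partial M$. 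I would organize this as: split $\partial M = \Gamma_+ \cup \Gamma_+^c$; on $\Gamma_+^c$, $Z_2$'s relative data vanish so the terms $(tZ_1|ti_\nu dZ_2)$ and $(t\delta Z_1|ti_\nu Z_2)$ pair against zero (one needs Lemma \ref{Tdelta} / Lemma \ref{iNdu} to convert $ti_\nu dZ_2$ and $ti_\nu Z_2$ into expressions involving $tZ_2$ and $ti_N Z_2$, both vanishing in the relative sense), and the two starred terms pair $t*Z_1$ against $ti_\nu d*Z_2$, $ti_\nu*Z_2$ — here one invokes the measurement hypothesis to replace $Z_1$ by $Z_2$ in the starred boundary data and then uses that $Z_2$'s relative data also control, via Green again, leaving only a symmetric expression that cancels. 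The cleanest route is actually to run Green's theorem with $Z_1$ against $Z_2$ and then separately with the $Q_2$-solution that has the same Cauchy data as $Z_1$ on $\Gamma_+$, and subtract; this is the argument sketched in \cite{KSU}, \cite{Ch2} and I would adapt it verbatim.

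The main obstacle is the regularity issue: the CGO solutions $Z_j$ from Lemma \ref{lemma_cgo_form_construction} live only in $L^2(M,\Lambda M)$, so the boundary traces $t$, $ti_\nu$, $d$, $\delta$ are not literally defined, and the integration-by-parts identity must be justified. The standard fix is to observe that $(-\Delta+Q_j)Z_j = 0$ with $Z_j \in L^2$ and elliptic regularity (interior plus the structure of the relative/absolute boundary value problem, cf.\ \cite[Section 5.9]{T1}) gives $Z_j \in H^2$ near the boundary, or alternatively to work with the weak formulation and a sequence of smooth approximants matching the boundary data; since $Z_1$ has genuine vanishing relative boundary data on $\Gamma_-^c$ and the equation is elliptic, $Z_1$ is smooth up to that part of the boundary, and symmetrically for $Z_2$. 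I would state this as a lemma (or cite the wellposedness reference) and then the Green's identity is legitimate. A secondary, purely bookkeeping obstacle is correctly tracking the conjugations: $Z_2$ solves the equation with potential $\bar Q_2$, and the sesquilinear pairing $(\,\cdot\,|\,\cdot\,)$ involves a complex conjugate, so one must check that $(Q_1 Z_1|Z_2) - (Z_1 | \bar Q_2 Z_2) = ((Q_1-Q_2)Z_1|Z_2)$ rather than picking up a stray conjugate on $Q_2$; this works out because $(Z_1|\bar Q_2 Z_2) = \int \langle Z_1, \overline{\bar Q_2 Z_2}\rangle = \int \langle Z_1, Q_2^* \bar Z_2\rangle$ and $Q_2$ here should be read as acting so that this equals $(Q_2 Z_1 | Z_2)$ — I would double-check the precise convention for how $Q$ and its adjoint/conjugate act on $\Lambda M$ against the definitions in Section \ref{sec:results} and in Proposition \ref{RBCHahnBanach}, and phrase the statement accordingly.
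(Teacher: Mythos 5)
There is a genuine gap. Your primary plan---apply Green's identity directly to $Z_1$ against $Z_2$ and show that all four boundary terms vanish---cannot be carried out. The terms $(tZ_1|ti_{\nu}dZ_2)_{\partial M}$ and $(t\delta Z_1|ti_{\nu}Z_2)_{\partial M}$ have first factors vanishing only on $\Gamma_-^c$, and on the rest of $\partial M$ (in particular on $\Gamma_+\cap\Gamma_-$) nothing kills them: your parenthetical claim that $ti_{\nu}Z_2$ and $ti_{\nu}dZ_2$ vanish on $\Gamma_+^c$ because $Z_2$'s relative data vanish there is false, since $ti_N Z_2$ is absolute-type data and is not controlled by $tZ_2 = t\delta Z_2 = 0$. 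What the vanishing relative data of $Z_2$ actually give on $\Gamma_+^c$ is $ti_N * Z_2 = 0$ and $ti_N d* Z_2 = 0$ (via $ti_N * w = 0 \Leftrightarrow tw = 0$ and $d* = \pm *\delta$), i.e.\ exactly the \emph{second} factors of the two starred terms. Symmetrically, the starred terms $(t*Z_1|ti_{\nu}d*Z_2)$ and $(t\delta*Z_1|ti_{\nu}*Z_2)$ involve the absolute data of $Z_1$, which the hypothesis of Theorem \ref{RelativeAbsoluteThm} does not make vanish anywhere; it only says that on $\Gamma_+$ they coincide with the absolute data of an auxiliary $Q_2$-solution. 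So with $Z_1$ alone, none of the four boundary terms is zero.

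The missing idea, which is the heart of the paper's proof, is that auxiliary solution: let $Y$ solve $(-\Delta+Q_2)Y=0$ in $M$ with the \emph{same relative boundary data} $(tZ_1, t\delta Z_1)$ as $Z_1$ on \emph{all} of $\partial M$ (this is exactly the solution entering the definition of $\RA_{Q_2}$), and run Green's identity for $Z_1-Y$ against $Z_2$. Then $t(Z_1-Y)=t\delta(Z_1-Y)=0$ on $\partial M$ kills the two relative-type terms outright; the surviving terms pair $t*(Z_1-Y)$ and $t\delta*(Z_1-Y)$ --- which vanish on $\Gamma_+$ by the hypothesis, because $Z_1$'s relative data are supported in $\Gamma_-$, so $(\RA_{Q_1}-\RA_{Q_2})(tZ_1,t\delta Z_1)|_{\Gamma_+}=0$ --- against $ti_N d*Z_2$ and $ti_N * Z_2$, which vanish on $\Gamma_+^c$ as above; and the bulk term is $((Q_2-Q_1)Z_1|Z_2)_M$, since $-\Delta(Z_1-Y)=Q_2Y-Q_1Z_1$ and $(Z_1-Y|\bar Q_2 Z_2)=(Q_2(Z_1-Y)|Z_2)$. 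You do gesture at this at the end (``run Green's theorem \dots with the $Q_2$-solution that has the same Cauchy data as $Z_1$ on $\Gamma_+$, and subtract''), but as stated it is mis-specified: prescribing data only on $\Gamma_+$ does not determine $Y$, and the cancellation requires matching the relative data on the whole boundary; also the measurement hypothesis lets you replace the absolute data of $Z_1$ by those of $Y$ on $\Gamma_+$, not ``replace $Z_1$ by $Z_2$.'' Your remarks about regularity of the $L^2$ CGO solutions and about the conjugation convention for $\bar Q_2$ are reasonable but secondary to this structural gap.
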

Note that while the orthogonality condition derived in the Lemma does not use the particular form of the solution, we will only apply this identity to solutions of the given form.
\begin{proof}
Let $Y$ be a solution of $(-\Delta+Q_2)Y = 0$ in $M$ with the same relative boundary conditions as $Z_1$; such a solution exists by the assumption on $Q_2$.  Then consider the integral
\[
((\RA_{Q_1} - \RA_{Q_2})(tZ_1,t\delta Z_1) | (ti_{N} d * Z_2, ti_{N}*Z_2))_{\partial M}.
\]
By definition of the $\RA$ map, this is
\begin{eqnarray*}
& & ((t*(Z_1-Y),t\delta *(Z_1 - Y)) | (ti_{N} d * Z_2, ti_{N}*Z_2))_{\partial_M} \\
&=& ((t*(Z_1-Y)|ti_{N} d * Z_2)_{\partial M} + (t\delta *(Z_1 - Y) | ti_{N}*Z_2)_{\partial M}. \\
\end{eqnarray*}
Recall from the section on notation and identities that 
\begin{eqnarray*}
(-\Delta u| v)_M &=& (u| -\Delta v)_M +(t u | ti_{\nu}dv)_{\partial M}\\
                 & & +(t \delta *u|ti_{\nu} *v)_{\partial M} + (t *u|ti_{\nu} d*v)_{\partial M} + (t\delta u|ti_{\nu}v)_{\partial M}.
\end{eqnarray*}
Since the relative boundary values of $(Z_1 - Y)$ vanish, by definition, the integration by parts formula above implies that
\begin{eqnarray*}
& & ((t*(Z_1-Y)|ti_{N} d * Z_2)_{\partial M} + (t\delta *(Z_1 - Y) | ti_{N}*Z_2)_{\partial M} \\
&=& (-\Lap (Z_1- Y)| Z_2)_M - (Z_1- Y| -\Lap Z_2)_M \\
&=& (Q_2 Y - Q_1 Z_1| Z_2)_M - (Z_1- Y| -\overline{Q}_2 Z_2)_M \\
&=& ((Q_2 - Q_1) Z_1| Z_2)_M.\\
\end{eqnarray*}
Meanwhile, by the hypothesis on $\RA_{Q_1}$ and $\RA_{Q_2}$, we have that $\RA_{Q_1}(Z_1 - Y) = \RA_{Q_2}(Z_1 - Y)$ on $\Gamma_{+}$.  Therefore
\begin{eqnarray*}
& & ((t*(Z_1-Y)|ti_{N} d * Z_2)_{\partial M} + (t\delta *(Z_1 - Y) | ti_{N}*Z_2)_{\partial M} \\
&=& ((t*(Z_1-Y)|ti_{N} d * Z_2)_{\Gamma_{+}^c} + (t\delta *(Z_1 - Y) | ti_{N}*Z_2)_{\Gamma_{+}^c}.
\end{eqnarray*}
Now by construction, $Z_2$ has relative boundary values that vanish on $\Gamma_{+}^c$.  But 
\begin{equation*}
\begin{split}
ti_{N}*Z_2|_{\Gamma_{+}^c} = 0 &\Leftrightarrow (*Z_2)_{\perp}|_{\Gamma_{+}^c} = 0 \\
                               &\Leftrightarrow *(Z_2)_{\|}|_{\Gamma_{+}^c} = 0 \\
                               &\Leftrightarrow (Z_2)_{\|}|_{\Gamma_{+}^c} = 0 \\
                               &\Leftrightarrow t Z_2|_{\Gamma_{+}^c} = 0. \\
\end{split}
\end{equation*}
Similarly, 
\[
ti_{N} d * Z_2|_{\Gamma_{+}^c} = 0 \Leftrightarrow t\delta * Z_2|_{\Gamma_{+}^c} = 0.
\]
Therefore the fact that $Z_2$ has relative boundary values that vanish on $\Gamma_{+}^c$ implies that 
\[
((t*(Z_1-Y)|ti_{N} d * Z_2)_{\Gamma_{+}^c} + (t\delta *(Z_1 - Y) | ti_{N}*Z_2)_{\Gamma_{+}^c} = 0.
\]
Therefore
\[
((Q_2 - Q_1) Z_1| Z_2)_M = 0
\]
for each such pair of CGO solutions $Z_1$ and $Z_2$.  
\end{proof}
\begin{remarknonum}
The proof of the Lemma \ref{RBCtoDensity} does not use the actual forms of the CGO solutions. The integral identity holds for all solutions $Z_1$ and $Z_2$ with vanishing relative boundary conditions on $\Gamma_{-}^c$ and $\Gamma_{+}^c$ respectively. However, the identity is only of interest to us for the particular forms of CGO solutions which we stated.
\end{remarknonum}
Working through the same argument with $*Z_1$ and $*Z_2$ gives us the following lemma as well.
\begin{lemma}\label{ABCtoDensity}
Suppose the hypotheses of Theorem \ref{AbsoluteRelativeThm} hold.  Using the notation in Lemma \ref{lemma_cgo_form_construction}, let $*Z_j \in L^2(M, \Lambda M)$ be solutions of $(-\Delta+Q_1)*Z_1 = (-\Delta+\bar{Q}_2)*Z_2 = 0$ in $M$ of the form 
\begin{align*}
Z_1 &= e^{-sx_1} \left[ e^{i s r} \abs{g_0}^{-1/4} \left[ \sum_I c_I(\theta) \eta^I \right] + R_1 \right], \\
Z_2 &= e^{sx_1} \left[ e^{i s r} \abs{g_0}^{-1/4} \left[ \sum_I d_I(\theta) \eta^I \right] + R_2 \right].
\end{align*}  
Then
\[
((Q_2 - Q_1) Z_1| Z_2)_M = 0.
\]
\end{lemma}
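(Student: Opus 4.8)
The plan is to run the proof of Lemma~\ref{RBCtoDensity} with every differential form replaced by its Hodge dual, so that the relative-to-absolute map is replaced by the absolute-to-relative map. First I set $W_j = *Z_j$; since $*$ commutes with $\Delta$, $W_1$ solves $(-\Delta+Q_1)W_1 = 0$ and $W_2$ solves $(-\Delta+\bar Q_2)W_2 = 0$, and by the remark at the end of Section~\ref{sec:cgo} the vanishing of the relative boundary values of $Z_1$ on $\Gamma_-^c$ and of $Z_2$ on $\Gamma_+^c$ becomes the vanishing of the absolute boundary values of $W_1$ on $\Gamma_-^c$ and of $W_2$ on $\Gamma_+^c$. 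Because $\AR_{Q_2}$ is assumed defined, I then pick a solution $Y$ of $(-\Delta+Q_2)Y = 0$ with the same absolute boundary data $(t*W_1, t\delta *W_1)$ as $W_1$. (Equivalently, one may simply invoke Lemma~\ref{RBCtoDensity} for the conjugated potentials $*Q_1*^{-1}$, $*Q_2*^{-1}$, using that conjugation by $*$ turns the absolute-to-relative map for $Q$ into the relative-to-absolute map for $*Q*^{-1}$.)

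Next I form the boundary pairing
\[
((\AR_{Q_1} - \AR_{Q_2})(t*W_1, t\delta *W_1)\,|\,(ti_{N} d W_2, ti_{N} W_2))_{\partial M},
\]
which by the definition of the absolute-to-relative map equals $(t(W_1-Y)|ti_{N} d W_2)_{\partial M} + (t\delta(W_1-Y)|ti_{N} W_2)_{\partial M}$. I then apply the integration by parts identity for the Hodge Laplacian recorded in Section~\ref{sec:notation} with $u = W_1-Y$ and $v = W_2$: the two boundary terms involving $t*u$ and $t\delta *u$ vanish because $W_1-Y$ has vanishing absolute boundary data, so the pairing equals $(-\Delta(W_1-Y)|W_2)_M - (W_1-Y|-\Delta W_2)_M$. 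Inserting $-\Delta W_1 = -Q_1 W_1$, $-\Delta Y = -Q_2 Y$, $-\Delta W_2 = -\bar Q_2 W_2$ and simplifying exactly as in Lemma~\ref{RBCtoDensity}, this collapses to $((Q_2-Q_1)W_1|W_2)_M$.

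On the other hand the data $(t*W_1, t\delta *W_1)$ is supported in $\Gamma_-$, so the partial data hypothesis forces $(\AR_{Q_1} - \AR_{Q_2})(t*W_1, t\delta *W_1)$ to be supported in $\Gamma_+^c$, and the pairing reduces to an integral over $\Gamma_+^c$. There $W_2$ has vanishing absolute boundary values, so by the same chain of equivalences used in the proof of Lemma~\ref{RBCtoDensity} (with the roles of $t$ and $ti_{N}$, and of $*Z_2$ and $W_2 = *Z_2$, interchanged) one gets $ti_{N} W_2 = 0$ and $ti_{N} d W_2 = 0$ on $\Gamma_+^c$. Hence the pairing is zero, and therefore $((Q_2-Q_1)W_1|W_2)_M = ((Q_2-Q_1)*Z_1|*Z_2)_M = 0$; since $*$ is an isometry of the $L^2$ inner product this is the asserted identity (the stated $Z_1, Z_2$ being read as the solutions $*Z_1, *Z_2$, or equivalently after absorbing $*$ into the free amplitude coefficients $c_I, d_I$).

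The step I expect to be the main obstacle is the Hodge-star bookkeeping: verifying that the absolute boundary data of a form matches the relative boundary data of its dual, that the CGO solutions of Lemma~\ref{lemma_cgo_form_construction} pass under $*$ to solutions vanishing on the correct portion of $\partial M$, and that $ti_{N} W_2$ and $ti_{N} d W_2$ vanish on $\Gamma_+^c$. The only genuinely algebraic point, the collapse of $(Q_2 Y - Q_1 W_1|W_2)_M - (W_1-Y|-\bar Q_2 W_2)_M$ to $((Q_2-Q_1)W_1|W_2)_M$, is word-for-word the corresponding step in Lemma~\ref{RBCtoDensity} and uses only that $W_2$ solves the $\bar Q_2$ equation.
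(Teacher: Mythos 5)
Your proposal is correct and is essentially the paper's own argument: the paper proves Lemma \ref{ABCtoDensity} by simply noting that the proof of Lemma \ref{RBCtoDensity} goes through with $*Z_1$, $*Z_2$ in place of $Z_1$, $Z_2$, which is exactly the dualization you carry out (absolute data of $W_j=*Z_j$ matching relative data of $Z_j$, the complementary pair of boundary terms in the integration-by-parts identity surviving, and the vanishing of $ti_N W_2$, $ti_N dW_2$ on $\Gamma_+^c$). Your detailed Hodge-star bookkeeping is consistent with the identities in Section \ref{sec:notation}, so there is no gap.
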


Therefore both of the main theorems reduce to using the condition $(Q Z_1, Z_2)_{L^2(M)} = 0$ for solutions of the type given in Lemma \ref{lemma_cgo_form_construction} to show that $Q = 0$. 

The next result shows that from the condition $(Q Z_1, Z_2)_{L^2(M)} = 0$ for solutions of the type given in Lemma \ref{lemma_cgo_form_construction}, it follows that certain exponentially attenuated integrals over geodesics in $(M_0,g_0)$ of matrix elements of $Q$, further Fourier transformed in $x_1$, must vanish.

\begin{prop} \label{prop_transform_matrixelements}
Assume the hypotheses in Theorem \ref{RelativeAbsoluteThm} or \ref{AbsoluteRelativeThm}, with $Q = Q_2-Q_1$ extended by zero to $\mR \times M_0$. Fix a geodesic $\gamma: [0,L] \to M_0$ with $\gamma(0), \gamma(L) \in \partial M_0$, let $\partial_r$ be the vector field in $M_0$ tangent to geodesic rays starting at $\gamma(0)$, and suppose that $\{ \eta^I \}$ is an orthonormal frame of $\Lambda(\mR \times M_0^{\rm int})$ with $\eta^1 = dx^1$, $\eta^2 = dr$, and $\nabla_{\partial_r} \eta^j = 0$ for $3 \leq j \leq n$. (Such a frame always exists.) Then for any $\lambda \in \mR$ and any $I, J$ one has 
$$
\int_0^L e^{-2\lambda r} \left[ \int_{-\infty}^{\infty} e^{-2i\lambda x_1} \langle Q(x_1,\gamma(r)) \eta^I, \eta^J \rangle \,dx_1 \right] dr = 0.
$$
\end{prop}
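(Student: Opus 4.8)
The plan is to take the identity $(QZ_1 \mid Z_2)_M = 0$ from Lemma \ref{RBCtoDensity} (or Lemma \ref{ABCtoDensity}) and plug in the explicit CGO solutions from Lemma \ref{lemma_cgo_form_construction}, then pass to the limit $\abs{\tau} \to \infty$. Concretely, I would fix the geodesic $\gamma$ and the orthonormal frame $\{\eta^I\}$ with $\eta^1 = dx^1$, $\eta^2 = dr$, $\nabla_{\partial_r}\eta^j = 0$ (existence of such a frame is by parallel transport along the geodesic rays from $\gamma(0)$, exactly as in Lemma \ref{lemma_form_quasimode}). I would then choose the quasimode amplitude data to concentrate on the single pair of indices $I,J$: in $Z_1$ take $c_{I'}(\theta) = \delta_{I' I}\, b(\theta)$ for a bump function $b \in C^\infty(S^{n-2})$ concentrating near the direction $\theta_0$ determined by $\gamma$, and in $Z_2$ take $d_{J'}(\theta) = \delta_{J' J}\, b(\theta)$ with the same bump. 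This is the standard Gaussian-beam / stationary-phase choice familiar from \cite{DKSaU}.

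The next step is the asymptotic computation of the integral. Writing $s = \tau + i\lambda$, the leading terms of $Z_1$ and $Z_2$ are $e^{-sx_1} e^{isr} \abs{g_0}^{-1/4} b(\theta)\eta^I$ and $e^{sx_1} e^{isr}\abs{g_0}^{-1/4} b(\theta)\eta^J$ respectively, so the product of the phases gives $e^{-sx_1 + \bar{s}x_1} e^{isr + \overline{isr}}$ — here one must be careful that $(\,\cdot\mid\cdot\,)_M$ is sesquilinear, so $Z_2$ gets conjugated; the $e^{-sx_1}$ from $Z_1$ against $\overline{e^{sx_1}} = e^{\bar s x_1}$ produces $e^{(\bar s - s)x_1} = e^{-2i\lambda x_1}$, and the $e^{isr}$ factors similarly produce $e^{-2\lambda r}$ (the oscillatory $\tau$-parts in $x_1$ cancel, and the real exponential growth/decay in $x_1$ cancels, which is why this particular pairing of CGOs is used). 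The remainder terms $R_1, R_2$ contribute $O(\abs{\tau}^{-1})$ since $\norm{R_j}_{L^2} = O(\abs{\tau}^{-1})$ while the amplitudes are $O(1)$ in $L^2$, so they drop out in the limit. The cross terms (amplitude against remainder) are also $O(\abs{\tau}^{-1})$. What survives is $\int_M e^{-2i\lambda x_1} e^{-2\lambda r} \abs{g_0}^{-1/2} b(\theta)^2 \langle Q \eta^I, \eta^J\rangle \, dV$. Using Fubini in the $x_1$ variable and the fact that $Q$ is extended by zero off $M$, and then writing $dV = dx^1\, \abs{g_0}^{1/2}\, dr\, d\theta$ in polar normal coordinates, the $\abs{g_0}^{\pm 1/2}$ factors cancel and one is left with $\int_{S^{n-2}} b(\theta)^2 \left[ \int_0^{L(\theta)} e^{-2\lambda r} \left( \int_{-\infty}^\infty e^{-2i\lambda x_1} \langle Q(x_1, \gamma_\theta(r))\eta^I, \eta^J\rangle\, dx_1 \right) dr \right] d\theta = 0$, where $\gamma_\theta$ is the geodesic from $\gamma(0)$ in direction $\theta$. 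Finally, shrinking the support of $b$ to a point $\theta_0$ and using continuity of $Q$ (hence continuity in $\theta$ of the inner integral), the mean value property forces the bracketed quantity to vanish at $\theta_0$, which is the claimed identity for $\gamma = \gamma_{\theta_0}$.

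The main obstacle, and the step requiring the most care, is the rigorous justification of the asymptotics: one must verify that the phase functions combine exactly as claimed with no leftover growing exponential (this is the reason $Z_1$ carries $e^{-sx_1}$ and $Z_2$ carries $e^{+sx_1}$, and it is important that the conformal factor is $c = 1$ so that $x_1$ is genuinely Euclidean and $\partial_1$ is a unit parallel field), and that all remainder contributions are genuinely $o(1)$. The $R_j$ estimate is only in $L^2(M)$, so one estimates $\abs{(QR_1 \mid A_2)_M} \le \norm{Q}_{L^\infty}\norm{R_1}_{L^2}\norm{A_2}_{L^2} = O(\abs{\tau}^{-1})$ and symmetrically, which suffices. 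A secondary technical point is that $Q = Q_2 - Q_1$ is only continuous, not smooth, but continuity is exactly enough both for the limiting argument (dominated convergence, with the $e^{-2\lambda r}$ and compact support providing integrability) and for the final localization in $\theta$; no more regularity is needed. One should also note the integral over $x_1$ is really over the bounded $x_1$-extent of $M$, which is fine since $Q$ is compactly supported after extension by zero, so writing the limits as $\pm\infty$ is harmless.
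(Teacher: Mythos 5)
Your proposal is correct and follows essentially the same route as the paper's proof: pair the CGOs of Lemma \ref{lemma_cgo_form_construction} in the vanishing integral from Lemma \ref{RBCtoDensity} (or Lemma \ref{ABCtoDensity}), let $\tau \to \infty$ so that the phases combine into $e^{-2i\lambda x_1}e^{-2\lambda r}$ with the $|g_0|^{\pm 1/2}$ factors cancelling against the volume form and the remainders contributing $O(|\tau|^{-1})$, and then vary/localize the amplitudes $c_I,\,d_J$ to isolate a fixed pair $(I,J)$ and a fixed geodesic. The only point the paper treats more carefully is the existence of the frame $\{\eta^I\}$, which it constructs (rather than quotes) by parallel transporting along $\partial_r$ a frame chosen on a punctured geodesic sphere about a center $\omega = \gamma(-\eps)$ taken slightly outside $M_0$, so that the deleted direction never meets $M_0$; this does not affect the substance of your argument.
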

\begin{proof}
Using the notation in Lemma \ref{lemma_cgo_form_construction}, let $Z_j \in L^2(M, \Lambda M)$ be solutions of $(-\Delta+Q_1)Z_1 = (-\Delta+\bar{Q}_2)Z_2 = 0$ in $M$ of the form 
\begin{align*}
Z_1 &= e^{-sx_1} \left[ e^{i s r} \abs{g_0}^{-1/4} \left[ \sum_I c_I(\theta) \eta^I \right] + R_1 \right], \\
Z_2 &= e^{sx_1} \left[ e^{i s r} \abs{g_0}^{-1/4} \left[ \sum_I d_I(\theta) \eta^I \right] + R_2 \right],
\end{align*}
where $s = \tau + i \lambda$, $\tau > 0$ is large, $\lambda \in \mR$, and $c_I, d_I \in C^{\infty}(S^{n-2})$. We can assume that $\norm{R_j}_{L^2(M)} = O(\tau^{-1})$ as $\tau \to \infty$, and that the relative (absolute) boundary values of $Z_1$ are supported in $\tilde{F}$ and the relative (absolute) boundary values of $Z_2$ are supported in $\tilde{B}$. By Lemma \ref{RBCtoDensity} (Lemma \ref{ABCtoDensity}), we have 
\begin{multline*}
0 = \lim_{\tau \to \infty} (Q Z_1, Z_2)_{L^2(M)} = \int_{S^{n-2}} \int_0^{\infty} e^{-2\lambda r} \\
 \times \left[ \sum_{I,J} \left[ \int_{-\infty}^{\infty} e^{-2i\lambda x_1} \langle Q(x_1,r,\theta) \eta^I, \eta^J \rangle \,dx_1 \right] c_I(\theta) \overline{d_J(\theta)} \right] dr \,d\theta.
\end{multline*}

We now extend the $M_0$-geodesic $\gamma$ to $\hat{M}_0$, choose $\omega = \gamma(-\eps)$ for small $\eps > 0$, and choose $\theta_0$ so that $\gamma(t) = (t,\theta_0)$. The functions $c_I$ and $d_J$ can be chosen freely, and by varying them we obtain that 
$$
\int_0^{\infty} e^{-2\lambda r} \left[ \int_{-\infty}^{\infty} e^{-2i\lambda x_1} \langle Q(x_1,r,\theta_0) \eta^I, \eta^J \rangle \,dx_1 \right] dr = 0
$$
for each fixed $I$ and $J$. Since $Q$ is compactly supported in $M_0^{\rm int}$, this implies the required result.

It remains to show that a frame $\{ \eta^I \}$ with the required properties exists. Let $\omega = \gamma(0)$, and let $(\hat{M}_0,g_0)$ be a simple manifold with $(M_0, g_0) \subset \subset (\hat{M}_0,g_0)$ such that the $\hat{M}_0$-geodesic starting at $\omega$ in direction $\nu(\omega)$ never meets $M_0$. (It is enough to embed $(M_0,g_0)$ in some closed manifold and to take $\hat{M}_0$ strictly convex and slightly larger than $M_0$.) Let $(r,\theta)$ be polar normal coordinates in $\hat{M}_0$ with center $\omega = \gamma(0)$, fix $r_0 > 0$ so that the geodesic ball $B(\omega,r_0)$ is contained in $\hat{M}_0^{\rm int}$, and let $\hat{\theta} \in S^{n-2}$ be the direction of $\nu(\omega)$. Choose some orthonormal frame $\eta^3, \ldots, \eta^n$ of the cotangent space of $\partial B(\omega,r_0) \setminus \{(r_0,\hat{\theta})\}$, and extend these as $1$-forms in $M_0^{\rm int}$ by parallel transporting along integral curves of $\partial_r$. We thus obtain a global orthonormal frame $\eta^2, \ldots, \eta^n$ of $T^* M_0^{\rm int}$ with $\eta^2 = dr$ and $\nabla_{\partial_r} \eta^j = 0$ for $3 \leq j \leq n$. Moreover, $\eta^1, \ldots, \eta^n$ will be a global orthonormal frame of $T^* (\mR \times M_0^{\rm int})$ inducing an orthonormal frame $\{ \eta^I \}$ of $\Lambda(\mR \times M_0^{\rm int})$.
\end{proof}

We will now show how the coefficients are uniquely determined by the integrals in Proposition \ref{prop_transform_matrixelements}. This follows by inverting attenuated ray transforms, a topic of considerable independent interest (see the survey \cite{finch} for results in the Euclidean case, and the survey \cite{PSU_survey} and references below for the manifold case). The transform in Proposition \ref{prop_transform_matrixelements} is not exactly the same kind of attenuated ray transform/Fourier transform as in the scalar case for instance in \cite{DKSaU}, since the matrix element of $Q$ that appears in the integral may actually depend on the geodesic $\gamma$ (note that the $1$-forms $\eta$ depend on $\gamma$). To clarify this point, we fix some global orthonormal frame $\{ \eps^1, \ldots, \eps^n \}$ of $T^*(\mR \times M_0)$ with $\eps^1 = dx^1$, and let $\{ \eps^I \}$ be the corresponding orthonormal frame of $\Lambda(\mR \times M_0)$. Define the matrix elements 
$$
q_{I,J} = \langle Q \eps^I, \eps^J \rangle.
$$
Define also 
$$
\hat{q}_{I,J}(\xi_1,x') = \int_{-\infty}^{\infty} e^{-ix_1 \xi_1} q_{I,J}(x_1,x') \,dx_1.
$$
Then the conclusion in Proposition \ref{prop_transform_matrixelements} implies that 
\[
\int_0^L e^{-2\lambda r} \hat{q}_{I',J'}(2\lambda,\gamma(r)) \langle \eta^I, \eps^{I'} \rangle \langle \eta^J, \eps^{J'} \rangle \,dr = 0
\]
for any $\lambda \in \mR$, for any $I, J$, and for any maximal geodesic $\gamma$ in $M_0$. (Note that the inner products $\langle \eta^I, \eps^{I'} \rangle$ do not depend on $x_1$.)

Up until now everything discussed in this paper has held for any dimension $n \geq 3$.  Now, however, we will invoke the assumption that $n = 3$.  Then $q_{I,J}$ is an $8 \times 8$ matrix.  In this case we may choose $\eta^1 = dx^1$, $\eta^2 = dr$, and $\eta^3 = *_{g_0} dr$, where $dr$ is the $1$-form dual to $\dot{\gamma}$ on the geodesic $\gamma$. Let also $\{ e_j \}$ be the orthonormal frame of vector fields dual to $\{ \eps^j \}$ (which is assumed to be positively oriented). It follows that 
\begin{gather*}
\langle \eta^1, \eps^1 \rangle = 1, \quad \langle \eta^1, \eps^2 \rangle = 0, \quad \langle \eta^1, \eps^3 \rangle = 0, \\
\langle \eta^2, \eps^1 \rangle = 0, \quad \langle \eta^2, \eps^2 \rangle = \langle e_2, \dot{\gamma} \rangle, \quad \langle \eta^2, \eps^3 \rangle = \langle e_3, \dot{\gamma} \rangle, \\
\langle \eta^3, \eps^1 \rangle = 0, \quad \langle \eta^3, \eps^2 \rangle = -\langle e_3, \dot{\gamma} \rangle, \quad \langle \eta^3, \eps^3 \rangle = \langle e_2, \dot{\gamma} \rangle.
\end{gather*}
The relations for $\eta^{\{1,2\}} = \eta^1 \wedge \eta^2, \eta^{\{3,1\}},\eta^{\{2,3\}}$ and $\eps^{\{1,2\}}, \eps^{\{3,1\}},\eps^{\{2,3\}}$ can be determined from the above relations by duality. Finally, $\langle \eta^0, \eps^I \rangle = 1$ if $I=0$ and $0$ otherwise, and the other relations for $\eta^0, \eps^0$, $\eta^{\{1,2,3\}}$, and $\eps^{\{1,2,3\}}$ are similar.

Now choosing $I = J = 1$ (here we identify $1$ with $\{1\}$) we obtain 
$$
\int_0^L e^{-2\lambda r} \hat{q}_{1,1}(2\lambda,\gamma(r)) \,dr = 0 \quad \text{for all $\lambda$ and $\gamma$.}
$$
This means that the usual attenuated geodesic ray transform of the function $\hat{q}_{1,1}(2\lambda,\,\cdot\,)$ in $M_0$ vanishes for all $\lambda$. First we have $\hat{q}_{1,1}(2\lambda,\,\cdot\,) \in C^{\infty}(M_0)$ for all $\lambda$ \cite[Proposition 3]{FSU}, and then $\hat{q}_{1,1}(2\lambda,\,\cdot\,) = 0$ for all $\lambda$ by the injectivity of the attenuated ray transform \cite{SaU} and so $q_{1,1} = 0$. The same argument applies for all pairs $(I,J)$ where 
\[
I,J \in \{ 0,1,\{2,3\}, \{1,2,3\}\}.
\]

Now consider the case where $I = 1$ and $J = 2$.  Then 
\[
\int_0^L e^{-2\lambda r} (\hat{q}_{1,2}(2\lambda,\gamma(r)) \langle e_2, \dot{\gamma} \rangle + \hat{q}_{1,3}(2\lambda,\gamma(r)) \langle e_3, \dot{\gamma} \rangle) \,dr = 0.
\]
Then the injectivity result for the attenuated ray transform on 1-tensors \cite{SaU} together with the regularity result \cite[Proposition 1]{HS} says that 
\[
\hat{q}_{1,2}(2\lambda,x) \eps^2 + \hat{q}_{1,3}(2\lambda,x) \eps^3 = 0
\]
for all $\lambda \neq 0$, from which we can conclude that
\[
q_{1,2} = q_{1,3} = 0.
\]
The same argument then applies for all pairs $(I,J)$ where $I \in \{ 0,1,\{2,3\}, \{1,2,3\}\}$ and $J \in \{2,3,\{1,2\},\{3,1\}\}$, or vice versa.

Finally, consider the case when $I = J = 2$.  For brevity, we'll write $\langle e_j, \dot{\gamma} \rangle$ as $\dot{\gamma}_j$.  Then $I=J=2$ gives
\begin{equation}\label{IJ22}
\int_0^L e^{-2\lambda r} (\hat{q}_{2,2} \dot{\gamma}_2^2 + \hat{q}_{2,3}\dot{\gamma}_2 \dot{\gamma}_3 + \hat{q}_{3,2}\dot{\gamma}_3 \dot{\gamma}_2 + \hat{q}_{3,3}\dot{\gamma}_3^2) \,dr = 0.
\end{equation}
The integrand here can be represented as the symmetric 2-tensor
\[
f^{2,2} := \left( \begin{array}{cc}
\hat{q}_{2,2}                        & \half(\hat{q}_{2,3} + \hat{q}_{3,2}) \\
\half(\hat{q}_{2,3} + \hat{q}_{3,2}) & \hat{q}_{3,3}                        \\
\end{array} \right)
\]
(in coordinates provided by $\{\eps^2, \eps^3\}$) applied to $(\dot{\gamma}, \dot{\gamma})$. This shows that the attenuated ray transform of the $2$-tensor $f^{2,2}$ in $(M_0,g_0)$, with constant attenuation $-2\lambda$, vanishes identically.

We will now make use of the methods of \cite{PSaU} in this tensor tomography problem. We only give the details in the case where $Q$ (and hence $f^{2,2}$) is $C^{\infty}$. The result also holds for continuous $Q$ by using an elliptic regularity result for the normal operator, but in the present weighted case for 2-tensors the required result may not be in the literature. We only say that such a result can be proved by adapting the methods of \cite{HS} to the 2-tensor case (in particular one needs a solenoidal decomposition $f = f^s + d\beta$ of a $2$-tensor $f$ and a further solenoidal decomposition $\beta = \beta^s + d\phi$ of the $1$-form $\beta$, and one then shows that the normal operator acting on ''solenoidal triples'' $(f^s, \beta^s, \phi)$ is elliptic because the weight comes from a nonvanishing attenuation).

Since $f^{2,2}$ is $C^{\infty}$, the injectivity result for the attenuated ray transform on symmetric 2-tensors (see \cite{Assylbekov}, following \cite{PSaU}) says that
\[
f^{2,2} = -Xu + 2\lambda u
\]
where $X$ is the geodesic vector field on $(M_0,g_0)$, and $u$ is a smooth function on the unit circle bundle $SM_0$ that corresponds to the sum of a 1-tensor and scalar function, with 
\[
u|_{\partial M_0} = 0.
\]
Here we have identified $f^{2,2}$ and $u$ with functions on $SM_0$ as in ~\cite{PSaU}. 
We can also express $u$ and $f^{2,2}$ in terms of Fourier components as in ~\cite{PSaU},
\begin{equation*}
\begin{split}
u &= u_{-1} + u_0 + u_{1} \\
f^{2,2} &= f^{2,2}_{-2} + f^{2,2}_0 + f^{2,2}_{2}. \\
\end{split}
\end{equation*}
Here $u_0 \in C^{\infty}(M_0)$, $u_1 + u_{-1}$ corresponds to a smooth $1$-tensor in $M_0$, and $u_0, u_1, u_{-1}$ vanish on $\partial M_0$. Then 
\[
-X(u_{-1} + u_0 + u_{1}) + 2\lambda (u_{-1} + u_0 + u_{1}) = f^{2,2}_{-2} + f^{2,2}_0 + f^{2,2}_{2}.
\]
Now parity implies the following two equations: 
\[
2\lambda (u_{-1} + u_{1}) = Xu_0
\]
and
\[
-X(u_{-1} + u_{1}) + 2\lambda (u_0) = f^{2,2}_{-2} + f^{2,2}_0 + f^{2,2}_{2}.
\]
Assume that $\lambda$ is non-zero. 
Using the first equation in the second one implies that 
\begin{equation} \label{f22_equation}
-\frac{X^2(u_0)}{2\lambda} + 2\lambda u_0 = f^{2,2},
\end{equation}
where $X^2u_0$ corresponds to the covariant Hessian $\nabla^2 u_0$ of $u_0$. 
The first equation implies that $u_0$ vanishes to first order on $\partial M_0$.

Unfortunately, this is not enough to conclude that the coefficients of $f^{2,2}$ are $0$.  However, going back and choosing $(I,J) = (2,3), (3,2),$ and $(3,3)$ gives us three additional equations of this type with the same elements $q_{I,J}$.  More specifically, 
\[
f^{2,3} = \left( \begin{array}{cc}
\hat{q}_{2,3}                        & \half(\hat{q}_{3,3} - \hat{q}_{2,2}) \\
\half(\hat{q}_{3,3} - \hat{q}_{2,2}) & -\hat{q}_{3,2}                       \\
\end{array} \right),
\] 
\[
f^{3,2} = \left( \begin{array}{cc}
\hat{q}_{3,2}                        & \half(\hat{q}_{3,3} - \hat{q}_{2,2}) \\
\half(\hat{q}_{3,3} - \hat{q}_{2,2}) & -\hat{q}_{2,3}                        \\
\end{array} \right),
\] 
and
\[
f^{3,3} = \left( \begin{array}{cc}
\hat{q}_{3,3}                         & -\half(\hat{q}_{2,3} + \hat{q}_{3,2})  \\
-\half(\hat{q}_{2,3} + \hat{q}_{3,2}) & \hat{q}_{2,2} \\
\end{array} \right).
\] 
are all of the same form.  Therefore it follows that $f^{2,2} + f^{3,3}$ and $f^{2,3} - f^{3,2}$ are as well.  But these are both scalar matrices, and if 
\[
-\frac{X^2(u_0)}{2\lambda} + 2\lambda u_0 
\]
is a scalar matrix, then also the covariant Hessian $\nabla^2 u_0$ is a scalar matrix in the $\{ \eps^2, \eps^3 \}$ basis.

To make the previous statement more explicit, identify $(M_0,g_0)$ with the unit disk in $\mR^2$ and choose an isothermal coordinate system $(x^1, x^2)$ in which the metric is given by $e^{2\mu} \delta_{jk}$ for some $\mu \in C^{\infty}(M_0)$. Choosing $e_2 = e^{-\mu} \partial_1$ and $e_3 = e^{-\mu} \partial_2$, the condition $\nabla^2 u_0(e_2,e_2) - \nabla^2 u_0(e_3,e_3) = 0$ implies that 
$$
\partial_1^2 u_0 - \partial_2^2 u_0 + b \cdot \nabla u_0 = 0 \quad \text{in } M_0
$$
for some vector field $b \in C^{\infty}(M_0,\mR^2)$ depending on $\mu$. Since $u_0$ vanishes to first order on $\partial M_0$, extending $u_0$ by zero to $\mR^2$ we have 
$$
\partial_1^2 u_0 - \partial_2^2 u_0  + b \cdot \nabla u_0 = 0 \quad \text{in } \mR^2
$$
where $u_0 \in H^2(\mR^2)$ is compactly supported and $b$ is some smooth compactly supported vector field. Uniqueness for hyperbolic equations \cite[Section 2.8]{T1} implies that $u_0 = 0$.

The above argument shows that $f^{2,2} + f^{3,3}$ and $f^{2,3} - f^{3,2}$ are $0$. Thus $\hat{q}_{2,2} + \hat{q}_{3,3} = 0$ and $\hat{q}_{2,3} - \hat{q}_{3,2} = 0$, showing that $f^{2,2}$ and $f^{2,3}$ are trace free. Taking traces in \eqref{f22_equation} and using that $u_0$ vanishes to first order on $\partial M_0$ implies that $u_0 = 0$ by unique continuation for elliptic equations. Thus $f^{2,2} = 0$ and similarly $f^{2,3} = 0$, which shows that $q_{2,2}, q_{2,3}, q_{3,2},$ and $q_{3,3}$ are zero as well.

The same argument now works for the remaining entries of $q$, and this finishes the proof.

\section{Higher dimensions} \label{sec:higher}

In higher dimensions, $n > 3$, as noted above, everything up to and including the proof of Proposition \ref{prop_transform_matrixelements} still holds.  However, this does not reduce easily into a tensor tomography problem, as in the three-dimensional case, because we cannot choose the basis $\{ \eta^i \}$ so that $\eta^3, \ldots, \eta^4$ to depend on $\eta^2 = dr$ in a tensorial manner.  

More precisely, in general we lack tensors $T_i$ for which $\eta^i = T_i(\eta^2, \ldots, \eta^2)$ for $i \geq 3$, as is the case in three dimensions.  Moreover, even if the results of Proposition \ref{prop_transform_matrixelements} can be reduced to a tensor tomography problem, there is no guarantee that it will be one for which there are useful injectivity results, since there are very few such results for $k$-tensors with $k > 2$.  

However, in the Euclidean case we can do better, since we have the extra freedom to vary the Carleman weight $\ph$.  In particular, we can construct CGOs to reduce the problem in Lemmas \ref{RBCtoDensity} and \ref{ABCtoDensity} to a Fourier transform, as has been done for inverse problems for scalar functions, e.g. in ~\cite{BU}.  Therefore we can conclude this paper by a proof for higher dimensions, in the Euclidean case.  

\begin{proof}[Proof of Theorem \ref{EuclideanThm}]

Fix coordinates $x_1, \ldots, x_n$ on $\Rn$.  The corresponding basis for the cotangent space is $dx^1, \ldots, dx^n$, and this gives a corresponding basis $\{ dx^{I} \}$ for $\Lambda M$.  

Now note that if $f$ is a scalar function, $\Lap (f dx^I) = (\Lap f) dx^I$.  Therefore if $\alpha$ and $\beta$ are unit vectors such that $\alpha \cdot \beta = 0$, then
\[
e^{-\frac{\alpha \cdot x}{h}}h^2(-\Lap + Q)(e^{\frac{(\alpha + i\beta)\cdot x}{h}}dx^I) = O(h^2)dx^I.
\]
Therefore Proposition \ref{RBCHahnBanach} implies there exists $r \in L^2(M, \Lambda M)$ such that 
\[
(-\Lap + Q)(e^{\frac{(\alpha + i\beta) \cdot x}{h}}(dx^I + r)) = 0,
\]
with $\|r\|_{L^2(M)} = O(h)$, and $Z = e^{\frac{(\alpha + i\beta)\cdot x}{h}}(dx^I + r)$ has relative boundary conditions which vanish on $\Gamma_+^c$.    

Now if $k$ and $\ell$ are mutually orthogonal unit vectors which are both orthogonal to $\alpha$, then we can set $\beta_1 = \ell + hk$ and $\beta_2 = \ell-hk$, and create $Z_1 = e^{\frac{(-\alpha + i\beta_1)\cdot x}{h}}(dx^I + r_1)$ and $Z_2 = e^{\frac{(\alpha + i\beta_2)\cdot x}{h}}(dx^I + r_2)$ so that $(-\Lap + Q_1)Z_1 = (-\Lap + Q_2)Z_2 = 0$, and $Z_1$ and $Z_2$ have relative boundary conditions that vanish on $\Gamma_{-}^c$ and $\Gamma_{+}^c$ respectively.  

Then Lemma \ref{RBCtoDensity}, together with the hypotheses of Theorem \ref{EuclideanThm} implies that 
\[
(Q_1 - Q_2|e^{-i2k \cdot x}) = 0.
\]
This can be done for any $k$ orthogonal to $\alpha$.  Since $\alpha$ can be varied slightly without preventing the relative boundary conditions of the solutions from vanishing on the correct set, this is in fact true for $k$ in an open set, from which we can conclude that $Q_1 = Q_2$ on $M$.  

The absolute boundary value version works similarly, with the appropriate change in the CGOs.  
\end{proof}

\section{Appendix}

This appendix contains the proofs of Lemmas \ref{lemma_xieta_product}--\ref{ConxnLaplace}. We will write $k$-forms in local coordinates as $u = u_I \eps^I$ where $\eps^1, \ldots, \eps^n$ is some frame of $1$-forms, the sum is over all $k$-combinations $I = \{ i_1, \ldots, i_k \} \subset \{ 1, \ldots, n \}$, and we write $\eps^I = \eps^{i_1} \wedge \ldots \wedge \eps^{i_k}$ if $i_1 < \ldots < i_k$. Often we will choose $\eps^j = dx^j$ and write $u = u_I \,dx^I$.

\begin{proof}[Proof of Lemma \ref{lemma_xieta_product}]
We will prove that 
$$
\xi \wedge i_{\xi} u + i_{\xi} (\xi \wedge u) = \abs{\xi}^2 u.
$$
The lemma will follow immediately from this by polarization. It is enough to prove the identity at a fixed point. Choose a positive orthonormal basis $\eps^1, \ldots, \eps^n$ of covectors such that $\xi = \abs{\xi} \eps^1$, and note that if $I = \{ i_1,\ldots,i_k \}$ with $i_1 < \ldots < i_k$, we have 
$$
\eps^1 \wedge i_{\eps^1} \eps^I = \left\{ \begin{array}{ll} \eps^I, & 1 \in I, \\ 0, & \text{otherwise} \end{array} \right.
$$
and 
$$
i_{\eps^1} (\eps^1 \wedge \eps^I) = \left\{ \begin{array}{ll} \eps^I, & 1 \notin I, \\ 0, & \text{otherwise}. \end{array} \right.
$$
This proves the result.
\end{proof}

\begin{proof}[Proof of Lemma \ref{lemma_conjugated_hodge_expression}]
Note that 
$$
e^{s\rho} d(e^{-s\rho} u) = (d - s \,d\rho \wedge)u
$$
and by duality 
$$
e^{s\rho} \delta (e^{-s\rho} u) = (\delta + s \,i_{d\rho})u.
$$
It follows that 
\begin{align*}
 &e^{s\rho} (-\Delta) (e^{-s\rho} u) = e^{s\rho} (d\delta + \delta d) (e^{-s\rho} u) \\
 &= (d - s \,d\rho \wedge)(\delta u + s \,i_{d\rho} u) + (\delta + s \,i_{d\rho})(du - s \,d\rho \wedge u) \\
 &= -s^2 \left[ d\rho \wedge i_{d\rho} + i_{d\rho} (d\rho \wedge \,\cdot\,) \right] u \\
 & \quad \,+ s \left[ i_{d\rho} \circ d + d \circ i_{d\rho} - d\rho \wedge \delta - \delta(d\rho \wedge \,\cdot\,) \right] u - \Delta u.
\end{align*}
Writing $\rho = \phi + i\psi$ where $\phi$ and $\psi$ are real valued, Lemma \ref{lemma_xieta_product} implies that 
$$
d\rho \wedge i_{d\rho} u + i_{d\rho} (d\rho \wedge u) = \langle d\rho, d\rho \rangle u.
$$

It remains to show that 
$$
i_{d\rho} \circ d + d \circ i_{d\rho} - d\rho \wedge \delta - \delta(d\rho \wedge \,\cdot\,) = 2\nabla_{\text{grad}(\rho)} + \Delta \rho.
$$
To prove this, we compute in Riemannian normal coordinates at a fixed point $p$. Then $u$ has the coordinate expression $u = u_I \,dx^I$. We have 
$$
i_{d\rho}(du)|_p = \partial_j u_I \partial_k \rho \,i_{dx^k}(dx^j \wedge dx^I)|_p
$$
and 
\begin{align*}
d(i_{d\rho} u)|_p &= d( u_I \partial_k \rho \,i_{dx^k} dx^I)|_p \\
 &= \partial_j( u_I \partial_k \rho ) dx^j \wedge i_{dx^k} dx^I + u_I \partial_k \rho \,d(i_{dx^k} dx^I)|_p \\
 &= \partial_j u_I \partial_k \rho \,dx^j \wedge i_{dx^k} dx^I + u_I \partial_{jk} \rho \,dx^j \wedge i_{dx^k} dx^I|_p
\end{align*}
since $d(i_{dx^k} dx^I)|_p = d(i_{\partial_k} dx^I)|_p = 0$. Further, using that 
$$
\delta u|_p = -\sum_{j=1}^n i_{\partial_j} \nabla_{\partial_j} u|_p
$$
we have 
$$
-d\rho \wedge \delta u|_p = \partial_k \rho \,dx^k \wedge i_{\partial_j} \nabla_{\partial_j} (u_I \,dx^I)|_p = \partial_j u_I \partial_k \rho \,dx^k \wedge i_{\partial_j} dx^I|_p
$$
and 
\begin{align*}
-\delta(d\rho \wedge u)|_p &= i_{\partial_j} \nabla_{\partial_j} (u_I \partial_k \rho \,dx^k \wedge dx^I)|_p \\
 &= \partial_j u_I \partial_k \rho \,i_{\partial_j}(dx^k \wedge dx^I) + u_I \partial_{jk} \rho \,i_{\partial_j}(dx^k \wedge dx^I)|_p.
\end{align*}
Combining these computations with Lemma \ref{lemma_xieta_product}, we see that 
$$
 i_{d\rho} \circ du + d \circ i_{d\rho} u - d\rho \wedge \delta u - \delta(d\rho \wedge u)|_p = 2 \partial_j u_I \partial_j \rho \,dx^I + \partial_j^2 \rho \,u_I \,dx^I.
$$
The right hand side is $2\nabla_{{\rm grad}(\rho)} u + (\Delta \rho) u|_p$.
\end{proof}

\begin{proof}[Proof of Lemma \ref{Tdelta}]
First examine $-t\delta u_{\perp}$.  By using the expansion of $\delta$ in coordinates, at a point $p \in \partial M$,
\[
-t\delta u_{\perp} = t i_N \grad_N u_{\perp} + t \sum_{j=1}^{n-1} i_{e_j} \grad_{e_j} u_{\perp},
\]
where $\{e_j\}$ are a basis for $T\partial M$ at $p$.
Since $N$ is a geodesic vector field, $\grad_N N^{\flat} = 0$, and thus
\[
i_N \nabla_N u_\perp = \nabla_N i_N u_\perp.
\]
Therefore
\begin{equation}\label{firstperp}
-t\delta u_{\perp} = t \grad_N i_N u_{\perp} + t \sum_{j=1}^{n-1} i_{e_j} \grad_{e_j} u_{\perp},
\end{equation}
Meanwhile, if $\{X_1, \ldots, X_{k-1}\}$ are local sections of $T\partial M$ near $p$,
\begin{eqnarray*}
& & t\sum_{j=1}^{n-1} i_{e_j}\nabla_{e_j}u_\perp ( X_1,..,X_{k-1}) \\
&=& -t\sum_{j=1}^{n-1} \left( u_\perp(\nabla_{e_j}e_j, X_1,..,X_{k-1}) +\sum_{l=1}^{k-1} u_\perp(e_j,  X_1,..,\nabla_{e_j}X_l,..,X_{k-1}) \right)\\
&=& -t\sum_{j=1}^{n-1} \big( u_\perp(\nabla'_{e_j}e_j + II(e_j, e_j), X_1,..,X_{k-1}) \\
& & +\sum_{l=1}^{k-1} u_\perp(e_j,  X_1,..,\nabla'_{e_j}X_l + II(e_j, X_l),..,X_{k-1}) \big),
\end{eqnarray*}
where $II : T\partial M \oplus T\partial M \to N\partial M$ is the second fundamental form of $\partial M$ relative to its embedding in $M$.  

Since all of  $e_j$ and $X_k$ in the above sum belong to $T\partial M$ and $u_\perp$ needs an element of $N\partial M$ in its argument to survive, we have that 
\[
u_\perp(e_j,  X_1,..,\nabla'_{e_j}X_l + II(e_j, X_l),..,X_{k-1})) =  u_\perp(e_j,  X_1,.., II(e_j, X_l),..,X_{k-1})).
\]
Therefore, $t\sum_{j=1}^{n-1} i_{e_j}\nabla_{e_j}u_\perp ( X_1,..,X_{k-1})$ is equal to 
\[
-t\sum_{j = 1}^{n-1} \left( u_\perp(II(e_j, e_j), X_1,..,X_{k-1}) +\sum_{l = 1}^{k-1} u_\perp(e_j,  X_1,..,II(e_j, X_l),..,X_{k-1}) \right).
\]
Then
\begin{eqnarray*}
& & -\sum\limits_{j = 1}^{n-1} \left( u_\perp(II(e_j, e_j), X_1,..,X_{k-1}) +\sum\limits_{l = 1}^{k-1} u_\perp(e_j,  X_1,..,II(e_j, X_l),..,X_{k-1}) \right) \\
&=& -\sum\limits_{j = 1}^{n-1} u_\perp(II(e_j, e_j), X_1,..,X_{k-1})+ \sum\limits_{l = 1}^{k-1}\sum\limits_{j = 1}^{n-1}u_\perp(N,  X_1,..,e_j (N|II(e_j, X_l)),..,X_{k-1})) \\
&=& -(n-1)\kappa u_\perp(N, X_1,..,X_{k-1})+ \sum\limits_{l = 1}^{k-1}\sum\limits_{j = 1}^{n-1}u_\perp(N,  X_1,..,e_j (sX_l | e_j),..,X_{k-1})) \\
&=&-(n-1)\kappa u_\perp(N, X_1,..,X_{k-1}) + \sum\limits_{l =1}^{k-1} u_\perp(N,  X_1,..,s X_l,..,X_{k-1}))\\ 
\end{eqnarray*}
Therefore
\[
t\sum_{j=1}^{n-1} i_{e_j}\nabla_{e_j}u_\perp = (S-(n-1)\kappa) t i_N u_\perp.
\]
Using this together with \eqref{firstperp} gives
\begin{equation}\label{secondperp}
-t\delta u_{\perp} = t \grad_N i_N u_{\perp} + (S-(n-1)\kappa)  t i_N u_\perp.
\end{equation}

Now consider $t\delta u_{\|}$.  Again, using the expansion of $\delta$ in coordinates,
\[
-t\delta u_{||} = t\sum\limits_{j= 1}^n i_{e_j} \nabla_{e_j} u_{||}  = ti_{N} \nabla_{N} u_{||} + t\sum\limits_{j= 1}^{n-1} i_{e_j} \nabla_{e_j} u_{||}.
\]
Since $N$ is a geodesic vector field, $\grad_N N = 0$.  Moreover, $i_N u_{\|} =0$ by the definition of $u_{\|}$.  Therefore $i_N \grad_N u_{\|} = 0$.  Then for the remaining sums, we let $\{X_1,..X_{k-1}\}$ be sections of $T\partial M$ and compute
\begin{eqnarray*}
& & \sum\limits_{j= 1}^{n-1} (i_{e_j} \nabla_{e_j} u_{||}) (X_1,..,X_{k-1}) \\
&=& \sum\limits_{j= 1}^{n-1} \big(e_j  u_{||}(e_j, X_1,..,X_{k-1}) - u_{||}(\nabla_{e_j}e_j, X_1,..,X_{k-1}) \\
& & -\sum\limits_{l = 1}^{k-1} u_{||}(e_j, X_1, .., \nabla_{e_j} X_l,.., X_{k-1})\big). 
\end{eqnarray*}

We have that  $\nabla_{e_j} X = \underbrace{\nabla'_{e_j} X}_{\in T\partial M} + \underbrace{II(e_j, X)}_{\in N\partial M}$ for all $X\in T\partial M$. Using this formula and the fact that $i_N u_{||} =0$ we have that
\begin{eqnarray*}
& & \sum\limits_{j=1}^{n-1} (i_{e_j} \nabla_{e_j} u_{||}) (X_1,..,X_{k-1}) \\
&=& \sum\limits_{j=1}^{n-1} \big(e_j  (u_{||}(e_j, X_1,..,X_{k-1})) - u_{||}(\nabla'_{e_j}e_j, X_1,..,X_{k-1}) - \\
& & -\sum\limits_{l=1}^{k-1} u_{||}(e_j, X_1, .., \nabla'_{e_j} X_l,.., X_{k-1})\big) \\
&=& \sum\limits_{j=1}^{n-1} (i_{e_j} \nabla'_{e_j} u_{||}) (X_1,..,X_{k-1}) \\
&=& (-\delta' t u_{||})(X_1,..,X_{k-1}). 
\end{eqnarray*}

Therefore
\[
t\delta u_{||} = \delta' tu_{||}.
\]
Combining this with \eqref{secondperp} finishes the proof of the lemma.
\end{proof}

\begin{proof}[Proof of Lemma \ref{iNdu}]
As in the previous proof, we'll begin by examining the $u_{\perp}$ part.  Using the expansion of $d$ in terms of sections, if $\{X_1, \ldots, X_k\}$ are local sections of $T \partial M$, 
\[
i_{N}d u_{\perp} (X_1, \ldots, X_k) = \sum_{j=1}^k (-1)^j \nabla_{X_j} u_\perp (N, X_1,..,\hat X_j,..,X_k) + \nabla_N u_\perp (X_1,..,X_k)
\]
Here we have identified $X_j$ with its extension to a neighbourhood of $\partial M$ in the interior of $M$ by parallel transport along normal geodesics, so that $(N|X_j)$ vanishes identically.  Then the last term becomes
\[
\nabla_N u_\perp (X_1,..,X_k) = \underbrace{N (u_\perp (X_1,..,X_k))}_{=0} - \sum\limits_{l=1}^k u_\perp(X_1,..,\underbrace{\nabla_{N}X_l}_{=0},..X_k) = 0.
\]

For the first term, we can use the Leibnitz rule to get
\begin{eqnarray*}
& & \sum\limits_{j =1}^k (-1)^j \nabla_{X_j} u_\perp (N, X_1,..,\hat X_j,..,X_k) \\
&=& \sum\limits_{j =1}^k (-1)^j\big( X_j (u_\perp (N, X_1,..,\hat X_j,..,X_k)) - u_\perp(\nabla_{X_j} N, X_1,..,\hat X_j,.., X_k) \\
& & -\sum\limits_{l =1}^k u_\perp(N,X_1,..,\nabla_{X_j} X_l,..,\hat X_j,..,X_k)\big)
\end{eqnarray*}
For all $j$, $u_\perp(\nabla_{X_j} N, X_1,..,\hat X_j,.., X_k) = u_\perp((\nabla_{X_j} N)_\perp, X_1,..,\hat X_j,.., X_k)$ since one needs a normal component in the argument for the expression to not vanish. However, 
\[
0 = X_j \underbrace{(N|N)}_{=1} = 2(\nabla_{X_j} N, N)
\]
so
\[
u_\perp(\nabla_{X_j} N, X_1,..,\hat X_j,.., X_k)= 0
\]
 for all $j$. On the other hand, for all $l$ and $j$,
\begin{eqnarray*}
& & u_\perp(N,X_1,..,\nabla_{X_j} X_l,..,\hat X_j,..,X_k) \\
&=& u_\perp(N,X_1,..,(\nabla_{X_j} X_l)_{||},..,\hat X_j,..,X_k) \\
&=& u_\perp(N,X_1,..,\nabla'_{X_j} X_l,..,\hat X_j,..,X_k)
\end{eqnarray*}
since the expression only allows one normal component in its argument. Therefore,
\begin{eqnarray*}
& & \sum\limits_{j=1}^k (-1)^j \nabla_{X_j} u_\perp (N, X_1,..,\hat X_j,..,X_k) \\
&=& -\sum\limits_{j=1}^k (-1)^{j+1}\nabla'_{X_j} i_N u_\perp(X_1,..,\hat X_j,..,X_k) \\
&=& -d' t i_N u_\perp
\end{eqnarray*}

Therefore
\[
t i_{N}d u_{\perp} = -d' t i_N u_\perp.
\]

Now examine the parallel part.  Suppose $\{X_1, \ldots, X_k\}$ are local sections of $T \partial M$.  By definition of $u_{\|}$,
\[
u_{||}(N,X_1,..,\nabla_{X_j} X_l, ..,X_k) = 0,
\]
so we can write
\begin{eqnarray*}
i_N du_{||} (X_1,..,X_k) &=& \sum_{j=1}^k (-1)^j \nabla_{X_j}u_{||}(N, X_1,..,\hat X_j,..,X_k) + \nabla_N u_{||}(X_1,..,X_k)\\
                         &=&-\sum_{j=1}^k (-1)^j u_{||}(\nabla_{X_j}N,..,\hat X_j, ..,X_k) + \nabla_N u_{||}(X_1,..,X_k)\\
                         &=&-\sum_{j=1}^k (-1)^j u_{||}((\nabla_{X_j}N)_{||},..,\hat X_j, ..,X_k) + \nabla_N u_{||}(X_1,..,X_k) \\
                         &=& \sum_{j=1}^k u_{||}(X_1,..,s X_j, ..,X_k) + \nabla_{N} u_{||}(X_1,..,X_k)\\
\end{eqnarray*}

Therefore
\[
t i_N du_{||} = S tu_{||} + t \nabla_{N}u_{||}.
\]
Putting this together with the calculation for the normal part gives
\[
t i_N du = -d' t i_N u_\perp + Stu_{||} + t\nabla_{N}u_{||}
\]
Since $i_N u_{\perp} = i_N u$, 
\[
t i_N du = -d' t i_N u + Stu_{||} + t\nabla_{N}u_{||}
\]
as desired.
\end{proof}

To prove Lemma \ref{deltaBu}, we will first require the following intermediate lemma.  

\begin{lemma}\label{tnablaNiNnablagradf}
For all smooth functions $f$, and $u \in \Om^k(M)$ such that $tu =0$, we have that
\begin{eqnarray*}
t \nabla_N i_N\nabla_{\grad f}u &=& \nabla_{\grad f_{||}}'t \nabla_N i_N u - t i_{s \grad f_{||}} \nabla_N u_{||} +t i_N R(N, \nabla f_{||}) u_\perp \\
                                & &+ t \nabla_{[\grad f_{||}, N]}i_Nu-\partial_\nu f t\nabla_N \nabla_N i_N u + \partial^2_\nu f t\nabla_N i_N u. \\
\end{eqnarray*}
\end{lemma}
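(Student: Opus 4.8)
The plan is to compute $t\nabla_N i_N \nabla_{\grad f} u$ by repeatedly commuting the various differential operators past each other and using the hypothesis $tu=0$ at every stage to discard boundary-tangential contributions. First I would split the gradient into its tangential and normal parts at the boundary, writing $\grad f = (\grad f)_{\|} - (\partial_\nu f) N$ in the convention of the paper (note $N$ is the \emph{inward} normal, so $\grad f = (\grad f)_\| - (\partial_\nu f)N$ with the appropriate sign, to be pinned down carefully). This gives two pieces to analyze: $t\nabla_N i_N \nabla_{(\grad f)_\|} u$ and $-\partial_\nu f\, t\nabla_N i_N \nabla_N u$, plus a lower-order term coming from differentiating the coefficient $\partial_\nu f$ along $N$, which will produce the $\partial_\nu^2 f\, t\nabla_N i_N u$ term. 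The normal piece is the easiest: since $\nabla_N N = 0$, $\nabla_N$ commutes with $i_N$, so $i_N \nabla_N \nabla_N u = \nabla_N \nabla_N i_N u$ up to terms that vanish because $\nabla_N N=0$, giving $-\partial_\nu f\, t\nabla_N\nabla_N i_N u$ directly; the derivative hitting the scalar $\partial_\nu f$ (extended off the boundary via parallel transport of $N$) contributes $+\partial_\nu^2 f\, t\nabla_N i_N u$.

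The substantive work is the tangential term $t\nabla_N i_N \nabla_{(\grad f)_\|} u$. Here I would commute $i_N$ past $\nabla_{(\grad f)_\|}$ using the identity $[\nabla_X, i_Y] = i_{\nabla_X Y}$, with $X = (\grad f)_\|$ extended appropriately, so that $i_N \nabla_{(\grad f)_\|} u = \nabla_{(\grad f)_\|} i_N u - i_{\nabla_{(\grad f)_\|} N} u = \nabla_{(\grad f)_\|} i_N u - i_{s(\grad f)_\|} u$ using $s(X)=\nabla_X N$; the last term, after applying $t\nabla_N$ and using $tu=0$ so that only the $u_\|$ component survives contraction appropriately, yields $-t i_{s(\grad f)_\|}\nabla_N u_\|$ (one checks that the derivative landing on the coefficient of $s$ and on the tangential/normal decomposition of $u$ produces only terms already listed or terms killed by $tu=0$). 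Then I must commute $\nabla_N$ past $\nabla_{(\grad f)_\|}$: this is where the curvature appears, via $\nabla_N \nabla_X - \nabla_X \nabla_N = \nabla_{[N,X]} + R(N,X)$. Applied to $i_N u$ and then restricted and combined with $tu = 0$ (so that $i_N u = i_N u_\perp$ and the tangential part of $i_N u$ is controlled), this gives the terms $t i_N R(N,(\grad f)_\|) u_\perp$ and $t\nabla_{[(\grad f)_\|, N]} i_N u$, while $\nabla_{(\grad f)_\|}\nabla_N i_N u$ restricts to $\nabla'_{(\grad f)_\|} t\nabla_N i_N u$ once we observe that the difference between the full connection $\nabla$ and the pullback connection $\nabla'$ on $\partial M$ involves the second fundamental form paired against quantities that vanish by $tu=0$ and the fact that $\nabla_N i_N u$ has the relevant structure.

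I expect the main obstacle to be bookkeeping: keeping rigorous track of which terms genuinely vanish because $tu=0$ (and its consequence that $i_N u$ equals $i_N u_\perp$ and that $t\nabla_{X}u$ for tangential $X$ equals $\nabla'_X tu = 0$ only at the boundary but not its normal derivatives), distinguishing $\nabla$ from $\nabla'$ correctly, and correctly handling extensions of $(\grad f)_\|$, $s$, and the scalars off the boundary. A useful reduction is to fix $p \in \partial M$ and work in the orthonormal frame $\{N, e_1,\ldots,e_{n-1}\}$ with the $e_j$ parallel-transported along normal geodesics (as in the proofs of Lemmas \ref{Tdelta} and \ref{iNdu}), so that at $p$ all the Christoffel symbols involving $N$ and the $e_j$ that we do not want are either zero or expressed via the shape operator $s$ and the curvature $R$; then the identity reduces to a finite commutator computation at $p$, and the claimed formula should fall out after collecting terms. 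The sign conventions (inward vs.\ outward normal, the sign in $\grad f = (\grad f)_\| \mp (\partial_\nu f)N$) will need to be checked against the paper's usage in Lemma \ref{Tdelta} to make sure the $-\partial_\nu f$ and $+\partial_\nu^2 f$ coefficients come out with the stated signs.
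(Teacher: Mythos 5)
Your proposal is correct and follows essentially the same route as the paper: split $\grad f = -(\partial_\nu f)N + \grad f_{\|}$ near $\partial M$, handle the normal piece via $\nabla_N N = 0$ and the product rule (producing the $-\partial_\nu f\, t\nabla_N\nabla_N i_N u$ and $\partial_\nu^2 f\, t\nabla_N i_N u$ terms), and handle the tangential piece with the commutators $[\nabla_X, i_N] = i_{\nabla_X N}$ and $\nabla_N\nabla_X - \nabla_X\nabla_N = R + \nabla_{[\cdot,\cdot]}$, discarding boundary terms via $tu=0$ (so $u = u_\perp$ on $\partial M$) and $(\nabla_N i_N u)_\perp = 0$. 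The only deviation is that you commute $i_N$ past $\nabla_{\grad f_\|}$ before moving $\nabla_N$, which creates the extra terms $-t\,i_{\nabla_N(\nabla_{\grad f_\|}N)}u$ and $-t\,i_{R(N,\grad f_\|)N}u$ that you wave at; both do vanish (since $\langle \nabla_{\grad f_\|}N, N\rangle \equiv 0$ near $\partial M$, $\langle R(N,\grad f_\|)N,N\rangle = 0$, and $t\,i_Z u_\perp = \langle Z,N\rangle\, t\,i_N u$), so your sketch checks out.
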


\begin{proof}
We first split $\nabla_{\grad f} $ into its normal and tangential parts: $\nabla_{\grad f} = -\partial_\nu f \nabla_{N} + \nabla_{\grad f_{||}}$ and 
\[
t \nabla_N i_N \nabla_{\grad f} u = - t \nabla_N \partial_\nu f i_N \nabla_{N} u + t \nabla_N i_N \nabla_{\grad f_{||}} u
\]
where $\grad f_{||}$ is defined by $\grad f = -(\partial _\nu f) N + \grad f_{||}$. We first compute the tangential derivative
\begin{eqnarray*} 
t\nabla_N i_N \nabla_{\grad f_{||}} u &=& ti_N\nabla_N \nabla_{\grad f_{||}} u\\
                                      &=&  ti_N\nabla_{\grad f_{||}} \nabla_{N} u +t i_N R(N, \nabla f_{||}) u+  ti_N\nabla_{[\grad f_{||}, N]} u \\
                                      &=& t \nabla_{\grad f_{||}} \nabla_N i_N u - t i_{\nabla_{\grad f_{||}} N} \nabla_N u +t i_N R(N, \nabla f_{||}) u+ t i_N \nabla_{[\grad f_{||}, N]}u
\end{eqnarray*}
We now observe that $t i_N \nabla_N u_{||} = 0$ and $t\nabla_N u_\perp =0$ so that we have 
\begin{equation*}
\begin{split}
t i_{\nabla_{\grad f_{||}}N} \nabla_N u_{||} &=  t i_{(\nabla_{\grad f_{||}}N)_{||}} \nabla_N u_{||}\\
                                             &= t i_{s\grad f_{||}} \nabla_N u_{||} \\
\end{split}
\end{equation*}
and 
\[
t i_{\nabla_{\grad f_{||}}N} \nabla_N u_\perp = t i_{(\nabla_{\grad f_{||}}N)_{\perp}} \nabla_N u_\perp = 0.
\] 
Plugging all this into the equality above we have
\begin{equation} \label{tangential part of double nabla u}
\begin{split}
t\nabla_N i_N \nabla_{\grad f_{||}} u = &t \nabla_{\grad f_{||}} \nabla_N i_N u - t i_{s \grad f_{||}} \nabla_N u_{||} +t i_N R(N, \nabla f_{||}) u+ t i_N \nabla_{[\grad f_{||}, N]}u \\
                                      = &t \nabla_{\grad f_{||}} \nabla_N i_N u - t i_{s \grad f_{||}} \nabla_N u_{||} +t i_N R(N, \nabla f_{||}) u \\
                                        &+ t \nabla_{[\grad f_{||}, N]}i_Nu - t i_{\nabla_{[\grad f_{||}, N]}N} u.
\end{split}
\end{equation}
If $tu =0$, then the last term $t i_{\nabla_{[\grad f_{||}, N]}N} u = t i_{(\nabla_{[\grad f_{||}, N]}N)_{\perp}} u = 0$ since $[\grad f_{||}, N]\in T\partial M$ and $N$ is constant length. Furthermore, since $(\nabla_N i_N u)_\perp = 0$ for all $u$ we have that $ t \nabla_{\grad f_{||}} \nabla_N i_N u =\nabla_{\grad f_{||}}' t \nabla_N i_N u$. Therefore we get
\begin{equation*}
t\nabla_N i_N \nabla_{\grad f_{||}} u = \nabla_{\grad f_{||}}' t \nabla_N i_N u - t i_{s \grad f_{||}} \nabla_N u_{||}+t i_N R(N, \nabla f_{||}) u_\perp + t \nabla_{[\grad f_{||}, N]}i_Nu.
\end{equation*}
For the derivative in the normal direction we write 
\[
-\nabla_N \partial_\nu f i_N\nabla_N u = -\partial_\nu f \nabla_N \nabla_N i_N u + \partial^2_\nu f \nabla_N i_N u.
\]
Combining the last two equations finishes the proof.
\end{proof}

\begin{proof}[Proof of Lemma \ref{deltaBu}]
By Lemma \ref{Tdelta},
\[
t \delta Bu = \delta ' tBu - (S - (n-1)\kappa)ti_{N}(Bu) - t \grad_{N} i_N Bu.
\]
Now
\[
t\grad_N i_{N}(Bu) = -ih t \left( 2 \grad_N i_N\nabla_{\text{grad}(\varphi_c)} u + \grad_N \Delta \varphi_c i_N u \right) 
\]
and
\begin{equation*}
\begin{split}
ti_{N}(Bu) &= -ih ti_N \left( 2 \nabla_{\text{grad}(\varphi_c)} + \Delta \varphi_c \right)u \\
           &= -ih ti_N \left( 2 \nabla_{\text{grad}(\varphi_c)_{\|}}u - 2\partial_{\nu}\ph_c \nabla_{N}u + \Delta \varphi_c u \right) \\           
\end{split}
\end{equation*}
If $tu = 0$, then $t i_{\text{grad}(\varphi_c)_{\|}N}u = 0$, so
\[
ti_{N}(Bu) = -ih\left( 2 t \nabla_{\text{grad}(\varphi_c)_{\|}}i_N u - 2\partial_{\nu}\ph_c t \nabla_{N}i_N u + \Delta \varphi_c ti_N u \right).
\] 
Therefore
\begin{equation*}
\begin{split}
t \delta Bu = &\delta ' tBu + ih t \left( 2 \grad_N i_N\nabla_{\text{grad}(\varphi_c)} u + \grad_N \Delta \varphi_c i_N u \right)  \\
              &+ih(S -(n-1) \kappa)\left( 2 t \nabla_{\text{grad}(\varphi_c)_{\|}}i_N u - 2\partial_{\nu}\ph_c t \nabla_{N}i_N u + \Delta \varphi_c ti_N u \right) \\
\end{split}
\end{equation*}
Now by Lemma \ref{tnablaNiNnablagradf},
\begin{eqnarray*}
& & t \grad_N i_N\nabla_{\text{grad}(\varphi_c)} u \\
&=& \nabla_{\grad(\ph_c)_{||}}'t \nabla_N i_N u - t i_{s \grad(\ph_c)_{||}} \nabla_N u_{||}\\
& & + ti_N R(N,\grad(\ph_c)_{||} ) u_\perp+t \nabla_{[\grad(\ph_c)_{||}, N]}i_Nu-\partial_\nu \ph_c t\nabla_N \nabla_N i_N u +\partial^2_\nu \ph_c t\nabla_N i_N u.\\
\end{eqnarray*}
Substituting this into the previous equation finishes the proof.
\end{proof}

\begin{proof}[Proof of Lemma \ref{ConxnLaplace}]
First we see that for all $v\in \Omega^k(M)$ and $e_j, X_l \in T\partial M$ we have 
\[
(ti_N \nabla_{e_j} v)(X_1,..,X_{k-1})  = (\nabla_{e_j}' i_N v) (X_1,..,X_{k-1}) - v (\nabla_{e_j} N, X_1,..,X_{k-1})
\]
Since $(\nabla_{e_j} N)_\perp = 0$ and $(\nabla_{e_j} N)_{||} = se_j$ by Weingarten's identity, we have that 

\[ti_N \nabla_{e_j} v = \nabla_{e_j}' t i_N v_\perp - t i_{se_j}v_{||}.\]
Setting $v =  \nabla_{e_j} u$ we have for $e_j\in T\partial M$, $j = 1,..,n-1$
\begin{eqnarray*}
ti_N \nabla_{e_j} \nabla_{e_j} u &=&t i_N \nabla_{e_j}\big( (\nabla_{e_j} u)_\perp + (\nabla_{e_j} u)_{||}\big)\\
                                 &=& \nabla'_{e_j}\big( ti_N (\nabla_{e_j} u)_\perp) -i_{s e_j}(\nabla_{e_j} u)_{||}\big.
\end{eqnarray*}
Since $(\nabla_{e_j}N)_\perp =0$ we have that $ \nabla'_{e_j} ti_N (\nabla_{e_j} u)_\perp =\nabla'_{e_j} \nabla'_{e_j}(ti_N u)$ and therefore
\begin{eqnarray}
\label{laplace on boundary}
t i_N\sum\limits_{j =1}^{n-1} \nabla_{e_j} \nabla_{e_j} u = \tilde\Delta' t i_N u - \sum\limits_{j=1}^{n-1}i_{se_j}(\nabla_{e_j} u)_{||}
\end{eqnarray}
For the term involving $e_n$ we use $\nabla_N N = 0$ to obtain $i_N \nabla_{e_n}\nabla_{e_n} u = \nabla_{e_n} \nabla_{e_n} i_N u$. Combining this and (\ref{laplace on boundary}) we obtain 
\[t i_N \tilde\Delta u = \tilde \Delta' t i_Nu + t \nabla_{N}\nabla_N i_N u- \sum\limits_{j=1}^{n-1}ti_{se_j}(\nabla_{e_j} u)_{||}.\]
Let $p \in \partial M$  and since the above identity is true for any local orthonormal frame, we will choose $\{e_1,.., e_{n-1}\}$ so that they are eigenvectors of the shape operator $s$ at the point $p$ with $se_j = \lambda_j e_j$. For $X_{1,},..,X_{l} \in T\partial M_p$ we compute
\begin{eqnarray*}
t i_N \tilde\Delta u(X_1,..,X_{k-1}) &=& \tilde \Delta' t i_Nu(X_1,..,X_{k-1}) + t \nabla_{N}\nabla_N i_N u(X_1,..,X_{k-1})\\&-& \sum\limits_{j=1}^{n-1}i_{se_j}(\nabla_{e_j} u)_{||}(X_1,..,X_{k-1})\\
&=&\tilde \Delta' t i_Nu(X_1,..,X_{k-1}) + t \nabla_{N}\nabla_N i_N u(X_1,..,X_{k-1})\\&+& \sum\limits_{j=1}^{n-1} \big(u( \nabla_{e_j} se_j, X_1,..,X_{k-1}) +\sum\limits_{l=1}^{k-1}u(se_j,..,\nabla_{e_j}X_l,..)\big)\\
&=& \tilde \Delta' t i_Nu(X_1,..,X_{k-1}) + t \nabla_{N}\nabla_N i_N u(X_1,..,X_{k-1})\\&+& \sum\limits_{j=1}^{n-1} \big(u( \lambda_{j} \langle II(e_j,e_j),N\rangle N, X_1,..,X_{k-1}) \\ &+& \lambda_j\sum\limits_{l=1}^{k-1}u(e_j,..,\nabla_{e_j}X_l,..)\big) \\
\end{eqnarray*}
Since $tu = 0$, only the second fundamental form appears in $\nabla_{e_j}$ terms:
\begin{eqnarray*}
t i_N \tilde\Delta u(X_1,..,X_{k-1}) &=& \tilde \Delta' t i_Nu(X_1,..,X_{k-1}) + t \nabla_{N}\nabla_N i_N u(X_1,..,X_{k-1})\\&+& \sum\limits_{j=1}^{n-1} \big(\lambda_{j} \langle II(e_j,e_j),N\rangle u(  N, X_1,..,X_{k-1}) \\ &+& \lambda_j\sum\limits_{l=1}^{k-1}u(e_j,..,II(e_j, X_l),..)\big).\\
\end{eqnarray*}
This becomes
\begin{eqnarray*}
t i_N \tilde\Delta u(X_1,..,X_{k-1})
&=& \tilde \Delta' t i_Nu(X_1,..,X_{k-1}) + t \nabla_{N}\nabla_N i_N u(X_1,..,X_{k-1})\\&+& \sum\limits_{j=1}^{n-1} \big(\lambda_{j}^2u(  N, X_1,..,X_{k-1}) -\lambda_j\sum\limits_{l=1}^{k-1}u(N,..,\langle II(e_j, X_l),N\rangle e_j,..)\big)\\
&=& \tilde \Delta' t i_Nu(X_1,..,X_{k-1}) + t \nabla_{N}\nabla_N i_N u(X_1,..,X_{k-1})\\&+& \sum\limits_{j=1}^{n-1} \big(\lambda_{j}^2u(  N, X_1,..,X_{k-1}) -\lambda_j^2\sum\limits_{l=1}^{k-1}u(N,..,\langle e_j, X_l\rangle e_j,..)\big)\\
&=& \tilde \Delta' t i_Nu(X_1,..,X_{k-1}) + t \nabla_{N}\nabla_N i_N u(X_1,..,X_{k-1})\\&+& \sum\limits_{j=1}^{n-1} \big(\lambda_{j}^2u(  N, X_1,..,X_{k-1}) -\sum\limits_{l=1}^{k-1}u(N,..,\langle e_j, s^2X_l\rangle e_j,..)\big)\\
&=& \tilde \Delta' t i_Nu(X_1,..,X_{k-1}) + t \nabla_{N}\nabla_N i_N u(X_1,..,X_{k-1})\\&+&\big(tr(s^2)i_Nu( X_1,..,X_{k-1}) -\sum\limits_{l=1}^{k-1}i_Nu(..,s^2X_l,..)\big)
\end{eqnarray*}
Therefore we conclude that 
\[t i_N \tilde\Delta u = \tilde \Delta' t i_Nu + t \nabla_{N}\nabla_N i_N u+tr(s^2)i_Nu -S_2i_Nu\]
where $S_2 \omega (X_1,..,X_{k-1}) := \sum\limits_{l=1}^{k-1}\omega(.., s^2 X_l, ..)$.
\end{proof}
 \label{sec:appendix}

\bibliographystyle{alpha}

\end{document}